\providecommand{\blue}[1]{\color{black}{#1}\color{black}\hspace{0pt}}
\newtheorem{theorem}{Theorem}[section]
\newtheorem{corollary}[theorem]{Corollary}
\newtheorem{proposition}[theorem]{Proposition}
\newtheorem{lemma}[theorem]{Lemma}
\newtheorem{define}[theorem]{Definition}
\newtheorem{remark}[theorem]{Remark}
\newtheorem{example}[theorem]{Example}
\newcommand{\mendth}{\hfill \ensuremath{\vartriangle}}
\DeclareMathOperator*{\col}{col}
\DeclareMathOperator*{\diag}{diag}
\DeclareMathOperator*{\T}{\intercal}
\DeclareMathOperator{\eps}{\varepsilon}
\def\ds{\textnormal{d}s}
\def\dt{\textnormal{d}t}
\def\d{\textnormal{d}}
\def\Tmin{T_\textnormal{min}}
\def\Tmax{T_\textnormal{max}}
\def\T{\textnormal{T}}
\def\tk{\ensuremath \{t_k\}_{k\in\mathbb{Z}_{\ge0}}}
\newenvironment{proof}{{\it Proof :~}}{\hfill$\diamondsuit$\\}
\title{Hybrid $L_\infty\times\ell_\infty$-Performance Analysis and Control of Linear Time-Varying Impulsive and Switched Positive Systems}
\author{Corentin Briat\thanks{email: {\tt  corentin@briat.info}; url:~{\tt http://www.briat.info}}}
\date{}
\begin{document}

\maketitle
\vspace{-1cm}
\begin{abstract}
\noindent Recent works have shown that the $L_1$ and $L_\infty$-gains are natural performance criteria for linear positive systems as they can be characterized using linear programs. Those performance measures have also been extended to linear positive impulsive and switched systems through the concept of hybrid $L_1\times\ell_1$-gain. For LTI positive systems, the $L_\infty$-gain is known to coincide with the $L_1$-gain of the transposed system and, as a consequence, one can use linear copositive Lyapunov functions for characterizing the $L_\infty$-gain of LTI positive systems. Unfortunately, this does not hold in the time-varying setting and one cannot characterize the hybrid $L_\infty\times\ell_\infty$-gain of a linear positive impulsive system in terms of the hybrid $L_1\times\ell_1$-gain of the transposed system. An alternative approach based on the use of linear copositive max-separable Lyapunov functions is proposed. We first prove very general necessary and sufficient conditions characterizing the exponential stability and the $L_\infty\times\ell_\infty$- and $L_1\times\ell_1$-gains using linear max-separable copositive and linear sum-separable copositive Lyapunov functions. These two results are then connected together using operator theoretic results  and the notion of adjoint system. Results characterizing the stability and the hybrid $L_\infty\times\ell_\infty$-gain of linear positive impulsive systems under arbitrary, constant, minimum, and range dwell-time constraints are then derived from the previously obtained general results. These conditions are then exploited to yield constructive convex stabilization conditions via state-feedback. By reformulating linear  positive switched systems as impulsive systems with multiple jump maps, stability and stabilization conditions are also obtained for linear positive switched systems. It is notably proven that the obtained conditions generalize existing ones of the literature. As all the results are stated as infinite-dimensional linear programs, sum of squares programming is used to turn those optimization problems into sufficient tractable finite-dimensional semidefinite programs. Interestingly, the relaxation becomes necessary if we allow the degrees of the polynomials to be arbitrarily large. Several particular cases of the approach such as LTV positive systems and periodic positive systems are also discussed for completeness. Examples are given for illustration.\\

\noindent\textit{Keywords.} Positive systems; Impulsive systems; Switched systems; Hybrid gains.
\end{abstract}
\section{Introduction}

Linear positive systems \cite{Farina:00} have recently attracted a lot of attention because of their surprising properties. Several problems known to be NP-Hard, in general, turn out to be, in fact, deceptively simple in the context of linear positive systems. Such problems include the design of structured controllers with and without prescribed bounds \cite{Briat:11h}, the exact computation of gains such as the $L_1$-, $\ell_1$, $L_\infty$- and $\ell_\infty$-gains \cite{Briat:11h,Briat:11g}, the exact stability analysis of uncertain systems \cite{Ebihara:11,Briat:11h,Colombino:15} and systems with delays \cite{Kaczorek:09,Shen:15,Shen:15b,Briat:16b,Zhu:17b}, etc. Positive systems can also be used to model a wide variety of real-world systems and have found application in systems biology \cite{Briat:12c,Briat:13h,Briat:13i,Blanchini:14,Briat:15e,Briat:16a}, biochemistry \cite{Hynn:01}, physiology \cite{Linninger:09}, epidemiology and disease dynamics \cite{Murray:02,Jonsson:14,Jonsson:16}, etc. They are sometimes used as comparison systems in order to establish simple and potentially scalable stability results  (see e.g. \cite{Ai:17,Sootla:17cdc} and the references therein)  and they have also been shown to play an instrumental role in the design of so-called interval observers; see e.g. \cite{Gouze:00,Briat:15g,Efimov:16c,Chambon:16,Briat:18:IntImp,Briat:18:IntMarkov,Briat:19:IJC} and the references therein. Linear positive impulsive systems have been recently studied in \cite{Zhang:14b,Briat:16c,Briat:18:IntImp,Briat:19:IJC}; see also the references therein. In particular, the papers \cite{Briat:18:IntImp,Briat:19:IJC} deal with the stability and hybrid $L_1\times\ell_1$-performance analysis of such systems under minimum and range dwell-time constraints. Linear positive switched systems have also been considered in \cite{Blanchini:15,Briat:18:IntImp}.

The objective of this paper is to develop a theory for the stability and hybrid $L_\infty\times\ell_\infty$ performance analysis, and the state-feedback control of linear positive impulsive systems. In \cite{Briat:11g,Briat:11h}, it was demonstrated that, in the case of LTI positive systems, the $L_\infty$-gain ($\ell_\infty$-gain) of an LTI continuous-time (discrete-time) positive system is equal to the $L_1$-gain ($\ell_1$-gain) of the so-called \emph{transposed system}. This is an immediate consequence of the explicit formulas for such gains; see e.g. \cite{Desoer:75a}. Interestingly, it was shown in \cite{Briat:11g,Briat:11h} that computing the $L_1$-gain or the $\ell_1$-gain amounts to solving a simple linear program. Unfortunately, this is only possible in the time-invariant setting and this cannot be directly generalized to more general systems such as linear time-varying systems, switched systems or impulsive systems. In this regard, we cannot use the approach developed in \cite{Briat:18:IntImp,Briat:19:IJC} to our current problem and novel one is required.

The proposed approach consists of first proving very general necessary and sufficient conditions, inspired from the ideas in \cite{Blanchini:15}, characterizing the exponential stability as well as the hybrid  $L_\infty\times\ell_\infty$ or the $L_1\times\ell_1$ performance of an impulsive system for a given and fixed sequence of impulse times. Following the terminology introduced in \cite{Dashkhovskiy:11,Ito:12} in the context of monotone systems, the former result is based on the use of a max-separable linear copositive Lyapunov function whereas the latter is based on the use of a linear sum-separable copositive Lyapunov function. There is an interesting connection between those results, the class of Lyapunov function and the type of associated performance: on one side we have a sum- separable Lyapunov function for the hybrid $L_1\times\ell_1$ performance (which involves the sum of inputs and outputs) while we have a max-separable Lyapunov function for the hybrid $L_\infty\times\ell_\infty$ performance (which involves the maximum of inputs and outputs). This is due to the fact that the dual norm of the vector 1-norm, directly associated with the linear sum-separable copositive Lyapunov function, is dual to the vector $\infty$-norm, which is associated with the max-separable linear copositive Lyapunov function. Similarly, the dual of the  $L_1\times\ell_1$-norm is the $L_\infty\times\ell_\infty$-norm. We use these facts to connect the two previously obtained results all together through the concept of adjoint system and the use of standard operator theoretic results. The resulting result states that a linear positive impulsive system is exponentially stable and has an $L_\infty\times\ell_\infty$-gain equal to $\gamma$ if and only if the adjoint system is backward dissipative with a linear sum-separable copositive storage function with the supply-rate characterizing the hybrid $L_1\times\ell_1$ performance. This result is the time-varying counterpart of the result based on the transposed system only valid in the LTI case. In particular, when the system is made time-invariant, the conditions reduce to those based on the transposed system.

Based on those very general results, stability and performance analysis conditions can be easily obtained. Arbitrary, constant, range and minimum dwell-times constraints are considered. The results are notably specialized to timer-dependent impulsive systems, a class of systems often considered in the literature as it arises naturally from certain control and observation problems; see e.g. \cite{Allerhand:11,Briat:13d,Briat:14f,Briat:15i}. An important advantage of this class of systems is that the stability conditions, expressed as infinite-dimensional linear programs, are tractable as the timer variable takes values within a compact set. This makes the verification of those conditions a much simpler task as they may be tackled using polynomial optimization techniques such as sum of squares programming \cite{Parrilo:00,sostools3}. Even though the sum of square relaxation is sufficient only, we prove that it is also necessary provided that the degrees of the polynomials can be chosen arbitrarily large.

Stabilization conditions are then derived. It turns out that even when the matrices of the system are time-invariant, timer-dependent and dwell-time-dependent controllers need to be considered in order to obtain convex design conditions, a crucial point which justifies the consideration of timer-dependent systems. Using a linear analogous of Finsler's Lemma, we also provide conditions where part of the designed controllers is time-invariant, possibly at the expense of some additional conservatism.

To illustrate the generality of the approach, the results are then specialized to linear positive switched systems albeit only the minimum dwell-time constraint is considered. We also provide conditions for subclasses of such systems such as linear time-varying, periodic and linear time-invariant systems in both the continuous- and discrete-time.\\

\noindent\textbf{Outline.} The general stability and performance results derived in Section \ref{sec:fund} serve as the basis of the specialized results in Section \ref{sec:Linf_imp_stab}. Stabilization results are obtained in the following section, Section \ref{sec:Linf_imp_stabz}, which are then specialized to positive switched systems in Section \ref{sec:Linf_sw}. Finally, interesting particular cases of the proposed approach are discussed in Section \ref{sec:part}. Illustrative examples are provided in the relevant sections.\\

\noindent\textbf{Notations.} The set of integers greater or equal to $n\in\mathbb{Z}$ is denoted by $\mathbb{Z}_{\ge n}$. The cones of positive and nonnegative vectors of dimension $n$ are denoted by $\mathbb{R}_{>0}^n$ and $\mathbb{R}_{\ge0}^n$, respectively.  For any matrix $M$, the inequalities $M\ge0$ and $M>0$ are always interpreted componentwise. The set of diagonal matrices of dimension $n$ is denoted by $\mathbb{D}^n$ and the subset of those being positive definite is denoted by $\mathbb{D}_{\succ0}^n$. The $n$-dimensional vector of ones is denoted by $\mathds{1}_n$. The dimension will be often omitted when its value is obvious from the context. For some elements, $\{x_1,\ldots,x_n\}$, the operator $\textstyle \diag_{i=1}^n(x_i)$ builds a matrix with diagonal entries given by $x_1,\ldots,x_n$ whereas $\textstyle\col_{i=1}^n(x_i)$ creates a vector by vertically stacking them with $x_1$ on the top. The limit from the right of a function $f$ at some point $t$ is denoted by $\textstyle f(t^+):=\lim_{s\downarrow t}f(s)$. The natural basis for $\mathbb{R}^n$ is denoted by $\{e_1,\ldots,e_n\}$. We also use the shorthand $\partial_s$ for the partial derivative operator $\frac{\partial}{\partial s}$.

\section{General stability and hybrid performance results for LTV positive impulsive systems}\label{sec:fund}

General stability results for LTV impulsive systems are obtained in this section using max-separable linear copositive Lyapunov functions or sum-separable linear copositive Lyapunov functions. These results are extended to characterize the hybrid $L_\infty\times\ell_\infty$-gain and the hybrid $L_1\times\ell_1$-gain, respectively. A link connecting those results using operator theoretic ideas is also provided.

\subsection{Preliminaries}

We consider in this paper linear time-varying impulsive systems of the form
\begin{equation}\label{eq:mainsyst}
\begin{array}{rcl}
  \dot{x}(t)&=&\tilde{A}(t)x(t)+\tilde{B}_c(t)u_c(t)+\tilde{E}_c(t)w_c(t),t\in(t_k,t_{k+1}],k\in\mathbb{Z}_{\ge0}\\
  x(t_k^+)&=&\tilde{J}(k)x(t_k)+\tilde{B}_d(k)u_d(k)+\tilde{E}_d(k)w_d(k),k\in\mathbb{Z}_{\ge1}\\
  z_c(t)&=&\tilde{C}_c(t)x(t)+\tilde{D}_c(t)u_c(t)+\tilde{F}_c(t)w_c(t),t\ge0\\
  z_d(k)&=&\tilde{C}_d(k)x(t_k)+\tilde{D}_d(k)u_d(k)+\tilde{F}_d(k)w_d(k),k\in\mathbb{Z}_{\ge0}\\
  x(t_0)&=&x(t_0^+)=x_0
\end{array}
\end{equation}
where $x(t),x_0\in\mathbb{R}^n$, $u_c(t)\in\mathbb{R}^{m_c}$, $w_c(t)\in\mathbb{R}^{p_c}$, $u_d(k)\in\mathbb{R}^{m_d}$, $w_d(k)\in\mathbb{R}^{p_d}$, $z_c(t)\in\mathbb{R}^{q_c}$ and $z_d(k)\in\mathbb{R}^{q_d}$, $t\in\mathbb{R}_{\ge0}$, $k\in\mathbb{Z}_{\ge0}$ are the state of the system, the initial condition, the continuous-time exogenous input, the continuous-time control input, the discrete-time exogenous input, the discrete-time control input, the continuous-time performance output and the discrete-time performance output, respectively. The matrix-valued functions $\tilde{A}(t)\in\mathbb{R}^{n\times n}$, $\tilde{E}_c(t)\in\mathbb{R}^{n\times p_c}$, $\tilde{C}_c(t)\in\mathbb{R}^{q_c\times n}$ and $\tilde{F}_c(t)\in\mathbb{R}^{q_c\times p_c}$ are assumed to be continuous and bounded for all $t\in\mathbb{R}_{\ge0}$. The matrices $\tilde{J}(k)\in\mathbb{R}^{n\times n}$, $\tilde{E}_d(k)\in\mathbb{R}^{n\times p_d}$, $\tilde{C}_d(k)\in\mathbb{R}^{q_d\times n}$ and $\tilde{F}_d(k)\in\mathbb{R}^{q_c\times p_c}$ are bounded matrices for all $k\in\mathbb{Z}_{\ge0}$. The sequence of impulse times $\{t_k\}_{k\in\mathbb{Z}_{\ge1}}$ is assumed to verify the properties: (a) $T_k:= t_{k+1}-t_k>0$ for all $k\in\mathbb{Z}_{\ge1}$, and (b) $\textstyle t_k\to\infty$ as $k\to\infty$. \blue{In the following, we will denote by $\frak{T}$ a sequence $\{t_1,t_2,\ldots\}$ of impulse times that belongs to some family $\bar{T}$. We, moreover, define the sequence of impulse times relative to $t_0$ as $\frak{T}_{t_0}:=\{t_0,t_1,\ldots\}$ for all $\frak{T}\in\mathbb{T}$ where $0\le t_0< t_1$. We denote by $\mathbb{T}_0$ the set of all possible $(t_0,\frak{T})$ where $\frak{T}\in\mathbb{T}$. It is given by $\mathbb{T}_0=\{(\tau,\frak{T}):[0,\min\{\frak{T}\}),\frak{T}\in\mathbb{T}\}$. It it possible to extend this definition to the case where $t_0\ge t_1$ at the expense of a dramatic increase of notational complexity. This is the reason why we will only consider the case $0\le t_0< t_1$ in the current paper.}

We define hybrid the $L_1\times\ell_1$- and $L_\infty\times\ell_\infty$-norm as follows:
\begin{define}
  Let us consider a hybrid signal $w:=(w_c,w_d)$ where $w_c:\mathbb{R}_{\ge0}\mapsto\mathbb{R}^{p_c}$ and $w_d:\mathbb{Z}_{\ge0}\mapsto\mathbb{R}^{p_d}$. Then,
  \begin{enumerate}[(a)]
    \item the hybrid $L_\infty\times\ell_\infty$-norm is defined as
  \begin{equation}
    \left|\left|\begin{bmatrix}
      w_c\\
      w_d
    \end{bmatrix}\right|\right|_{L_\infty\times\ell_\infty}:=\left|\left|\begin{bmatrix}
      ||w_c||_{L_\infty}\\
      ||w_d||_{\ell_\infty}
    \end{bmatrix}\right|\right|_\infty=\max\{||w_c||_{L_\infty}, ||w_d||_{\ell_\infty}\},
  \end{equation}
  \item the hybrid $L_1\times\ell_1$-norm is defined as
  \begin{equation}
    \left|\left|\begin{bmatrix}
      w_c\\
      w_d
    \end{bmatrix}\right|\right|_{L_1\times\ell_1}:=\left|\left|\begin{bmatrix}
      ||w_c||_{L_1}\\
      ||w_d||_{\ell_1}
    \end{bmatrix}\right|\right|_1= ||w_c||_{L_1}+||w_d||_{\ell_1}.
  \end{equation}
  \end{enumerate}
\end{define}

Based on the above definition, we are now able of clearly defining the concept of hybrid gain for a hybrid operator:
\begin{define}
  Let us consider a time-varying hybrid system represented by the bounded operator $\Sigma_{t_0,\frak{T}}:L_p\times\ell_p\mapsto L_p\times\ell_p$ where $\frak{T}$ is the sequence of impulse times and $t_0\ge0$ is the initial time such that $(t_0,\frak{T})\in\mathbb{T}_0$. Then, the $L_p\times\ell_p$-gain of $\Sigma_{t_0,\frak{T}}$, $p\in\mathbb{Z}_{\ge1}$, $(t_0,\frak{T})\in\mathbb{T}$, denoted by $||\Sigma_{t_0,\frak{T}}||_{\mathbb{T},L_p\times\ell_p}$, is defined as
  \begin{equation}
    ||\Sigma_{t_0,\frak{T}}||_{\mathbb{T}_0,L_p\times\ell_p}:=
\sup_{{\scriptsize\begin{array}{c}
||(w_c,w_d)||_{L_p\times\ell_p}=1\\
  x_0=0, (t_0,\mathfrak{T})\in\mathbb{T}_0
\end{array}}} \left|\left|\Sigma_{\frak{T}_{t_0}}\begin{pmatrix}
      w_c\\w_d
    \end{pmatrix}\right|\right|_{L_p\times\ell_p}.
  \end{equation}
  It can be alternatively defined as
  \begin{equation}
    ||\Sigma_{t_0,\frak{T}}||_{\mathbb{T}_0,L_p\times\ell_p}:=\inf_{\gamma>0}\gamma \textnormal{ such that }  \left|\left|\Sigma_{t_0,\frak{T}}\begin{pmatrix}
      w_c\\w_d
    \end{pmatrix}\right|\right|_{L_p\times\ell_p}<\gamma \left|\left|\begin{bmatrix}
      w_c\\
      w_d
    \end{bmatrix}\right|\right|_{L_p\times\ell_p}+v(||x_0||_p)
  \end{equation}
  holds for all  $(w_c,w_d)\in L_p\times \ell_p$, all $(t_0,\frak{T})\in\mathbb{T}_0$, and for some $v(\cdot)$ such that $v(0)=0$, $v(s)>0$ for all $s>0$, and $v(s)\to\infty$ as $s\to\infty$.
\end{define}

We have the following known result regarding the internal positivity of the impulsive system \blue{\eqref{eq:mainsyst}}:
\begin{proposition}[\cite{Briat:18:IntImp,Briat:19:IJC}]\label{prop:positive}
The following statements are equivalent:
\begin{enumerate}[(a)]
  \item The system \eqref{eq:mainsyst} with $u_c,u_d\equiv0$ is internally positive, i.e. for any sequence $\{t_k\}_{k\in\mathbb{Z}_{\ge0}}$, any initial time $t_0\ge0$, any initial condition $x_0\ge0$,  and any nonnegative inputs $w_c(t)\ge0$ and $w_d(k)\ge0$, we have that $x(t)\ge0$, $z_c(t)\ge0$ and $z_d(k)\ge0$ for all $t\in\mathbb{R}_{\ge t_0}$ and all $k\in\mathbb{Z}_{\ge0}$.
  \item For all $t\in\mathbb{R}_{\ge0}$, the matrix-valued function $\tilde A(t)$ is Metzler\footnote{A square matrix is Metzler if all its off-diagonal entries are nonnegative.} and the matrix-valued functions $\tilde{E}_c(t),\tilde{C}_c(t)$ and $\tilde{F}_c(t)$ are nonnegative\footnote{A matrix is nonnegative if all its entries are nonnegative.}, and for all $k\in\mathbb{Z}_{\ge0}$ the matrices $\tilde{J}(k),\tilde{E}_d(k),\tilde{C}_d(k)$ and $\tilde{F}_d(k)$ are nonnegative.
\end{enumerate}
\end{proposition}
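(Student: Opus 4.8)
The plan is to prove the two implications separately; I expect (b) $\Rightarrow$ (a) to carry the real analytic content and (a) $\Rightarrow$ (b) to follow by contraposition through explicitly constructed ``bad'' trajectories.

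For (b) $\Rightarrow$ (a), assuming the sign conditions, I would first prove that the state-transition matrix $\Phi(t,s)$ of $\dot{x}=\tilde{A}(t)x$ is nonnegative whenever $s\le t$ belong to a common inter-impulse interval. Since $\tilde{A}$ is bounded and Metzler, one can pick $\alpha>0$ with $\tilde{A}(t)+\alpha I\ge0$ for all $t$; writing $\Phi(t,s)=e^{-\alpha(t-s)}\Psi(t,s)$, where $\Psi$ is the transition matrix of $\dot{y}=(\tilde{A}(t)+\alpha I)y$, and representing $\Psi(t,s)$ by the Peano--Baker (product-integral) limit of finite products of the nonnegative matrices $I+\delta\,(\tilde{A}(\sigma)+\alpha I)$, one gets $\Psi(t,s)\ge0$, hence $\Phi(t,s)\ge0$. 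The second step is an induction over the impulse intervals: on $(t_k,t_{k+1}]$, variation of constants gives
\begin{equation}
  x(t)=\Phi(t,t_k)\,x(t_k^+)+\int_{t_k}^{t}\Phi(t,\tau)\tilde{E}_c(\tau)w_c(\tau)\dtau
\end{equation}
(with $x(t_0^+)=x_0$), so $x(t)\ge0$ follows from $x(t_k^+)\ge0$, $\Phi\ge0$, $\tilde{E}_c\ge0$ and $w_c\ge0$; then the reset $x(t_{k+1}^+)=\tilde{J}(k+1)x(t_{k+1})+\tilde{E}_d(k+1)w_d(k+1)\ge0$ since $\tilde{J},\tilde{E}_d\ge0$ and $w_d\ge0$. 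As $t_k\to\infty$, this covers all $t\ge t_0$, and finally $z_c=\tilde{C}_c x+\tilde{F}_c w_c\ge0$ and $z_d(k)=\tilde{C}_d(k)x(t_k)+\tilde{F}_d(k)w_d(k)\ge0$ because every factor is nonnegative.

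For (a) $\Rightarrow$ (b), I would contrapose: assuming some sign condition in (b) fails, I exhibit admissible data producing a trajectory or output with a negative component. If $[\tilde{A}(\bar t)]_{ij}<0$ for some $i\ne j$, take $t_0=\bar t$, an admissible sequence with $t_1>\bar t$, $x_0=e_j$, $w\equiv0$; then $\dot{x}_i(t_0^+)=[\tilde{A}(\bar t)]_{ij}<0$ while $x_i(t_0)=0$, so $x_i<0$ just after $\bar t$. A negative entry of $\tilde{E}_c(\bar t)$ uses $x_0=0$, $w_c\equiv e_j$; negative entries of $\tilde{C}_c(\bar t)$ or $\tilde{F}_c(\bar t)$ are read directly off $z_c(\bar t)$; the index $\bar k=0$ for $\tilde{C}_d(0),\tilde{F}_d(0)$ is identical using $z_d(0)=\tilde{C}_d(0)x_0+\tilde{F}_d(0)w_d(0)$. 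For a negative entry of $\tilde{J}(\bar k),\tilde{E}_d(\bar k),\tilde{C}_d(\bar k)$ or $\tilde{F}_d(\bar k)$ with $\bar k\ge1$, I would first choose the sequence and the data so that the state $x(t_{\bar k})$ just before the $\bar k$-th impulse is (arbitrarily close to) a coordinate vector $e_j$, and then read off $x_i(t_{\bar k}^+)<0$ or $z_{d,i}(\bar k)<0$ exactly as above.

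The main obstacle is this last reachability-type claim: that $x(t_{\bar k})$ can be steered close to $e_j$ over admissible data. I would handle it by a limiting argument --- letting the inter-impulse lengths $t_1-t_0,\dots,t_{\bar k}-t_{\bar k-1}$ shrink to $0$ so the intervening flows tend to the identity, while the preceding (already nonnegative) jump maps together with the discrete inputs $w_d(1),\dots,w_d(\bar k-1)$ position the state --- combined with continuity of the trajectory in the data. The other delicate point, more conceptual than hard, is the first step of (b) $\Rightarrow$ (a): in the time-varying setting nonnegativity of $\Phi$ is not the one-line fact it is for $e^{\tilde{A}t}$ in the LTI case and genuinely requires the product-integral (or a comparison-principle) argument.
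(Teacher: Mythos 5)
The paper itself does not actually prove this proposition: it cites \cite{Briat:18:IntImp,Briat:19:IJC} and dismisses the argument as a routine mixture of the continuous- and discrete-time positivity analyses, so there is no in-text proof to compare yours against line by line. Your direction (b) $\Rightarrow$ (a) is correct and is the standard argument one would expect: the shift $\tilde{A}(t)+\alpha I\ge0$ (available by boundedness), the product-integral representation giving $\Phi(t,s)\ge0$ on each inter-impulse interval, and the induction over intervals via variation of constants and the nonnegative resets. Your converse constructions for the continuous-time matrices and for $\tilde{C}_d(0),\tilde{F}_d(0)$ are also fine.

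The genuine gap is in the converse for the impulse-indexed matrices with $\bar k\ge1$, and it is precisely the step you flagged. Your limiting argument (shrinking the inter-impulse gaps so the flows tend to the identity) only shows that $x(t_{\bar k})$ can be driven close to whatever the composition of the \emph{preceding} jump maps and input maps can produce; it does not show that $e_j$, or any nonnegative vector whose image under $\tilde{J}(\bar k)$ has a negative component, is reachable at $t_{\bar k}$. If, for instance, $\tilde{J}(1)=0$ and $\tilde{E}_c\equiv0$, $\tilde{E}_d\equiv0$, then $x(t_{\bar k})=0$ for every $\bar k\ge2$ and every admissible choice of data, so a negative entry of $\tilde{J}(2)$ never manifests as a negative state and the trajectory-based contraposition cannot detect it. In fact this is not just a gap in your proof but in the literal statement: the one-dimensional system with $\tilde{A}=0$, $\tilde{E}_c=\tilde{E}_d=0$, nonnegative output matrices, $\tilde{J}(1)=0$ and $\tilde{J}(2)=-1$ satisfies (a) but violates (b), since the paper fixes the initial time in $[0,t_1)$ so that $\tilde{J}(k)$ for $k\ge2$ only ever acts on states that have already passed through the earlier (possibly annihilating) jumps. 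No reachability trick can close this; the necessity direction requires either a non-degeneracy hypothesis on the upstream maps, or allowing the initial time to be placed in an arbitrary inter-impulse interval, or restricting the claim in (b) to the action of the matrices on reachable states. You should state one of these repairs explicitly rather than leave the reachability claim as a heuristic.
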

\begin{proof}
The proof follows from a mixture of the arguments used in the positivity analysis of continuous- and discrete systems. It is therefore omitted.
\end{proof}

\blue{It is also convenient to define the state-transition matrix associated with the system \eqref{eq:mainsyst}:
\begin{define}[State-transition matrix]
Let us consider a sequence of impulse times $\mathfrak{T}$ and an initial condition $t_0\ge0$ which gives $\frak{T}_{t_0}$ as the sequence of impulse times relative to $t_0$. The state-transition matrix $\Phi(t,s)$ associated with the system \eqref{eq:mainsyst} defined as
\begin{equation}
\begin{array}{rcl}
  \partial_t\Phi(t,s)&=&\tilde{A}(t)\Phi(t,s),\ t_k<s\le t\le t_{k+1}, k\in\mathbb{Z}_{\ge0}\\
  \Phi(t_k^+,s)&=&\tilde{J}(k)\Phi(t_k,s),\ s\le t_k,k\in\mathbb{Z}_{\ge1}\\
  \Phi(s,s)&=&I,\ s\ge0.
\end{array}
\end{equation}
verifies the following properties:
\begin{enumerate}[(a)]
  \item it is invertible on every interval $(t_k,t_{k+1}]$, $k\in\mathbb{Z}_{\ge0}$ and it is invertible on $(t_{k-1},t_{k+1}]$ if and only if $J(k)$ is invertible, and
  \item it verifies
  \begin{equation}
    \partial_s\Phi(t,s)=-\Phi(t,s)\tilde{A}(s)
  \end{equation}
  on every interval where it is invertible; i.e. $t_i<s\le t\le t_{i+1}$, $i\ge0$.
  \end{enumerate}
\end{define}}

\blue{This allows us to explicitly express the output of the system in terms of hybrid convolution operators:
\begin{proposition}[Input/output Operators]
Let us consider a sequence of impulse times $\mathfrak{T}$ and an initial condition $t_0\ge0$ which gives $\frak{T}_{t_0}$ as the sequence of impulse times relative to $t_0$. Assuming zero initial conditions, the outputs of the system \eqref{eq:mainsyst} can be written in terms of the operators $G_{cc}^{t_0,\frak{T}}$, $G_{cd}^{t_0,\frak{T}}$, $G_{cd}^{t_0,\frak{T}}$ and  $G_{dd}^{t_0,\frak{T}}$ as
  \begin{equation}
    \begin{bmatrix}
      z_c\\
      z_d
    \end{bmatrix}=\begin{bmatrix}
      G_{cc}^{t_0,\frak{T}} & G_{cd}^{t_0,\frak{T}}\\
      G_{dc}^{t_0,\frak{T}} & G_{dd}^{t_0,\frak{T}}
    \end{bmatrix}\begin{bmatrix}
      w_c\\
      w_d
    \end{bmatrix}
  \end{equation}
  where these operators depend on the dwell-times sequence; i.e. for two different sequences, these operators are different. They are given by sequence dependent
  \begin{equation}
    \begin{array}{rcl}
      (G_{cc}^{t_0,\frak{T}}w_c)(t)&=&\tilde C_c(t)\int_{t_0}^t\tilde{\Phi}(t,s)\tilde E_c(s)w_c(s)ds+\tilde F_c(t)w_c(t),\ t_k<s\le t\le t_{k+1}, k\in\mathbb{Z}_{\ge0}\\[1em]
      (G_{cd}^{t_0,\frak{T}}w_d)(t)&=&\tilde C_c(t)\sum_{i=0}^k\tilde{\Phi}(t,t_i^+)\tilde{E}_d(i)w_d(i),\ t_k<s\le t\le t_{k+1}, k\in\mathbb{Z}_{\ge0}\\[1em]
      (G_{dc}^{t_0,\frak{T}}w_c)(k)&=&\tilde C_d(k)\int_{t_0}^{t_k}\tilde{\Phi}(t_k,s)\tilde E_c(s)w_c(s)ds,\ k\ge0\\[1em]
      (G_{dd}^{t_0,\frak{T}}w_d)(k)&=&\tilde C_d(k)\sum_{i=0}^{k-1}\tilde{\Phi}(t_k,t_i^+)\tilde{E}_d(i)w_d(i)+\tilde{F}_d(k)w_d(k),\ k\ge0
    \end{array}
  \end{equation}
  where we have set $w_d(0)=0$, by definition.
\end{proposition}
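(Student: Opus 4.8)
The plan is to establish the representation by a direct variation-of-constants (Duhamel) computation, carried out by induction over the impulse intervals, and then to substitute the resulting closed-form expression for the state into the two output equations of \eqref{eq:mainsyst}.

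First I would fix $(t_0,\frak{T})\in\mathbb{T}_0$, set $u_c\equiv0$, $u_d\equiv0$ and $x_0=0$, and recall that on each interval $(t_k,t_{k+1}]$ the state satisfies the linear non-homogeneous ODE $\dot{x}(t)=\tilde{A}(t)x(t)+\tilde{E}_c(t)w_c(t)$ with jump rule $x(t_k^+)=\tilde{J}(k)x(t_k)+\tilde{E}_d(k)w_d(k)$. On $(t_0,t_1]$ the classical variation-of-constants formula, together with $\Phi(s,s)=I$ and $x(t_0^+)=x_0=0$, gives $x(t)=\int_{t_0}^t\tilde{\Phi}(t,s)\tilde{E}_c(s)w_c(s)\ds$. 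Proceeding inductively on $k$: assuming the claimed formula holds on $(t_{k-1},t_k]$, one evaluates $x(t_k)$, applies the jump rule to obtain $x(t_k^+)=\tilde{J}(k)x(t_k)+\tilde{E}_d(k)w_d(k)$, and then propagates over $(t_k,t_{k+1}]$ by variation of constants once more. The structural fact that makes this work is the cocycle (semigroup) property of the state-transition matrix, namely $\tilde{\Phi}(t,r)\tilde{\Phi}(r,\sigma)=\tilde{\Phi}(t,\sigma)$ on any interval free of impulses and $\tilde{\Phi}(t,t_i^+)=\tilde{\Phi}(t,t_k^+)\tilde{J}(k)\tilde{\Phi}(t_k,t_i^+)$ across the impulse at $t_k$; this is precisely what lets the accumulated products of intermediate flow maps and jump maps be absorbed into the single symbol $\tilde{\Phi}(t,t_i^+)$. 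One then obtains
\[
x(t)=\int_{t_0}^t\tilde{\Phi}(t,s)\tilde{E}_c(s)w_c(s)\ds+\sum_{i=1}^{k}\tilde{\Phi}(t,t_i^+)\tilde{E}_d(i)w_d(i),\qquad t\in(t_k,t_{k+1}],
\]
which, with the convention $w_d(0)=0$, is the stated expression with the sum written from $i=0$.

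Next I would substitute this into the continuous-time output equation $z_c(t)=\tilde{C}_c(t)x(t)+\tilde{F}_c(t)w_c(t)$ (the control terms vanishing): the part of $x(t)$ driven by $w_c$ plus the feedthrough $\tilde{F}_c(t)w_c(t)$ is exactly $(G_{cc}^{t_0,\frak{T}}w_c)(t)$, while the part driven by $w_d$ is $(G_{cd}^{t_0,\frak{T}}w_d)(t)$. For the discrete output I would emphasize that $z_d(k)=\tilde{C}_d(k)x(t_k)+\tilde{F}_d(k)w_d(k)$ is evaluated at $x(t_k)$, i.e. \emph{before} the $k$-th jump, so that only the impulses at $t_0,\ldots,t_{k-1}$ contribute; this yields the sum $\sum_{i=0}^{k-1}$ in $(G_{dd}^{t_0,\frak{T}}w_d)(k)$, while the $w_c$-part of $x(t_k)$ gives $(G_{dc}^{t_0,\frak{T}}w_c)(k)$ and the feedthrough gives $\tilde{F}_d(k)w_d(k)$.

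The only genuine subtlety — hence the step I would carry out most carefully — is the jump bookkeeping: tracking which jump maps $\tilde{J}(i)$ have already been incorporated into $\tilde{\Phi}(t,t_i^+)$ as opposed to those applied explicitly, and in particular the $x(t_k)$ versus $x(t_k^+)$ distinction that produces the upper limit $k-1$ in $G_{dd}^{t_0,\frak{T}}$ against $k$ in $G_{cd}^{t_0,\frak{T}}$. Everything else is routine linearity, and the measurability and integrability of the integrands (hence well-posedness of the four operators on the relevant hybrid signal spaces) follow from the continuity and boundedness of the continuous-time data together with the boundedness of the discrete-time data assumed below \eqref{eq:mainsyst}.
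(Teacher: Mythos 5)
The paper states this proposition without proof, treating it as a routine consequence of the variation-of-constants formula and the definition of the hybrid state-transition matrix. Your argument --- Duhamel's formula on each inter-impulse interval, induction over the impulses using the cocycle property of $\tilde{\Phi}$ (including $\tilde{\Phi}(t_k^+,s)=\tilde{J}(k)\tilde{\Phi}(t_k,s)$), and the $x(t_k)$ versus $x(t_k^+)$ bookkeeping that yields the upper limits $k-1$ in $G_{dd}^{t_0,\frak{T}}$ and $k$ in $G_{cd}^{t_0,\frak{T}}$ --- is correct and is exactly the computation the paper leaves implicit.
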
}

\blue{We also consider the following concept of hybrid uniform exponential stability:
\begin{define}\label{def:expstab}
  Assume that the set of initial times and impulse times sequence $\mathbb{T}_0$ is given. Then, the system said to be uniformly exponentially stable with hybrid rate $(\alpha,\rho)$ if there exist an $M>0$, a $\rho\in(0,1)$ and an $\alpha>0$ such that we have
  \begin{equation}
    ||x(t)||_p\le M\rho^{\kappa(t,t_0)}e^{-\alpha(t-t_0)}||x_0||_p
  \end{equation}
  for some $p\in\mathbb{Z}_{>0}$ and for all $(t_0,\frak{T})\in\mathbb{T}_0$, $t\ge t_0$, where $\kappa(t,s)$ denotes the number of jumps between $s\le t$. Equivalently,
  \begin{equation}
    ||\tilde{\Phi}(t,s)||_p\le M\rho^{\kappa(t,s)}e^{-\alpha(t-s)}
  \end{equation}
  for all $(t_0,\frak{T})\in\mathbb{T}_0$, $t\ge s\ge t_0$.
\end{define}
This definition combines both a continuous exponential decay-rate $\alpha$ together with a discrete-time decay rate $\rho$ to capture both the continuous-time behavior and the discrete behavior at jumps. Note also that the choice of the norm does not matter here as all the norms are equivalent in finite-dimensional spaces.}

\subsection{General stability and hybrid $L_\infty\times\ell_\infty$ performance results for linear positive impulsive systems}

The preliminary following result states a necessary and sufficient condition for an LTV impulsive system to be exponentially stable for a given sequence of impulse time instants using linear max-separable copositive Lyapunov functions:
\begin{lemma}\label{lem:general_max}
  Assume that the sequence of impulse times $\frak{T}=\{t_k\}_{k\in\mathbb{Z}_{\ge1}}$ is given. Then, the following statements are equivalent:
  \begin{enumerate}[(a)]
    \item The time-varying impulsive system
    \begin{equation}\label{eq:stabTV}
      \begin{array}{rcl}
        \dot{x}(t)&=&\tilde{A}(t)x(t),\ t\in(t_k,t_{k+1}],k\in\mathbb{Z}_{\ge0}\\
        x(t_k^+)&=&\tilde J(k)x(t_k),\ k\in\mathbb{Z}_{\ge1}\\
        x(t_0^+)&=&x(t_0)=x_0
      \end{array}
    \end{equation}
    is uniformly exponentially stable \blue{in the sense of Definition \ref{def:expstab}.}
    \item There exist positive vectors $\bar\xi_1,\bar\xi_2\in\mathbb{R}^n_{>0}$, $0<\bar\xi_1\le \bar\xi_2$,  and a continuously differentiable vector-valued function $\xi:[t_0,\infty)\mapsto\mathbb{R}^n$, verifying $\bar\xi_1<\xi(t)<\bar\xi_2$, $t\ge t_0$, such that the function
    \begin{equation}
    V(t,x(t))=\max_{i\in\{1,\ldots,n\}}\dfrac{x_i(t)}{\xi_i(t)}
    \end{equation}
    is a uniform Lyapunov function for the system \eqref{eq:stabTV}.
    \item For any $b,d\in\mathbb{R}^n_{>0}$, the differential-difference inequality
    \begin{equation}\label{eq:jdlsjdls}
      \begin{array}{rcl}
        -\dot{\xi}(t)+\tilde{A}(t)\xi(t)+b&<&0,\ t\in(t_k,t_{k+1}],k\in\mathbb{Z}_{\ge0}\\
        \tilde J(k)\xi(t_k)-\xi(t_k^+)+d&<&0,\ k\in\mathbb{Z}_{\ge1}
      \end{array}
    \end{equation}
    has a continuously differentiable positive solution $\xi(t)$ verifying $\bar\xi_1<\xi(t)<\bar\xi_2$ for some positive vectors $\bar\xi_1,\bar\xi_2\in\mathbb{R}^n_{>0}$, $0<\bar\xi_1\le \bar\xi_2$, for all $t\ge t_0$, and for all $0\le t_0<t_1$.
  \end{enumerate}
\end{lemma}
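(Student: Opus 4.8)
The plan is to prove the cycle of implications $(a)\Rightarrow(c)\Rightarrow(b)\Rightarrow(a)$. The implications $(c)\Rightarrow(b)$ and $(b)\Rightarrow(a)$ are, respectively, a direct computation exploiting the Metzler/nonnegative structure of the system (which is in force here since, by Proposition \ref{prop:positive}, internal positivity means $\tilde A(t)$ Metzler and $\tilde J(k)$ nonnegative) and a routine integration of the Lyapunov inequalities. The substantial step is the converse statement $(a)\Rightarrow(c)$: manufacturing the bounded, bounded-away-from-zero, piecewise-$C^1$ supersolution $\xi$ out of the sole hypothesis of uniform exponential stability, and I expect the main difficulty to concentrate there.

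For $(a)\Rightarrow(c)$, I would fix $b,d\in\mathbb{R}^n_{>0}$ and an initial time $t_0\in[0,t_1)$, and take $\xi$ to be the (piecewise-$C^1$) solution of the \emph{forced} impulsive system
\begin{equation*}
\dot\xi(t)=\tilde A(t)\xi(t)+2b \text{ on } (t_k,t_{k+1}],\qquad \xi(t_k^+)=\tilde J(k)\xi(t_k)+2d,\qquad \xi(t_0)=\mathds 1_n.
\end{equation*}
By construction $-\dot\xi+\tilde A\xi+b=-b<0$ on every flow interval and $\tilde J(k)\xi(t_k)-\xi(t_k^+)+d=-d<0$ at every jump, so \eqref{eq:jdlsjdls} holds; what remains is the uniform sandwich $\bar\xi_1<\xi(t)<\bar\xi_2$. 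Using the impulsive variation-of-constants formula $\xi(t)=\Phi(t,t_0)\mathds 1_n+\int_{t_0}^t\Phi(t,s)2b\,\d s+\sum_{t_0<t_k\le t}\Phi(t,t_k^+)2d$, all three terms are nonnegative by internal positivity, and the estimate $\|\Phi(t,s)\|\le M\rho^{\kappa(t,s)}e^{-\alpha(t-s)}$ from Definition \ref{def:expstab} bounds them uniformly: the first by $M$; the integral by $2M\|b\|_\infty/\alpha$; and — since the jumps in $(t_0,t]$ are finite in number (as $t_k\to\infty$) and the $m$-th counted from the last carries a factor $\rho^{\,m-1}$ — the sum by $2M\|d\|_\infty\sum_{l\ge0}\rho^l=2M\|d\|_\infty/(1-\rho)$. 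This yields $\bar\xi_2$, uniform in $t$ and in $t_0$ because $M,\rho,\alpha$ are.

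For the lower bound I would use that the diagonal entries of the continuous transition matrix satisfy $\Phi_{ii}(t,s)\ge e^{-a(t-s)}$ with $a:=\sup_{t}\|\tilde A(t)\|_\infty<\infty$. Letting $s^*(t)$ be the last jump time before $t$ (or $t_0$ if none): if $t-s^*(t)\ge 1$, the flow over $(t-1,t]$ is jump-free and $\xi(t)\ge\int_{t-1}^t\Phi(t,s)2b\,\d s\ge 2e^{-a}b$; otherwise the single nonnegative term corresponding to $s^*(t)$ — namely $\Phi(t,t_k^+)2d$ if $s^*(t)=t_k$, or $\Phi(t,t_0)\mathds 1_n$ if $s^*(t)=t_0$ — is componentwise at least $e^{-a}\min(2d,\mathds 1_n)$. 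In all cases $\xi(t)\ge\bar\xi_1:=\tfrac12 e^{-a}\big(\min_i\min\{2b_i,2d_i,1\}\big)\mathds 1_n>0$, uniformly. This is precisely the delicate point: without the persistent forcing $2b$, $2d$ one cannot stop $\xi$ from collapsing toward $0$ when some $\tilde J(k)$ has zero rows or when the intervals $T_k$ are neither bounded below nor above, and the role of those terms is exactly to hold $\xi$ in the interior of the positive orthant while only mildly perturbing the defining inequalities.

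Finally, $(c)\Rightarrow(b)$: given $\xi$ solving \eqref{eq:jdlsjdls} with $\bar\xi_1<\xi(t)<\bar\xi_2$, set $V(t,x)=\max_i x_i/\xi_i(t)$. On a flow interval, at a time where the maximum is attained at index $j$, the Metzler property and $x_k(t)\le\xi_k(t)V(t,x(t))$ give $\dot x_j=\tilde A_{jj}x_j+\sum_{k\ne j}\tilde A_{jk}x_k\le(\tilde A(t)\xi(t))_jV$, hence $D^+V(t,x(t))\le\xi_j(t)^{-1}\big((\tilde A(t)\xi(t))_j-\dot\xi_j(t)\big)V\le-\big(\min_i b_i/\bar\xi_{2,i}\big)V$; at a jump the same argument with $\tilde J(k)\ge0$ gives $V(t_k^+,x(t_k^+))\le\big(1-\min_i d_i/\bar\xi_{2,i}\big)V(t_k,x(t_k))$, the contraction factor lying in $(0,1)$ because the jump inequality in \eqref{eq:jdlsjdls} forces $0<d_i<\bar\xi_{2,i}$. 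Combined with $\|x\|_\infty/\max_i\bar\xi_{2,i}\le V(t,x)\le\|x\|_\infty/\min_i\bar\xi_{1,i}$ on $\mathbb{R}^n_{\ge0}$, this makes $V$ a uniform, exponentially decreasing Lyapunov function for \eqref{eq:stabTV}, i.e. (b). The implication $(b)\Rightarrow(a)$ then follows by composing the flow decay and the jump contraction into $V(t,x(t))\le\rho'^{\,\kappa(t,t_0)}e^{-\alpha'(t-t_0)}V(t_0,x_0)$ and converting back through the same two-sided bound to recover the estimate of Definition \ref{def:expstab} for $p=\infty$ — hence for every $p$ by equivalence of norms — uniformly over the family, the passage from nonnegative to arbitrary $x_0$ being handled by linearity of \eqref{eq:stabTV} together with positivity.
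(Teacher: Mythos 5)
Your proof is correct, and for the two easy implications it is essentially the paper's argument: the $(c)\Rightarrow(b)\Rightarrow(a)$ computation with the max-type function, the Metzler/nonnegativity structure, the decay rate $\min_i b_i/\max_i\bar\xi_{2,i}$ and the jump contraction $1-\min_i d_i/\max_i\bar\xi_{2,i}$ are all the same (you phrase the flow estimate with $D^+V$, which is slightly cleaner than the paper's direct differentiation of the maximizing index, but it is the same inequality). The genuinely different step is $(a)\Rightarrow(c)$. The paper takes the ``steady-state'' particular solution $\xi(t)=\int_{-\infty}^t\tilde{\Phi}(t,s)(b+\eps\mathds{1})\,\ds+\sum_{t_i^+\le t}\tilde{\Phi}(t,t_i^+)(d+\eps\mathds{1})$, which forces it to extend $\tilde A$ to negative times and to obtain the lower bound $\xi\ge\bar\xi_1>0$ through the auxiliary inequality $\partial_s\tilde{\Phi}(t,s)(b+\eps\mathds{1})\le c\,\tilde{\Phi}(t,s)(b+\eps\mathds{1})$ for a constant $c$ chosen so that $cI_n+\tilde A(s)\ge0$ and $(\tilde J(i)-I)(b+\eps\mathds{1})+c(d+\eps\mathds{1})\ge0$, followed by an interval-by-interval integration by parts. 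You instead solve the forced system forward from $t_0$ with initial datum $\mathds{1}_n$, so that the inequalities \eqref{eq:jdlsjdls} hold by construction with slack $b$ and $d$; the upper bound is the same geometric-plus-exponential estimate, and your lower bound comes more elementarily from $\Phi_{ii}(t,s)\ge e^{-a(t-s)}$ on jump-free intervals (valid by the Metzler property) combined with the case split on whether the last jump, or $t_0$, lies within unit distance of $t$. This buys you a construction that avoids the backward extension of $\tilde A$ and the two compatibility conditions on $c$, at the price of a $t_0$-dependent $\xi$ — which is harmless, since your bounds $\bar\xi_1,\bar\xi_2$ depend only on $M,\rho,\alpha$, $\sup_t\|\tilde A(t)\|_\infty$, $b$ and $d$, hence are uniform over $t_0$ and the sequence as statement $(c)$ requires. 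Note finally that both constructions produce a $\xi$ that is discontinuous at the jump times, so ``continuously differentiable'' must be read piecewise; that looseness sits in the statement of the lemma, not in your argument.
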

\begin{proof}
The proof of this result is inspired by \cite{Blanchini:15} for switched systems and extends it to the case of the impulsive systems. The proof that (b) implies (a) follows from Lyapunov theory.\\

\noindent\textbf{Proof that (c) implies (a) and (b).} Let us define the Lyapunov function $V(x,t)=\max_k\{x_k(t)/\xi_k(t)\}$. where $\xi(t)$ verifies \eqref{eq:jdlsjdls} and note that this function is absolutely continuous, hence differentiable almost everywhere. We also observe that $\textstyle\frac{||x||_\infty}{\max\{\bar{\xi}_2\}}\le V(x,t)\le\frac{||x||_\infty}{\min\{\bar{\xi}_1\}}$. Assume that $$V(x,t)=\dfrac{x_i(t)}{\xi_i(t)} \textnormal{ and } V(x,t^+)=\dfrac{x_j(t)}{\xi_j(t)},$$ then we have that
  \begin{equation}
      \dot{V}(x,t^+)    = \sum_{k=1}^n\tilde{A}_{jk}(t)\dfrac{x_k(t)}{\xi_j(t)}-\dfrac{x_j(t)}{\xi_j(t)^2}\dot{\xi}_j(t).
  \end{equation}
  Using \eqref{eq:jdlsjdls} yields
  \begin{equation}
    \begin{array}{rcl}
      \dot{V}(x,t^+)    &=&   \displaystyle\sum_{k=1}^n\tilde{A}_{jk}(t)\dfrac{x_k(t)}{\xi_j(t)}-\dfrac{x_j(t)}{\xi_j(t)^2}\dot{\xi}_j(t)\\
                                    &<&   \displaystyle\sum_{k=1}^n\tilde{A}_{jk}(t)\dfrac{x_k(t)}{\xi_k(t)}\dfrac{\xi_k(t)}{\xi_j(t)}-\left(\sum_{k=1}^n\tilde{A}_{jk}(t)\xi_k(t)+b_j\right)\dfrac{x_j(t)}{\xi_j(t)^2}\\
                                    &=&   \displaystyle\dfrac{1}{\xi_j(t)}\left[\sum^n_{k=1,\blue{k\ne j}}\tilde{A}_{jk}(t)\xi_k(t)\left(\dfrac{x_k(t)}{\xi_k(t)}-\dfrac{x_j(t)}{\xi_j(t)}\right)-\dfrac{x_j(t)}{\xi_j(t)}b_j\right]\\
                                    &<&   \displaystyle\dfrac{1}{\xi_j(t)}\left[\sum^n_{k=1,\blue{k\ne j}}\tilde{A}_{jk}(t)\xi_k(t)\left(\dfrac{x_i(t)}{\xi_i(t)}-\dfrac{x_j(t)}{\xi_j(t)}\right)-\dfrac{x_j(t)}{\xi_j(t)}b_j\right]\\
                                    &=&\displaystyle-\dfrac{x_j(t)}{\xi_j(t)^2}b_j\\
                                    &\le&-\dfrac{b_j}{e_j^{\T}\xi_{2,j}}V(x,t^+)\\
                                    &\le&-\dfrac{\min\{b\}}{\max\{\bar\xi_{2}\}}V(x,t^+)
    \end{array}
  \end{equation}
  \blue{where we have used the facts that the off-diagonal elements of $\tilde{A}(t)$ are nonnegative and that we have we have $x_i(t)/\xi_i(t)=x_j(t)/\xi_j(t)$ at time $t$.} Similarly, we have that
  \begin{equation}
    \begin{array}{rclcrcl}
      V(x,t_k)&=&\dfrac{x_i(t_k)}{\xi_i(t_k)}&\textnormal{and}&   V(x,t_k^+)&=&\dfrac{x_j(t_k^+)}{\xi_j(t_k^+)}.
    \end{array}
  \end{equation}
  Hence,
  \begin{equation}
  \begin{array}{rcl}
    V(x,t_k^+)-V(x,t_k) &=& \displaystyle\dfrac{x_j(t_k^+)}{\xi_j(t_k^+)}-\dfrac{x_i(t_k)}{\xi_i(t_k)}\\[1em]
                                        &<& \displaystyle\dfrac{\sum_{s=1}^n\tilde J_{js}(k)x_s(t_k)}{\sum_{s=1}^n\tilde J_{js}(k)\xi_s(t_k)+d_j}-\dfrac{x_i(t_k)}{\xi_i(t_k)}\\[1em]
                                        &=& \displaystyle\dfrac{\sum_{s=1}^n\tilde J_{js}(k)\xi_s(t_k)\frac{x_s(t_k)}{\xi_s(t_k)}}{\sum_{s=1}^n\tilde J_{js}(k)\xi_s(t_k)+d_j}-\dfrac{x_i(t_k)}{\xi_i(t_k)}\\[1em]
                                        &<& \displaystyle\dfrac{\sum_{s=1}^n\tilde J_{js}(k)\xi_s(t_k)+d_j-d_j}{\sum_{s=1}^n\tilde J_{js}(k)\xi_s(t_k)+d_j}\frac{x_i(t_k)}{\xi_i(t_k)}-\dfrac{x_i(t_k)}{\xi_i(t_k)}\\[1em]
                                        &=& \displaystyle\left(1-\dfrac{d_j}{\sum_{s=1}^n\tilde J_{js}(k)\xi_s(t_k)+d_j}\right)\frac{x_i(t_k)}{\xi_i(t_k)}-\dfrac{x_i(t_k)}{\xi_i(t_k)}\\[1em]
                                        &=& -\dfrac{d_j}{\sum_{s=1}^n\tilde J_{js}(k)\xi_s(t_k)+d_j}V(x,t_k)\\[1em]
                                        &<&-\dfrac{d_j}{\xi_j(t_k^+)}V(x,t_k)\\[1em]
                                        &<&-\dfrac{\min\{d\}}{\max\{\bar\xi_{2}\}}V(x,t_k)
  \end{array}
  \end{equation}
  where we have used the fact that $\textstyle\sum_{s=1}^n\tilde J_{js}(k)\xi_s(t_k)+d_j<\xi_j(t_k^+)$. Note also that $\min\{d\}/\max\{\bar \xi_2\}<1$ since $\bar\xi_2\ge \xi(t_k^+)>d$. \blue{Letting $\textstyle\alpha=\frac{\min\{b\}}{\max\{\bar\xi_{2}\}}$ and $\textstyle\rho=1-\frac{\min\{d\}}{\max\{\bar\xi_{2}\}}$, we get that
  \begin{equation}
    \begin{array}{rcl}
      \dot{V}(x,t)&\le&-\alpha V(x,t), t\in(t_k,t_{k+1}],\ k\in\mathbb{Z}_{\ge0}\\
      V(x,t_k^+)&\le&\rho V(x,t_k),\ k\in\mathbb{Z}_{\ge1}.
    \end{array}
  \end{equation}
  This implies that
  \begin{equation}
    V(x,t)\le V(x_0,t_0)\rho^{\kappa(t,t_0)}e^{-\alpha(t-t_0)},\ t\ge t_0
  \end{equation}
  which implies in turn
  \begin{equation}
    ||x(t)||_\infty\le\dfrac{\max\{\bar{\xi}_2\}}{\min\{\bar{\xi}_1\}}\rho^{\kappa(t,t_0)}e^{-\alpha(t-t_0)}||x_0||_\infty,\ t\ge t_0
  \end{equation}
  which proves the uniform exponential stability of the system \eqref{eq:mainsyst} in the sense of Definition \ref{def:expstab}; i.e. (a) and (c) hold.}\\

  \noindent\textbf{Proof that (a) implies (c).} \blue{Assume that for the given sequence of jump times the system is uniformly exponentially stable in the sense of Definition \ref{def:expstab}. Define further the function
  \begin{equation}\label{eq:xiLinfproof}
    \xi(t)=\int_{-\infty}^t\tilde{\Phi}(t,s)(b+\eps\mathds{1})\ds+\smashoperator{\sum_{t_1^+\le t_i^+\le t}}\tilde{\Phi}(t,t_i^+)(d+\eps\mathds{1}).
  \end{equation}
  We need to show first that this expression is well-defined and is uniformly bounded from above by some vector $\bar\xi_2>0$. From the definition of uniform exponential stability, i.e. Definition \ref{def:expstab}, we have that
  \begin{equation}
  \begin{array}{rcl}
    ||\xi(t)||_\infty&\le&\int_{-\infty}^tM\rho^{\kappa(t,s)}e^{-\alpha(t-s)}||b+\eps\mathds{1}||_\infty\ds+\smashoperator{\sum_{t_1^+\le t_i^+\le t}}Me^{-\alpha(t-t_i)}\rho^{\kappa(t,t_i)}||d+\eps\mathds{1}||_\infty\\
                &\le&\int_{-\infty}^tMe^{-\alpha(t-s)}||b+\eps\mathds{1}||_\infty\ds+\smashoperator{\sum_{t_1^+\le t_i^+\le t}}M\rho^{\kappa(t,t_i)}||d+\eps\mathds{1}||_\infty\\
                &\le&\int_{-\infty}^tMe^{-\alpha(t-s)}||b+\eps\mathds{1}||_\infty\ds+\sum_{k\ge0}^\infty M\rho^k||d+\eps\mathds{1}||_\infty\\
            &=&M\left(\dfrac{1}{\alpha}||b+\eps\mathds{1}||_\infty+\dfrac{1}{1-\rho}||d+\eps\mathds{1}||_\infty\right).
  \end{array}
  \end{equation}
  This proves that we have $\xi(t)\le\bar\xi_2$ for all $t_0\ge0$ where $\bar\xi_2=M\left(\dfrac{1}{\alpha}||b+\eps\mathds{1}||_1+\dfrac{1}{1-\rho}||d+\eps\mathds{1}||_1\right)\mathds{1}$.  }

\blue{To show the existence of a positive lower bound $\bar\xi_1>0$, we claim first that there exists a large enough $c>0$ such that
  \begin{equation}\label{eq:cLinf}
   \partial_s\tilde \Phi(t,s)(b+\eps\mathds{1})\le c\tilde \Phi(t,s)(b+\eps\mathds{1})
  \end{equation}
 holds for all $-\infty<s\le t\le t_1$ and $t_k<s\le t\le t_{k+1}, k\in\mathbb{Z}_{\ge1}$. This can be seen from the fact this is equivalent to say that $\tilde \Phi(t,s)(cI_n+\tilde A(s))(b+\eps\mathds{1})\ge0$ where we have used the fact that $\partial_s\tilde \Phi(t,s)=-\tilde \Phi(t,s)\tilde A(s)$ for all admissible values $s\le t$. Since $\tilde \Phi(t,s)$ and $(b+\eps\mathds{1})$ are nonnegative, the inequality can be ensured for all admissible values for $s\le t$ if $cI_n+\tilde A(s)\ge0$ for all the admissible values for $s$. It is worth mentioning here that we have to evaluate here $A(s)$ outside its domain of definition. This is not a problem as it can be extended on $(-\infty,\infty)$ by simply letting $\tilde A(s)=-I_n$ for $s<0$ or any other Hurwitz stable Metzler matrix. In this regard, it is possible to find a large enough $c>0$ such that \eqref{eq:cLinf} is verified by virtue of the Metzler structure of the matrix $\tilde A(s)$ and its boundedness. A suitable choice is $c = \textstyle \sup_{s\ge 0}||(\tilde a_{11}(s),\ldots,\tilde a_{nn}(s))||_\infty+1$ where the $\tilde a_{ii}(s)$'s are the diagonal elements of the matrix $\tilde A(s)$.\\

\noindent Now assume that $t_k<t\le t_{k+1}$ and observe that
\begin{equation}
\begin{array}{rcccl}
   \int_{-\infty}^{t_1}\partial_s\tilde \Phi(t,s)(b+\eps\mathds{1})\ds&=&\tilde{\Phi}(t,t_1)(b+\eps\mathds{1})&\le & c\int_{-\infty}^{t_1}\tilde{\Phi}(t,s)(b+\eps\mathds{1})\ds\\
   \int_{t_i}^{t_{i+1}}\partial_s\tilde \Phi(t,s)(b+\eps\mathds{1})\ds&=&[\tilde{\Phi}(t,t_{i+1})-\tilde{\Phi}(t,t_{i}^+)](b+\eps\mathds{1})&\le & c\int_{t_i}^{t_{i+1}}\tilde{\Phi}(t,s)(b+\eps\mathds{1})\ds\\
   \int_{t_k}^{t}\partial_s\tilde \Phi(t,s)(b+\eps\mathds{1})\ds&=&[I-\tilde{\Phi}(t,t_{k}^+)](b+\eps\mathds{1})&\le & c\int_{t_k}^{t}\tilde{\Phi}(t,s)(b+\eps\mathds{1})\ds.
\end{array}
\end{equation}
Summing all the above terms gives
\begin{equation}
  (b+\eps\mathds{1})+\sum_{i=1}^k[\tilde{\Phi}(t,t_{i})-\tilde{\Phi}(t,t_{i}^+)](b+\eps\mathds{1})\le c\int_{-\infty}^{t}\tilde{\Phi}(t,s)(b+\eps\mathds{1})\ds.
\end{equation}
Adding $c\smashoperator{\sum_{t_1^+\le t_i^+\le t}}\tilde{\Phi}(t,t_i^+)(d+\eps\mathds{1})$ on both sides yields
\begin{equation}
  (b+\eps\mathds{1})+\sum_{i=1}^k[(\tilde{\Phi}(t,t_{i})-\tilde{\Phi}(t,t_{i}^+))(b+\eps\mathds{1})+c\tilde{\Phi}(t,t_i^+)(d+\eps\mathds{1})]\le c\xi(t)
\end{equation}
where we have used the fact that $t_k<t\le t_{k+1}$. Noting that
\begin{equation}
  \sum_{i=1}^k[(\tilde{\Phi}(t,t_{i})-\tilde{\Phi}(t,t_{i}^+))(b+\eps\mathds{1})+c\tilde{\Phi}(t,t_i^+)(d+\eps\mathds{1})]=\sum_{i=1}^k\tilde{\Phi}(t,t_{i}^+)[(\tilde{J}(i)-I)(b+\eps\mathds{1})+c(d+\eps\mathds{1})].
\end{equation}
Choosing $c>0$ such that $[(\tilde{J}(i)-I)(b+\eps\mathds{1})+c(d+\eps\mathds{1})]\ge0$ for all $i\in\mathbb{Z}_{\ge1}$ (note that this does not contradict the previous choice for  $c$) yields
\begin{equation}
  (b+\eps\mathds{1})\le c\xi(t)
\end{equation}
and, as a result, $\xi(t)\ge(b+\eps\mathds{1})/c=:\bar\xi_1$. Hence, we have shown the uniform boundedness and boundedness away from 0 of the function $\xi$ in \eqref{eq:xiLinfproof}.

 We address now the problem of verifying whether $\xi$ in \eqref{eq:xiLinfproof} satisfies the inequalities in \eqref{eq:jdlsjdls}. Differentiating \eqref{eq:xiLinfproof} yields
    \begin{equation}
    \begin{array}{rcl}
      \dot\xi(t)&=&b+\eps\mathds{1}+\tilde{A}(t)\left(\int_{-\infty}^t\tilde{\Phi}(t,s)(b+\eps\mathds{1})\ds+\smashoperator{\sum_{t_1^+\le t_i^+\le t}}\tilde{\Phi}(t,t_i^+)(d+\eps\mathds{1})\right)\\
      &=&\tilde{A}(t)\xi(t)+b+\eps\mathds{1}
    \end{array}
  \end{equation}
  which shows that \eqref{eq:xiLinfproof} verifies the first inequality in \eqref{eq:jdlsjdls}. Similarly, evaluating $\xi(t)$ at $t_k$ and $t_k^+$ yields
  \begin{equation}
  \begin{array}{rcl}
  \xi(t_k)&=&\int_{-\infty}^{t_k}\tilde{\Phi}(t_k,s)(b+\eps\mathds{1})\ds+\smashoperator{\sum_{1\le i<k}}\tilde{\Phi}(t_k,t_i^+)(d+\eps\mathds{1})\\
  \xi(t_k^+)&=&\int_{-\infty}^{t_k}\tilde{\Phi}(t_k^+,s)(b+\eps\mathds{1})\ds+\smashoperator{\sum_{1\le i\le k}}\tilde{\Phi}(t_k^+,t_i^+)(d+\eps\mathds{1})\\
                    &=&\int_{-\infty}^{t_k}\tilde{\Phi}(t_k^+,s)(b+\eps\mathds{1})\ds+(d+\eps\mathds{1})+\smashoperator{\sum_{1\le i<k}}\tilde{\Phi}(t_k^+,t_i^+)(d+\eps\mathds{1})\\
                    &=&\tilde{J}(k)\int_{-\infty}^{t_k}\tilde{\Phi}(t_k,s)(b+\eps\mathds{1})\ds+(d+\eps\mathds{1})+\tilde{J}(k)\smashoperator{\sum_{1\le i<k}}\tilde{\Phi}(t_k,t_i^+)(d+\eps\mathds{1})\\
                    &=&\tilde{J}(k)\xi(t_k)+d+\eps\mathds{1}
  \end{array}
  \end{equation}
  which proves that  \eqref{eq:xiLinfproof} verifies the second inequality in \eqref{eq:jdlsjdls} and, as a result, proves the desired implication.}
\end{proof}

\blue{\begin{remark}\label{rem:relax}
  It is interesting to note that the conditions in the above result can be relaxed. The first one is by allowing hybrid rates $(0,\rho)$ and $(\alpha,1)$ where $\alpha>0$ and $\rho\in(0,1)$. The first situation corresponds to the case where the Lyapunov function is nonincreasing along the flow of the system whereas the second one to the case the Lyapunov is nonincreasing jumps. In each of this case, the corresponding condition can be made a closed inequality and the constant terms $b+\eps\mathds{1}$ and $d+\eps\mathds{1}$ can be set to 0. These cases correspond to the persistent jumping and persistent flowing conditions of \cite{Goebel:12}. The proof of Lemma \ref{lem:general_max} can be adapted to cope with these relaxations by either dropping the integral term in \eqref{eq:xiLinfproof} in the persistent jumping case and the sum term in \eqref{eq:xiLinfproof}  in the persistent flowing case.
\end{remark}}

On the strength of the above result, we are now in position of stating a necessary and sufficient condition for the system \eqref{eq:mainsyst} to be exponentially stable and to have an hybrid $L_\infty\times\ell_\infty$-gain of at most $\gamma$.
\begin{theorem}\label{th:generalLinf}
    Assume that the sequence of impulse times $\frak{T}=\{t_k\}_{k\in\mathbb{Z}_{\ge1}}$ is given. Then, the following statements are equivalent:
  \begin{enumerate}[(a)]
    \item The system \eqref{eq:mainsyst} with $u_c,u_d,w_c,w_d\equiv0$ is uniformly exponentially stable and the hybrid $L_\infty\times\ell_\infty$-gain of the transfer $(w_c,w_d)\mapsto(z_c,z_d)$ is at most $\gamma$.
    \item There exist positive vectors $\bar\xi_1,\bar\xi_2\in\mathbb{R}^n_{>0}$, $0<\bar\xi_1\le\bar\xi_2$, a continuously differentiable vector-valued function $\xi:[t_0,\infty)\mapsto\mathbb{R}_{>0}^n$, $\bar\xi_1\le\xi(t)\le\bar\xi_2$, $t\ge t_0$, and a scalar $\eps>0$ such that
        \begin{equation}\label{eq:condLinfperf}
          \begin{array}{rcl}
            -\dot{\xi}(t)+\tilde{A}(t)\xi(t)+\tilde{E}_c(t)\mathds{1}&<&0, t\in t\in(t_k,t_{k+1}],k\in\mathbb{Z}_{\ge0}\\
            \tilde J(k)\xi(t_k)-\xi(t_k^+)+\tilde E_d(k)\mathds{1}&<&0,k\in\mathbb{Z}_{\ge1}\\
            \tilde{C}_c(t)\xi(t)+\tilde{F}_c(t)\mathds{1}-\gamma\mathds{1}&<&0,t\ge t_0\\
            \tilde{C}_d(k)\xi(t_k)+\tilde{F}_d(k)\mathds{1}-\gamma\mathds{1}&<&0, k\in\mathbb{Z}_{\ge1}
          \end{array}
        \end{equation}
        for any $t_0\in[0,t_1)$.
  \end{enumerate}
\end{theorem}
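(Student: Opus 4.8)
The plan is to follow the same template used in the proof of Lemma~\ref{lem:general_max}, but now incorporating the input/output channels into the construction of the certificate $\xi$. The two directions are quite asymmetric in difficulty, so I would handle them separately.

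For the direction (b)$\Rightarrow$(a), I would build a candidate max-separable Lyapunov function $V(t,x)=\max_i x_i(t)/\xi_i(t)$ from the $\xi$ satisfying \eqref{eq:condLinfperf}, exactly as in Lemma~\ref{lem:general_max}. The first two inequalities of \eqref{eq:condLinfperf} are precisely \eqref{eq:jdlsjdls} with $b=\tilde E_c(t)\mathds 1$ and $d=\tilde E_d(k)\mathds 1$, so the stability part comes for free from Lemma~\ref{lem:general_max}(c). To get the gain bound, I would run a dissipation-inequality argument: along trajectories with zero initial condition, use the sign of $-\dot\xi+\tilde A\xi+\tilde E_c\mathds 1$ and $\tilde J\xi-\xi(t_k^+)+\tilde E_d\mathds 1$ to show that whenever $\|w_c\|_{L_\infty},\|w_d\|_{\ell_\infty}\le 1$ one has $V(t,x(t))<1$ for all $t$ (the input terms $\tilde E_c\mathds 1$, $\tilde E_d\mathds 1$ exactly absorb the forcing by nonnegativity and the bound $w\le\mathds 1$ componentwise, using positivity of the system so that $x(t)\ge 0$ and the trajectory stays below the ``level-1'' set). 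Then $x_i(t)<\xi_i(t)\le\bar\xi_{2,i}$, and feeding this into the third and fourth inequalities of \eqref{eq:condLinfperf} gives $z_c(t)=\tilde C_c(t)x(t)+\tilde F_c(t)w_c(t)<\tilde C_c(t)\xi(t)+\tilde F_c(t)\mathds 1<\gamma\mathds 1$ (componentwise, again using nonnegativity of $\tilde C_c,\tilde F_c$ and $w_c\le\mathds 1$), hence $\|z_c\|_{L_\infty}\le\gamma$, and similarly $\|z_d\|_{\ell_\infty}\le\gamma$. A standard homogeneity/scaling argument then upgrades this to $\|\Sigma_{t_0,\frak T}\|_{\mathbb T_0,L_\infty\times\ell_\infty}\le\gamma$, and exponential stability is already in hand; the extra term $v(\|x_0\|)$ in the gain definition is supplied by the exponential decay of the unforced part.

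For the harder direction (a)$\Rightarrow$(b), I would mimic the explicit construction \eqref{eq:xiLinfproof} but with $b,d$ replaced by the input matrices and a level term for the outputs. Concretely, set
\begin{equation*}
  \xi(t)=\int_{-\infty}^t\tilde\Phi(t,s)\big(\tilde E_c(s)\mathds 1+\eps\mathds 1\big)\,\ds+\smashoperator{\sum_{t_1^+\le t_i^+\le t}}\tilde\Phi(t,t_i^+)\big(\tilde E_d(i)\mathds 1+\eps\mathds 1\big),
\end{equation*}
possibly rescaled by a constant. The boundedness from above and away from zero is obtained verbatim as in Lemma~\ref{lem:general_max} (using uniform exponential stability from (a), together with boundedness of $\tilde E_c,\tilde E_d$ to bound $\|\tilde E_c\mathds 1+\eps\mathds 1\|$, and the Metzler-plus-$cI$ trick for the lower bound). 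Differentiating gives $\dot\xi(t)=\tilde A(t)\xi(t)+\tilde E_c(t)\mathds 1+\eps\mathds 1$ and $\xi(t_k^+)=\tilde J(k)\xi(t_k)+\tilde E_d(k)\mathds 1+\eps\mathds 1$, which are the first two inequalities of \eqref{eq:condLinfperf}. The genuinely new part is the last two output inequalities: here I would interpret $\tilde\Phi(t,s)\tilde E_c(s)$ and $\tilde\Phi(t,t_i^+)\tilde E_d(i)$ as the kernels of the convolution operators $G_{cc},G_{cd}$ (resp. $G_{dc},G_{dd}$) from the Input/Output Operators proposition, so that $\tilde C_c(t)\xi(t)+\tilde F_c(t)\mathds 1$ is, up to the $\eps$-perturbation, exactly the output produced by the constant unit input $w_c\equiv\mathds 1$, $w_d\equiv\mathds 1$; since the system is positive and has hybrid $L_\infty\times\ell_\infty$-gain at most $\gamma$, this output is dominated (componentwise) by $\gamma\mathds 1$, and one picks $\eps$ small enough to keep strict inequality. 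The main obstacle — and the point that needs care — is precisely making this last identification rigorous: the gain bound is stated as a supremum over inputs of $L_\infty\times\ell_\infty$-norm one with \emph{zero} initial condition, whereas $\xi$ involves an integral from $-\infty$, so I would need a limiting argument (truncating the input at time $t_0\to-\infty$ and using the monotonicity of positive-system outputs in the input) to conclude that the steady-state response to the constant unit input is still bounded by $\gamma$, and that the slight inflation by $\eps\mathds 1$ in the forcing only inflates the output by an amount that can be absorbed since the inequalities in \eqref{eq:condLinfperf} are strict. The relaxations mentioned in Remark~\ref{rem:relax} carry over with the same modifications (dropping the integral or sum term).
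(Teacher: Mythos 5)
Your direction (b)$\Rightarrow$(a) is essentially the paper's argument: with zero initial state and unit-bounded nonnegative inputs one shows $x(t)\le \xi(t)$ componentwise (the paper phrases this as $e:=\xi-x$ obeying an internally positive system with nonnegative forcing $\mathds{1}-w_c$, $\mathds{1}-w_d$, hence $e\ge0$), and then feeds this into the last two inequalities of \eqref{eq:condLinfperf}; your sublevel-set formulation $V(t,x(t))<1$ is the same comparison in different clothing, and the stability part is indeed Lemma \ref{lem:general_max} applied to the first two inequalities.

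The gap is in (a)$\Rightarrow$(b), exactly at the step you flag as ``the main obstacle.'' Because you take the lower limit of the integral defining $\xi$ to be $-\infty$, the quantity $\tilde C_c(t)\xi(t)+\tilde F_c(t)\mathds{1}$ is not the output of the system under the unit input with zero initial condition at $t_0$: it contains the extra nonnegative term $\tilde C_c(t)\int_{-\infty}^{t_0}\tilde\Phi(t,s)\bigl(\tilde E_c(s)\mathds{1}+\eps\mathds{1}\bigr)\ds$, whose $\tilde E_c$ part does not vanish as $\eps\to0$. The gain hypothesis only controls responses to inputs applied from some $t_0\in[0,t_1)$ with zero initial state; it says nothing about a ``steady-state response to the unit input applied since $-\infty$,'' which is not even well defined without an arbitrary extension of $\tilde E_c$ to negative times (and for a bad extension the claimed bound is simply false). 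Worse, the monotonicity you invoke points the wrong way: positivity implies the response from $-\infty$ \emph{dominates} the response from $t_0$, so knowing the latter is $<\gamma\mathds{1}$ gives no upper bound on the former. The paper avoids this by splitting the certificate as $\xi^*(t)=\epsilon v(t)+\tilde r(t)$, where $v$ is the bounded, bounded-away-from-zero stability certificate supplied by Lemma \ref{lem:general_max} (this carries the strictness and the lower bound $\bar\xi_1$), while $\tilde r(t)=\int_{t_0}^{t}\tilde\Phi(t,s)\tilde E_c(s)\mathds{1}\ds+\sum_{t_i\le t}\tilde\Phi(t,t_i^+)\tilde E_d(i)\mathds{1}$ starts exactly at $t_0$, so that $\tilde C_c(t)\tilde r(t)+\tilde F_c(t)\mathds{1}=(G_{cc}\mathds{1})(t)+(G_{cd}\mathds{1})(t)<\gamma\mathds{1}$ is literally the gain hypothesis. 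You could equivalently keep your single formula but extend $\tilde E_c,\tilde E_d$ by zero before $t_0$ while retaining the $\eps\mathds{1}$ tail for the lower bound; as written, however, the output inequalities do not follow.
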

\begin{proof}
The proof of this result is inspired by \cite{Blanchini:15} for switched systems and extends it to the case of the impulsive systems.\\

  \textbf{Proof that (b) implies (a).} The stability of the system follows from the two first inequalities in \eqref{eq:condLinfperf} and Lemma \ref{lem:general_max}. So let us address the performance. Assume that the nonnegative exogenous disturbances $w_c$ and $w_d$ are such that $||w_c||_\infty\le1$ and $||w_d||_\infty\le1$.

  Note first that from \eqref{eq:mainsyst} we have that
  \begin{equation}
  \begin{array}{rcl}
     x(t)&=&\tilde \Phi(t,t_k^+)x(t_k^+)+\int_{t_k}^{t}\tilde \Phi(t,s)\tilde E_c(s)w_c(t_k+s)\ds,\ t\in(t_k,t_{k+1}], k\in\mathbb{Z}_{\ge0}\\
     x(t_k^+)&=&\tilde J(k)x(t_k)+\tilde E_d(k)w_d(k),\ k\in\mathbb{Z}_{\ge1}
  \end{array}
  \end{equation}
  and from \eqref{eq:condLinfperf} we obtain that
  \begin{equation}
  \begin{array}{rcl}
    \xi(t)&>&\xi(t_k^+)+\tilde \Phi(t,t_k^+)\xi(t_k^+)+\int_{t_k}^{t}\tilde \Phi(t,s)\tilde E_c(s)\mathds{1}\ds,\ t\in(t_k,t_{k+1}], k\in\mathbb{Z}_{\ge0}\\
    \xi(t_k^+)&>&\tilde J(k)x(t_k)+\tilde E_d(k)w_d(k),\ k\in\mathbb{Z}_{\ge1}.
    \end{array}
  \end{equation}
  Letting $e:=\xi-x$ yields
  \begin{equation}
  \begin{array}{rcl}
        e(t)&>&\tilde \Phi(t,t_k^+)e(t_k^+)+\int_{t_k}^{t}\tilde \Phi(t,s)\tilde E_c(s)(\mathds{1}-w_c(t_k+s))\ds,\ t\in(t_k,t_{k+1}], k\in\mathbb{Z}_{\ge0}\\
        e(t_k^+)&>&\tilde  J(k)e(t_k)+\tilde E_d(k)(\mathds{1}-w_d(k)),k\in\mathbb{Z}_{\ge1}.
    \end{array}
  \end{equation}
  Assuming $x_0=0$ (i.e. zero initial conditions), we get that $e(0)=\xi(0)>0$, and, as a result $e(t)\ge0$ at all times since the dynamics of the error is described by an internally positive system and because $\mathds{1}-w_c(t)$ and $\mathds{1}-w_d(k)$ are both nonnegative. We then have that
  \begin{equation}
  \begin{array}{rcl}
        z_c(t)&=&\tilde C_c(t)x(t)+\tilde F_c(t)w_c(t)\\
                              &=&\tilde C_c(t)\xi(t)+\tilde F_c(t)\mathds{1}-\left[\tilde C_c(t)e(t)+\tilde F_c(t)(\mathds{1}-w_c(t))\right]\\
                            &\le&\tilde C_c(t)\xi(t)+\tilde F_c(t)\mathds{1}\\
                            &<&\gamma\mathds{1}
  \end{array}
  \end{equation}
  where we have used the fact that $\tilde C_c(\tau)e(t)+\tilde F_c(t)(\mathds{1}-w_c(t))\ge0$. A similar argument yields $z_d(k)<\gamma\mathds{1}$. This proves the implication.\\

  \noindent\textbf{Proof that (a) implies (b).} Assume that the system \eqref{eq:mainsyst} with $u_c,u_d\equiv0$ is uniformly exponentially stable whenever $w_c  ,w_d\equiv0$, and that it satisfies $||(z_c,z_d)||_{L_\infty\times\ell_\infty}<\gamma$ for all $||(w_c,w_d)||_{L_\infty\times\ell_\infty}=1$ whenever $x_0=0$. \blue{Since the system \eqref{eq:mainsyst} with $u_c,u_d\equiv0$ is uniformly exponentially stable whenever $w_c  ,w_d\equiv0$, then, from Lemma \ref{lem:general_max}, there exists a bounded vector-valued function $v(t)$ which is positive, bounded away from zero, and which verifies
      \begin{equation}
      \begin{array}{rcl}
        -\dot{v}(t)+\tilde{A}(t)v(t)+b&<&0,\ t\in(t_k,t_{k+1}], k\in\mathbb{Z}_{\ge0}\\
       \tilde  J(k)v(t_k)-v(t_k^+)+d&<&0,\ k\in\mathbb{Z}_{\ge1}.
      \end{array}
    \end{equation}
    Define now the function
        \begin{equation}
        \xi^*(t):=\epsilon v(t)+\tilde{r}(t)
    \end{equation}
    where $\epsilon>0$ and
    \begin{equation}
      \tilde{r}(t):=\int_{t_0}^t\tilde{\Phi}(t,s)\tilde{E}_c(s)\mathds{1}\ds+\smashoperator{\sum_{1\le i, t_k<t}}\tilde{\Phi}(t,t_i^+)E_d(i)\mathds{1}, t\in(t_k,t_{k+1}],k\in\mathbb{Z}_{\ge0}.
    \end{equation}
    Note that $\xi^*(t)$ is defined at all times since the system is uniformly exponentially stable in the sense of Definition \ref{def:expstab} and since $\tilde E_c(s)$ and $\tilde E_d(k)$ are uniformly bounded. It is immediate to see that this function satisfies
    \begin{equation}
      -\dot{\xi}^*(t)+\tilde{A}(t)\xi^*(t)+\tilde{E}_c(t)\mathds{1}<-\epsilon b<0
    \end{equation}
    for all $t\in t\in(t_k,t_{k+1}]$, $k\in\mathbb{Z}_{\ge0}$, and
    \begin{equation}
      \tilde J(k)\xi^*(t_k)-\xi^*(t_k^+)+\tilde{E}_d(k)\mathds{1}<-\epsilon d<0
    \end{equation}
    for all $k\in\mathbb{Z}_{\ge1}$. This proves that the two first inequalities in \eqref{eq:condLinfperf} hold with $\xi=\xi^*$. To prove the third one note that
 \begin{equation}
 \begin{array}{rcl}
   \tilde{C}_c(t)\xi^*(t)+\tilde{F}_c(t)\mathds{1}&=&\tilde{C}_c(t)\epsilon v(t)+\tilde{C}_c(t)\left(\int_{t_0}^t\tilde{\Phi}(t,s)\tilde{E}_c(s)\mathds{1}\ds+\smashoperator{\sum_{1\le i, t_k<t}}\tilde{\Phi}(t,t_i^+)E_d(i)\mathds{1}\right)+\tilde{F}_c(t)\mathds{1}\\
   &=&\tilde{C}_c(t)\epsilon v(t)+(G_{cc}\mathds{1})(t)+(G_{cd}\mathds{1})(t)\\
   &<&\tilde{C}_c(t)\epsilon v(t)+\gamma\mathds{1}
 \end{array}
 \end{equation}
 where we have used the fact that $||(G_{cc}\mathds{1})+(G_{cd}\mathds{1})||_{L_\infty}<\gamma$, by assumption. Since $\epsilon>0$ is arbitrary, then we must have that $\tilde{C}_c(t)\xi^*(t)+\tilde{F}_c(t)\mathds{1}<\gamma\mathds{1}$, which proves the third inequality in \eqref{eq:condLinfperf}. The fourth one is proved analogously and, for this reason, the proof is omitted. This proves the result.}
\end{proof}


\subsection{General stability and hybrid $L_1\times\ell_1$ performance results for linear positive impulsive systems}

The following result is the analogue of Lemma \ref{lem:general_max} in the context of the use of a linear sum-separable copositive Lyapunov function:
\begin{lemma}\label{lem:general_lin}
    Assume that the sequence of impulse times $\frak{T}=\{t_k\}_{k\in\mathbb{Z}_{\ge1}}$ is given. Then, the following statements are equivalent:
  \begin{enumerate}[(a)]
    \item The time-varying impulsive system
    \begin{equation}
      \begin{array}{rcl}
        \dot{x}(t)&=&\tilde{A}(t)x(t),\ t\in(t_k,t_{k+1}],k\in\mathbb{Z}_{\ge0}\\
        x(t_k^+)&=&\tilde J(k)x(t_k),\ k\in\mathbb{Z}_{\ge1}\\
        x(t_0^+)&=&x(t_0)=x_0
      \end{array}
    \end{equation}
    is uniformly exponentially stable in the sense of Definition \ref{def:expstab}.
    \item There exist positive vectors $\bar\chi_1,\bar\chi_2\in\mathbb{R}^n_{>0}$, $0<\bar\chi_1\le\bar\chi_2$, and an continuously differentiable vector-valued function $\chi:[t_0,\infty)\mapsto\mathbb{R}^b$, verifying $\bar \chi_1<\chi(t)<\bar \chi_2$ , $t\ge t_0$, such that the function
    \begin{equation}
    V(t,x(t))=\chi(t)^{\T}x(t)
    \end{equation}
    is a uniform Lyapunov function for the system \eqref{eq:stabTV}.
    \item The differential-difference inequality
    \begin{equation}\label{ea:kdsolkds;ldk}
      \begin{array}{rcl}
        \dot{\chi}(t)^{\T}+\chi(t)^{\T}\tilde{A}(t)+b^{\T}&<&0,\ t\in(t_k,t_{k+1}],k\in\mathbb{Z}_{\ge0}\\
        \chi(t_k^+)^{\T}\tilde J(k)-\chi(t_k)^{\T}+d^{\T}&<&0,\ k\in\mathbb{Z}_{\ge1}
      \end{array}
    \end{equation}
    has an almost everywhere differentiable positive solution $\chi(t)$ verifying $\bar \chi_1<\chi(t)<\bar \chi_2$  for some positive vectors $\bar\chi_1,\bar\chi_2\in\mathbb{R}^n_{>0}$, $0<\bar\chi_1\le\bar\chi_2$ and for all $t\ge t_0$ and all $t\le t_0< t_1$.
  \end{enumerate}
\end{lemma}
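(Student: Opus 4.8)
The plan is to prove Lemma \ref{lem:general_lin} by adapting, essentially \emph{mutatis mutandis}, the structure of the proof of Lemma \ref{lem:general_max}, exploiting the fact that a linear sum-separable copositive Lyapunov function $V(t,x)=\chi(t)^{\T}x$ is the ``transpose'' object of the max-separable one $V(t,x)=\max_i x_i/\xi_i$, and that the differential-difference inequalities \eqref{ea:kdsolkds;ldk} are obtained from \eqref{eq:jdlsjdls} by transposition and by running time backwards. I would prove the three implications (c)$\Rightarrow$(a)$\&$(b), (b)$\Rightarrow$(a), and (a)$\Rightarrow$(c), with the bulk of the work in the first and last.

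For (c)$\Rightarrow$(a)$\&$(b): given $\chi$ solving \eqref{ea:kdsolkds;ldk} with $\bar\chi_1<\chi(t)<\bar\chi_2$, set $V(t,x)=\chi(t)^{\T}x(t)$, which on the nonnegative orthant satisfies $\min\{\bar\chi_1\}\|x\|_1 \le V(t,x)\le \max\{\bar\chi_2\}\|x\|_1$. Along the flow, $\dot V = (\dot\chi(t)^{\T}+\chi(t)^{\T}\tilde A(t))x(t) < -b^{\T}x(t) \le -\tfrac{\min\{b\}}{\max\{\bar\chi_2\}}V(t,x)$ using $x(t)\ge0$; at jumps, $V(t_k^+,x(t_k^+)) = \chi(t_k^+)^{\T}\tilde J(k)x(t_k) < (\chi(t_k)^{\T}-d^{\T})x(t_k) \le (1-\tfrac{\min\{d\}}{\max\{\bar\chi_2\}})V(t_k,x(t_k))$, where one must also note $\min\{d\}/\max\{\bar\chi_2\}<1$ because $\chi(t_k)>d$ componentwise (so $\max\{\bar\chi_2\}\ge$ some component of $\chi(t_k)>$ corresponding component of $d\ge\min\{d\}$). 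Then setting $\alpha=\min\{b\}/\max\{\bar\chi_2\}$, $\rho=1-\min\{d\}/\max\{\bar\chi_2\}$ and integrating/iterating gives $V(x,t)\le V(x_0,t_0)\rho^{\kappa(t,t_0)}e^{-\alpha(t-t_0)}$, hence uniform exponential stability in the sense of Definition \ref{def:expstab} after translating back to $\|x\|_1$; this also establishes (b).

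For (a)$\Rightarrow$(c): this is the main obstacle, and the key realization is that the candidate $\chi$ must be built from the \emph{adjoint/transposed} system — i.e. as a backward-in-time accumulation against $\tilde\Phi(\cdot,\cdot)^{\T}$ — mirroring how $\xi$ in \eqref{eq:xiLinfproof} was a forward accumulation. Concretely I would propose
\begin{equation}
\chi(t)^{\T} = \int_t^{\infty}(b+\eps\mathds{1})^{\T}\tilde\Phi(s,t)\,\ds + \smashoperator{\sum_{t< t_i}}(d+\eps\mathds{1})^{\T}\tilde\Phi(t_i,t^+),
\end{equation}
extending $\tilde A$ to all of $\mathbb R$ (e.g. $\tilde A(s)=-I_n$ for $s$ outside $[0,\infty)$ past the last relevant impulse, or $+\infty$) exactly as in the previous proof so the state transition matrix is defined for all arguments. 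Uniform boundedness $\chi(t)\le\bar\chi_2$ follows from $\|\tilde\Phi(s,t)\|\le M\rho^{\kappa(s,t)}e^{-\alpha(s-t)}$ by the same geometric-series estimate ($\bar\chi_2 = M(\tfrac1\alpha\|b+\eps\mathds{1}\|_\infty+\tfrac1{1-\rho}\|d+\eps\mathds{1}\|_\infty)\mathds{1}$, up to the choice of norm constants); boundedness away from zero is obtained by the same trick of choosing $c>0$ large enough that $(b+\eps\mathds{1})^{\T}\partial_t\tilde\Phi(s,t)\ge -c(b+\eps\mathds{1})^{\T}\tilde\Phi(s,t)$ — which holds since $\partial_t\tilde\Phi(s,t)=\tilde\Phi(s,t)\tilde A(t)\ge -c\tilde\Phi(s,t)$ componentwise once $cI_n+\tilde A(t)\ge0$, valid by the Metzler structure and boundedness of $\tilde A$ — together with the jump-map bound $(\tilde J(i)-I)(b+\eps\mathds{1})+c(d+\eps\mathds{1})\ge0$, summing telescoping increments over $[t,\infty)$ to get $(b+\eps\mathds{1})^{\T}\le c\,\chi(t)^{\T}$, hence $\chi(t)\ge(b+\eps\mathds{1})/c=:\bar\chi_1$. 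Finally, differentiating the integral-plus-sum expression (the lower limit $t$ and the first summation index produce the constant terms) gives $\dot\chi(t)^{\T} = -(b+\eps\mathds{1})^{\T} - \chi(t)^{\T}\tilde A(t)$, i.e. the first inequality of \eqref{ea:kdsolkds;ldk} with slack $\eps\mathds{1}$, and evaluating at $t_k$ versus $t_k^+$ and using $\tilde\Phi(t_i,t_k^+) = \tilde\Phi(t_i,t_k)\tilde J(k)$ (for $i>k$) yields $\chi(t_k)^{\T} = \chi(t_k^+)^{\T}\tilde J(k) + (d+\eps\mathds{1})^{\T}$, which is the second inequality. I expect the only genuinely delicate points to be bookkeeping of the summation ranges and the $t_k$ versus $t_k^+$ one-sided limits in the backward sum, and correctly orienting all the transposed state-transition identities; the analytic estimates themselves are carbon copies of those in the proof of Lemma \ref{lem:general_max}. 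The implication (b)$\Rightarrow$(a) is standard Lyapunov theory and I would dispatch it in one line, as the excerpt does for its counterpart. As in Remark \ref{rem:relax}, the same relaxations to hybrid rates $(0,\rho)$ and $(\alpha,1)$ go through by dropping, respectively, the integral term or the sum term in the construction of $\chi$.
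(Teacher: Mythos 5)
Your proposal follows the paper's proof essentially verbatim: the same Lyapunov function and the same bounds $\min\{\bar\chi_1\}\|x\|_1\le V\le\max\{\bar\chi_2\}\|x\|_1$ for (c)$\Rightarrow$(a), and for (a)$\Rightarrow$(c) the same backward construction $\chi(t)^{\T}=\int_t^\infty(b+\eps\mathds{1})^{\T}\tilde\Phi(s,t)\,\ds+\sum_{t_i\ge t}(d+\eps\mathds{1})^{\T}\tilde\Phi(t_i,t)$ with identical geometric-series upper bound and identical lower-bound argument via a large $c$ with $cI_n+\tilde A\ge0$ and $(b+\eps\mathds{1})^{\T}(I_n-\tilde J(i))-c(d+\eps\mathds{1})^{\T}\le0$. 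The only correction needed is in your candidate's sum, which must run over $t_i\ge t$ with second argument $t$ (not over $t<t_i$ with $t^+$), so that the term $\tilde\Phi(t_k,t_k)=I$ contributes the constant $(d+\eps\mathds{1})^{\T}$ at $t=t_k$ and the identity $\tilde\Phi(t_i,t_k)=\tilde\Phi(t_i,t_k^+)\tilde J(k)$ (note the order, which you wrote reversed) then yields $\chi(t_k)^{\T}=\chi(t_k^+)^{\T}\tilde J(k)+(d+\eps\mathds{1})^{\T}$ exactly as you intended.
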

\begin{proof}
The proof of this result is inspired by \cite{Blanchini:15} for switched systems and extends it to the case of the impulsive systems.\\

  \textbf{Proof that (c) implies (a).} Let us define the Lyapunov function $V(x,t)=\chi(t)^{\T}x$. Note that $\min\{\bar \chi_1\}||x||_1\le V(x,t)\le\max\{\bar \chi_2\}||x||_1$. Note that this function is continuous and differentiable almost everywhere. Then, we have that
  \begin{equation}
    \begin{array}{rcl}
      \dot{V}(x,t)    &=&  \dot{\chi}(t)^{\T}x(t)+\chi(t)^{\T}\tilde{A}(t)x(t)\\
                                    &<& -b^{\T}x(t)\\
                                    &<& -\dfrac{\min\{b\}}{\max\{\bar \chi_2\}}V(x,t).
    \end{array}
  \end{equation}
   Similarly, we have that
  \begin{equation}
    \begin{array}{rclcrcl}
      V(x,t_k)&=&\chi(t_k)^{\T}x(t_k)&\textnormal{and}&   V(x,t_k^+)&=&\chi(t_k^+)^{\T}x(t_k^+).
    \end{array}
  \end{equation}
  Hence,
  \begin{equation}
  \begin{array}{rcl}
    V(x,t_k^+)-V(x,t_k) &=& \chi(t_k^+)^{\T}x(t_k^+)-\chi(t_k)^{\T}x(t_k)\\
                                        &<& \chi(t_k^+)^{\T}x(t_{k}^+)-d^{\T}x(t_{k})-\chi(t_k^+)^{\T}Jx(t_k)\\
                                        &<& -d^{\T}x(t_k)\\
                                        &<&-\dfrac{\min\{d\}}{\max\{d\}}V(x,t_k)
  \end{array}
  \end{equation}
  where we have use the fact that $\zeta(t_k)>d$ from the second inequality in \eqref{ea:kdsolkds;ldk}. \blue{Letting $\alpha=\textstyle\frac{\min\{b\}}{\max\{\bar\xi_{2}\}}$ and $\rho=1-\textstyle\frac{\min\{d\}}{\max\{d\}}$, we get that
  \begin{equation}
    \begin{array}{rcl}
      \dot{V}(x,t)&\le&-\alpha V(x,t), t\in(t_k,t_{k+1}], k\in\mathbb{Z}_{\ge0}\\
      V(x,t_k^+)&\le&\rho V(x,t_k),\ k\ge1.
    \end{array}
  \end{equation}
  This implies that
  \begin{equation}
    V(x,t)\le V(x_0,t_0)\rho^{\kappa(t,t_0)}e^{-\alpha(t-t_0)},\ t\ge t_0
  \end{equation}
  which implies in turn
  \begin{equation}
    ||x(t)||_1\le\dfrac{\max\{\bar{\chi}_2\}}{\min\{\bar{\chi}_1\}}\rho^{\kappa(t,t_0)}e^{-\alpha(t-t_0)}||x_0||_1,\ t\ge t_0
  \end{equation}
  which proves the uniform exponential stability of the system \eqref{eq:mainsyst}. This implies that (a) and (c) hold.}\\

  \noindent\textbf{Proof that (a) implies (c).} Assume that for the given sequence of jump times the system is uniformly exponentially stable and let
  \begin{equation}\label{eq:chiL1proof}
    \chi(t)^{\T}=\int_{t}^{\infty}(b+\eps\mathds{1})^{\T}\tilde{\Phi}(s,t)\ds+\sum_{t_i\ge t}(d+\eps\mathds{1})^{\T}\tilde{\Phi}(t_{i},t).
  \end{equation}
  \blue{We need to show first that this expression is well-defined and is uniformly bounded from above by some $\bar\chi_2>0$. From the definition of uniform exponential stability, i.e. Definition \ref{def:expstab}, we have that
  \begin{equation}
  \begin{array}{rcl}
    ||\chi(t)||_\infty&\le&\int_{t}^\infty M\rho^{\kappa(s,t)}e^{-\alpha(s-t)}||b+\eps\mathds{1}||_\infty\ds+\smashoperator{\sum_{t_i\ge t}}Me^{-\alpha(t_i-t)}\rho^{\kappa(t_i,t)}||d+\eps\mathds{1}||_\infty\\
                &\le&\int_{t}^\infty Me^{-\alpha(s-t)}||b+\eps\mathds{1}||_\infty\ds+\smashoperator{\sum_{t_i\ge t}}M\rho^{\kappa(t_i,t)}||d+\eps\mathds{1}||_\infty\\
                &\le&\int_{t}^\infty Me^{-\alpha(s-t)}||b+\eps\mathds{1}||_\infty\ds+\sum_{k\ge0}^\infty M\rho^k||d+\eps\mathds{1}||_\infty\\
            &=&M\left(\dfrac{1}{\alpha}||b+\eps\mathds{1}||_\infty+\dfrac{1}{1-\rho}||d+\eps\mathds{1}||_\infty\right).
  \end{array}
  \end{equation}
  This proves that we have $\chi(t)\le\bar\chi_2$ for all $t_0\ge0$ where $\bar\chi_2=M\left(\dfrac{1}{\alpha}||b+\eps\mathds{1}||_1+\dfrac{1}{1-\rho}||d+\eps\mathds{1}||_1\right)\mathds{1}$. }

  \blue{To show the existence of a positive lower bound $\bar\chi_1>0$, we claim first that there exists a large enough $c>0$ such that
  \begin{equation}\label{eq:cL1}
   (b+\eps\mathds{1})^{\T}\partial_s\tilde \Phi(s,d)\ge -c(b+\eps\mathds{1})^{\T}\tilde \Phi(s,t)
  \end{equation}
 holds for all $t_k< s\le t\le t_{k+1}$, $k\in\mathbb{Z}_{\ge0}$. This can be seen to be true since using $\partial_s\tilde{\Phi}(s,t)=\tilde A(s)\tilde{\Phi}(s,t)$, the above condition can be reformulated as $(b+\eps\mathds{1})^{\T}(\tilde{A}(s)+c I_n)\tilde{\Phi}(s,t)\ge0$ and one can see that if $c$ is large enough then $\tilde{A}(s)+c I_n$ is nonnegative for all $s\ge0$. Indeed, we can simply take any $c\ge\textstyle\sup_{s\ge 0}||(\tilde a_{11}(s),\ldots,\tilde a_{nn}(s))||_\infty$ where the $\tilde a_{ii}(s)$'s are the diagonal elements of the matrix $\tilde A(s)$.\\

\noindent Now assume that $t_{k-1}<t\le t_{k}$ and observe that
\begin{equation}
\begin{array}{rcccl}
   \int_{t}^{t_k}(b+\eps\mathds{1})^{\T}\partial_s\tilde \Phi(s,t)\ds&=&(b+\eps\mathds{1})^{\T}[\tilde{\Phi}(t_k,t)-I_n]&\ge & -c\int_{t}^{t_k}(b+\eps\mathds{1})\tilde{\Phi}(t,s)\ds\\
   \int_{t_i}^{t_{i+1}}(b+\eps\mathds{1})^{\T}\partial_s\tilde \Phi(s,t)\ds&=&(b+\eps\mathds{1})^{\T}[\tilde{\Phi}(t_{i+1},t)-\tilde{\Phi}(t_{i}^+,t)]&\ge & -c\int_{t_i}^{t_{i+1}}\tilde{\Phi}(t,s)(b+\eps\mathds{1})\ds.
\end{array}
\end{equation}
Summing all the above terms gives
\begin{equation}
  -(b+\eps\mathds{1})+\sum_{i=k}^\infty(b+\eps\mathds{1})^{\T}[\tilde{\Phi}(t_{i},t)-\tilde{\Phi}(t_{i}^+,t)]\ge -c\int_{t}^{\infty}(b+\eps\mathds{1})^{\T}\tilde{\Phi}(s,t)\ds.
\end{equation}
Adding $-c\sum_{t_i\ge t}(d+\eps\mathds{1})^{\T}\tilde{\Phi}(t_{i},t)=-c\sum_{i=k}^\infty(d+\eps\mathds{1})^{\T}\tilde{\Phi}(t_{i},t)$ on both sides yields
\begin{equation}
  -(b+\eps\mathds{1})^{\T}+\sum_{i=k}^\infty[(b+\eps\mathds{1})^{\T}(\tilde{\Phi}(t_{i},t)-\tilde{\Phi}(t_{i}^+,t))-c(d+\eps\mathds{1})^{\T}\tilde{\Phi}(t_{i},t)]\ge -c\chi(t)
\end{equation}
where we have used the fact that $t_{k-1}<t\le t_{k}$. Noting that
\begin{equation}
  \sum_{i=k}^\infty[(b+\eps\mathds{1})^{\T}(\tilde{\Phi}(t_{i},t)-\tilde{\Phi}(t_{i}^+,t))-c(d+\eps\mathds{1})^{\T}\tilde{\Phi}(t_{i},t)]=\sum_{i=k}^\infty[(b+\eps\mathds{1})^{\T}(I_n-\tilde{J}(i))-c(d+\eps\mathds{1})^{\T}]\tilde{\Phi}(t_{i},t)
\end{equation}
Choosing $c>0$ such that $(b+\eps\mathds{1})^{\T}(I_n-\tilde{J}(i))-c(d+\eps\mathds{1})^{\T}\le0$ for all $i\in\mathbb{Z}_{\ge1}$ (note that this does not contradict the previous choice for  $c$) implies that
\begin{equation}
  -(b+\eps\mathds{1})^{\T}\ge -c\chi(t)
\end{equation}
and, as a result, $\chi(t)\ge(b+\eps\mathds{1})/c=:\bar\chi_1$. Hence, we have shown the uniform boundedness and boundedness away from 0 of the function $\chi$ in \eqref{eq:chiL1proof}.

We now prove that $\chi$ in \eqref{eq:chiL1proof} verifies the inequalities in \eqref{ea:kdsolkds;ldk}. Computing the derivative of $\chi(t)$ with respect to time yields
    \begin{equation}
    \begin{array}{rcl}
      \dot{\chi}(t)^{\T}&=&\sum_{t_i\ge t}(d+\eps\mathds{1})^{\T}\tilde{\Phi}(t_{i}^+,t)\tilde{A}(t)+\int_{t}^{\infty}(b+\eps\mathds{1})^{\T}\tilde{\Phi}(s,t)\tilde{A}(t)\ds-(b+\eps\mathds{1})^{\T}\\
       &=& \chi(t)^{\T}\tilde{A}(t)+b^{\T}+\eps\mathds{1}^{\T}
    \end{array}
  \end{equation}
  which shows that it verifies the first inequality in \eqref{ea:kdsolkds;ldk}. Similarly, evaluating $\chi(t)$ at $t_k$ and $t_k^+$ yields
  \begin{equation}
    \begin{array}{rcl}
    \chi(t_k^+)^{\T}&=&\sum_{i=k+1}^\infty(d+\eps\mathds{1})^{\T}\tilde{\Phi}(t_{i},t_k^+)+\int_{t}^{\infty}(b+\eps\mathds{1})^{\T}\tilde{\Phi}(s,t_k^+)\ds\\
      \chi(t_k)^{\T}&=&\sum_{i=k}^\infty(d+\eps\mathds{1})^{\T}\tilde{\Phi}(t_{i},t_k)+\int_{t}^{\infty}(b+\eps\mathds{1})^{\T}\tilde{\Phi}(s,t_k)\ds\\
                      &=&(d+\eps\mathds{1})^{\T}+\sum_{i=k+1}^\infty(d+\eps\mathds{1})^{\T}\tilde{\Phi}(t_{i},t_k)+\int_{t}^{\infty}(b+\eps\mathds{1})^{\T}\tilde{\Phi}(s,t_k)\ds\\
                      &=&(d+\eps\mathds{1})^{\T}+\sum_{i=k+1}^\infty(d+\eps\mathds{1})^{\T}\tilde{\Phi}(t_{i},t_k^+)\tilde{J}(k)+\int_{t}^{\infty}(b+\eps\mathds{1})^{\T}\tilde{\Phi}(s,t_k^+)\tilde{J}(k)\ds\\
                      &=&(d+\eps\mathds{1})^{\T}+\chi(t_k^+)^{\T}\tilde{J}(k)
    \end{array}
  \end{equation}
  which shows that $\chi$ in \eqref{eq:chiL1proof} also verifies the second inequality in \eqref{ea:kdsolkds;ldk}. This proves the desired result.}
\end{proof}

We are now in position to state the analogue of Theorem \ref{th:generalLinf} in the hybrid $L_1\times\ell_1$-performance case:
\begin{theorem}\label{th:generalL1}
  Assume that the sequence of impulse times $\frak{T}=\{t_k\}_{k\in\mathbb{Z}_{\ge1}}$ is given. Then, the following statements are equivalent:
  \begin{enumerate}[(a)]
    \item\label{st:L1general:1} The system \eqref{eq:mainsyst} is uniformly exponentially stable and the hybrid $L_1\times\ell_1$-gain of the transfer $(w_c,w_d)\mapsto(z_c,z_d)$ is at most $\gamma$.
    \item\label{st:L1general:2} The system \eqref{eq:mainsyst} is strictly (forward-in-time) dissipative with respect to the storage function $V(x,t)=\chi(t)^{\T}x$, $\bar\chi_1\le\chi(t)\le\bar\chi_2$ for some $0<\bar\chi_1\le\bar\chi_2$, and the supply-rates $s_c(w_c,z_c)=-\mathds{1}^{\T}z_c(t)+\gamma\mathds{1}^{\T}w_c(t)$ and $s_d(w_d,z_d)=-\mathds{1}^{\T}z_d(t)+\gamma\mathds{1}^{\T}w_d(t)$.
    \item\label{st:L1general:3} There exist vectors $\chi_1,\chi_2\in\mathbb{R}^n_{>0}$, $\chi_1\le\chi_2$, a continuously differentiable vector-valued function $\chi:[t_0,\infty)\mapsto\mathbb{R}_{>0}^n$, $\chi_1\le\chi(t)\le\chi_2$, $t\ge t_0$, and a scalar $\eps>0$ such that
        \begin{equation}\label{eq:L1cond}
          \begin{array}{rcl}
            \dot{\chi}(t)^{\T}+\chi(t)^{\T}\tilde{A}(t)+\mathds{1}^{\T}\tilde{C}_c(t)&<&0,\ t\in t\in(t_k,t_{k+1}],k\in\mathbb{Z}_{\ge0}\\
            \chi(t_k^+)^{\T}\tilde J(k)-\chi(t_k)^{\T}+\mathds{1}^{\T}\tilde{C}_d(k)&<&0,\ k\in\mathbb{Z}_{\ge1}\\
            \chi(t)^{\T}\tilde{E}_c(t)+\mathds{1}^{\T}\tilde{F}_c(t)-\gamma\mathds{1}^{\T}&<&0,\ t\ge t_0\\
            \chi(t_{k}^+)^{\T}\tilde{E}_d(k)+\mathds{1}^{\T}\tilde{F}_d(k)-\gamma\mathds{1}^{\T}&<&0,\ k\in\mathbb{Z}_{\ge1}
          \end{array}
        \end{equation}
        hold for all $0\le t_0<t_1$.
  \end{enumerate}
\end{theorem}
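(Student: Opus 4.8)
The plan is to establish the cycle (c)$\Rightarrow$(b)$\Rightarrow$(a)$\Rightarrow$(c), following the pattern of the proof of Theorem \ref{th:generalLinf} but with the linear sum-separable copositive Lyapunov function $V(x,t)=\chi(t)^{\T}x$ replacing the max-separable one and with the input matrices $(\tilde E_c,\tilde E_d)$ and the output matrices $(\tilde C_c,\tilde C_d)$ interchanged throughout. This interchange is exactly the duality between the vector $1$- and $\infty$-norms, hence between the hybrid $L_1\times\ell_1$- and $L_\infty\times\ell_\infty$-norms and between sum- and max-separable copositive functions.

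\textbf{Proof that (c) implies (b) and (b) implies (a).} For the first implication I would substitute $V(x,t)=\chi(t)^{\T}x$, with $\chi$ as in (c), into \eqref{eq:mainsyst} with $u_c,u_d\equiv0$: along the flow one gets $\dot V=(\dot\chi(t)^{\T}+\chi(t)^{\T}\tilde A(t))x(t)+\chi(t)^{\T}\tilde E_c(t)w_c(t)$, and the first and third inequalities of \eqref{eq:L1cond}, combined with $x(t)\ge0$, $w_c(t)\ge0$ (internal positivity, Proposition \ref{prop:positive}), give $\dot V<-\mathds{1}^{\T}(\tilde C_c(t)x(t)+\tilde F_c(t)w_c(t))+\gamma\mathds{1}^{\T}w_c(t)=s_c(w_c,z_c)$; the jump case is identical using the second and fourth inequalities, yielding $V(x,t_k^+)-V(x,t_k)<s_d(w_d,z_d)$, while the bounds on $\chi$ make $V$ an admissible positive definite storage function, so (b) holds. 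For (b)$\Rightarrow$(a), restricting the dissipation inequalities to $w_c,w_d\equiv0$ recovers the hypotheses of Lemma \ref{lem:general_lin}, whence uniform exponential stability; and, for $x_0=0$, integrating $\dot V\le s_c$ over each $(t_k,t_{k+1}]$ and summing $V(x,t_k^+)-V(x,t_k)\le s_d$ over the jumps telescopes, using $V\ge0$ and $V(0,t_0)=0$, to $||z_c||_{L_1}+||z_d||_{\ell_1}\le\gamma(||w_c||_{L_1}+||w_d||_{\ell_1})$, once one invokes that for nonnegative signals $\mathds{1}^{\T}z_c=||z_c||_1$, $\mathds{1}^{\T}w_c=||w_c||_1$ and likewise in discrete time; this is precisely the hybrid $L_1\times\ell_1$-gain being at most $\gamma$.

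\textbf{Proof that (a) implies (c).} This is the substantive step and it parallels the implication (a)$\Rightarrow$(b) of Theorem \ref{th:generalLinf}. From uniform exponential stability and Lemma \ref{lem:general_lin} I obtain a bounded, positive, bounded-away-from-zero function $v(t)$ satisfying \eqref{ea:kdsolkds;ldk} for some $b,d\in\mathbb{R}^n_{>0}$, and I would then set $\chi^*(t)^{\T}:=\eps v(t)^{\T}+\tilde r(t)^{\T}$, $\eps>0$, where
\[
\tilde r(t)^{\T}:=\int_t^\infty\mathds{1}^{\T}\tilde C_c(s)\tilde\Phi(s,t)\,\ds+\smashoperator{\sum_{t_i\ge t}}\mathds{1}^{\T}\tilde C_d(i)\tilde\Phi(t_i,t)
\]
is the exact dual of the forward reachability map used in the proof of Theorem \ref{th:generalLinf}. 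Uniform exponential stability---the decay bound on $\tilde\Phi(s,t)$ for $s\ge t$ from Definition \ref{def:expstab}---together with the boundedness of $\tilde C_c,\tilde C_d$, makes $\tilde r$ well-defined, nonnegative and uniformly bounded above, while the term $\eps v(t)$, bounded below by a positive vector, keeps $\chi^*$ away from $0$; moreover $\chi^*$ is $C^1$ on each $(t_k,t_{k+1}]$. Differentiating and using $\partial_t\tilde\Phi(s,t)=-\tilde\Phi(s,t)\tilde A(t)$ yields $\dot{\tilde r}(t)^{\T}+\tilde r(t)^{\T}\tilde A(t)+\mathds{1}^{\T}\tilde C_c(t)=0$, so $\chi^*$ satisfies the first inequality of \eqref{eq:L1cond} with margin $-\eps b$; and evaluating at $t_k$ and $t_k^+$ and using $\tilde\Phi(s,t_k)=\tilde\Phi(s,t_k^+)\tilde J(k)$ gives $\tilde r(t_k)^{\T}=\tilde r(t_k^+)^{\T}\tilde J(k)+\mathds{1}^{\T}\tilde C_d(k)$, so the second inequality holds with margin $-\eps d$. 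Finally, interchanging the order of integration and summation identifies $\tilde r(t)^{\T}\tilde E_c(t)+\mathds{1}^{\T}\tilde F_c(t)$ with the continuous component, evaluated at $t$, of the all-ones hybrid signal pushed through the adjoint of the input/output operator $(w_c,w_d)\mapsto(z_c,z_d)$; since the hybrid $L_1\times\ell_1$-gain equals the hybrid $L_\infty\times\ell_\infty$-gain of that adjoint and is at most $\gamma$, one obtains $\tilde r(t)^{\T}\tilde E_c(t)+\mathds{1}^{\T}\tilde F_c(t)\le\gamma\mathds{1}^{\T}$, and since $\eps>0$ is arbitrary the strict third inequality of \eqref{eq:L1cond} follows; the fourth is obtained in the same way, with $\tilde r(t_k^+)$ and $\tilde E_d(k),\tilde F_d(k)$ in place of $\tilde r(t)$ and $\tilde E_c(t),\tilde F_c(t)$.

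\textbf{Expected main obstacle.} Everything delicate sits inside the implication (a)$\Rightarrow$(c): establishing the convergence and uniform upper bound of the backward function $\tilde r$ (which, exactly as in the (a)$\Rightarrow$(c) part of Lemma \ref{lem:general_lin}, rests on the exponential decay of $\tilde\Phi(s,t)$ as $s\to\infty$), verifying the flow and jump identities satisfied by $\tilde r$, and making rigorous the duality identity expressing $\tilde r(t)^{\T}\tilde E_c(t)+\mathds{1}^{\T}\tilde F_c(t)$ through the adjoint operator; the passage from the non-strict estimate $\le\gamma\mathds{1}^{\T}$ to the strict inequalities of \eqref{eq:L1cond} is then handled, exactly as in Theorem \ref{th:generalLinf}, by the arbitrariness of $\eps$.
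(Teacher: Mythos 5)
Your proposal is correct and, for the bulk of the argument, coincides with the paper's proof: the equivalence of (b) and (c) by expanding the dissipation inequality over the nonnegative cone, the telescoping of $V(x,t)=\chi(t)^{\T}x$ along flows and jumps for (c)$\Rightarrow$(a), and, for (a)$\Rightarrow$(c), the same candidate $\chi^*(t)=\epsilon v(t)+\tilde r(t)$ with $v$ from Lemma \ref{lem:general_lin} and the same flow/jump identities $\dot{\tilde r}(t)^{\T}+\tilde r(t)^{\T}\tilde A(t)+\mathds{1}^{\T}\tilde C_c(t)=0$ and $\tilde r(t_k)^{\T}=\tilde r(t_k^+)^{\T}\tilde J(k)+\mathds{1}^{\T}\tilde C_d(k)$. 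The one place you genuinely diverge is the verification of the last two inequalities of \eqref{eq:L1cond}: the paper evaluates $\textstyle\int[\chi^{*\T}\tilde E_c+\mathds{1}^{\T}\tilde F_c]w_c+\sum[\chi^*(t_k^+)^{\T}\tilde E_d+\mathds{1}^{\T}\tilde F_d]w_d$ for arbitrary unit-norm $(w_c,w_d)$, interchanges sums and integrals to recognize $||z_c||_{L_1}+||z_d||_{\ell_1}$, and then localizes with Dirac and Kronecker impulses, whereas you identify $\tilde r$ directly as the state of the adjoint system \eqref{eq:transposed} driven by the all-ones inputs and invoke the Banach-space identity $||\Sigma_{t_0,\frak{T}}||_{L_1\times\ell_1}=||\Sigma^\times_{t_0,\frak{T}}||_{L_\infty\times\ell_\infty}$. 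Your route is legitimate and non-circular (it uses only the abstract adjoint-norm fact, not the theorem being proved) and it avoids the delta-input limiting argument, at the price of importing the operator-duality machinery that the paper defers to the following subsection; note also that, as in the paper's own necessity proof, the passage to strict inequalities really requires starting from a strict gain bound, since the $\epsilon v$ term only worsens the third and fourth inequalities.
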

\begin{proof}
The proof of this result is inspired by \cite{Blanchini:15} for switched systems and extends it to the case of the impulsive systems.\\

\blue{  \noindent\textbf{Proof that \eqref{st:L1general:2} is equivalent to \eqref{st:L1general:3}.} Statement \eqref{st:L1general:2} is equivalent to saying that
  \begin{equation}
  \begin{array}{rcl}
    V(x(t),t)-V(x(s),s)&\le&\int_s^t\left[s_c(w_c(\theta),z_c(\theta))-\epsilon(||x(\theta)||_1+w_c(\theta)||_1)\right]\d\theta\\
    &&+\sum_{s\le t_i\le t}\left[s_d(w_d(i),z_d(i))-\epsilon(||x(t_i)||_1+||w_d(i)||_1)\right]
  \end{array}
  \end{equation}
  for some $\epsilon>0$ and for all $t_0\le s\le t$ and all $||(w_c,w_d)||_{L_1\times\ell_1}\le1$. This is equivalent to saying that
  \begin{equation}
    \dot{V}(x(t),t)-s_c(w_c(t),z_c(t))\le-\epsilon(||x(t)||_1+||w_c(t)||_1)
  \end{equation}
  and
  \begin{equation}
    V(x(t_i^+),t_i^+)-V(x(t_i),t_i)-s_d(w_d(i),z_d(i))\le-\epsilon(||x(t_i)||_1+||w_d(i)||_1)
  \end{equation}
  hold for all $t\in(t_k,t_{k+1}]$, $k\in\mathbb{Z}_{\ge0}$, $x(t),x(t_i),w_c(t),w_d(i)\ge0$, $i\ge 1$, and $||(w_c,w_d)||_{L_1\times\ell_1}\le1$. Expanding those expressions exactly yields
  \begin{equation}
    \left[\dot{\chi}(t)^{\T}+\chi(t)^{\T}\tilde{A}(t)+\mathds{1}^{\T}\tilde{C}_c(t)+\epsilon\mathds{1}^{\T}\right]x(t) +\left[\chi(t)^{\T}\tilde{E}_c(t)+\mathds{1}^{\T}\tilde{F}_c(t)-\gamma\mathds{1}^{\T}+\epsilon\mathds{1}^{\T}\right]w_c(t)\le0
  \end{equation}
and
   \begin{equation}
    \left[ \chi(t_i^+)^{\T}\tilde J(i)-\chi(t_i)^{\T}+\mathds{1}^{\T}\tilde{C}_d(i) +\epsilon\mathds{1}^{\T}\right]x(t_i) + \left[ \chi(t_{k}^+)^{\T}\tilde{E}_d(i)+\mathds{1}^{\T}\tilde{F}_d(i)-\gamma\mathds{1}^{\T}+\epsilon\mathds{1}^{\T}\right]w_d(i)\le0.
  \end{equation}
  which must hold for all $t\in(t_k,t_{k+1}]$, $k\in\mathbb{Z}_{\ge0}$, $x(t),x(t_i),w_c(t),w_d(i)\ge0$, $i\ge 1$, and $||(w_c,w_d)||_{L_1\times\ell_1}\le1$. This is readily seen to be equivalent to the conditions \eqref{eq:L1cond}.\\}

  \noindent\textbf{Proof that \eqref{st:L1general:3} implies \eqref{st:L1general:1}.} We first prove the stability.  To this aim let us the define the linear copositive Lyapunov function $V(x,t)=\chi(t)^{\T}x$. Note that $\min\{\chi_1\}||x||_1\le V(x,t)\le\max\{\chi_2\}||x||_1$. The uniform exponential stability of the system is immediate from Lemma \ref{lem:general_lin} and the two first inequalities in \eqref{eq:L1cond}. We, therefore, look at the performance. Multiplying the first and the third inequalities to the right by $x(t)$ and $w_c(t)$, respectively, yields
  \begin{equation}
    \dot{V}(x(t),t)<\mathds{1}^{\T}z_c(t)-\gamma\mathds{1}^{\T}w_c(t),t\in(t_k,t_{k+1}].
  \end{equation}
  Integrating from $t_k$ to $t_{k+1}$ yields
  \begin{equation}
    V(x(t_{k+1}),t_{k+1})-V(x(t_{k}^+),t_{k}^+)<\int_{t_k}^{t_{k+1}}\left[-\mathds{1}^{\T}z_c(s)+\gamma \mathds{1}^{\T}w_c(s)\right]\ds.
  \end{equation}
  Similarly, multiplying the second and the fourth second inequalities to the right by $x(t_k)$ and $w_d(k)$, respectively, yields
  \begin{equation}
     V(x,t_k^+)-V(x,t_k) < -\mathds{1}^{\T}z_d(k)+\gamma \mathds{1}^{\T}w_d(k)
  \end{equation}
  and summing the two expressions above yields
  \begin{equation}
    V(x(t_{k+1}),t_{k+1})-V(x(t_{k}),t_k)<\int_{t_k}^{t_{k+1}}\left(-\mathds{1}^{\T}z_c(s)+\gamma \mathds{1}^{\T}w_c(s)\right)\ds-\mathds{1}^{\T}z_d(k)+\gamma \mathds{1}^{\T}w_d(k).
  \end{equation}
  By summing now from $k=0$ to $\infty$ , we obtain
  \begin{equation}
    0<\int_{0}^{\infty}\left(-\mathds{1}^{\T}z_c(s)+\gamma \mathds{1}^{\T}w_c(s)\right)\ds+\sum_{k=0}^\infty\left[-\mathds{1}^{\T}z_d(k)+\gamma \mathds{1}^{\T}w_d(k)\right]
  \end{equation}
  where we have used the fact that $V(x_0,t)=0$ since $x_0=0$ and that $\textstyle\lim_{t\to\infty}V(x(t),t)=0$ since the system is uniformly exponentially stable and since the inputs $w_c\in L_1$ and $w_d\in\ell_1$ both asymptotically tend to 0. Reorganizing the terms yields the inequality
 \begin{equation}
 \begin{array}{rcl}
   0&<&\int_{0}^{\infty}\left(-\mathds{1}^{\T}z_c(s)+\gamma \mathds{1}^{\T}w_c(s)\right)\ds+\sum_{k=1}^\infty\left[-\mathds{1}^{\T}z_d(k)+\gamma \mathds{1}^{\T}w_d(k)\right]\\
   &=&-(||z_c||_{L_1}+||z_d||_{\ell_1})+\gamma(||w_c||_{L_1}+||w_d||_{\ell_1})
 \end{array}
  \end{equation}
  and, therefore, that
  \begin{equation}
    (||z_c||_{L_1}+||z_d||_{\ell_1})<\gamma(||w_c||_{L_1}+||w_d||_{\ell_1}).
  \end{equation}
  This proves the implication.\\

  \noindent\textbf{Proof that \eqref{st:L1general:1} implies \eqref{st:L1general:3}.} Since the system is uniformly exponentially stable and that its hybrid $L_1\times\ell_1$-gains is less than $\gamma$. Then, from Lemma \ref{lem:general_lin}, there exists a bounded positive solution $v(t)$ to
   \begin{equation}\label{eq:jdskljdlkslkdjsldkjsldjsalkdjaskldjaslkdjlakdjlak}
      \begin{array}{rcl}
        \dot{v}(t)^{\T}+v(t)^{\T}\tilde{A}(t)+b^{\T}&<&0,\ t\in t\in(t_k,t_{k+1}],k\in\mathbb{Z}_{\ge0}\\
        v(t_k^+)^{\T}J(k)-v(t_k)+d^{\T}&<&0,\ k\in\mathbb{Z}_{\ge1}.
      \end{array}
    \end{equation}
    \blue{We also define $\chi^*(t)=\epsilon v(t)+\tilde{r}(t)$ for some $\epsilon>0$ and where
    \begin{equation}
      \tilde{r}(t)^{\T}:=\int_t^{\infty}\mathds{1}^{\T}\tilde{C}_c(s)\tilde{\Phi}(s,t)\ds+\sum_{t_i\ge t}\mathds{1}^{\T}C_d(i)\tilde{\Phi}(t_i,t),t\ge t_0.
    \end{equation}
    Note that this function exists since the system is uniformly exponentially stable in the sense of Definition \ref{def:expstab} and since $\tilde C_c(s)$ and $\tilde C_d(k)$ are uniformly bounded. Differentiating $\chi^*(t)$ yields
    \begin{equation}\label{eq:kdsl;sdk1}
    \begin{array}{rcl}
      \dot{\chi}^*(t)&=&    \epsilon \dot{v}(t)-\mathds{1}^{\T}\tilde{C}_c(t)-\tilde{r}(t)^{\T}\tilde{A}(t)\\
                                    &<&   -\epsilon(v(t)^{\T}\tilde{A}(t)+b^{\T})-\mathds{1}^{\T}\tilde{C}_c(t)-\tilde{r}(t)^{\T}\tilde{A}(t)\\
                                    &=& -\dot{\chi}^*(t)^{\T}\tilde{A}(t)-\mathds{1}^{\T}\tilde{C}_c(t)-\epsilon b.
    \end{array}
    \end{equation}
    Similarly,
        \begin{equation}
    \begin{array}{rcl}
      \chi^*(t_k^+)^{\T}&=& \epsilon v(t_k^+)+\int_{t_k}^{\infty}\mathds{1}^{\T}\tilde{C}_c(s)\tilde{\Phi}(s,t_k^+)\ds+\mathds{1}^{\T}\sum_{i=k+1}^\infty C_d(i)\tilde{\Phi}(t_i,t_k^+)\\
      \chi^*(t_k)^{\T}&=& \epsilon v(t_k)+\int_{t_k}^{\infty}\mathds{1}^{\T}\tilde{C}_c(s)\tilde{\Phi}(s,t_k)\ds+\mathds{1}^{\T}\sum_{i=k}^\infty C_d(i)\tilde{\Phi}(t_i,t_k)\\
    &=& \epsilon v(t_k)+\int_{t_k}^{\infty}\mathds{1}^{\T}\tilde{C}_c(s)\tilde{\Phi}(s,t_k^+)\tilde{J}(k)\ds+\mathds{1}^{\T}C_d(k)+\sum_{i=k+1}^\infty C_d(i)\tilde{\Phi}(t_i,t_k^+)\tilde{J}(k)\\
    &=& \epsilon v(t_k)+\mathds{1}^{\T}C_d(k)+\tilde{r}(t_k^+)^{\T}\tilde{J}(k).
    \end{array}
    \end{equation}
    As a result,
    \begin{equation}
      \epsilon v(t_k)+\mathds{1}^{\T}C_d(k)+\tilde{r}(t_k^+)^{\T}\tilde{J}(k)-\chi^*(t_k)=0
    \end{equation}
    and, using \eqref{eq:jdskljdlkslkdjsldkjsldjsalkdjaskldjaslkdjlakdjlak}, this implies that
    \begin{equation}
      \epsilon(v(t_k^+)^{\T}J(k)+d^{\T})+\mathds{1}^{\T}C_d(k)+\tilde{r}(t_k^+)^{\T}\tilde{J}(k)-\chi^*(t_k)<0
    \end{equation}
    and
    \begin{equation}\label{eq:kdsl;sdk2}
      \epsilon d^{\T}+\mathds{1}^{\T}C_d(k)+\chi^*(t_k^+)^{\T}\tilde{J}(k)-\chi^*(t_k)<0.
    \end{equation}
    Now we can conclude that using the function $\chi^*$, the two first inequalities in \eqref{eq:L1cond} hold with $\chi=\chi^*$ since \eqref{eq:kdsl;sdk1} and \eqref{eq:kdsl;sdk2} are equivalent to
       \begin{equation}
          \dot{\chi}^*(t)+\dot{\chi}^*(t)^{\T}\tilde{A}(t)+\mathds{1}^{\T}\tilde{C}_c(t)<-\epsilon b
    \end{equation}
    and
    \begin{equation}
       \mathds{1}^{\T}C_d(k)+\chi^*(t_k^+)^{\T}\tilde{J}(k)-\chi^*(t_k)<-\epsilon d^{\T},
    \end{equation}
    respectively.\\

    We now show that the two last inequalities in \eqref{eq:L1cond} are also satisfied for $\chi=\chi^*$. Indeed, we have that
    \begin{equation}
    \begin{array}{rcl}
        \chi^*(t)^{\T}\tilde{E}_c(t)+\mathds{1}^{\T}\tilde{F}_c(t)&=&\epsilon v(t)^{\T}\tilde{E}_c(t)+\left(\int_t^{\infty}\mathds{1}^{\T}\tilde{C}_c(s)\tilde{\Phi}(s,t)\ds+\sum_{t_i\ge t}\mathds{1}^{\T}C_d(i)\tilde{\Phi}(t_i,t)\right)\tilde{E}_c(t)\\
        &&+\mathds{1}^{\T}\tilde{F}_c(t)
    \end{array}
    \end{equation}
    and, hence,
    \begin{equation}
    \begin{array}{rcl}
        S_c(w_c)&:=&\int_{t_0}^\infty[\chi^*(t)^{\T}\tilde{E}_c(t)+\mathds{1}^{\T}\tilde{F}_c(t)]w_c(t)\dt\\
        &=&\epsilon \int_{t_0}^\infty v(t)^{\T}\tilde{E}_c(t)w_c(t)\dt+\int_{t_0}^\infty\int_t^{\infty}\mathds{1}^{\T}\tilde{C}_c(s)\tilde{\Phi}(s,t)w_c(t)\ds\dt\\
        &&+\int_{t_0}^\infty \sum_{t_i\ge t}\mathds{1}^{\T}C_d(i)\tilde{\Phi}(t_i,t)\tilde{E}_c(t)w_c(t)\dt+\int_{t_0}^\infty\mathds{1}^{\T}\tilde{F}_c(t)w_c(t)\dt\\
        &=&\epsilon \int_{t_0}^\infty v(t)^{\T}\tilde{E}_c(t)\dt+\int_{t_0}^\infty\int_{t_0}^{s}\mathds{1}^{\T}\tilde{C}_c(s)\tilde{\Phi}(s,t)w_c(t)\dt\ds\\
        &&+\sum_{i\ge 1}\int_{t_0}^{t_i} \mathds{1}^{\T}C_d(i)\tilde{\Phi}(t_i,t)\tilde{E}_c(t)w_c(t)\dt+\int_{t_0}^\infty\mathds{1}^{\T}\tilde{F}_c(t)w_c(t)\dt
    \end{array}
    \end{equation}
    where we have exchanged sums and integrals to make appear the integral/sum of convolution operators. Similarly,
    \begin{equation}
      \begin{array}{rcl}
        \chi(t_{k}^+)^{\T}\tilde{E}_d(k)+\mathds{1}^{\T}\tilde{F}_d(k)&=&\left[\epsilon v(t_k^+)+ \int_{t_k}^{\infty}\mathds{1}^{\T}\tilde{C}_c(s)\tilde{\Phi}(s,t_k^+)\ds+\sum_{i=k+1}^\infty \mathds{1}^{\T}C_d(i)\tilde{\Phi}(t_i,t_k^+)\right]\tilde{E}_d(k)\\
        &&+\mathds{1}^{\T}\tilde{F}_d(k).
      \end{array}
    \end{equation}
    Hence, we have that
    \begin{equation}
      \begin{array}{rcl}
        S_d(w_d)&:=&\sum_{k\ge1}\left[\chi(t_{k}^+)^{\T}\tilde{E}_d(k)+\mathds{1}^{\T}\tilde{F}_d(k)\right]w_d(k)\\
        &=&\sum_{k\ge1} \epsilon v(t_k^+)\tilde{E}_d(k)w_d(k)+\sum_{k\ge1} \int_{t_k}^{\infty}\mathds{1}^{\T}\tilde{C}_c(s)\tilde{\Phi}(s,t_k^+)\tilde{E}_d(k)w_d(k)\ds\\
        &&+\sum_{k\ge1} \sum_{i=k+1}^\infty \mathds{1}^{\T}C_d(i)\tilde{\Phi}(t_i,t_k^+)\tilde{E}_d(k)w_d(k)+\sum_{k\ge1}\mathds{1}^{\T}\tilde{F}_d(k)w_d(k),\\
        &=&\sum_{k\ge1} \epsilon v(t_k^+)\tilde{E}_d(k)w_d(k)+ \int_{t_0}^{\infty}\sum_{t_1^+\le t_k<s} \mathds{1}^{\T}\tilde{C}_c(s)\tilde{\Phi}(s,t_k^+)\tilde{E}_d(k)w_d(k)\ds\\
        &&+\sum_{i\ge1} \sum_{k=0}^{i-1} \mathds{1}^{\T}C_d(i)\tilde{\Phi}(t_i,t_k^+)\tilde{E}_d(k)w_d(k)+\sum_{k\ge1}\mathds{1}^{\T}\tilde{F}_d(k)w_d(k)
      \end{array}
    \end{equation}
    where we have also exchanged sums and integrals to make appear the integral/sum of convolution operators. We then obtain that $S(w_c,w_d):=S_c(w_c)+S_d(w_d)$ is given by
        \begin{equation}
      \begin{array}{rcl}
        S(w_c,w_d)&=&\epsilon \int_{t_0}^\infty v(t)^{\T}\tilde{E}_c(t)\dt+\int_{t_0}^\infty\int_{t_0}^{s}\mathds{1}^{\T}\tilde{C}_c(s)\tilde{\Phi}(s,t)w_c(t)\dt\ds\\
        &&+\sum_{i\ge 1}\int_{t_0}^{t_i} \mathds{1}^{\T}C_d(i)\tilde{\Phi}(t_i,t)\tilde{E}_c(t)w_c(t)\dt+\int_{t_0}^\infty\mathds{1}^{\T}\tilde{F}_c(t)w_c(t)\dt\\
        &&+\sum_{k\ge1} \epsilon v(t_k^+)\tilde{E}_d(k)w_d(k)+ \int_{t_0}^{\infty}\sum_{t_1^+\le t_k<s} \mathds{1}^{\T}\tilde{C}_c(s)\tilde{\Phi}(s,t_k^+)\tilde{E}_d(k)w_d(k)\ds\\
        &&+\sum_{i\ge1} \sum_{k=0}^{i-1} \mathds{1}^{\T}C_d(i)\tilde{\Phi}(t_i,t_k^+)\tilde{E}_d(k)w_d(k)+\sum_{k\ge1}\mathds{1}^{\T}\tilde{F}_d(k)w_d(k)\\
        &=&\int_{t_0}^\infty \mathds{1}^{\T}z_c(s)\ds+\sum_{k\ge1}\mathds{1}^{\T}z_d(k)+\epsilon\left(\int_{t_0}^\infty v(t)^{\T}\tilde{E}_c(t)w_c(t)\dt+\sum_{k\ge1}v(t_k^+)\tilde{E}_d(k)w_d(k)\right)\\
        &<&\gamma+\epsilon\left(\int_{t_0}^\infty v(t)^{\T}\tilde{E}_c(t)w_c(t)\dt+\sum_{k\ge1}v(t_k^+)\tilde{E}_d(k)w_d(k)\right)
      \end{array}
    \end{equation}
    where we have used the fact $||(w_c,w_d)||_{L_1\times\ell_1}=1$. Since the above is true for all $\epsilon>0$, then we get that
    \begin{equation}
       \int_{t_0}^\infty[\chi^*(t)^{\T}\tilde{E}_c(t)+\mathds{1}^{\T}\tilde{F}_c(t)]w_c(t)\dt+\sum_{k\ge1}\left[\chi(t_{k}^+)^{\T}\tilde{E}_d(k)+\mathds{1}^{\T}\tilde{F}_d(k)\right]w_d(k)<\gamma
    \end{equation}
    and this holds for all $||(w_c,w_d)||_{L_1\times\ell_1}=1$. Picking now $w_c(t)=\delta(u-t)$, $u\ge t_0$, and $w_d\equiv0$, this implies that
    \begin{equation}
       \chi^*(u)^{\T}\tilde{E}_c(u)+\mathds{1}^{\T}\tilde{F}_c(u)<\gamma
    \end{equation}
    which must hold for all $u\ge t_0$. Picking, finally, $w_c\equiv0$ and $w_d(k)=\delta_{k,\ell}$, $\ell\ge1$, where $\delta_{k,\ell}$ is the Kronecker delta, we obtain that
    \begin{equation}
       \chi(t_{\ell}^+)^{\T}\tilde{E}_d(\ell)+\mathds{1}^{\T}\tilde{F}_d(\ell)<\gamma
    \end{equation}
    which must hold for all $\ell\ge1$. The proof is complete.}
\end{proof}

\subsection{Connection between the results: backward dissipativity of the adjoint system}

\blue{For completeness, it seems important to show how Theorem \ref{th:generalLinf} is connected to Theorem \ref{th:generalL1} and vice-versa. This connection can be clarified via the use of central results in operator theory. Those are recalled below for clarity.

\begin{define}[Adjoint of an operator \cite{Kreyszig:78}]
  Let ${X}$ and ${Y}$ be Banach spaces and let $T:{X}\mapsto{Y}$ be a bounded linear operator. The adjoint of $T$ is the linear operator $T^\times:{Y^*}\mapsto{X^*}$ defined for all $f\in{Y^*}$ by $T^\times(f)=f\circ T$; i.e. $(T^\times f)(u)=f(Tu)$, for all $f\in Y^*$ and all $u\in X$.
\end{define}

The following result states that the norms of an operator and its adjoint coincides with each other:
\begin{proposition}[\cite{Kreyszig:78}]
   Let $X$ and $Y$ be Banach spaces and let $T:X\mapsto Y$ be a bounded linear operator. Then $T^\times:Y^*\mapsto X^*$ is a bounded linear operator and $||T||=||T^\times||$ where the norms are the corresponding induced norm; e.g. $||T||=\sup_{||x||_X=1}||Tx||_Y$.
\end{proposition}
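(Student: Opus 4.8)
The plan is to prove the identity $||T||=||T^\times||$ by establishing the two inequalities $||T^\times||\le||T||$ and $||T||\le||T^\times||$ separately, after a quick check that $T^\times$ is linear. Linearity of $T^\times$ is immediate: for $f,g\in Y^*$, scalars $\alpha,\beta$, and any $x\in X$, one has $(T^\times(\alpha f+\beta g))(x)=(\alpha f+\beta g)(Tx)=\alpha f(Tx)+\beta g(Tx)=\alpha(T^\times f)(x)+\beta(T^\times g)(x)$, so $T^\times(\alpha f+\beta g)=\alpha T^\times f+\beta T^\times g$.

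First I would prove $||T^\times||\le||T||$, which also yields boundedness of $T^\times$. Given $f\in Y^*$ and $x\in X$, the defining inequality $||Tx||_Y\le||T||\,||x||_X$ together with the definition of $||f||_{Y^*}$ gives $|(T^\times f)(x)|=|f(Tx)|\le||f||_{Y^*}\,||Tx||_Y\le||f||_{Y^*}\,||T||\,||x||_X$. Taking the supremum over $x$ with $||x||_X=1$ shows $||T^\times f||_{X^*}\le||T||\,||f||_{Y^*}$, hence $T^\times$ is bounded and $||T^\times||\le||T||$.

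Next I would prove the reverse inequality $||T||\le||T^\times||$. Fix $x\in X$; if $Tx=0$ the bound $||Tx||_Y\le||T^\times||\,||x||_X$ is trivial, so assume $Tx\ne0$. By the norming-functional corollary of the Hahn--Banach theorem there exists $f\in Y^*$ with $||f||_{Y^*}=1$ and $f(Tx)=||Tx||_Y$. Then $||Tx||_Y=f(Tx)=(T^\times f)(x)\le||T^\times f||_{X^*}\,||x||_X\le||T^\times||\,||f||_{Y^*}\,||x||_X=||T^\times||\,||x||_X$. Taking the supremum over $||x||_X=1$ gives $||T||\le||T^\times||$, and combining the two inequalities yields $||T||=||T^\times||$.

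The main obstacle is essentially a matter of recalling the right tool: the forward inequality $||T^\times||\le||T||$ follows from plain estimates, whereas the reverse inequality genuinely requires the existence of a norming functional, which is precisely the content of the Hahn--Banach corollary. No structure beyond the ambient normed setting is used for this norm identity — completeness is relevant only for other properties of adjoints — so the argument is short once Hahn--Banach is invoked. For the purposes of this paper, this proposition (together with the preceding definition of the adjoint) is exactly what is needed to transfer the $L_1\times\ell_1$-gain characterization of Theorem~\ref{th:generalL1} to the $L_\infty\times\ell_\infty$-gain characterization of Theorem~\ref{th:generalLinf} via the adjoint (backward) system.
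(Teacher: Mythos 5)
Your proof is correct and is exactly the standard argument: the paper itself gives no proof but cites Kreyszig, and the cited textbook proof proceeds precisely as you do — the easy estimate $\|T^\times f\|_{X^*}\le\|T\|\,\|f\|_{Y^*}$ for one direction, and a Hahn--Banach norming functional for $Tx$ for the reverse inequality. Nothing to add.
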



In the context of this paper, the system \eqref{eq:mainsyst} describes an operator with symbol $\Sigma_{t_0,\frak{T}}$ that maps $L_\infty\times\ell_\infty$ to $L_\infty\times\ell_\infty$ and where $t_0$ denotes the initial time and $\frak{T}$ indicates the dependence of the operator on an explicit impulse time sequence which may belong to some family $\mathbb{T}$ of impulse times sequence. In this regard, one can invoke the above proposition to state that
  \begin{equation}\label{eq:operators}
  ||\Sigma_{t_0,\frak{T}}||_{\mathbb{T},L_\infty\times\ell_\infty}=||\Sigma_{t_0,\frak{T}}^\times ||_{\mathbb{T},L_1\times\ell_1}.
\end{equation}

The associated adjoint system $\Sigma^\times _{t_0,\frak{T}}$ (sometimes also called the dual system) is given by \cite{Lawrence:12}
 \begin{equation}\label{eq:transposed}
\begin{array}{rcl}
  \dot{x}^\times (t)&=&-\tilde A(t)^{\T} x^\times (t)-\tilde C_c(t)^{\T} w^\times _c(t)\\
  x^\times (t_k)&=&\tilde J(k)^{\T}x^\times (t_k^+)+\tilde C_d(k)^{\T}w_d^\times (k)\\
  z^\times _c(t)&=&\tilde E_c(t)^{\T}x^\times (t)+\tilde F_c(t)^{\T}w_c^\times (t)\\
  z^\times _d(k)&=&\tilde E_d(k)^{\T}x^\times (t_k^+)+\tilde F_d(k)^{\T}w_d^\times (k)
\end{array}
\end{equation}
where $x^\times (t)\in\mathbb{R}^n,w^\times _c(t)\in\mathbb{R}^{q_c},w^\times _d(k)\in\mathbb{R}^{q_d},z^\times _c(t)\in\mathbb{R}^{p_c}$ and $ z_d^\times (k)\in\mathbb{R}^{p_d}$, $t\in\mathbb{R}_{\ge0}$, $k\in\mathbb{Z}_{\ge0}$, are the adjoint state, the adjoint continuous and discrete exogenous inputs, and the adjoint controlled continuous and discrete exogenous outputs. The adjoint system is slightly different (note the presence of negative signs in the continuous-time part and the fact that past is obtained from the future in the discrete-time part) from the transposed system used in \cite{Briat:11g,Briat:11h} to characterize the $L_\infty$-gain of LTI positive systems in terms of the $L_\infty$-gain of their corresponding transposed systems. This was based on the observation that for LTI positive systems the $L_1$-gain corresponds to the 1-norm of the static gain of the system whereas the $L_\infty$-gain corresponds to the $\infty$-norm of the very same matrix. Since the $\infty$-norm of a matrix coincides with the 1-norm of its transposed, one can then compute the $L_\infty$-gain of a LTI positive system from the $L_1$-gain of the associated transposed system. Unfortunately, this correspondence does not hold in the time-varying setting and one cannot use this result when dealing with LTV systems.

This leads to the following result:
\begin{theorem}
  Let us consider a family of impulse time sequences $\mathbb{T}$ and an initial time $t_0\ge0$. Then, the following statements are equivalent:
  \begin{enumerate}[(a)]
    \item The internally positive system \eqref{eq:mainsyst} is uniformly exponentially stable when $w_c,w_d\equiv0$ and whenever $x_0$ we have that
     \begin{equation}
 \left|\left|\Sigma_{t_0,\frak{T}}\begin{bmatrix}
      w_c\\
      w_d
    \end{bmatrix}\right|\right|_{\mathbb{T},L_\infty\times\ell_\infty}\le\gamma\left|\left|\begin{bmatrix}
      w_c\\
      w_d
    \end{bmatrix}\right|\right|_{L_\infty\times\ell_\infty}
  \end{equation}
  for all $(w_c,w_d)\in L_\infty\times \ell_\infty$.
\item The conditions in Theorem \ref{th:generalLinf} hold.
    \item The adjoint system \eqref{eq:transposed} is backward-in-time uniformly exponentially stable and its backward-in-time hybrid $L_1\times\ell_1$-gain of the transfer $(w^\times _c,w^\times _d)\mapsto(z^\times _c,z^\times _d)$ is at most $\gamma$.
    \item The adjoint system \eqref{eq:transposed} is backward-in-time strictly dissipative with respect to the dual storage function
    \begin{equation}\label{eq:djskdjsd}
      \tilde{V}(x^\times ,t)=\chi(t)^{\T}x^\times
    \end{equation}
    and the dual supply-rates
\begin{equation}
      s^\times _c(z^\times _c ,w^\times _c )=\mathds{1}^{\T}z^\times _c -\gamma\mathds{1}^{\T}w^\times _c
    \end{equation}
    and
    \begin{equation}
      s^\times _d(z^\times _d ,w^\times _d )=\mathds{1}^{\T}z^\times _d -\gamma\mathds{1}^{\T}w^\times _d .
    \end{equation}
  \end{enumerate}
\end{theorem}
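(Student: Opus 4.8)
The plan is to establish the four-way equivalence by a cyclic argument, exploiting the results already proven in this section and the operator-theoretic machinery just recalled. First I would note that the equivalence (a) $\Leftrightarrow$ (b) is immediate from Theorem \ref{th:generalLinf}, since statement (a) here is precisely statement (a) of that theorem (uniform exponential stability of the unforced system together with the hybrid $L_\infty\times\ell_\infty$-gain bound $\gamma$), and Theorem \ref{th:generalLinf} asserts its equivalence with the existence of the function $\xi$ satisfying \eqref{eq:condLinfperf}. So the substance is in linking (a) to (c) and (c) to (d).

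For (c) $\Leftrightarrow$ (d), the strategy mirrors the proof that \eqref{st:L1general:2} $\Leftrightarrow$ \eqref{st:L1general:3} in Theorem \ref{th:generalL1}, but run \emph{backward in time}. One applies Theorem \ref{th:generalL1} not to \eqref{eq:mainsyst} directly but to the adjoint system \eqref{eq:transposed}, after reversing the time axis so that the discrete recursion $x^\times(t_k) = \tilde J(k)^{\T} x^\times(t_k^+) + \cdots$, which propagates information from the future to the past, becomes a standard forward impulsive recursion in the reversed time variable $\tau = -t$ (or $\tau = T-t$ on a truncated horizon, passing to the limit). Under this time reversal the continuous dynamics $\dot x^\times = -\tilde A(t)^{\T} x^\times - \cdots$ becomes $\tfrac{d}{d\tau} x^\times = \tilde A^{\T} x^\times + \cdots$, which is Metzler since $\tilde A$ is Metzler and transposition preserves the Metzler property; likewise $\tilde J(k)^{\T}$, $\tilde C_c(t)^{\T}$, $\tilde C_d(k)^{\T}$, $\tilde E_c(t)^{\T}$, $\tilde E_d(k)^{\T}$, $\tilde F_c(t)^{\T}$, $\tilde F_d(k)^{\T}$ are all nonnegative, so the reversed adjoint system is internally positive and Theorem \ref{th:generalL1} applies verbatim. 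Matching notation: the roles of $(\tilde A, \tilde E_c, \tilde E_d, \tilde C_c, \tilde C_d, \tilde F_c, \tilde F_d)$ in Theorem \ref{th:generalL1} are played by $(\tilde A^{\T}, \tilde C_c^{\T}, \tilde C_d^{\T}, \tilde E_c^{\T}, \tilde E_d^{\T}, \tilde F_c^{\T}, \tilde F_d^{\T})$, and substituting these into the conditions \eqref{eq:L1cond} of that theorem, one transposes each inequality and recognizes exactly the conditions \eqref{eq:condLinfperf} of Theorem \ref{th:generalLinf} — the same function $\chi$ here plays the role of $\xi$ there. This simultaneously gives (c) $\Leftrightarrow$ (d) and, via Theorem \ref{th:generalLinf}, ties both to (b), hence to (a).

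Independently (and as a consistency check that also yields (a) $\Leftrightarrow$ (c) directly without routing through (b)), one invokes the operator-norm identity \eqref{eq:operators}: since $\Sigma_{t_0,\frak{T}}$ maps $L_\infty\times\ell_\infty$ to itself and $\Sigma_{t_0,\frak{T}}^\times$ is its Banach-space adjoint acting on the dual $L_1\times\ell_1$, Proposition (Kreyszig) gives $\|\Sigma_{t_0,\frak{T}}\|_{\mathbb{T},L_\infty\times\ell_\infty} = \|\Sigma_{t_0,\frak{T}}^\times\|_{\mathbb{T},L_1\times\ell_1}$, so the $L_\infty\times\ell_\infty$-gain bound on the original system is equivalent to the backward $L_1\times\ell_1$-gain bound on the adjoint. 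The stability equivalence is handled separately: uniform exponential stability of \eqref{eq:stabTV} is equivalent, via the decay bound $\|\tilde\Phi(t,s)\|_p \le M\rho^{\kappa(t,s)} e^{-\alpha(t-s)}$, to a symmetric decay bound on the transition matrix of the reversed adjoint system, because that transition matrix is (up to time relabeling) the transpose-inverse of $\tilde\Phi$ on invertibility intervals and the estimate is norm-equivalent under transposition. One must be slightly careful here about the interface between the ``gain'' notion for operators on the full horizon and the truncated-horizon arguments used in the Lyapunov constructions, and about well-definedness of the adjoint on the relevant dual space.

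The main obstacle I anticipate is the bookkeeping of the time reversal at the jump instants: the adjoint recursion reads the pre-jump adjoint state from the post-jump one, and one has to verify that under $\tau = -t$ the impulse times $\{t_k\}$ map to a valid reversed sequence satisfying the dwell-time-type hypotheses (strictly positive gaps, accumulation only at the horizon endpoint), and that the one-sided limits $x^\times(t_k)$ versus $x^\times(t_k^+)$ correspond correctly to the $\tau$-right and $\tau$-left limits demanded by the template of \eqref{eq:mainsyst}. A secondary subtlety is that Theorem \ref{th:generalL1} is stated for a \emph{given} sequence $\frak{T}$ whereas the present statement quantifies over a family $\mathbb{T}$ and a fixed $t_0$; this is handled by applying the per-sequence equivalences uniformly and noting the bounds $\bar\chi_1,\bar\chi_2$ obtained in the proofs of Lemmas \ref{lem:general_max}–\ref{lem:general_lin} depend only on the uniform stability constants $(M,\alpha,\rho)$ and the uniform bounds on the system matrices, hence are uniform over $\mathbb{T}$. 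Everything else is the routine transposition of inequalities, which I would not spell out in full.
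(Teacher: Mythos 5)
Your proposal is correct and follows essentially the same route as the paper: (a)$\Leftrightarrow$(b) is delegated to Theorem \ref{th:generalLinf}, (c)$\Leftrightarrow$(d) to Theorem \ref{th:generalL1} read backward in time on the adjoint, and the bridge is the algebraic observation that expanding the backward dissipation inequalities of \eqref{eq:transposed} (equivalently, substituting the adjoint data into \eqref{eq:L1cond} and transposing) reproduces exactly the conditions \eqref{eq:condLinfperf}. Your treatment of the time-reversal bookkeeping at the jumps and of uniformity over $\mathbb{T}$ is in fact more explicit than the paper's rather terse argument, but it is the same proof.
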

\begin{proof}
The proof that (a) and (b) are equivalent follows from Theorem \ref{th:generalLinf}. The equivalence between (c) and (d) comes from the definition of backward-in-time stability and dissipativity, and Theorem \ref{th:generalL1}. We prove now the equivalence between (b) and (d).  The backward-in-time strict dissipativity conditions write
\begin{equation}
    \dot{V}(x^\times(t),t)-s_c^\times(w_c^\times(t),z_c^\times(t))\ge\epsilon(||x^\times(t)||_1+||w^\times_c(t)||_1)
  \end{equation}
  and
  \begin{equation}
    V(x^\times(t_i),t_i)-V(x^\times(t_i^+),t_i^+)-s_d^\times(w_d^\times(i),z_d^\times(i))\ge\epsilon(||x^\times(t_i)||_1+||w^\times_d(i)||_1)
  \end{equation}
  hold for all $t\in(t_k,t_{k+1}]$, $k\in\mathbb{Z}_{\ge0}$, $x^\times(t),x^\times(t_i),w^\times_c(t),w^\times_d(i)\ge0$, $i\ge 1$, and $||(w_c^\times,w_d^\times)||_{L_1\times\ell_1}\le1$. Expanding those expressions exactly yields
  \begin{equation}
    \left[\dot{\chi}(t)^{\T}+\chi(t)^{\T}\tilde{A}(t)+\mathds{1}^{\T}\tilde{C}_c(t)+\epsilon\mathds{1}^{\T}\right]x^\times(t) +\left[\chi(t)^{\T}\tilde{E}_c(t)+\mathds{1}^{\T}\tilde{F}_c(t)-\gamma\mathds{1}^{\T}+\epsilon\mathds{1}^{\T}\right]w^\times_c(t)\le0
  \end{equation}
and
   \begin{equation}
    \left[ \chi(t_i^+)^{\T}\tilde J(i)-\chi(t_i)^{\T}+\mathds{1}^{\T}\tilde{C}_d(i) +\epsilon\mathds{1}^{\T}\right]x^\times(t_i) + \left[ \chi(t_{k}^+)^{\T}\tilde{E}_d(i)+\mathds{1}^{\T}\tilde{F}_d(i)-\gamma\mathds{1}^{\T}+\epsilon\mathds{1}^{\T}\right]w^\times_d(i)\le0.
  \end{equation}
  which must hold for all $t\in(t_k,t_{k+1}]$, $k\in\mathbb{Z}_{\ge0}$, $x^\times(t),x^\times(t_i),w^\times_c(t),w^\times_d(i)\ge0$, $i\ge 1$, and $||(w_c,w_d)||_{L_1\times\ell_1}\le1$. This is readily seen to be equivalent to the conditions \eqref{eq:L1cond}. This proves the desired result.
\end{proof}}

\section{Stability and hybrid $L_\infty\times\ell_\infty$-performance analysis of linear positive impulsive systems under dwell-time constraints}\label{sec:Linf_imp_stab}

The objective of this section is to use the results derived in the previous one in order to derive stability and performance results under various dwell-time constraints, namely, constant, minimum, rang, and arbitrary dwell-time constraints. The conditions being infinite-dimensional, a relaxation based on sum of squares programming is proposed. The accuracy of the obtained conditions are then illustrated through simple examples.

\subsection{Preliminaries}

In particular, we will also be interested in the following class of \emph{timer-dependent} linear impulsive systems taking the form
\begin{equation}\label{eq:mainsyst2}
\begin{array}{rcl}
  \partial_\tau x(t_k+\tau)&=&A(\tau)x(t_k+\tau)+B_c(\tau)u_c(t_k+\tau)+E_c(\tau)w_c(t_k+\tau),\tau\in(0,T_k],k\in\mathbb{Z}_{\ge 0}\\
  x(t_k^+)&=&Jx(t_k)+B_du_d(k)+E_dw_d(k), k\in\mathbb{Z}_{\ge 1}\\
  z_c(t_k+\tau)&=&C_c(\tau)x(t_k+\tau)+D_c(\tau)u_c(t_k+\tau)+F_c(\tau)w_c(t_k+\tau),\ \tau\in(0,T_k],k\in\mathbb{Z}_{\ge 0}\\
  z_d(k)&=&C_dx(t_k)+D_du_d(k)+F_dw_d(k), k\in\mathbb{Z}_{\ge 1}\\
  x(0)&=&x(0^+)=x_0
\end{array}
\end{equation}
where the values of the matrix-valued functions only depend on the time elapsed since the last impulse (when the name "timer-dependent" system) and the matrices present in the discrete-time part of the system depend on the last value of the dwell-time. The reason why such a class of systems is considered is that when impulsive systems are controlled or observed under a range or minimum dwell-time constraint, then the obtained controller/observer gains are timer-dependent, making the model for the closed-loop system or the estimation error of the form \eqref{eq:mainsyst2}. Note that the system \eqref{eq:mainsyst2} can be expressed as \eqref{eq:mainsyst} by gluing the matrix-valued functions of the continuous-time part  as $\tilde{A}(t_k+\tau)=A(\tau)$ for $\tau\in(0,T_k]$. This class of systems arises naturally when designing controllers or observers for impulsive and switched systems under constant, minimum, maximum and range dwell-time constraints.


\subsection{Stability and performance analysis under constant dwell-time}

The following results formulate a necessary and sufficient condition for the system \eqref{eq:mainsyst2} with $u_c,u_d\equiv0$ to be exponentially stable and to have an hybrid $L_\infty\times\ell_\infty$-gain of at most $\gamma$ under constant dwell-time sequences defined as
\begin{equation}
  \mathfrak{T}_{0}\in\{\tk:t_{k+1}-t_k=\bar{T},t_0=0,k\in\mathbb{Z}_{\ge0}\}
\end{equation}
with constant dwell-time $\bar T$. In this case, the system \eqref{eq:mainsyst2} becomes a periodically jumping system and this leads to the following result:
\begin{theorem}\label{th:cstDTLinfty}
  Let us consider the system \eqref{eq:mainsyst2} with $u_c,u_d\equiv0$ and assume that it is internally positive. Then, the following statements are equivalent:
  \begin{enumerate}[(a)]
    \item\label{st:cstDTLinfty1} The system \eqref{eq:mainsyst2} is asymptotically stable under constant dwell-time $\bar{T}$ and the hybrid $L_\infty\times\ell_\infty$-gain of the transfer $(w_c,w_d)\mapsto(z_c,z_d)$ is at most $\gamma$.
        \blue{\item\label{st:cstDTLinfty3} There exist a vector $\lambda\in\mathbb{R}^n_{>0}$ and a scalar $\gamma>0$ such that the conditions
 \begin{equation}\label{eq:cstDTLinfty3_1}
   (J\Phi(\bar T,0)-I)\lambda+J\int_0^{\bar T}\Phi(\bar T,s)E_c(s)\mathds{1}\ds+E_d\mathds{1}<0,
 \end{equation}
 \begin{equation}\label{eq:cstDTLinfty3_2}
   C_c(\tau)\left(\Phi(\tau,0)\lambda+\int_0^{\tau}\Phi(\tau,s)E_c(s)\mathds{1}\ds\right)+F_c(\tau)\mathds{1}<\gamma\mathds{1}
 \end{equation}
 and
 \begin{equation}\label{eq:cstDTLinfty3_3}
  C_d\left(\Phi(\bar T,0)\lambda+\int_0^{\bar T}\Phi(\bar T,s)E_c(s)\mathds{1}\ds\right)+F_d\mathds{1}<\gamma\mathds{1}
 \end{equation}
 hold for all $\tau\in[0,\bar T]$.}
    \item\label{st:cstDTLinfty2} There exist a differentiable vector-valued function $\zeta:\mathbb{R}_{\ge0}\mapsto\mathbb{R}^n$, $\zeta(0)>0$, and scalars $\eps,\gamma>0$ such that the conditions
   \begin{equation}\label{eq:cstDTLinfty1}
          \begin{array}{rcl}
            -\dot\zeta(\tau)+A(\tau)\zeta(\tau)+E_c(\tau)\mathds{1}&<&0\\
            C_c(\tau)\zeta(\tau)+F_c(\tau)\mathds{1}-\gamma\mathds{1}&<&0
  \end{array}
  \end{equation}
  and
    \begin{equation}\label{eq:cstDTLinfty2}
   \begin{array}{rcl}
            J\zeta(\bar{T})-\zeta(0)+E_d\mathds{1}&<&0\\
           C_d\zeta(\bar{T})+F_d\mathds{1}-\gamma\mathds{1}&<&0
          \end{array}
        \end{equation}
        hold for all $\tau\in[0,\bar{T}]$.
 \end{enumerate}
\end{theorem}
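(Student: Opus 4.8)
The plan is to prove that all three statements are equivalent using $\eqref{st:cstDTLinfty2}$ as a hub: I will establish $\eqref{st:cstDTLinfty2}\Leftrightarrow\eqref{st:cstDTLinfty1}$ by invoking Theorem~\ref{th:generalLinf} together with the observation that, under a constant dwell-time $\bar T$, the timer-dependent system \eqref{eq:mainsyst2} is a $\bar T$-periodic instance of \eqref{eq:mainsyst} with $t_k=k\bar T$, and $\eqref{st:cstDTLinfty2}\Leftrightarrow\eqref{st:cstDTLinfty3}$ by solving the continuous-time flow explicitly through the state-transition matrix $\Phi$ and exploiting internal positivity.

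For $\eqref{st:cstDTLinfty2}\Rightarrow\eqref{st:cstDTLinfty1}$ I would take the solution $\zeta$ of \eqref{eq:cstDTLinfty1}--\eqref{eq:cstDTLinfty2} and build the $\bar T$-periodic function $\xi(t_k+\tau):=\zeta(\tau)$ for $\tau\in(0,\bar T]$, $k\in\mathbb{Z}_{\ge0}$, with $\xi(0):=\zeta(0)$. The first inequality in \eqref{eq:cstDTLinfty1} together with $E_c(\tau)\ge0$ gives $\dot\zeta(\tau)>A(\tau)\zeta(\tau)$, so the comparison principle for Metzler systems yields $\zeta(\tau)\ge\Phi(\tau,0)\zeta(0)>0$ on $[0,\bar T]$; continuity of $\zeta$ then makes $\xi$ positive, bounded, and bounded away from zero, with $\bar\xi_1,\bar\xi_2$ the componentwise extrema of $\zeta$ over $[0,\bar T]$. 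Writing $t=t_k+\tau$, $\xi(t_k)=\zeta(\bar T)$ and $\xi(t_k^+)=\zeta(0)$, the four inequalities of \eqref{eq:condLinfperf} become exactly \eqref{eq:cstDTLinfty1}--\eqref{eq:cstDTLinfty2}, so Theorem~\ref{th:generalLinf} delivers $\eqref{st:cstDTLinfty1}$.

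The converse $\eqref{st:cstDTLinfty1}\Rightarrow\eqref{st:cstDTLinfty2}$ is the step I expect to be the main obstacle: Theorem~\ref{th:generalLinf} hands back a certificate $\xi$ on $[0,\infty)$, but to recover a timer-only function $\zeta$ on $[0,\bar T]$ I need $\xi$ to be $\bar T$-periodic. The plan is to rerun the constructions in the proofs of Lemma~\ref{lem:general_max} and Theorem~\ref{th:generalLinf} after extending the system data $\bar T$-periodically to $\mathbb{R}$ (which keeps \eqref{eq:mainsyst2} internally positive and uniformly exponentially stable): the improper integral and the infinite sum appearing in \eqref{eq:xiLinfproof} and in the auxiliary function $\tilde r$ of that proof converge by uniform exponential stability, and the Floquet-type identity $\tilde\Phi(t+\bar T,s+\bar T)=\tilde\Phi(t,s)$ forces the resulting $\xi^{*}$ to be $\bar T$-periodic; the verification that $\xi^{*}$ satisfies \eqref{eq:condLinfperf} is then verbatim that of Theorem~\ref{th:generalLinf}. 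Restricting $\xi^{*}$ to one period then gives the desired $\zeta$.

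Finally, for $\eqref{st:cstDTLinfty2}\Leftrightarrow\eqref{st:cstDTLinfty3}$: in the forward direction I set $\lambda:=\zeta(0)>0$, use the comparison argument above to get $\zeta(\tau)\ge y(\tau):=\Phi(\tau,0)\lambda+\int_0^\tau\Phi(\tau,s)E_c(s)\mathds{1}\ds$ on $[0,\bar T]$, and substitute this lower bound into the jump inequality of \eqref{eq:cstDTLinfty2} and into the two output inequalities of \eqref{eq:cstDTLinfty1}--\eqref{eq:cstDTLinfty2}, using $J,C_c(\tau),C_d\ge0$ to preserve the sign; this produces precisely \eqref{eq:cstDTLinfty3_1}--\eqref{eq:cstDTLinfty3_3}. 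Conversely, given $\lambda$ satisfying \eqref{eq:cstDTLinfty3_1}--\eqref{eq:cstDTLinfty3_3}, I define $\zeta(\tau):=\Phi(\tau,0)\lambda+\int_0^\tau\Phi(\tau,s)(E_c(s)\mathds{1}+\eps\mathds{1})\ds$, so that $-\dot\zeta(\tau)+A(\tau)\zeta(\tau)+E_c(\tau)\mathds{1}=-\eps\mathds{1}<0$ and $\zeta(0)=\lambda>0$, while each of the remaining three inequalities in \eqref{eq:cstDTLinfty1}--\eqref{eq:cstDTLinfty2} equals the corresponding strict inequality of $\eqref{st:cstDTLinfty3}$ plus $\eps$ times a continuous nonnegative quantity bounded over the compact interval $[0,\bar T]$; since those inequalities are strict and $[0,\bar T]$ is compact, the perturbed inequalities persist for all sufficiently small $\eps>0$, which completes the equivalence.
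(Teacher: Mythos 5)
Your proposal is correct and follows essentially the same route as the paper's proof: statement \eqref{st:cstDTLinfty2} serves as the hub, its equivalence with \eqref{st:cstDTLinfty3} is obtained by integrating the flow inequality (to get $\zeta(\tau)>\Phi(\tau,0)\zeta(0)+\int_0^\tau\Phi(\tau,s)E_c(s)\mathds{1}\ds$ and substituting) and, conversely, by the explicit variation-of-constants construction of $\zeta$ from $\lambda$, while the equivalence with \eqref{st:cstDTLinfty1} rests on the $\bar T$-periodicity of the system together with Theorem \ref{th:generalLinf}. You supply noticeably more detail than the paper at the two delicate points --- the periodic-extension/Floquet argument needed to turn the certificate of Theorem \ref{th:generalLinf} into a genuinely $\bar T$-periodic one for \eqref{st:cstDTLinfty1}$\Rightarrow$\eqref{st:cstDTLinfty2}, which the paper compresses into a single sentence, and the placement of $\eps$ inside the integral in the \eqref{st:cstDTLinfty3}$\Rightarrow$\eqref{st:cstDTLinfty2} construction, which gives the residual $-\eps\mathds{1}<0$ exactly and thereby repairs the paper's additive $+\eps\mathds{1}$ choice, whose residual is $\eps A(\tau)\mathds{1}$ and need not be negative.
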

\begin{proof}
\blue{\noindent\textbf{Proof that \eqref{st:cstDTLinfty2} implies \eqref{st:cstDTLinfty3}.} Integrating the first inequality in \eqref{eq:cstDTLinfty1} from 0 to $\tau$ yields
   \begin{equation}
    \zeta(\tau)>\Phi(\tau,0)\zeta(0)+\int_0^\tau\Phi(t,s)E_c(s)\mathds{1}\ds.
  \end{equation}
  Substituting this expression in the first inequality in \eqref{eq:cstDTLinfty2} yields \eqref{eq:cstDTLinfty3_1}, in the second inequality in \eqref{eq:cstDTLinfty1} yields \eqref{eq:cstDTLinfty3_2}, and in the second inequality in \eqref{eq:cstDTLinfty2} yields \eqref{eq:cstDTLinfty3_3} where $\zeta(0)=\lambda$.\\

\noindent\textbf{Proof that \eqref{st:cstDTLinfty3} implies \eqref{st:cstDTLinfty2}.} Define $\zeta(\tau)=\Phi(\tau,0)\lambda+\int_0^\tau\Phi(t,s)E_c(s)\mathds{1}\ds+\eps\mathds{1}$. It is immediate to see that this is $\zeta$ satisfies the conditions in the statement \eqref{st:cstDTLinfty2} provided that the conditions in the statement \eqref{st:cstDTLinfty3} hold.\\

\noindent\textbf{Proof that \eqref{st:cstDTLinfty1} is equivalent to \eqref{st:cstDTLinfty2}.}  Since the system is periodic (or equivalent the associated discrete-time system is LTI), it is necessary and sufficient to find a periodic Lyapunov function where
\begin{equation}
  \xi(t_k+\tau)=\zeta(\tau),\tau\in(0,\bar T] \textnormal{ and }  \xi(t_k^+)=\zeta(0), k\in\mathbb{Z}_{\ge0}.
\end{equation}
This yields the result.}
\end{proof}

\blue{The above result states the equivalence between several statements. In particular, we have the correspondence between the discrete-time stability criterion of statement \eqref{st:cstDTLinfty3} and the hybrid stability criterion of statement \eqref{st:cstDTLinfty2}. While the discrete-time stability condition is semi-infinite dimensional since it has to be verified for all the values for the timer, the optimization problem itself is finite-dimensional since the decision variable belongs to a finite-dimensional space; i.e. the Euclidian space. Unfortunately, those conditions are unlikely to be verifiable in the general setting due to the dependence on the state-transition matrix which is known to be difficult to compute in the general case and the presence of integral terms. Note, however, that since the state-transition matrix admits a closed form in the timer-invariant setting, those conditions may be checked in this very particular case. As a final remark, it is important to state that the discrete-time stability conditions are difficult to consider for design purposes or when the system is subject to uncertainties, even in the linear time-invariant case, because of the strong nonlinear dependence of the conditions on the matrices of the system.

All those difficulties are resolved by the use of hybrid stability conditions at the expense of trading a finite-dimensional semi-infinite optimization problem for a infinite-dimensional LMI problem. Indeed, the conditions are now affine in the matrices of the system, hence convex, a fact that strongly suggests the possibility of using those conditions for design purposes or to tackle uncertain systems. This will be exploited in Section \ref{sec:Linf_imp_stabz} pertaining to the stabilization of impulsive systems.}

\subsection{Stability and performance analysis under minimum dwell-time}

The following result formulates a sufficient condition for the system \eqref{eq:mainsyst2} with $u_c,u_d\equiv0$ to be exponentially stable and to have an hybrid $L_\infty\times\ell_\infty$-gain of at most $\gamma$ under minimum dwell-time sequences defined as
\begin{equation}
  \mathfrak{T}_0\in\{\tk:t_{k+1}-t_k\ge\bar{T},t_0=0,k\in\mathbb{Z}_{\ge0}\}
\end{equation}
for some minimum dwell-time value $\bar T>0$. We then have the following result:

\begin{theorem}\label{th:minDTLinfty}
    Let us consider the system \eqref{eq:mainsyst2} with $u_c,u_d\equiv0$ and assume that it is internally positive and such that $A(\tau)=A(\bar{T})$, $E_c(\tau)=E_c(\bar{T})$, $C_c(\tau)=C_c(\bar{T})$ and $F_c(\tau)=F_c(\bar{T})$ for all $\tau\ge \bar{T}$ for some scalar $\bar{T}>0$. Then, the following statements are equivalent:
    \begin{enumerate}[(a)]
    \blue{\item\label{st:minDTLinfty3}  There exist a vector $\lambda\in\mathbb{R}^n_{>0}$ and a scalar $\gamma>0$ such that the conditions
       \begin{equation}\label{eq:minDTLinfty3_1}
          \begin{array}{rcl}
           A(\bar{T})\zeta(\bar{T})+E_c(\bar{T})\mathds{1}&<&0\\
            C_c(\bar{T})\zeta(\bar{T})+F_c(\bar{T})\mathds{1}-\gamma\mathds{1}&<&0,
            \end{array}
            \end{equation}
 \begin{equation}\label{eq:minDTLinfty3_2}
 (J\Phi(\theta,0)-I)\lambda+J\int_0^{\theta}\Phi(\theta,s)E_c(s)\mathds{1}\ds+E_d\mathds{1}<0,
 \end{equation}
 \begin{equation}\label{eq:minDTLinfty3_3}
   C_c(\tau)\left(\Phi(\tau,0)\lambda+\int_0^{\tau}\Phi(\tau,s)E_c(s)\mathds{1}\ds\right)+F_c(\tau)\mathds{1}<\gamma\mathds{1}
 \end{equation}
 and
 \begin{equation}\label{eq:minDTLinfty3_4}
  C_d\left(\Phi(\theta,0)\lambda+\int_0^{\theta}\Phi(\theta,s)E_c(s)\mathds{1}\ds\right)+F_d\mathds{1}<\gamma\mathds{1}
 \end{equation}
 hold for all $\tau\in[0,\infty)$ and all $\theta\ge\bar T$.}
  \blue{\item\label{st:minDTLinfty2}  There exist a vector $\lambda\in\mathbb{R}^n_{>0}$ and a scalar $\gamma>0$ such that the conditions
       \begin{equation}\label{eq:minDTLinfty2_1}
          \begin{array}{rcl}
           A(\bar{T})\left(\Phi(\bar T,0)\lambda+\int_0^{\bar T}\Phi(\bar T,s)E_c(s)\mathds{1}\ds\right)+E_c(\bar{T})\mathds{1}&<&0\\
            C_c(\bar{T})\left(\Phi(\bar T,0)\lambda+\int_0^{\bar T}\Phi(\bar T,s)E_c(s)\mathds{1}\ds\right)+F_c(\bar{T})\mathds{1}-\gamma\mathds{1}&<&0,
            \end{array}
            \end{equation}
 \begin{equation}\label{eq:minDTLinfty2_2}
   (J\Phi(\bar T,0)-I)\lambda+J\int_0^{\bar T}\Phi(\bar T,s)E_c(s)\mathds{1}\ds+E_d\mathds{1}<0,
 \end{equation}
 \begin{equation}\label{eq:minDTLinfty2_3}
   C_c(\tau)\left(\Phi(\tau,0)\lambda+\int_0^{\tau}\Phi(\tau,s)E_c(s)\mathds{1}\ds\right)+F_c(\tau)\mathds{1}<\gamma\mathds{1}
 \end{equation}
 and
 \begin{equation}\label{eq:minDTLinfty2_3}
  C_d\left(\Phi(\bar T,0)\lambda+\int_0^{\bar T}\Phi(t,s)E_c(s)\mathds{1}\ds\right)+F_d\mathds{1}<\gamma\mathds{1}
 \end{equation}
 hold for all $\tau\in[0,\bar T]$.}
      \item\label{st:minDTLinfty1} There exist a differentiable vector-valued function $\zeta:\mathbb{R}_{\ge0}\mapsto\mathbb{R}^n$, $\zeta(0)>0$, $\dot{\zeta}(\bar T)=0$, and scalars $\eps,\gamma>0$ such that the conditions
   \begin{equation}\label{eq:minDTLinfty1_1}
          \begin{array}{rcl}
            -\dot\zeta(\tau)+A(\tau)\zeta(\tau)+E_c(\tau)\mathds{1}&<&0\\
            C_c(\tau)\zeta(\tau)+F_c(\tau)\mathds{1}-\gamma\mathds{1}&<&0
  \end{array}
  \end{equation}
     \begin{equation}\label{eq:minDTLinfty1_2}
          \begin{array}{rcl}
           A(\bar{T})\zeta(\bar{T})+E_c(\bar{T})\mathds{1}&<&0\\
            C_c(\bar{T})\zeta(\bar{T})+F_c(\bar{T})\mathds{1}-\gamma\mathds{1}&<&0
            \end{array}
        \end{equation}
     and
        \begin{equation}\label{eq:minDTLinfty1_3}
          \begin{array}{rcl}
           J\zeta(\bar{T})-\zeta(0)+E_d\mathds{1}&<&0\\
           C_d\zeta(\bar{T})+F_d\mathds{1}-\gamma\mathds{1}&<&0
          \end{array}
        \end{equation}
    hold for all $\tau\in[0,\bar T]$.
    \end{enumerate}
    Moreover, when one of the above statements holds, then the positive impulsive system \eqref{eq:mainsyst2} with $u_c,u_d\equiv0$ is asymptotically stable under minimum dwell-time $\bar T$ and the hybrid $L_\infty\times\ell_\infty$-gain of the transfer $(w_c,w_d)\mapsto(z_c,z_d)$ is at most $\gamma$.
\end{theorem}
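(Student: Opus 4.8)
The plan is to establish the chain \eqref{st:minDTLinfty1} $\Leftrightarrow$ \eqref{st:minDTLinfty2} $\Leftrightarrow$ \eqref{st:minDTLinfty3}, and, separately, to show that statement \eqref{st:minDTLinfty1} by itself implies the stability/performance conclusion by reducing it to Theorem \ref{th:generalLinf} through an explicit gluing of the timer function $\zeta$ into a Lyapunov certificate $\xi$; since the three statements are equivalent, this yields the ``Moreover'' part.

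\textbf{Main implication \eqref{st:minDTLinfty1} $\Rightarrow$ conclusion.} Fix an arbitrary minimum dwell-time sequence, so that $T_k=t_{k+1}-t_k\ge\bar T$ and $t_0=0$, and take $\zeta$ as in \eqref{st:minDTLinfty1}, extended to $[0,\infty)$ by $\zeta(\tau)=\zeta(\bar T)$ (hence $\dot\zeta(\tau)=0$) for $\tau\ge\bar T$ — this is consistent thanks to the standing hypothesis that $A,E_c,C_c,F_c$ are frozen at their value at $\bar T$ beyond $\bar T$. View \eqref{eq:mainsyst2} as a member of the class \eqref{eq:mainsyst} through $\tilde A(t_k+\tau)=A(\tau)$, etc., and define the candidate $\xi(t_k+\tau):=\zeta(\min\{\tau,\bar T\})$ for $\tau\in(0,T_k]$, $\xi(t_k^+):=\zeta(0)$. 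The verification has three small parts: (i) $\xi$ is continuously differentiable on each interval $(t_k,t_{k+1}]$, the only nontrivial point being continuity of $\dot\xi$ at $\tau=\bar T$, which is precisely the hypothesis $\dot\zeta(\bar T)=0$; (ii) $\xi$ is bounded below and above by strictly positive constants uniformly in the sequence — first $\zeta(\tau)>0$ on $[0,\bar T]$ by the positivity/comparison argument for internally positive systems applied to the first line of \eqref{eq:minDTLinfty1_1} (Metzler $A(\cdot)$, nonnegative $E_c(\cdot)$, $\zeta(0)>0$), then $\bar\xi_1:=\min_{[0,\bar T]}\zeta>0$ and $\bar\xi_2:=\max_{[0,\bar T]}\zeta$, both sequence-independent; (iii) $\xi$ satisfies the four inequalities \eqref{eq:condLinfperf} of Theorem \ref{th:generalLinf}: on the timer part $\tau\in[0,\bar T]$ they are \eqref{eq:minDTLinfty1_1}; on the frozen part $\tau\in(\bar T,T_k]$, where $\dot\xi=0$ and the matrices are constant, they collapse to \eqref{eq:minDTLinfty1_2}; and at jumps $\xi(t_k)=\zeta(\bar T)$ (since $T_{k-1}\ge\bar T$) and $\xi(t_k^+)=\zeta(0)$, so the discrete inequalities are \eqref{eq:minDTLinfty1_3}. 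Theorem \ref{th:generalLinf} then yields uniform exponential stability and a hybrid $L_\infty\times\ell_\infty$-gain at most $\gamma$ for this sequence, and since $\bar\xi_1,\bar\xi_2$ do not depend on the sequence, the conclusion holds uniformly under minimum dwell-time $\bar T$.

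\textbf{Equivalences.} Introduce $v(\tau):=\Phi(\tau,0)\lambda+\int_0^\tau\Phi(\tau,s)E_c(s)\mathds{1}\ds$, the exact solution of $\dot v=A(\tau)v+E_c(\tau)\mathds{1}$, $v(0)=\lambda$, so that \eqref{st:minDTLinfty2} and \eqref{st:minDTLinfty3} are just \eqref{eq:minDTLinfty1_1}--\eqref{eq:minDTLinfty1_3} rewritten with $\zeta(0)=\lambda$ and $\zeta$ replaced by $v$ (evaluated at the relevant timer values). For \eqref{st:minDTLinfty2} $\Leftrightarrow$ \eqref{st:minDTLinfty3}, the key remark is that $\dot v(\tau)=e^{A(\bar T)(\tau-\bar T)}\dot v(\bar T)$ for $\tau\ge\bar T$, with $\dot v(\bar T)=A(\bar T)v(\bar T)+E_c(\bar T)\mathds{1}$; since $e^{A(\bar T)(\tau-\bar T)}$ is nonnegative with positive diagonal, the first inequality of \eqref{eq:minDTLinfty2_1} forces $v$ to be componentwise strictly decreasing on $[\bar T,\infty)$, so every condition of \eqref{st:minDTLinfty3} over $\theta\ge\bar T$ (resp. $\tau\ge\bar T$) is worst at $\bar T$, which is exactly the reduced set retained in \eqref{st:minDTLinfty2}; the converse inclusion is immediate. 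For \eqref{st:minDTLinfty1} $\Leftrightarrow$ \eqref{st:minDTLinfty2}, given $\zeta$ as in \eqref{st:minDTLinfty1} put $\lambda=\zeta(0)$; the comparison principle on the first line of \eqref{eq:minDTLinfty1_1} gives $\zeta(\tau)>v(\tau)$ on $(0,\bar T]$, and since $J,E_d,C_c,C_d,F_c,F_d\ge0$, replacing $\zeta$ by the smaller $v$ preserves all jump and output inequalities of \eqref{st:minDTLinfty2}; conversely one reconstructs an admissible $\zeta$ as $\zeta(\tau)=v(\tau)+\eps g(\tau)$ on $[0,\bar T]$, with $g>0$ a fixed strict supersolution of the flow operator, $\eps>0$ small enough to absorb the $O(\eps)$ perturbations in the strict inequalities of \eqref{st:minDTLinfty2}, and $g$ tuned near $\bar T$ so that $\dot\zeta(\bar T)=0$.

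\textbf{Main obstacle.} The delicate step is the sign bookkeeping in the frozen regime $\tau\ge\bar T$: since $A(\bar T)$ is only Metzler, left-multiplication by it does not respect componentwise inequalities, so $A(\bar T)v(\bar T)+E_c(\bar T)\mathds{1}<0$ cannot be obtained from $A(\bar T)\zeta(\bar T)+E_c(\bar T)\mathds{1}<0$ by monotonicity alone; this is why the $\Phi$-based characterizations must keep the flow condition as a separate requirement, and why, in the direction \eqref{st:minDTLinfty2} $\Rightarrow$ \eqref{st:minDTLinfty1}, the perturbation $\zeta=v+\eps g$ must be arranged to reproduce $\dot\zeta(\bar T)=0$ while staying a strict flow-supersolution. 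Securing continuity of $\dot\xi$ at the timer value $\bar T$ in the gluing, and the sequence-uniformity of the Lyapunov bounds, are the remaining points that need (minor) care.
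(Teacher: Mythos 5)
Your proof is correct and follows essentially the same route as the paper: the equivalences are obtained by passing to the variation-of-constants solution $v(\tau)=\Phi(\tau,0)\lambda+\int_0^\tau\Phi(\tau,s)E_c(s)\mathds{1}\ds$ and exploiting that $\dot v(\theta)=e^{A(\bar T)(\theta-\bar T)}\dot v(\bar T)<0$ in the frozen regime (the paper differentiates the left-hand side of \eqref{eq:minDTLinfty3_2} directly, which is the same monotonicity argument), and the stability/performance conclusion comes from the same gluing $\xi(t_k+\tau)=\zeta(\min\{\tau,\bar T\})$ fed into Theorem \ref{th:generalLinf}. Your explicit treatment of the constraint $\dot\zeta(\bar T)=0$ in the reconstruction $\zeta=v+\eps g$ addresses a point the paper only handles implicitly (via Proposition \ref{prop:nodiff}), but it does not change the substance of the argument.
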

\begin{proof}
\blue{\noindent\textbf{Proof that \eqref{st:minDTLinfty2} is equivalent to \eqref{st:minDTLinfty3}.} The fact that \eqref{st:minDTLinfty3} implies \eqref{st:minDTLinfty2} is immediate. So, let us focus on the reverse implication. First note that we have $\Phi(T_k,s)=e^{A(\bar T)(T_k-\bar T)}\Phi(\bar T,s)$ for any $s\le\bar T\le T_k$. If we compute the derivative with respect to $\theta$ of the left-hand side of \eqref{eq:minDTLinfty3_2}, we get that
\begin{equation}
  D(\theta):=JA(\theta)\left(\Phi(\theta,0)\lambda+\int_0^{\theta}\Phi(\theta,s)E_c(s)\mathds{1}\ds\right)
\end{equation}
and for $\theta\ge\bar T$, we get that
\begin{equation}
\begin{array}{rcl}
  D(\theta)&=&JA(\bar T)\left(e^{A(\bar T)(\theta-\bar T)}\Phi(\bar T,0)\lambda+e^{A(\bar T)(\theta-\bar T)}\int_0^{\bar T}\Phi(\bar T,s)E_c(s)\mathds{1}\ds+\int_{\bar T}^{\theta}e^{A(\bar T)(\theta-s)}E_c(\bar T)\mathds{1}\ds\right)\\
  &=&JA(\bar T)\left(e^{A(\bar T)(\theta-\bar T)}\Phi(\bar T,0)\lambda+e^{A(\bar T)(\theta-\bar T)}\int_0^{\bar T}\Phi(\bar T,s)E_c(s)\mathds{1}\ds\right)+J\int_{\bar T}^{\theta}A(\bar T)e^{A(\bar T)(\theta-s)}E_c(\bar T)\mathds{1}\ds\\
  &=&JA(\bar T)\left(e^{A(\bar T)(\theta-\bar T)}\Phi(\bar T,0)\lambda+e^{A(\bar T)(\theta-\bar T)}\int_0^{\bar T}\Phi(\bar T,s)E_c(s)\mathds{1}\ds\right)+J\left(e^{A(\bar T)(\theta-\bar T)}-I\right)E_c(\bar T)\\
  &=&Je^{A(\bar T)(\theta-\bar T)}\left[A(\bar T)\left(\Phi(\bar T,0)\lambda+\int_0^{\bar T}\Phi(\bar T,s)E_c(s)\mathds{1}\ds\right)+E_c(\bar T)\mathds{1}\right]-JE_c(\bar T)\mathds{1}\\
\end{array}
\end{equation}
From the first inequality in \eqref{eq:minDTLinfty3_1} and the facts that $-JE_c(\bar T)\mathds{1}\le0$ and $Je^{A(\bar T)(\theta-\bar T)}\ge0$ for all $\theta\ge\bar T$, then we get that $D(\theta)\le0$ for all $\theta\ge\bar T$. As a result, the expression in the left-hand side of \eqref{eq:minDTLinfty3_2} is a nonincreasing function of $\theta\ge\bar T$, and therefore we have that \eqref{eq:minDTLinfty3_2}  holds for all $\theta\ge\bar T$ provided that \eqref{eq:minDTLinfty3_1} and \eqref{eq:minDTLinfty2_2} hold. The proofs of the other implications follow from similar manipulations and are thus omitted here.\\

\noindent\textbf{Proof that \eqref{st:minDTLinfty1} is equivalent to \eqref{st:minDTLinfty2}.} This follows from the same operations as for Theorem \ref{th:cstDTLinfty}.\\

\noindent\textbf{Proof that \eqref{st:minDTLinfty1} implies the uniform exponential stability and the hybrid $L_\infty\times\ell_\infty$ performance.} To prove this, we choose the following structure for $\xi$ in Theorem \ref{th:generalLinf}
\begin{equation}\label{eq:nonsmooth}
\begin{array}{rcl}
  \xi(t_k+\tau)&=&\left\{\begin{array}{rcl}
    \zeta(\tau) &&\textnormal{if }\tau\in[0,\bar{T}]\\
    \zeta(\bar{T}) &&\textnormal{if }\tau\in[\bar{T},T_k]
  \end{array}\right.\\
  \xi(t_k^+)&=&\zeta(0),\ k\in\mathbb{Z}_{\ge1},
\end{array}
\end{equation}
together with the differentiability condition at $\tau=\bar T$ given by $\dot{\zeta}(\bar T)=0$, then we obtain the conditions of the statement \eqref{st:minDTLinfty1}. This proves the result.}
\end{proof}

\blue{The assumption that the matrices describing the system become constant after some time $\bar T$ comes from the fact that stabilizing controllers in the minimum dwell-time cases satisfy such a property, as already emphasized in \cite{Allerhand:11,Briat:13d,Briat:16c}, and that those controllers can be characterized in a convex manner.

It is important to also state the following complementary result:
\begin{proposition}\label{prop:nodiff}
  The constraint that $\dot{\zeta}(\bar T)=0$ can be dropped in Theorem \ref{th:minDTLinfty}, (c).
\end{proposition}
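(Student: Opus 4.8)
The plan is to establish that statement~(c) of Theorem~\ref{th:minDTLinfty} with the clause ``\,$\dot\zeta(\bar T)=0$\,'' removed is still equivalent to statement~(a). The implication (a)~$\Rightarrow$~(c)-without-the-clause is free, since any $\zeta$ meeting the full statement~(c) a fortiori meets the weakened one, so it suffices to re-prove the forward implication, i.e.\ that a solution $\zeta$ of the constraint-free conditions still certifies uniform exponential stability and the hybrid $L_\infty\times\ell_\infty$-gain bound $\gamma$.

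So suppose $\zeta$, $\eps$, $\gamma$ satisfy \eqref{eq:minDTLinfty1_1}--\eqref{eq:minDTLinfty1_3} for all $\tau\in[0,\bar T]$, with $\zeta(0)>0$ but possibly $\dot\zeta(\bar T)\neq 0$. I would still define $\xi$ from $\zeta$ by the gluing~\eqref{eq:nonsmooth}. The only property of $\xi$ that is lost is $C^1$-smoothness at the timer value $\tau=\bar T$, i.e.\ at the locally finite set of times $\{t_k+\bar T\}$; elsewhere $\xi$ is $C^1$, it is globally continuous, and since $\zeta\in C^1$ on the compact interval $[0,\bar T]$ and $\xi$ is constant on $[\bar T,T_k]$, it is Lipschitz, hence absolutely continuous, on every interval $(t_k,t_{k+1}]$. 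Moreover $\xi$ satisfies all the inequalities in~\eqref{eq:condLinfperf}, as one checks directly: on $\tau\in(0,\bar T)$ the flow inequality is~\eqref{eq:minDTLinfty1_1}, on $\tau\in(\bar T,T_k]$ it is~\eqref{eq:minDTLinfty1_2} (using the standing hypothesis that $A,E_c,C_c,F_c$ are frozen past $\bar T$), the jump inequalities reduce to~\eqref{eq:minDTLinfty1_3}, the output inequalities to the second lines of~\eqref{eq:minDTLinfty1_1}/\eqref{eq:minDTLinfty1_2}, and $\xi>0$ on $[0,\bar T]$ follows by integrating the first line of~\eqref{eq:minDTLinfty1_1} since the state-transition matrix is nonnegative and $\zeta(0)>0$. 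Finally, even at $\tau=\bar T$ the flow inequality holds in both one-sided senses: with the left derivative $\dot\zeta(\bar T)$ it is~\eqref{eq:minDTLinfty1_1} evaluated at $\bar T$, and with the right derivative $0$ it is~\eqref{eq:minDTLinfty1_2}.

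The point I would then make is that the proofs of Lemma~\ref{lem:general_max} and of Theorem~\ref{th:generalLinf} only use the following properties of $\xi$: it is bounded, bounded away from $0$, absolutely continuous on each inter-jump interval, and the differential inequalities in~\eqref{eq:condLinfperf} hold almost everywhere. Indeed, the estimate $V(x,t)\le V(x_0,t_0)\rho^{\kappa(t,t_0)}e^{-\alpha(t-t_0)}$ is produced by a Gr\"onwall argument for absolutely continuous functions, and the performance part by the integrated (variation-of-constants) comparison $e=\xi-x\ge 0$ using internal positivity; the countable corner set $\{t_k+\bar T\}$ is negligible for all of these steps. Hence the conclusion of Theorem~\ref{th:generalLinf} — uniform exponential stability in the sense of Definition~\ref{def:expstab} and hybrid $L_\infty\times\ell_\infty$-gain at most $\gamma$ — holds with this $\xi$, which is exactly the ``moreover'' assertion of Theorem~\ref{th:minDTLinfty}.

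If one prefers to stay literally within the $C^1$ framework of Theorem~\ref{th:generalLinf}, the alternative is a mollification: replace $\zeta$ on a short interval $[\bar T-\delta,\bar T]$ by a $C^1$ interpolant keeping the values $\zeta(\bar T-\delta)$ and $\zeta(\bar T)$, matching $\dot\zeta(\bar T-\delta)$ at the left endpoint, and having zero derivative at $\bar T$; since all the inequalities in~\eqref{eq:minDTLinfty1_1}--\eqref{eq:minDTLinfty1_3} are strict and the data are continuous on the compact interval $[0,\bar T]$, there is a uniform slack that is preserved for $\delta$ small, the only binding point being that forcing the derivative toward $0$ near $\bar T$ must remain compatible with the flow inequality there, which is guaranteed precisely by~\eqref{eq:minDTLinfty1_2}. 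The main obstacle in the first route is the purely bookkeeping task of confirming that no step of the earlier proofs secretly requires two-sided differentiability of $\xi$ at every time; in the second route it is verifying that the strict inequalities genuinely survive the interpolation on $[\bar T-\delta,\bar T]$.
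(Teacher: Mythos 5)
Your argument is correct in substance but follows a genuinely different route from the paper. The paper's own proof is essentially a one-line appeal to an external result: it cites \cite{Holicki:19} for the fact that if an absolutely continuous function of the form \eqref{eq:nonsmooth} satisfies the conditions without the differentiability constraint at $\tau=\bar T$, then a continuously differentiable function satisfying the same conditions also exists. Your first route instead argues directly that the proofs of Lemma \ref{lem:general_max} and Theorem \ref{th:generalLinf} never use two-sided differentiability of $\xi$ at every instant --- only boundedness, boundedness away from zero, absolute continuity between jumps, and the inequalities holding almost everywhere --- which is sound: the Lyapunov function $\max_i x_i/\xi_i$ is already only absolutely continuous in the paper's own proof, and the performance argument goes through the integrated variation-of-constants comparison, so one additional countable corner set is harmless. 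This is in fact exactly the content of the paper's remark following the proposition (replacing the gradient by Clarke's generalized subgradient), so you have supplied the argument the paper only alludes to; what the paper's citation-based proof buys is brevity, while yours buys self-containedness. Your second, mollification route is the self-contained analogue of the cited \cite{Holicki:19} result, and it works, but one detail deserves a warning: an interpolant that matches \emph{both} endpoint values and both endpoint derivatives exactly (e.g.\ a cubic Hermite) can have a derivative that dips strictly below the minimum of its endpoint data on the interior of $[\bar T-\delta,\bar T]$, and hence can violate the one-sided flow inequality even though both endpoint derivatives satisfy it. The fix is to exploit that the constraints at $\tau=\bar T$ other than the flow inequality are strict and open in $\zeta(\bar T)$, so exact value matching at $\bar T$ is unnecessary: taking $\dot{\tilde\zeta}=\psi\dot\zeta$ for a smooth cutoff $\psi$ equal to $1$ on $[0,\bar T-\delta]$ and vanishing at $\bar T$ keeps the derivative a convex combination of $\dot\zeta(\tau)$ and $0$, both of which lie strictly above the threshold $A(\bar T)\zeta(\bar T)+E_c(\bar T)\mathds{1}$ near $\bar T$ by \eqref{eq:minDTLinfty1_1} and \eqref{eq:minDTLinfty1_2}, while perturbing the terminal value only by $O(\delta)$.
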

\begin{proof}
  Dropping this constraint means dropping the differentiability of the function $\xi(t)$ in Theorem \ref{th:generalLinf}, which would violate one of the conditions of the theorem. However, it is proven in \cite{Holicki:19} in the context of LMIs that if there exists an absolutely continuous function of the form \eqref{eq:nonsmooth} satisfying the conditions of Theorem \ref{th:minDTLinfty} without the differentiability constraint at $\tau=\bar T$, then there exists a continuously differentiable function satisfying the same conditions. Hence, the constraint that $\dot{\zeta}(\bar T)=0$ can be dropped.
\end{proof}

The above result may seem superficial at first sight but it, in fact, greatly simplifies the problem. Indeed, the constraint $\dot{\zeta}(\bar T)=0$ imposes a much stricter structure on $\zeta$ and dropping this constraint results in more flexibility for its choice. Another way of seeing this result is that the conditions of Theorem \ref{th:generalLinf} can be weakened by seeking for an absolutely continuous function $\xi$ and by replacing the gradient of the Lyapunov function by Clarke's generalized subgradient.}

\subsection{Stability and performance analysis under range dwell-time}

The following result formulates a sufficient condition for the system \eqref{eq:mainsyst2} with $u_c,u_d\equiv0$ to be exponentially stable and to have an hybrid $L_\infty\times\ell_\infty$-gain of at most $\gamma$ under range dwell-time sequences defined as
\begin{equation}
  \mathfrak{T}_0\in\{\tk:t_{k+1}-t_k\in[T_{\min},T_{\max}],t_0=0,k\in\mathbb{Z}_{\ge0}\}
\end{equation}
for some dwell-time bounds $0<T_{\min}\le T_{\max}<\infty$. We then have the following result:
\begin{theorem}\label{th:rangeDTLinfty}
  Let us consider the system \eqref{eq:mainsyst2} with $u_c,u_d\equiv0$ and assume that it is internally positive. Then, the following statements are equivalent:
  \begin{enumerate}[(a)]
  \blue{\item\label{st:rangeDTLinfty3} There exist a vector $\lambda\in\mathbb{R}^n_{>0}$ and a scalar $\gamma>0$ such that the conditions
 \begin{equation}\label{eq:rangeDTLinfty3_1}
 (J\Phi(\theta,0)-I)\lambda+J\int_0^{\theta}\Phi(\theta,s)E_c(s)\mathds{1}\ds+E_d\mathds{1}<0,
 \end{equation}
 \begin{equation}\label{eq:rangeDTLinfty3_2}
   C_c(\tau)\left(\Phi(\tau,0)\lambda+\int_0^{\tau}\Phi(\tau,s)E_c(s)\mathds{1}\ds\right)+F_c(\tau)\mathds{1}<\gamma\mathds{1}
 \end{equation}
 and
 \begin{equation}\label{eq:rangeDTLinfty3_3}
  C_d\left(\Phi(\theta,0)\lambda+\int_0^{\theta}\Phi(\theta,s)E_c(s)\mathds{1}\ds\right)+F_d\mathds{1}<\gamma\mathds{1}
 \end{equation}
 hold for all $\tau\in[0,\Tmax]$ and all $\theta\in[\Tmin,\Tmax]$.}
    \item\label{st:rangeDTLinfty1} There exist a differentiable vector-valued function $\zeta:\mathbb{R}_{\ge0}\mapsto\mathbb{R}^n$, $\zeta(0)>0$, and scalars $\eps,\gamma>0$ such that the conditions
   \begin{equation}\label{eq:rangeDTLinftyZ}
          \begin{array}{rcl}
            -\dot\zeta(\tau)+A(\tau)\zeta(\tau)+E_c(\tau)\mathds{1}&<&0\\
            C_c(\tau)\zeta(\tau)+F_c(\tau)\mathds{1}-\gamma\mathds{1}&<&0
  \end{array}
  \end{equation}
  and
    \begin{equation}\label{eq:rangeDTLinfty}
   \begin{array}{rcl}
            J\zeta(\theta)-\zeta(0)+E_d\mathds{1}&<&0\\
           C_d\zeta(\theta)+F_d\mathds{1}-\gamma\mathds{1}&<&0
          \end{array}
        \end{equation}
        hold for all $\tau\in[0,\Tmax]$ and $\theta\in[\Tmin,\Tmax]$.
    \item\label{st:rangeDTLinfty2} There exist a differentiable vector-valued function $\zeta:\mathbb{R}_{\ge0}\mapsto\mathbb{R}^n$, $\zeta(0)>0$, a vector-valued function $\mu:[\Tmin,\Tmax]\mapsto\mathbb{R}^n$ and scalars $\eps,\gamma>0$ such that the conditions
   \begin{equation}
          \begin{array}{rcl}
            -\dot\zeta(\tau)+A(\tau)\zeta(\tau)+E_c(\tau)\mathds{1}&<&0\\
            C_c(\tau)\zeta(\tau)+F_c(\tau)\mathds{1}-\gamma\mathds{1}&<&0
  \end{array}
  \end{equation}
  and
    \begin{equation}
   \begin{array}{rcl}
        \zeta(\theta)-\mu(\theta)&\le&0\\
            J\mu(\theta)-\zeta(0)+E_d\mathds{1}&<&0\\
           C_d\mu(\theta)+F_d\mathds{1}-\gamma\mathds{1}&<&0
          \end{array}
        \end{equation}
        hold for all $\tau\in[0,\Tmax]$ and $\theta\in[\Tmin,\Tmax]$.
  \end{enumerate}
    Moreover, if one of the above statements hold, then the system \eqref{eq:mainsyst2} with $u_c,u_d\equiv0$ is asymptotically stable under range dwell-time $[\Tmin,\Tmax]$ and the hybrid $L_\infty\times\ell_\infty$-gain of the transfer $(w_c,w_d)\mapsto(z_c,z_d)$ is at most $\gamma$.
\end{theorem}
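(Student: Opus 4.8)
The plan is to establish the equivalence of \eqref{st:rangeDTLinfty3}, \eqref{st:rangeDTLinfty1} and \eqref{st:rangeDTLinfty2}, and then to obtain the concluding ``moreover'' claim from statement \eqref{st:rangeDTLinfty1} via Theorem \ref{th:generalLinf}. The equivalence \eqref{st:rangeDTLinfty1}$\Leftrightarrow$\eqref{st:rangeDTLinfty2} is immediate in one direction by taking $\mu(\theta)=\zeta(\theta)$; for the converse, the componentwise inequality $\zeta(\theta)\le\mu(\theta)$ together with $J\ge0$ and $C_d\ge0$ (internal positivity, Proposition \ref{prop:positive}) yields $J\zeta(\theta)\le J\mu(\theta)$ and $C_d\zeta(\theta)\le C_d\mu(\theta)$, so the jump and discrete-output inequalities of \eqref{st:rangeDTLinfty2} imply those of \eqref{st:rangeDTLinfty1}. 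The slack function $\mu$ therefore only decouples the terminal value of a flow interval from the jump/output conditions, which is convenient for the later sum-of-squares relaxation but plays no essential role here.

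For \eqref{st:rangeDTLinfty1}$\Leftrightarrow$\eqref{st:rangeDTLinfty3}, I would reproduce the argument in the proof of Theorem \ref{th:cstDTLinfty}, carried out uniformly in the free dwell-time $\theta\in[\Tmin,\Tmax]$. Integrating the first inequality of \eqref{eq:rangeDTLinftyZ} from $0$ to $\tau$ and writing $\lambda:=\zeta(0)$ gives $\zeta(\tau)>\Phi(\tau,0)\lambda+\int_0^\tau\Phi(\tau,s)E_c(s)\mathds{1}\ds$; inserting this lower estimate into the second inequality of \eqref{eq:rangeDTLinftyZ} and into the two inequalities of \eqref{eq:rangeDTLinfty} evaluated at terminal time $\theta$, and using the nonnegativity of $J$, $C_c(\tau)$ and $C_d$ to preserve signs, gives exactly \eqref{eq:rangeDTLinfty3_2}, \eqref{eq:rangeDTLinfty3_1} and \eqref{eq:rangeDTLinfty3_3}. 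Conversely, starting from \eqref{st:rangeDTLinfty3}, the function $\zeta(\tau):=\Phi(\tau,0)\lambda+\int_0^\tau\Phi(\tau,s)(E_c(s)\mathds{1}+\eps\mathds{1})\ds$ is differentiable, satisfies $-\dot\zeta(\tau)+A(\tau)\zeta(\tau)+E_c(\tau)\mathds{1}=-\eps\mathds{1}<0$ with $\zeta(0)=\lambda>0$, and converges componentwise as $\eps\downarrow0$, uniformly on the compact set $\{(\tau,\theta):\tau\in[0,\Tmax],\ \theta\in[\Tmin,\Tmax]\}$, to the vectors appearing in the strict inequalities \eqref{eq:rangeDTLinfty3_1}--\eqref{eq:rangeDTLinfty3_3}; hence for $\eps>0$ small enough it satisfies all the remaining inequalities of \eqref{st:rangeDTLinfty1}.

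For the concluding claim, assume \eqref{st:rangeDTLinfty1} and fix any admissible sequence $\mathfrak{T}$ with $t_0=0$ and $T_k:=t_{k+1}-t_k\in[\Tmin,\Tmax]$. I would feed Theorem \ref{th:generalLinf} the candidate $\xi(t_k+\tau):=\zeta(\tau)$ for $\tau\in(0,T_k]$ and $\xi(t_k^+):=\zeta(0)$. Since $T_k\le\Tmax$ and $\zeta$ is differentiable on all of $[0,\Tmax]$, this $\xi$ is continuously differentiable on each flow interval $(t_k,t_{k+1}]$ --- in contrast with the minimum dwell-time setting of Theorem \ref{th:minDTLinfty}, no freezing of $\zeta$, hence no condition $\dot\zeta(\bar T)=0$, is needed, precisely because flow lengths are bounded above by $\Tmax$. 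Substituting $\xi$ into \eqref{eq:condLinfperf}, the flow and continuous-output inequalities reduce to \eqref{eq:rangeDTLinftyZ} at $\tau\in(0,T_k]\subseteq[0,\Tmax]$, while the jump and discrete-output inequalities reduce to the two lines of \eqref{eq:rangeDTLinfty} at $\theta=T_{k-1}\in[\Tmin,\Tmax]$; all hold by hypothesis. Theorem \ref{th:generalLinf} then yields uniform exponential stability (a fortiori asymptotic stability, in the sense of Definition \ref{def:expstab}) and a hybrid $L_\infty\times\ell_\infty$-gain at most $\gamma$, and uniformity over the range dwell-time family is automatic since $\zeta$, $\gamma$ and the bounds produced below do not depend on $\mathfrak{T}$.

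The point needing care, and what I expect to be the main obstacle, is producing the positive uniform bounds $\bar\xi_1\le\xi\le\bar\xi_2$ demanded by Theorem \ref{th:generalLinf}, since \eqref{st:rangeDTLinfty1} only posits $\zeta(0)>0$. From the first inequality of \eqref{eq:rangeDTLinftyZ} one has $\dot\zeta(\tau)>A(\tau)\zeta(\tau)$ componentwise, so a standard comparison argument for positive (Metzler) systems gives $\zeta(\tau)>\Phi(\tau,0)\zeta(0)\ge0$, whence $\zeta(\tau)>0$ on all of $[0,\Tmax]$; continuity of $\zeta$ on the compact interval $[0,\Tmax]$ then furnishes $\bar\xi_1:=\min_{\tau\in[0,\Tmax]}\zeta(\tau)>0$ and $\bar\xi_2:=\max_{\tau\in[0,\Tmax]}\zeta(\tau)$, valid for every admissible $\mathfrak{T}$ simultaneously. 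Everything else is bookkeeping paralleling the constant dwell-time proof.
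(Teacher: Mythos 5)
Your proof is correct and follows essentially the same route as the paper's (which is only sketched there): the equivalence \eqref{st:rangeDTLinfty3}$\Leftrightarrow$\eqref{st:rangeDTLinfty1} via the constant dwell-time argument of Theorem \ref{th:cstDTLinfty} carried out uniformly in $\theta$, the equivalence \eqref{st:rangeDTLinfty1}$\Leftrightarrow$\eqref{st:rangeDTLinfty2} by substitution using the nonnegativity of $J$ and $C_d$, and the concluding claim by feeding the timer-periodic candidate $\xi(t_k+\tau)=\zeta(\tau)$ into Theorem \ref{th:generalLinf}. Your explicit construction of the uniform bounds $\bar\xi_1\le\xi(t)\le\bar\xi_2$ from positivity of $\Phi(\tau,0)$ and compactness of $[0,\Tmax]$ fills in a detail the paper leaves implicit, and it is handled correctly.
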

\begin{proof}
The equivalence between the two first statements follows from the same lines as for the proof of Theorem \ref{th:cstDTLinfty}. To prove statement (b), we just need to choose the following structure for $\xi$ as $\xi(t_k+\tau)=\zeta(\tau)$, that is, $\xi(t_k^+)=\zeta(0)$ and $\xi(t_{k+1})=\zeta(T_k)$ in Theorem \ref{th:generalLinf}. The third statement can be proven from simple substitutions. It can also be proved using the S-procedure on the conditions obtained by applying dissipativity theory on the adjoint system.
\end{proof}
The second statement may seem superfluous, which is indeed the case when stability needs to be analyzed. However, the second statement will play a crucial role in obtaining in Section \ref{sec:Linf_imp_stabz_range} controllers that are independent of the dwell-times values by setting the $\mu(\cdot)$ to be constant function.

\subsection{Stability and performance analysis under arbitrary dwell-time}

Let us address first the case of arbitrary dwell-times sequence. That is, we consider here the family
\begin{equation}
  \mathfrak{T}_0\in\{\tk:t_{k+1}-t_k\in(0,\infty),k\in\mathbb{Z}_{\ge0}, t_0=0,\lim_{k\to\infty}t_k=\infty\}.
\end{equation}

We have the following result:
\begin{theorem}\label{th:arbDTLinfty}
  Let us consider the positive system \eqref{eq:mainsyst2} with $u_c,u_d\equiv0$ and constant matrices. Assume that there exist a vector $\lambda\in\mathbb{R}_{>0}$ and scalars $\gamma>0$ such that the conditions
   \begin{equation}\label{eq:arbDTLinfty1}
          \begin{array}{rcl}
            A\lambda+E_c\mathds{1}&<&0\\
            C_c\lambda+F_c\mathds{1}-\gamma\mathds{1}&<&0
  \end{array}
  \end{equation}
  and
    \begin{equation}\label{eq:arbDTLinfty2}
   \begin{array}{rcl}
            (J-I)\lambda+E_d\mathds{1}&<&0\\
           C_d\lambda+F_d\mathds{1}-\gamma\mathds{1}&<&0
          \end{array}
        \end{equation}
        hold. Then, the system \eqref{eq:mainsyst2} with constant matrices is uniformly exponentially stable under arbitrary dwell-time and the hybrid $L_\infty\times\ell_\infty$-gain of the transfer $(w_c,w_d)\mapsto(z_c,z_d)$ is at most $\gamma$.
\end{theorem}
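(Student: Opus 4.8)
The plan is to derive the result directly from Theorem~\ref{th:generalLinf} by exhibiting a \emph{constant} Lyapunov weight. First I would observe that the system~\eqref{eq:mainsyst2} with constant matrices is the instance of~\eqref{eq:mainsyst} obtained by taking $\tilde{A}(t)\equiv A$, $\tilde{E}_c(t)\equiv E_c$, $\tilde{C}_c(t)\equiv C_c$, $\tilde{F}_c(t)\equiv F_c$ and $\tilde{J}(k)\equiv J$, $\tilde{E}_d(k)\equiv E_d$, $\tilde{C}_d(k)\equiv C_d$, $\tilde{F}_d(k)\equiv F_d$; internal positivity forces $A$ to be Metzler and the remaining matrices to be nonnegative, so for \emph{every} admissible impulse-time sequence $\frak{T}$ and every $t_0\in[0,t_1)$ Theorem~\ref{th:generalLinf} is applicable. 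The ansatz is then to use $\xi(t)\equiv\lambda$ on $[t_0,\infty)$ as candidate Lyapunov weight, which reflects the fact that in the arbitrary-dwell-time case the certificate cannot be allowed to depend on the timer.

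This $\xi$ is continuously differentiable with $\dot{\xi}\equiv0$ and is trivially bounded, e.g. with $\bar\xi_1=\bar\xi_2=\lambda>0$. Substituting $\xi\equiv\lambda$ into~\eqref{eq:condLinfperf}, the four inequalities there collapse to $A\lambda+E_c\mathds{1}<0$, $(J-I)\lambda+E_d\mathds{1}<0$, $C_c\lambda+F_c\mathds{1}-\gamma\mathds{1}<0$ and $C_d\lambda+F_d\mathds{1}-\gamma\mathds{1}<0$, i.e. exactly the standing hypotheses~\eqref{eq:arbDTLinfty1}--\eqref{eq:arbDTLinfty2} (the scalar $\eps>0$ in statement~(b) of Theorem~\ref{th:generalLinf} may be taken arbitrary). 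Hence statement~(b) of Theorem~\ref{th:generalLinf} holds for this sequence, and statement~(a) of that theorem then gives uniform exponential stability when $w_c,w_d\equiv0$ together with the bound $\gamma$ on the hybrid $L_\infty\times\ell_\infty$-gain of $(w_c,w_d)\mapsto(z_c,z_d)$ for that sequence.

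The only point requiring a line of care is upgrading this per-sequence conclusion to one uniform over the whole family of arbitrary-dwell-time sequences. I would argue this as follows: the certificate $\xi\equiv\lambda$, the bounds $\bar\xi_1=\bar\xi_2=\lambda$, and a slack pair $(b,d)$ chosen componentwise with $0<b<-(A\lambda+E_c\mathds{1})$ and $0<d<\min\{\lambda,-((J-I)\lambda+E_d\mathds{1})\}$ are all independent of $\frak{T}$, so the constants built in the proof of Lemma~\ref{lem:general_max} via $\xi$ and the inequalities~\eqref{eq:jdlsjdls}, namely $\alpha=\min\{b\}/\max\{\lambda\}>0$, $\rho=1-\min\{d\}/\max\{\lambda\}\in(0,1)$ and $M=\max\{\lambda\}/\min\{\lambda\}$, likewise do not depend on the chosen sequence; they therefore deliver $||\tilde{\Phi}(t,s)||_\infty\le M\rho^{\kappa(t,s)}e^{-\alpha(t-s)}$ in the sense of Definition~\ref{def:expstab} simultaneously for all sequences with $t_{k+1}-t_k\in(0,\infty)$, and the gain bound $\gamma$ as well. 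Beyond recognizing that a constant weight is the right ansatz and checking this uniformity, no genuine obstacle arises, since every other step is a direct substitution.
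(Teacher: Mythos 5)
Your proof is correct, but it takes a genuinely different route from the paper. You instantiate Theorem~\ref{th:generalLinf} directly with the constant certificate $\xi(t)\equiv\lambda$, observe that the four conditions in \eqref{eq:condLinfperf} collapse exactly to \eqref{eq:arbDTLinfty1}--\eqref{eq:arbDTLinfty2}, and then note that the constants $(M,\alpha,\rho)$ produced by the proof of Lemma~\ref{lem:general_max} depend only on $\lambda$ and the slack pair $(b,d)$, hence are uniform over all admissible impulse sequences; this is indeed the one point that needs explicit care, since Theorem~\ref{th:generalLinf} is stated per sequence, and you handle it correctly. The paper instead reduces the arbitrary dwell-time case to the range dwell-time conditions of Theorem~\ref{th:rangeDTLinfty}, (a), extended to all $\theta\ge0$: it defines $D(\theta)$ as the left-hand side of \eqref{eq:rangeDTLinfty3_1}, shows via a first-order expansion that $D'(\theta)=Je^{A\theta}(A\lambda+E_c\mathds{1})\le0$ under the first inequality of \eqref{eq:arbDTLinfty1}, so that the worst case is attained at $\theta=0$, where the condition becomes precisely the first inequality of \eqref{eq:arbDTLinfty2}, and argues similarly for \eqref{eq:rangeDTLinfty3_2}--\eqref{eq:rangeDTLinfty3_3}. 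Your argument is more elementary and avoids both the state-transition-matrix formulation and the issue (discussed after the theorem in the paper) that the range dwell-time result is naturally stated on compact dwell-time intervals; what the paper's monotonicity argument buys in exchange is the additional structural information that the integrated discrete-time conditions are nonincreasing in the dwell-time, which explains why checking them at $\theta=0$ suffices and connects the arbitrary dwell-time result to the range dwell-time machinery used elsewhere. Two cosmetic points: the strict bounds $\bar\xi_1<\xi(t)<\bar\xi_2$ required in Lemma~\ref{lem:general_max} are met by taking, e.g., $\bar\xi_1=\lambda/2$ and $\bar\xi_2=2\lambda$ rather than $\bar\xi_1=\bar\xi_2=\lambda$; and your extra requirement $d<\lambda$ is automatic from $(J-I)\lambda+d<-E_d\mathds{1}\le0$ and $J\lambda\ge0$, so it need not be imposed separately.
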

\blue{\begin{proof}
From Theorem \ref{th:rangeDTLinfty}, the system \eqref{eq:mainsyst2} with constant matrices is uniformly exponentially stable under arbitrary dwell-time and the hybrid $L_\infty\times\ell_\infty$-gain of the transfer $(w_c,w_d)\mapsto(z_c,z_d)$ is at most $\gamma$ if the conditions in Theorem \ref{th:rangeDTLinfty}, (a), hold for all $\theta\ge0$. Let us define by $D(\theta)$ the left-hand side of \eqref{eq:rangeDTLinfty3_1}. Letting $\theta=\theta_0+\eps$ and using a Taylor expansion yields
\begin{equation}
 D(\theta_0+\eps)=D(\theta_0)+\eps Je^{A\theta_0}\left(A\lambda +E_c\mathds{1}\right)+o(\eps)
\end{equation}
for any sufficiently small $\eps$. Therefore, one can see that if $A\lambda +E_c\mathds{1}<0$ (which is identical to the first inequality in \eqref{eq:arbDTLinfty1}), then $A$ is Hurwitz stable and $D(\theta+\eps)$ is locally nonincreasing at $\theta=\theta_0$ since $Je^{A\theta_0}\ge 0$. However, this also holds true for all $\theta_0\ge0$. In this regard, one can see that the first inequality in \eqref{eq:arbDTLinfty1} implies that \eqref{eq:rangeDTLinfty3_1} holds for all $\theta\ge0$. Since we have proven that the maximum of $D(\theta)$ is at $\theta=0$, then we need to have that $D(0)<0$ together with the first inequality in \eqref{eq:arbDTLinfty1} imply that $D(\theta)<0$ for all $\theta\ge0$. We note now that $D(0)<0$ is exactly the first inequality in \eqref{eq:arbDTLinfty2}. Proceeding similarly for \eqref{eq:rangeDTLinfty3_2} and \eqref{eq:rangeDTLinfty3_3} allows us to show, again, that under the assumption that $A\lambda +E_c\mathds{1}<0$, the maximum for those expressions is attained at $\theta=0$ and, therefore, the conditions must only be satisfied at $\theta=0$, which leads to the second inequalities in both \eqref{eq:arbDTLinfty1} and \eqref{eq:arbDTLinfty2}. This proves the result.
\end{proof}}

\blue{It is also worth mentioning it would have been theoretically possible to simply consider the range dwell-time result with dwell-time values $\theta\in(0,\infty)$ to obtain an arbitrary dwell-time result. However, by doing so, we would have run into technical problems while using an SOS-based approach. To explain this, let us consider the LTI case without performance, that is, we only consider the first conditions of \eqref{eq:rangeDTLinftyZ} and \eqref{eq:rangeDTLinfty} with $E_c=E_d=0$. The solution for $\xi(t)$ solving the differential inequality in \eqref{eq:rangeDTLinftyZ} can be expressed as $\textstyle\xi(\theta)=e^{A\theta}\xi(0)+\epsilon\int_0^\theta e^{As}\mathds{1}\ds$ for any $\epsilon>0$. In this regard, one can observe that the SOS program needs to approximate this function by a polynomial to yield accurate results. Unfortunately, this can only be done on compact sets (this is the Weierstrass approximation theorem) and it is not possible to achieve this on the interval $(0,\infty)$. A second problem is that a polynomial necessarily diverge at infinity while $\xi(\theta)$ must tend to a constant at time goes to infinity otherwise the first condition in \eqref{eq:rangeDTLinfty} cannot hold. This justifies the consideration of the above result.}

\subsection{Computational considerations}

Several methods can be used to check the conditions stated in Theorem \ref{th:rangeDTLinfty}: the piecewise linear discretization approach \cite{Allerhand:11,Xiang:15a,Briat:16c}, Handelman's Theorem \cite{Handelman:88,Briat:11h,Briat:16c} and a sum of squares approach based on Putinar's Positivstellensatz \cite{Putinar:93} and semidefinite programming \cite{Parrilo:00}. \blue{Based on the extensive numerical comparisons between those methods performed in \cite{Briat:16c} in the context of the analysis of linear positive impulsive and switched systems, the method based on SOS programming will be considered here. Indeed, in all the numerical examples discussed in \cite{Briat:16c}, the SOS approach was always the one with the lowest computational complexity and the smallest solving time compared to the two other approaches for a similar accuracy. In other words, best accuracy at a lowest computational cost was always achieved by the SOS approach. The second approach in terms of performance was the one based on Handelman's theorem while the piecewise linear approach was the one that always performed the worst. A common denominator to those approaches is their lack of scalability in the sense that the computational complexity will grow very quickly with the size of the system and the degrees of the polynomials involved, resulting in intractable problems, even for systems of moderate size -- which is also a problem of LMI-based methods in general. Note, however, that all the polynomials are univariate in the present case and, hence, the size of the problem will grow linearly with the degree of the polynomials as opposed to the multivariate case where the number of monomials grows exponentially with the degree. In this regard, the limiting factor will be the size of the system. Possible ways to solve "large" SOS programs exist: one can try to exploit the structure of the problem such as chordal sparsity \cite{Zheng:19} or use (potentially very conservative) linear or second-order cone programming approximations of the underlying semidefinite program \cite{Ahmadi:19}.}

Before stating the main results of this section, we need to introduce few definitions. A multivariate polynomial $p(x)$ is said to be a sum-of-squares (SOS) polynomial if it can be written as $\textstyle p(x)=\sum_{i}q_i(x)^2$ for some polynomials $q_i(x)$. A polynomial matrix $p(x)\in\mathbb{R}^{n\times m}$ is said to \emph{componentwise sum-of-squares} (CSOS) if each of its entries is an SOS polynomial. Checking whether a polynomial is SOS can be exactly cast as a semidefinite program \cite{Parrilo:00} that can be easily solved using semidefinite programming solvers such as SeDuMi \cite{Sturm:01a}. The package SOSTOOLS \cite{sostools3} can be used to formulate and solve SOS programs in a convenient way.

\blue{In the SOS approach the conservatism may come from two different sources: the polynomial approximation of the infinite-dimensional decision variables, the choice of the degrees of the polynomial approximations of the infinite-dimensional decision variables, the degrees of the multipliers of the Positivstellensatz,  and the consideration of SOS constraints as a proxy for nonnegativity constraints on polynomials. We show below that considering polynomials is not restrictive in the present case:}

\blue{\begin{theorem}\label{th:polySOS}
  The following statements are equivalent:
  \begin{enumerate}[(a)]
    \item There exist a continuously differentiable vector-valued function $\zeta:\mathbb{R}\mapsto\mathbb{R}^n$ and a scalar $\gamma$ such that the conditions of Theorem \ref{th:rangeDTLinfty}, (a) hold.
    \item There exist a polynomial vector-valued function $\zeta:\mathbb{R}\mapsto\mathbb{R}^n$ and a scalar $\gamma$ such that the conditions of Theorem \ref{th:rangeDTLinfty}, (a) hold.
  \end{enumerate}
\end{theorem}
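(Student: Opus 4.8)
The direction (b)$\Rightarrow$(a) is trivial, since a polynomial is continuously differentiable. For the converse, the plan is to approximate a $C^1$ solution of the differential--difference inequalities \eqref{eq:rangeDTLinftyZ}--\eqref{eq:rangeDTLinfty} by a polynomial, exploiting the Weierstrass approximation theorem together with the fact that these inequalities are \emph{strict} and are imposed over the \emph{compact} intervals $[0,\Tmax]$ and $[\Tmin,\Tmax]$.

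First I would fix $\zeta\in C^1$ and $\gamma>0$ satisfying the conditions of Theorem~\ref{th:rangeDTLinfty} (i.e.\ the differential--difference inequalities \eqref{eq:rangeDTLinftyZ}--\eqref{eq:rangeDTLinfty}). Because $A(\cdot),E_c(\cdot),C_c(\cdot),F_c(\cdot)$ are continuous (hence bounded) on $[0,\Tmax]$ and $J,E_d,C_d,F_d$ are constant, each of the four left-hand sides is a continuous vector-valued function on a compact set that is strictly negative componentwise, so there is a uniform margin $\delta>0$ with each of them $\le -\delta\mathds{1}$ on its interval and $\zeta(\tau)\ge\delta\mathds{1}$ on $[0,\Tmax]$. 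Since $\dot\zeta$ is continuous on $[0,\Tmax]$, the Weierstrass theorem provides, for every $\eta>0$, a polynomial $q:\mathbb{R}\to\mathbb{R}^n$ with $\sup_{\tau\in[0,\Tmax]}\|\dot\zeta(\tau)-q(\tau)\|_\infty<\eta$; I then set
\begin{equation*}
  \hat\zeta(\tau):=\zeta(0)+\int_0^\tau q(s)\,\ds ,
\end{equation*}
which is a polynomial with $\hat\zeta(0)=\zeta(0)$, $\dot{\hat\zeta}=q$, and $\|\hat\zeta(\tau)-\zeta(\tau)\|_\infty\le\Tmax\,\eta$ for $\tau\in[0,\Tmax]$. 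Preserving $\hat\zeta(0)=\zeta(0)$ exactly is the key point: the jump inequalities tie the continuous part to the value $\zeta(0)$, and with this choice their perturbation reduces to $J(\hat\zeta-\zeta)(\theta)$.

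It then remains to absorb the approximation error into the margin. Substituting $\hat\zeta$ for $\zeta$ perturbs each of the four left-hand sides, componentwise, by at most $c\,\eta\,\mathds{1}$, where $c$ depends only on $\Tmax$ and on the suprema of $\|A(\cdot)\|$, $\|C_c(\cdot)\|$, $\|J\|$, $\|C_d\|$ over the relevant intervals; choosing $\eta<\delta/c$, and small enough that also $\hat\zeta(\tau)\ge\tfrac{\delta}{2}\mathds{1}>0$ on $[0,\Tmax]$, leaves every strict inequality intact. Hence the polynomial $\hat\zeta$, together with the same $\gamma$, satisfies the conditions of Theorem~\ref{th:rangeDTLinfty}, which is statement (b). I do not anticipate a real obstacle here: the argument is essentially the Weierstrass theorem plus compactness of the dwell-time intervals, and the only delicate points are the \emph{simultaneous} control of $\hat\zeta$ and $\dot{\hat\zeta}$ (resolved by approximating $\dot\zeta$ first and recovering $\hat\zeta$ by integration) and the exact matching of the initial value so that the coupled continuous/discrete inequalities remain valid.
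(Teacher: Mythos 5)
Your proposal is correct and follows essentially the same route as the paper: both prove (a)$\Rightarrow$(b) by applying the Weierstrass theorem to $\dot\zeta$ and recovering the polynomial approximant by integration from the initial value, so that $\zeta$ and $\dot\zeta$ are approximated simultaneously on the compact dwell-time intervals. You additionally spell out the margin argument (the uniform $\delta>0$ absorbing the perturbation of the strict inequalities, and the exact matching $\hat\zeta(0)=\zeta(0)$ for the jump conditions), which the paper leaves implicit.
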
}
\begin{proof}
  The proof that (b) implies (a) is immediate. To prove the converse, we simply need to prove that polynomials can approximate arbitrarily closely any continuously differentiable function and its derivative over a compact set. We prove that in the univariate case, that is, we pick the compact set $\mathcal{C}:=[a,b]$ and we consider the function $f$ which is differentiable on $(a,b)$. By Weierstrass's approximation Theorem, for any $\epsilon>0$ there exists a polynomial $q_n$ of degree $n$ such that
\begin{equation}
  \sup_{x\in[a,b]}|f'(x)-q_n(x)|\le\epsilon.
\end{equation}
Define $p_{n+1}(x):=f(\alpha)+\int_\alpha^xq_n(s)ds$. Clearly, we have that $p_{n+1}'(x)=q_n(x)$ and
\begin{equation}
  \begin{array}{rcl}
    |f(x)-p(x)| &=&     \left|\int_\alpha^x(f'(s)-q_n(s))ds\right|\le \int_\alpha^x|f'(s)-q_n(s))|\ds\\
                       &\le&      \epsilon|x-y|\le \epsilon(b-a).
  \end{array}
\end{equation}
This proves the result.
\end{proof}

\blue{Assuming the choice of the degrees of the polynomials is free, the second source of conservatism is that polynomial nonnegativity constraints are substituted by SOS constraints as they can be easily checked using semidefinite programming. However, since a univariate polynomial is nonnegative if and only if it a sum of squares, then replacing the nonnegativity constraints by SOS constraints does not introduce any extra conservatism. This leads to the following result:}
\blue{\begin{theorem}\label{prop:SOS1}
The following statements are equivalent:
\begin{enumerate}[(a)]
  \item  There exist a continuously differentiable vector-valued function $\zeta:\mathbb{R}\mapsto\mathbb{R}^n$ and a scalar $\gamma$ such that the conditions of Theorem \ref{th:rangeDTLinfty}, (a) hold.
  \item There exist sufficiently large degrees $d$ and $r$, a polynomial vector $\zeta:\mathbb{R}\mapsto\mathbb{R}^n$, of degree $d$, polynomial vectors $\psi_1,\psi_2:\mathbb{R}\mapsto\mathbb{R}^{n}$, $\psi_3:\mathbb{R}\mapsto\mathbb{R}^{p_c}$, $\psi_4:\mathbb{R}\mapsto\mathbb{R}^{p_d}$ of degree $2r$, and a scalar $\gamma\ge0$ such that
  \begin{enumerate}[(b1)]
    \item $\psi_1(\tau),\psi_2(\tau),\psi_3(\tau)$ and $\psi_4(\tau)$ are CSOS,
    \item $\zeta(0)-\epsilon \mathds{1}\ge0$ (or is CSOS),
    \item $\dot{\zeta}(\tau)-A(\tau)\zeta(\tau)-E_c(\tau)\mathds{1}-f(\tau)\psi_1(\tau)-\epsilon \mathds{1}$    is CSOS,
    \item $-J\zeta(\theta)+\zeta(0)-E_d\mathds{1}-g(\theta)\psi_2(\theta)-\epsilon \mathds{1}$   is CSOS,
     \item $-C_c(\tau)\zeta(\tau)-F_c(\tau)\mathds{1}+\gamma \mathds{1}-f(\tau)\psi_3(\tau)-\epsilon \mathds{1}$ is CSOS,
     \item $-C_d\zeta(\theta)-F_d\mathds{1}+\gamma \mathds{1}-g(\theta)\psi_4(\theta)-\epsilon \mathds{1}$  is CSOS
  \end{enumerate}
  where $f(\tau):=\tau(\Tmax-\tau)$ and $g(\theta):=(\theta-\Tmin)(\Tmax-\theta)$.
\end{enumerate}
\end{theorem}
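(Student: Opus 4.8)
The plan is to deduce this from Theorem~\ref{th:polySOS} together with the fact that, in one variable, the sum-of-squares relaxation of polynomial nonnegativity on a compact interval is lossless, so that the only new ingredient is a Positivstellensatz certificate and the rest is bookkeeping. Throughout I use, as is implicit in the statement of (b), that $A(\tau),E_c(\tau),C_c(\tau),F_c(\tau)$ depend polynomially on the timer (otherwise the CSOS constraints in (b) are not even well-posed), so that once $\zeta$ is a polynomial vector every bracketed expression in (b3)--(b6) is a polynomial vector in its argument. The direction (b)$\Rightarrow$(a) is then immediate: a CSOS polynomial vector is componentwise nonnegative on all of $\mathbb{R}$, and on $\{\tau:f(\tau)\ge0\}=[0,\Tmax]$ the product $f(\tau)\psi_1(\tau)$ is componentwise nonnegative since $\psi_1$ is CSOS, so (b3) forces $-\dot{\zeta}(\tau)+A(\tau)\zeta(\tau)+E_c(\tau)\mathds{1}\le-\epsilon\mathds{1}<0$ for all $\tau\in[0,\Tmax]$, i.e. the first inequality of \eqref{eq:rangeDTLinftyZ}; the same observation applied to (b5), and to (b4) and (b6) on $\{\theta:g(\theta)\ge0\}=[\Tmin,\Tmax]$, gives the remaining inequalities of \eqref{eq:rangeDTLinftyZ}--\eqref{eq:rangeDTLinfty}, while (b2) gives $\zeta(0)>0$, and a polynomial vector is in particular continuously differentiable, so (a) holds.

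For (a)$\Rightarrow$(b) I would first apply Theorem~\ref{th:polySOS} to replace the continuously differentiable solution by a polynomial one, still denoted $\zeta$, satisfying the same strict inequalities of Theorem~\ref{th:rangeDTLinfty}. Form the polynomial vectors obtained by moving each inequality to one side: $p_1:=\dot{\zeta}-A\zeta-E_c\mathds{1}$ and $p_3:=-C_c\zeta-F_c\mathds{1}+\gamma\mathds{1}$, which are componentwise positive on the compact set $[0,\Tmax]$, and $p_2(\theta):=-J\zeta(\theta)+\zeta(0)-E_d\mathds{1}$ and $p_4(\theta):=-C_d\zeta(\theta)-F_d\mathds{1}+\gamma\mathds{1}$, which are componentwise positive on $[\Tmin,\Tmax]$. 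By continuity and compactness one may fix a single $\epsilon>0$ with $p_1,p_3\ge\epsilon\mathds{1}$ on $[0,\Tmax]$, $p_2,p_4\ge\epsilon\mathds{1}$ on $[\Tmin,\Tmax]$, and $\zeta(0)\ge\epsilon\mathds{1}$, the last giving (b2). It then remains to produce the multipliers: each component of $p_1-\epsilon\mathds{1}$ is a univariate polynomial that is positive on $[0,\Tmax]=\{\tau:f(\tau)\ge0\}$, and since the quadratic module generated by $f(\tau)=\tau(\Tmax-\tau)$ is archimedean --- e.g. $\Tmax^2-\tau^2=(\tau-\Tmax)^2+2f(\tau)$ lies in it --- Putinar's Positivstellensatz \cite{Putinar:93} represents that component as $\sigma_0(\tau)+f(\tau)\sigma_1(\tau)$ with $\sigma_0,\sigma_1$ sums of squares. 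Stacking the $\sigma_1$'s of the $n$ components into a CSOS vector $\psi_1$ makes $p_1-\epsilon\mathds{1}-f\psi_1$ CSOS, which is exactly (b3); running the same argument for $p_3$ on $[0,\Tmax]$ and for $p_2,p_4$ on $[\Tmin,\Tmax]=\{\theta:g(\theta)\ge0\}$ (whose generating module is archimedean for the same reason) yields $\psi_3$, and $\psi_2,\psi_4$, hence (b5), (b4) and (b6). Finally one takes the degree $d$ of $\zeta$ and a common degree $2r$ for the $\psi_i$ large enough to accommodate these representations.

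The substantive step is the invocation of the lossless univariate Positivstellensatz: because in one variable a polynomial positive on a compact interval belongs to the (archimedean) quadratic module generated by that interval's defining polynomial, passing from polynomial nonnegativity constraints to CSOS constraints costs nothing. Everything else --- the reduction to polynomial $\zeta$ via Theorem~\ref{th:polySOS}, the uniform choice of $\epsilon$ over the finitely many strict inequalities, and the algebra with $f$ and $g$ --- is routine. The one point that must be handled with care is that this reasoning, and statement (b) itself, presuppose polynomial dependence of the system data on the timer; if the data were merely continuous one would first have to approximate them, at the cost of reintroducing some conservatism.
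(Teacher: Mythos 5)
Your proposal is correct and follows the same overall route as the paper: the forward direction (b)$\Rightarrow$(a) by evaluating the certificates on $\{f\ge0\}$ and $\{g\ge0\}$, and the converse by first invoking Theorem~\ref{th:polySOS} to pass to a polynomial $\zeta$ and then closing the strict inequalities with a uniform $\epsilon>0$. Where you differ is in the final, substantive step: the paper's proof justifies the existence of the certificates only by the statement that a univariate polynomial is nonnegative if and only if it is a sum of squares, which as written certifies membership in the SOS cone once a suitable $\psi_i$ is in hand but does not explain how the multipliers $\psi_i$ are produced; you supply exactly this missing construction by appealing to Putinar's Positivstellensatz for the archimedean quadratic module generated by $f$ (respectively $g$), componentwise, and stacking the resulting SOS multipliers into CSOS vectors. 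This makes your argument more complete than the paper's; one could equally cite the classical Markov--Luk\'acs representation for univariate polynomials nonnegative on an interval, which would in addition give explicit degree bounds. One cosmetic slip: after fixing $\epsilon$ with $p_i\ge\epsilon\mathds{1}$ on the relevant interval, the polynomial $p_i-\epsilon\mathds{1}$ is only nonnegative there, whereas Putinar requires strict positivity; this is repaired by taking the $\epsilon$ appearing in the certificates to be half of the minimum of the $p_i$ over the compact sets. Your closing caveat about the need for polynomial (or polynomially approximated) dependence of the system data on the timer is also well taken and is indeed left implicit in the paper.
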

\begin{proof}
\textbf{Proof that (b) implies (a).} This is fairly standard and uses the fact that the feasibility of the SOS constraints imply that of the constraints in Theorem \ref{th:rangeDTLinfty}, (a). For instance, (b3) implies that $\dot{\zeta}(\tau)+A(\tau)\zeta(\tau)\mathds{1}+E_c(\tau)\mathds{1}$ is nonpositive over $\tau\in[0,\Tmax]$. The same reasoning applies to the other conditions.\\

\noindent\textbf{Proof that (a) implies (b).} To prove the converse, we first note that by virtue of Theorem \ref{th:polySOS}, one can consider a polynomial vector-valued function $\zeta:\mathbb{R}\mapsto\mathbb{R}^n$ without introducing any conservatism. Then, using the fact that the negativity conditions in Theorem \ref{th:rangeDTLinfty}, (a) can be closed using a small enough $\epsilon>0$. Then, using the fact that a univariate polynomial is nonnegative if and only if the polynomial is a sum of squares yields the result.
\end{proof}}

\blue{It is worth mentioning that necessity is only achieved if we let the degrees of the polynomials to be arbitrary. If they are fixed, then the conditions (b) is only sufficient for (a). In this regard, for any fixed polynomial degree, one can only compute an upper-bound on the actual $L_\infty\times\ell_\infty$ performance level. Note, however, that this bound may be accurate enough as this will be further illustrated in the examples of the next section where the computed performance levels will be shown to be very close to the ones that can be obtained with this method. Numerical simulations will also provide lower bounds for the performance levels, which will turn out to be close to the computed upper-bounds, demonstrating then the accuracy of the approach.}

%

\subsection{Examples}

\begin{example}[Arbitrary and minimum dwell-time]
  Let us consider here the system \eqref{eq:mainsyst2} with the matrices:
\begin{equation}\label{eq:ex1}
\begin{array}{rclcrclcrclcrcl}
    A&=&\begin{bmatrix}
    -1 & 0\\
    1 & -2
  \end{bmatrix}, &&E_c&=&\begin{bmatrix}
    0.1\\
    1.1
  \end{bmatrix},&& J&=&\begin{bmatrix}
0.1 & 1\\
0.1  & 0.1
  \end{bmatrix},&&  E_d&=&\begin{bmatrix}
    0.3\\
    0.3
  \end{bmatrix},\\
    C_c&=&\begin{bmatrix}
    0 & 1
  \end{bmatrix},&& F_c&=&0.3,&& C_d&=&\begin{bmatrix}
    0 & 1
  \end{bmatrix},&& F_d&=&0.03.
\end{array}
\end{equation}
\textbf{Arbitrary dwell-time.} Solving for the conditions of Theorem \ref{th:arbDTLinfty}, we get the value $\gamma=1.925$ as the minimum upper-bound for the hybrid $L_\infty\times\ell_\infty$-gain. The SOS program has 11/8 primal/dual variables, and solves in 0.73 seconds, including preprocessing. 

\noindent\textbf{Minimum dwell-time.} We compare now to the case where we impose some lower bound on the dwell-time value. To this aim, we evaluate the minimum upper-bound for the hybrid $L_\infty\times\ell_\infty$-gain for several minimum dwell-time values using Theorem \ref{th:minDTLinfty} and Proposition \ref{prop:nodiff} (no constraint on the differentiability of $\zeta(\tau)$ at $\tau=\bar T$). We obtain the results depicted in Figure \ref{fig:gridding} where we can observe the decrease of the gain as the minimum dwell-time increases. We use polynomials of degree 4. This leads to an SOS program having \blue{96/31} primal/dual variables which solves in approximately 1 second, including preprocessing. When the system is purely continuous, the $L_\infty$-gain is given by $-C_cA^{-1}E_c+F_c= 0.9$ which means that we should have that the exact gain should converge to this value as the minimum dwell-time goes to infinity. Conversely, when the minimum dwell-time goes to zero, we should, and actually here, we do recover the value of 1.925 obtained in the arbitrary dwell-time case.

\begin{figure}
  \centering
  \includegraphics[width=0.8\textwidth]{./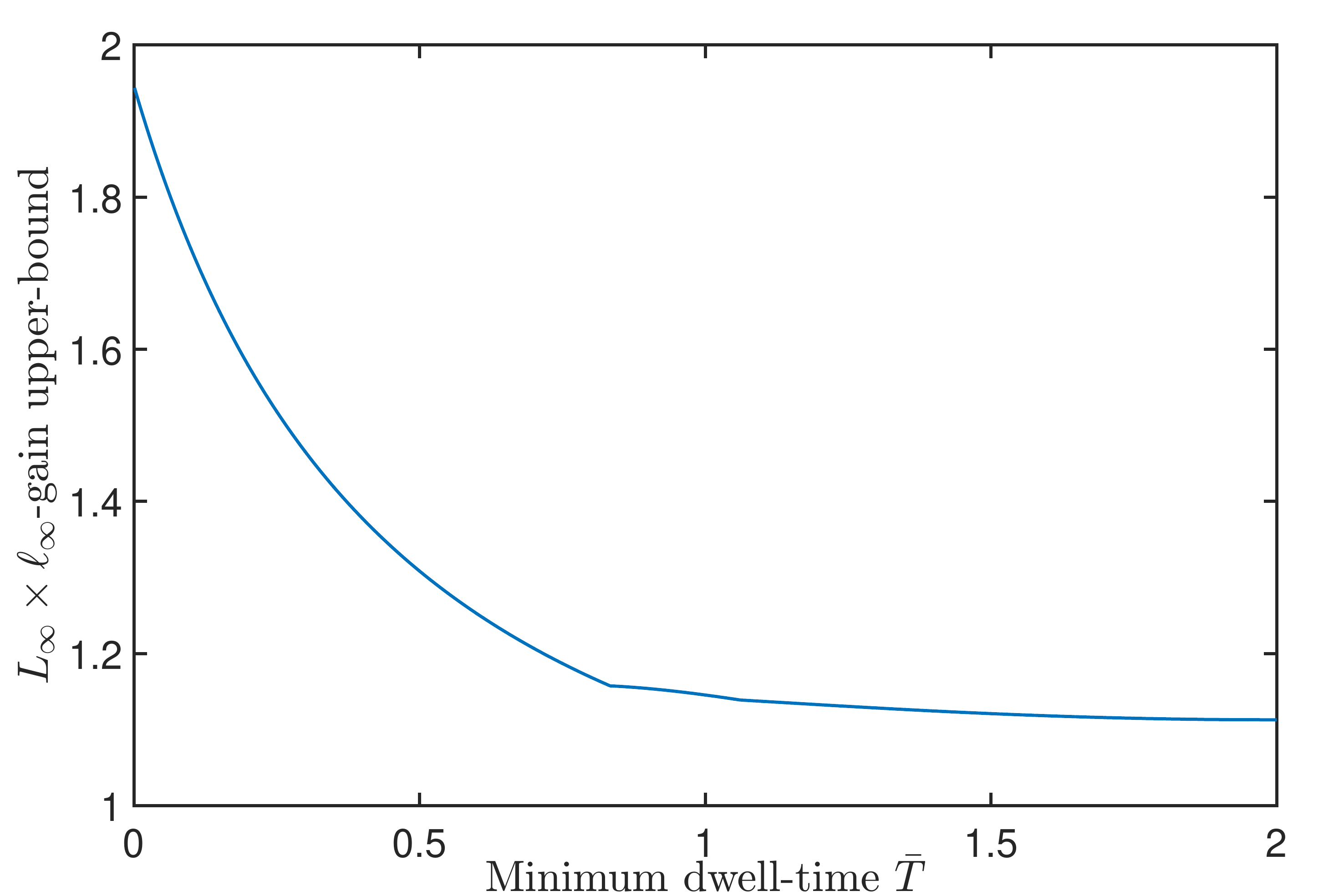}
  \caption{Evolution of the computer upper-bound for the hybrid $L_\infty\times\ell_\infty$-gain of the system \eqref{eq:ex1} as a function of the minimum dwell-time value $\bar T$ computed using Theorem \ref{th:minDTLinfty} using polynomials of degree 4.}\label{fig:gridding}
\end{figure}
\end{example}

\begin{example}[Constant and range dwell-time]
  Let us consider now the system \eqref{eq:mainsyst2} with matrices
\begin{equation}\label{eq:ex2}
\begin{array}{rclcrclcrclcrcl}
    A&=&\begin{bmatrix}
    -2-\tau & 1\\
    0 & 1+\frac{\tau}{2}
  \end{bmatrix},&& E_c&=&\begin{bmatrix}
    0.1+\tau\\
    0.1+\tau^2
  \end{bmatrix},&&J&=&\begin{bmatrix}
1.1 & 0\\
0 & 0.1
  \end{bmatrix},&& E_d&=&\begin{bmatrix}
    0.3\\
    0.3
  \end{bmatrix},\\
  C_c&=&\begin{bmatrix}
    0 & 1+\tau^2/2
  \end{bmatrix}, &&F_c&=&0.1(1+\tau), &&C_d&=&\begin{bmatrix}
    0 & 1
  \end{bmatrix},&&F_d&=&0.1.
\end{array}
\end{equation}
\noindent\textbf{Constant dwell-time.} Using polynomials of degree 4 and solving for the conditions of Theorem \ref{th:cstDTLinfty} with either $\bar T=0.3$ or $\bar T=0.5$. We get the values $\gamma=0.70386$ and $\gamma=1.0517$ as the minimum upper-bound for the hybrid $L_\infty\times\ell_\infty$-gain, respectively. In both cases, the SOS program has 93/28 primal/dual variables, and solves in 2.01 seconds, including preprocessing. Simulating the system 100 times with $x_0=0$ together with the inputs $w_c\equiv 1$ and $w_d\equiv 1$, and random dwell-time sequences satisfying the minimum dwell-time condition, we obtain the lower-bound on the hybrid $L_\infty\times\ell_\infty$-gain given by 1.0131, demonstrating the accuracy of the obtained numerical results, in spite of its conservatism.

\noindent\textbf{Range dwell-time.} Using polynomials of degree 4 and solving for the conditions of Theorem \ref{th:rangeDTLinfty} with $\Tmin=0.3$ and $\Tmax=0.5$, we get the value $\gamma=1.2239$ as the minimum upper-bound for the hybrid $L_\infty\times\ell_\infty$-gain. The SOS program has 182/50 primal/dual variables, and solves in 2.91 seconds, including preprocessing. Choosing polynomials of degree 6 yields the value $\gamma=1.0855$ and a solving time of roughly 5 seconds together with 296/64 as the number of primal/dual variables. We can see that this latter value is very close to both the lower bound obtained by simulation and the value obtained in the constant dwell-time case. This demonstrates the accuracy of the approach for this system.
\end{example}

\begin{example}[Constant and minimum dwell-time]
Let us consider here the example from \cite{Briat:13d} to which we add disturbances. The matrices of the system are given by
\begin{equation}
\begin{array}{rclcrclcrclcrcl}
    A&=&\begin{bmatrix}
    -1 & 0\\
    1+\tau & -2-\tau^2
  \end{bmatrix}, &&E_c&=&\begin{bmatrix}
    0.1\\
    0.1+\tau^2
  \end{bmatrix},&& J&=&\begin{bmatrix}
2 & 1\\
1 & 3
  \end{bmatrix},&&  E_d&=&\begin{bmatrix}
    0.3\\
    0.3
  \end{bmatrix},\\
    C_c&=&\begin{bmatrix}
    \tau & 1
  \end{bmatrix},&& F_c&=&0.03+0.1\tau,  &&C_d&=&\begin{bmatrix}
    0 & 1
  \end{bmatrix},&& F_d&=&0.03.
\end{array}
\end{equation}
\noindent\textbf{Constant dwell-time.} Using polynomials of degree 4 and solving for the conditions of Theorem \ref{th:cstDTLinfty} with $\bar T=2$, we get the value $\gamma=3.2278$ as the minimum upper-bound for the hybrid $L_\infty\times\ell_\infty$-gain. The SOS program has 93 primal and 28 dual variables and solves in 1.74 seconds, including preprocessing. Simulating the system 100 times with $x_0=0$ together with the inputs $w_c\equiv 1$ and $w_d\equiv 1$, and random dwell-time sequences satisfying the minimum dwell-time condition, we obtain the lower-bound on the hybrid $L_\infty\times\ell_\infty$-gain given by 3.1951, which suggests that the computed value is rather tight.

\noindent\textbf{Minimum dwell-time.} Using polynomials of degree 4 and solving for the conditions of Theorem \ref{th:minDTLinfty} and Proposition \ref{prop:nodiff} (no constraint on the differentiability of $\zeta(\tau)$ at $\tau=\bar T$) with $\bar T=2$, we get the value $\gamma=3.2364$ as the minimum upper-bound for the hybrid $L_\infty\times\ell_\infty$-gain. The SOS program has 96 primal and 31 dual variables and solves in 3.31 seconds, including preprocessing. The obtained value is very close to both the lower bound obtained by simulation and the value obtained in the constant dwell-time case.
\end{example}

\section{Stabilization with hybrid $L_\infty\times\ell_\infty$-performance of linear positive impulsive systems under dwell-time constraints}\label{sec:Linf_imp_stabz}

This section is devoted to the application of the stability and performance analysis results obtained in the previous sections. We, therefore, derive state-feedback stabilization conditions for timer-dependent linear positive impulsive systems of the form \eqref{eq:mainsyst2} under arbitrary, constant,range and minimum dwell-time constraints.

\subsection{Arbitrary dwell-time}\label{sec:Linf_imp_stabz_arb}

We consider here the following class of state-feedback control law
\begin{equation}\label{eq:arbDTK}
\begin{array}{rcl}
    u_c(t)&=&K_cx(t)\\
    u_d(k)&=&K_dx(t_k)
\end{array}
\end{equation}
where $K_c\in\mathbb{R}^{m_c\times n}$ and $K_d\in\mathbb{R}^{m_d\times n}$ are the gains of the controller to be determined. The rationale for considering such structure is to allow for the derivation of convex design conditions. This leads to the following result:
\begin{theorem}\label{th:stabzcstDTLinft}
Assume that there exist matrices $X\in\mathbb{D}_{\succ0}^n$, $U\in\mathbb{R}^{m_c\times n}$, $U_d\in\mathbb{R}^{m_d\times n}$ and scalars $\eps,\alpha,\gamma>0$ such that the conditions
    \begin{equation}\label{eq:stabz_arb1}
\begin{array}{rl}
            AX+B_cU_c+\alpha I_n&\ge0\\
           JX+B_dU_d&\ge0\\
           C_cX+D_cU_c&\ge0\\
            C_dX+B_dU_d&\ge0
  \end{array}
\end{equation}
           and
 \begin{equation}\label{eq:stabz_arb2}
\begin{array}{rl}
           \left[AX+B_c U_c\right]\mathds{1}+E_c \mathds{1}&\le0\\
           \left[JX+B_dU_d-X+\eps I\right]\mathds{1}+E_d\mathds{1}&\le0\\
           \left[C_c X+D_c U_c\right]\mathds{1}+F_c \mathds{1}-\gamma \mathds{1}&\le0\\
           \left[C_cX+D_dU_d\right]\mathds{1}+F_d\mathds{1}-\gamma \mathds{1}&\le0.
             \end{array}
\end{equation}

Then, the closed-loop system \eqref{eq:mainsyst2}, \eqref{eq:arbDTK} is asymptotically stable and has hybrid $L_\infty\times\ell_\infty$-gain less than $\gamma$ with the controller gains $K_c= U_cX^{-1}$ and $K_d=X^{-1}U_d$.
\end{theorem}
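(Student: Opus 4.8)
The plan is to reduce the claim to the analysis result for constant-matrix positive impulsive systems under arbitrary dwell-time, i.e. Theorem~\ref{th:arbDTLinfty}, by means of a diagonal congruence. First I would close the loop: substituting the static feedback \eqref{eq:arbDTK} into the (timer-invariant) system \eqref{eq:mainsyst2} yields an impulsive system with no control inputs and with matrices $\bar{A}:=A+B_cK_c$, $\bar{J}:=J+B_dK_d$, $\bar{C}_c:=C_c+D_cK_c$, $\bar{C}_d:=C_d+D_dK_d$, the remaining data $E_c,E_d,F_c,F_d$ being unchanged. By Theorem~\ref{th:arbDTLinfty} it then suffices to produce a vector $\lambda\in\mathbb{R}^n_{>0}$ and a scalar $\gamma>0$ such that the closed loop is internally positive ($\bar{A}$ Metzler, and $\bar{J},\bar{C}_c,\bar{C}_d,E_c,E_d,F_c,F_d$ nonnegative) and such that \eqref{eq:arbDTLinfty1}--\eqref{eq:arbDTLinfty2} hold with $A,J,C_c,C_d$ replaced by their barred versions.

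The substantive step is the linearizing change of variables $X=\diag_{i=1}^n(\lambda_i)\in\mathbb{D}^n_{\succ0}$, $U_c=K_cX$, $U_d=K_dX$ (equivalently $K_c=U_cX^{-1}$, $K_d=U_dX^{-1}$, well defined since $X$ is invertible). The key observation is that $X$ is diagonal and positive, so for any matrix $M$ of compatible size one has $MX\ge0$ componentwise iff $M\ge0$ componentwise, and $MX+\alpha I_n\ge0$ for some $\alpha>0$ iff $M$ is Metzler --- the slack $\alpha I_n$ merely absorbing the (bounded, possibly negative) diagonal of $M$. Reading \eqref{eq:stabz_arb1} through this dictionary shows that those four inequalities are precisely the internal-positivity conditions for the closed loop with $\lambda:=X\mathds{1}$. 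For \eqref{eq:stabz_arb2} I would use $X\mathds{1}=\lambda$ together with the identities $(AX+B_cU_c)\mathds{1}=\bar{A}\lambda$, $(JX+B_dU_d-X)\mathds{1}=(\bar{J}-I)\lambda$, $(C_cX+D_cU_c)\mathds{1}=\bar{C}_c\lambda$, $(C_dX+D_dU_d)\mathds{1}=\bar{C}_d\lambda$, which turn \eqref{eq:stabz_arb2} into $\bar{A}\lambda+E_c\mathds{1}\le0$, $(\bar{J}-I)\lambda+E_d\mathds{1}\le-\eps\mathds{1}$, $\bar{C}_c\lambda+F_c\mathds{1}\le\gamma\mathds{1}$, $\bar{C}_d\lambda+F_d\mathds{1}\le\gamma\mathds{1}$ --- exactly the conditions of Theorem~\ref{th:arbDTLinfty} for the closed loop. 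Invoking that theorem yields asymptotic (indeed uniform exponential) stability under arbitrary dwell-time and hybrid $L_\infty\times\ell_\infty$-gain at most $\gamma$; undoing the substitution gives the controller gains $K_c=U_cX^{-1}$ and $K_d=U_dX^{-1}$.

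The one delicate point --- and the main thing I would expect to be scrutinised --- is the passage from the non-strict inequalities appearing in \eqref{eq:stabz_arb2} to the strict inequalities required by Theorem~\ref{th:arbDTLinfty}. The $\eps I_n$ slack already supplies a strict margin for the jump inequality; for the continuous inequality and the two output inequalities I would recover strictness by a standard perturbation, replacing $\gamma$ by $\gamma+\delta$ with $\delta>0$ arbitrarily small and, if necessary, perturbing $\lambda$ slightly within the positive cone (this leaves internal positivity untouched, since it does not depend on $\lambda$, and the newly created strict margins absorb the perturbation). Apart from this bookkeeping the argument is routine: no new estimate is needed beyond the already-proved Theorem~\ref{th:arbDTLinfty}, and the only genuine content is the diagonal congruence, which is what convexifies the synthesis problem.
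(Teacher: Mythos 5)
Your proposal is correct and follows essentially the same route as the paper's own (much terser) proof: the linearizing change of variables $U_c=K_cX$, $U_d=K_dX$ with $X$ diagonal, the identification $\lambda=X\mathds{1}$ (the paper writes $\zeta=X\mathds{1}$), the reading of \eqref{eq:stabz_arb1} as closed-loop internal positivity via Proposition \ref{prop:positive}, and the reduction of \eqref{eq:stabz_arb2} to the arbitrary dwell-time conditions of Theorem \ref{th:arbDTLinfty}. The only thing you add is the explicit perturbation argument reconciling the non-strict inequalities of \eqref{eq:stabz_arb2} with the strict ones of Theorem \ref{th:arbDTLinfty}, a point the paper silently glosses over, so your write-up is if anything more complete.
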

\begin{proof}
  From the diagonal structure of the matrix-valued function $X(\cdot)$, the controller gain changes of variables and Proposition \ref{prop:positive}, we can observe that the inequalities \eqref{eq:stabz_arb1} are readily equivalent to saying that the closed-loop system is positive. Using now the changes of variables $\zeta=:X\mathds{1}$ and the controller gain changes of variables, we get that the feasibility of \eqref{eq:stabz_arb2} is equivalent to saying that the closed-loop system dynamics verifies the stability and performance conditions of Theorem \ref{th:arbDTLinfty} with the same $\zeta$. The proof is completed.
\end{proof}

It seems interesting to point out that less conservative results could be obtained by considering a timer-dependent continuous-time state-feedback controller. However, since polynomial solutions are sought, this will result in arbitrarily large values for the gain when the dwell-time is large. This is the reason why a constant gain may be more interesting from a practical standpoint.

\subsection{Constant dwell-time}\label{sec:Linf_imp_stabz_cst}

 We consider here the following class of state-feedback control law
\begin{equation}\label{eq:cstDTK}
\begin{array}{rcl}
    u_c(t_k+\tau)&=&K_c(\tau)x(t_k+\tau),\ \tau\in(0,\bar{T}]\\
    u_d(k)&=&K_dx(t_k)
\end{array}
\end{equation}
where $K_c:[0,\bar{T}]\mapsto\mathbb{R}^{m_c\times n}$ and $K_d\in\mathbb{R}^{m_d\times n}$ are the gains of the controller to be determined. The rationale for considering such structure is to allow for the derivation of convex design conditions. This leads to the following result:
\begin{theorem}\label{th:stabzcstDTLinfty}
  The following statements are equivalent:
  \begin{enumerate}[(a)]
    \item The closed-loop system \eqref{eq:mainsyst2},\eqref{eq:cstDTK} is exponentially stable under constant dwell-time $\bar{T}$ and the hybrid $L_\infty\times\ell_\infty$-gain of the transfer $(w_c,w_d)\mapsto(z_c,z_d)$ is at most $\gamma$.
    \item There exist a differentiable matrix-valued function $X:[0,\bar{T}]\mapsto\mathbb{D}^n$, $X(0)\succ0$, a matrix-valued function $U_c:[0,\bar{T}]\mapsto\mathbb{R}^{m_c\times n}$, a matrix $U_d\in\mathbb{R}^{m_d\times n}$ and scalars $\eps,\alpha,\gamma>0$ such that the conditions
    \begin{equation}\label{eq:stabz_cst1}
\begin{array}{rl}
            A(\tau)X(\tau)+B_c(\tau)U_c(\tau)+\alpha I_n&\ge0\\
           JX(\bar{T})+B_dU_d&\ge0\\
           C_c(\tau)X(\tau)+D_c(\tau)U_c(\tau)&\ge0\\
            C_dX(\bar{T})+B_dU_d&\ge0
  \end{array}
\end{equation}
           and
 \begin{equation}\label{eq:stabz_cst2}
\begin{array}{rl}
           \left[\dot{X}(\tau)+A(\tau)X(\tau)+B_c(\tau)U_c(\tau)\right]\mathds{1}+E_c(\tau)\mathds{1}&\le0\\
           \left[JX(\bar{T})+B_dU_d-X(0)+\eps I\right]\mathds{1}+E_d\mathds{1}&\le0\\
           \left[C_c(\tau)X(\tau)+D_c(\tau)U_c(\tau)\right]\mathds{1}+F_c(\tau)\mathds{1}-\gamma \mathds{1}&\le0\\
           \left[C_cX(\bar{T})+D_dU_d\right]\mathds{1}+F_d\mathds{1}-\gamma \mathds{1}&\le0
             \end{array}
\end{equation}
hold for all $\tau\in[0,\bar{T}]$. Moreover, the corresponding controller gains are given by $K_c(\tau)= U_c(\tau)X(\tau)^{-1}$ and $K_d=X(\bar{T})^{-1}U_d$.
  \end{enumerate}
\end{theorem}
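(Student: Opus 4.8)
The plan is to deduce the stabilization conditions from the analysis result of Theorem~\ref{th:cstDTLinfty} applied to the closed-loop system, via a diagonal change of variables that turns the bilinear products $B_c(\tau)K_c(\tau)$ and $B_dK_d$ into the free decision matrices $U_c(\tau)$ and $U_d$. First I would form the closed loop: substituting \eqref{eq:cstDTK} into \eqref{eq:mainsyst2} (with $w_c,w_d$ as the only exogenous inputs) yields again a timer-dependent impulsive system of the form \eqref{eq:mainsyst2}, with $A(\tau)+B_c(\tau)K_c(\tau)$ in place of $A(\tau)$, $C_c(\tau)+D_c(\tau)K_c(\tau)$ in place of $C_c(\tau)$, $J+B_dK_d$ in place of $J$, $C_d+D_dK_d$ in place of $C_d$, and $E_c,E_d,F_c,F_d$ unchanged. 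Since $X(\tau)$ is diagonal and positive definite, setting $U_c(\tau):=K_c(\tau)X(\tau)$ and $U_d:=K_dX(\bar T)$ (equivalently $K_c(\tau)=U_c(\tau)X(\tau)^{-1}$, $K_d=U_dX(\bar T)^{-1}$) makes each product of a closed-loop matrix with the corresponding $X$ an affine function of $(X,U_c,U_d)$; moreover, right-multiplication of a Metzler (resp. nonnegative) matrix by a positive diagonal matrix preserves the sign pattern, so by Proposition~\ref{prop:positive} the inequalities \eqref{eq:stabz_cst1} are exactly the statement that the closed-loop system is internally positive, the slack $\alpha I_n$ serving only to dominate the possibly negative diagonal of the closed-loop Metzler matrix, uniformly in $\tau\in[0,\bar T]$ by boundedness of $X$ and of the data.

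Next I would run the performance change of variables. For a diagonal $X(\tau)\succ0$ put $\zeta(\tau):=X(\tau)\mathds{1}$; this is a positive, continuously differentiable vector with $\dot\zeta(\tau)=\dot X(\tau)\mathds{1}$ and $\zeta(0)>0$, and conversely $X(\tau)=\diag(\zeta(\tau))$ recovers a diagonal $X$ from any such $\zeta$, so the correspondence is a bijection. With $U_c(\tau)=K_c(\tau)X(\tau)$ one has $B_c(\tau)K_c(\tau)\zeta(\tau)=B_c(\tau)U_c(\tau)\mathds{1}$ and $D_c(\tau)K_c(\tau)\zeta(\tau)=D_c(\tau)U_c(\tau)\mathds{1}$, and with $U_d=K_dX(\bar T)$, $B_dK_d\zeta(\bar T)=B_dU_d\mathds{1}$, $D_dK_d\zeta(\bar T)=D_dU_d\mathds{1}$. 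Consequently the four inequalities of Theorem~\ref{th:cstDTLinfty}, statement \eqref{st:cstDTLinfty2} (i.e. \eqref{eq:cstDTLinfty1}--\eqref{eq:cstDTLinfty2}), written for the closed-loop matrices and this $\zeta$, are precisely the four inequalities \eqref{eq:stabz_cst2} for the corresponding $(X,U_c,U_d)$, the explicit $\varepsilon I$-slack in the jump inequality encoding the strict inequality $(J+B_dK_d)\zeta(\bar T)-\zeta(0)+E_d\mathds{1}<0$ and the standard strict/non-strict passage handling the other three.

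With these two reductions the equivalence closes up. For (b)$\Rightarrow$(a): \eqref{eq:stabz_cst1} gives internal positivity and \eqref{eq:stabz_cst2} gives statement \eqref{st:cstDTLinfty2} of Theorem~\ref{th:cstDTLinfty} for the closed loop, hence by that theorem the closed loop is exponentially stable under constant dwell-time $\bar T$ with hybrid $L_\infty\times\ell_\infty$-gain at most $\gamma$, the gains being read off as $K_c(\tau)=U_c(\tau)X(\tau)^{-1}$, $K_d=U_dX(\bar T)^{-1}$. For (a)$\Rightarrow$(b): the (internally positive) closed loop is exponentially stable with gain $\le\gamma$, so Theorem~\ref{th:cstDTLinfty} supplies a positive, differentiable $\zeta$ with $\zeta(0)>0$ solving \eqref{eq:cstDTLinfty1}--\eqref{eq:cstDTLinfty2} for the closed-loop matrices; taking $X(\tau):=\diag(\zeta(\tau))$, $U_c(\tau):=K_c(\tau)X(\tau)$, $U_d:=K_dX(\bar T)$ yields \eqref{eq:stabz_cst2}, while \eqref{eq:stabz_cst1} follows from closed-loop positivity and boundedness of $\zeta$ upon choosing $\alpha$ and $\varepsilon$ appropriately.

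The only places the argument needs care — and hence the main obstacle — are: that the restriction to diagonal $X$ is not a loss of generality but is exactly matched to the max-separable ($\ell_\infty$) Lyapunov certificate underlying Theorem~\ref{th:cstDTLinfty}, which is precisely what makes $B_c(\tau)K_c(\tau)\zeta(\tau)=B_c(\tau)U_c(\tau)\mathds{1}$ legitimate after the substitution (a general $X$ would not linearize the control term); the uniform-in-$\tau$ choice of the positivity slack $\alpha$ together with the strict-versus-closed inequality bookkeeping; and the fact that the direction (a)$\Rightarrow$(b) tacitly uses internal positivity of the designed closed loop, which should be read as part of the positive-stabilization specification.
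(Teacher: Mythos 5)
Your proposal is correct and follows essentially the same route as the paper's (much terser) proof: the diagonal change of variables $U_c(\tau)=K_c(\tau)X(\tau)$, $U_d=K_dX(\bar T)$ together with Proposition~\ref{prop:positive} identifies \eqref{eq:stabz_cst1} with closed-loop internal positivity, and the substitution $\zeta(\tau)=X(\tau)\mathds{1}$ maps \eqref{eq:stabz_cst2} bijectively onto the constant dwell-time conditions of Theorem~\ref{th:cstDTLinfty} for the closed loop. Your added remarks on why diagonality of $X$ is what linearizes the control terms, and your dimensionally consistent reading $K_d=U_dX(\bar T)^{-1}$, are both sound.
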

\begin{proof}
From the diagonal structure of the matrix-valued function $X(\cdot)$, the controller gain changes of variables and Proposition \ref{prop:positive}, we can observe that the inequalities \eqref{eq:stabz_cst1} are readily equivalent to saying that the closed-loop system is positive. Using now the changes of variables $\zeta(\tau)=X(\tau)\mathds{1}$ and the controller gain changes of variables, we get that the feasibility of \eqref{eq:stabz_cst2} is equivalent to saying that the closed-loop system dynamics verifies the constant dwell-time conditions of Theorem \ref{th:cstDTLinfty} with the same $\zeta(\tau)$. The proof is completed.
\end{proof}

\subsection{Range dwell-time}\label{sec:Linf_imp_stabz_range}

We consider here the following class of state-feedback control law
\begin{equation}\label{eq:rDTK}
\begin{array}{rcl}
    u_c(t_k+\tau)&=&K_c(\tau)x(t_k+\tau),\ \tau\in(0,T_k]\\
    u_d(k)&=&K_d(T_{k-1})x(t_k)
\end{array}
\end{equation}
where $K_c:[0,\Tmax]\mapsto\mathbb{R}^{m_c\times n}$ and $K_d:[\Tmin,\Tmax]\mapsto\mathbb{R}^{m_d\times n}$ are the gains of the controller to be determined. The rationale for considering such structure is to allow for the derivation of convex design conditions. This leads to the following result:
\begin{theorem}\label{th:stabzrDT}
Assume that there exist a differentiable matrix-valued function $X:[0,\Tmax]\mapsto\mathbb{D}^n$, $X(0)\succ0$, matrix-valued functions $U_c:[0,\Tmax]\mapsto\mathbb{R}^{m_c\times n}$, $U_d:[\Tmin,\Tmax]\mapsto\mathbb{R}^{m_d\times n}$ and scalars $\eps,\alpha,\gamma>0$ such that the conditions
\begin{equation}\label{eq:th1a}
\begin{array}{rl}
            A(\tau)X(\tau)+B_c(\tau)U_c(\tau)+\alpha I_n&\ge0\\
           JX(\theta)+B_dU_d\theta&\ge0\\
           C_c(\tau)X(\tau)+D_c(\tau)U_c(\tau)&\ge0\\
            C_dX(\theta)+B_dU_d\theta&\ge0
  \end{array}
\end{equation}
           and
 \begin{equation}\label{eq:th1b}
\begin{array}{rl}
           \left[\dot{X}(\tau)+A(\tau)X(\tau)+B_c(\tau)U_c(\tau)\right]\mathds{1}+E_c(\tau)\mathds{1}&\le0\\
           \left[JX(\theta)+B_dU_d(\theta)-X(0)+\eps I\right]\mathds{1}+E_d\mathds{1}&\le0\\
           \left[C_c(\tau)X(\tau)+D_c(\tau)U_c(\tau)\right]\mathds{1}+F_c(\tau)\mathds{1}-\gamma \mathds{1}&\le0\\
           \left[C_dX(\theta)+D_dU_d(\theta)\right]\mathds{1}+F_d\mathds{1}-\gamma \mathds{1}&\le0
             \end{array}
\end{equation}
%
%
        %
hold for all $\tau\in[0,\Tmax]$ and all $\theta\in[\Tmin,\Tmax]$. Then, the closed-loop system \eqref{eq:mainsyst2}, \eqref{eq:rDTK} is asymptotically stable and has hybrid $L_\infty\times\ell_\infty$-gain less than $\gamma$ with the controller gains $K_c(\tau)= U_c(\tau)X(\tau)^{-1}$ and $K_d(\theta)=X(\theta)^{-1}U_d(\theta)$.
\end{theorem}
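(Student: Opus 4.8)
The plan is to mirror the arguments used for Theorems~\ref{th:stabzcstDTLinft} and~\ref{th:stabzcstDTLinfty}: read the design inequalities \eqref{eq:th1a}--\eqref{eq:th1b} as the analysis conditions of Theorem~\ref{th:rangeDTLinfty} applied to the closed-loop data, via a diagonal change of variables. First I would write down the closed-loop system. Substituting \eqref{eq:rDTK} into \eqref{eq:mainsyst2} produces a timer-dependent impulsive system of the form \eqref{eq:mainsyst2} in which $A(\tau)$ is replaced by $A(\tau)+B_c(\tau)K_c(\tau)$, $C_c(\tau)$ by $C_c(\tau)+D_c(\tau)K_c(\tau)$, $J$ by $J+B_dK_d(\theta)$, and $C_d$ by $C_d+D_dK_d(\theta)$, where $\theta=T_{k-1}\in[\Tmin,\Tmax]$ is the length of the preceding flow interval, while the (nonnegative) matrices $E_c(\tau),E_d,F_c(\tau),F_d$ are unchanged. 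Since $X(\tau)\in\mathbb{D}^n$ with $X(0)\succ0$, I would first check that $X(\tau)\succ0$ for all $\tau\in[0,\Tmax]$, so that $K_c(\tau)=U_c(\tau)X(\tau)^{-1}$ and $K_d(\theta)=X(\theta)^{-1}U_d(\theta)$ are well defined; this follows from a standard positive-invariance argument once the substitution below is in place, using that the closed-loop drift will be Metzler and that $\zeta(0)=X(0)\mathds{1}>0$.

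Next, right-multiplying each of the four inequalities in \eqref{eq:th1a} by the nonnegative diagonal matrix $X(\tau)^{-1}$ (respectively $X(\theta)^{-1}$) and inserting the gain substitutions shows that $A(\tau)+B_c(\tau)K_c(\tau)$ is Metzler and that $J+B_dK_d(\theta)$, $C_c(\tau)+D_c(\tau)K_c(\tau)$, $C_d+D_dK_d(\theta)$ are nonnegative for all $\tau\in[0,\Tmax]$, $\theta\in[\Tmin,\Tmax]$; together with the nonnegativity of $E_c,E_d,F_c,F_d$, Proposition~\ref{prop:positive} gives that the closed-loop system is internally positive. Then I would set $\zeta(\tau):=X(\tau)\mathds{1}$, which is differentiable, satisfies $\zeta(\tau)>0$ and $\zeta(0)>0$, and --- crucially because $X$ is \emph{diagonal} --- obeys the identities $U_c(\tau)\mathds{1}=K_c(\tau)\zeta(\tau)$ and $U_d(\theta)\mathds{1}=K_d(\theta)\zeta(\theta)$. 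With these identities, the four lines of \eqref{eq:th1b} become precisely the inequalities \eqref{eq:rangeDTLinftyZ}--\eqref{eq:rangeDTLinfty} written for the closed-loop matrices, the $\eps I$ term (and, if one insists on strictness, replacing $\gamma$ by $\gamma+\delta$ for arbitrarily small $\delta>0$ and then letting $\delta\downarrow0$) supplying the strict inequalities demanded by Theorem~\ref{th:rangeDTLinfty}(b). Invoking the implication (b)$\Rightarrow$(a) together with the ``Moreover'' clause of Theorem~\ref{th:rangeDTLinfty} then yields that the closed loop \eqref{eq:mainsyst2},\eqref{eq:rDTK} is asymptotically stable under range dwell-time $[\Tmin,\Tmax]$ with hybrid $L_\infty\times\ell_\infty$-gain at most $\gamma$, which is the assertion.

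The only point that needs care is the dwell-time bookkeeping in the jump part: the jump gain used at $t_k$ is $K_d(T_{k-1})$, so the discrete inequalities involve the \emph{previous} interval length and must hold for every admissible value $\theta\in[\Tmin,\Tmax]$ --- but this is exactly the quantification already built into \eqref{eq:th1a}--\eqref{eq:th1b} and into Theorem~\ref{th:rangeDTLinfty}(b), so no genuine obstruction arises. Everything else is the routine diagonal change of variables; the sole source of conservatism is the restriction $X(\tau)\in\mathbb{D}^n$, which is precisely what makes the identity $U_c(\tau)\mathds{1}=K_c(\tau)\zeta(\tau)$ (and its discrete analogue) valid and hence the design conditions affine.
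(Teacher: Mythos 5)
Your proposal is correct and follows essentially the same route as the paper's own (very terse) proof: the diagonal change of variables $\zeta(\tau)=X(\tau)\mathds{1}$ with the gain substitutions $K_c=U_cX^{-1}$, $K_d=X^{-1}U_d$, reading \eqref{eq:th1a} as the positivity conditions of Proposition~\ref{prop:positive} for the closed loop and \eqref{eq:th1b} as the range dwell-time analysis conditions of Theorem~\ref{th:rangeDTLinfty} applied to the closed-loop matrices. The extra details you supply --- the identity $U_c(\tau)\mathds{1}=K_c(\tau)\zeta(\tau)$ made possible by the diagonal structure of $X$, the invertibility of $X(\tau)$, and the bookkeeping of $\theta=T_{k-1}$ in the jump inequalities --- are all left implicit in the paper and are correctly handled.
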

\begin{proof}
From the diagonal structure of the matrix-valued function $X(\cdot)$, the controller gain changes of variables and Proposition \ref{prop:positive}, we can observe that the inequalities \eqref{eq:th1a} are readily equivalent to saying that the closed-loop system is positive. Using now the changes of variables $\zeta(\tau)=X(\tau)\mathds{1}$ and the controller gain changes of variables, we get that the feasibility of \eqref{eq:th1b} is equivalent to saying that the closed-loop system dynamics verifies the range dwell-time conditions of Theorem \ref{th:rangeDTLinfty}, (a), with the same $\zeta(\tau)$. The proof is completed.
\end{proof}

Interestingly, it is possible to obtain a condition for the design of dwell-time independent controller through the use of Finsler's Lemma or, equivalent, the $S$-procedure. This is stated in the result below:
\begin{theorem}\label{th:stabzrDT2}
Assume that there exist a differentiable matrix-valued function $X:[0,\Tmax]\mapsto\mathbb{D}^n$, $X(0)\succ0$, a matrix-valued function $U_c:[0,\Tmax]\mapsto\mathbb{R}^{m_c\times n}$, some matrices $M\in\mathbb{D}^n_{\succ0}$, $U_d\in\mathbb{R}^{m_d\times n}$ and some scalars $\eps,\alpha,\gamma>0$ such that the conditions $X(\theta)\le M$,
\begin{equation}
\begin{array}{rl}
            A(\tau)X(\tau)+B_c(\tau)U_c(\tau)+\alpha I_n&\ge0\\
           JM+B_dU_d&\ge0\\
           C_c(\tau)X(\tau)+D_c(\tau)U_c(\tau)&\ge0\\
            C_dM+B_dU_d&\ge0
  \end{array}
\end{equation}
           and
 \begin{equation}
\begin{array}{rl}
           \left[\dot{X}(\tau)+A(\tau)X(\tau)+B_c(\tau)U_c(\tau)\right]\mathds{1}+E_c(\tau)\mathds{1}&\le0\\
           \left[JM+B_dU_d(\theta)-X(0)+\eps I\right]\mathds{1}+E_d\mathds{1}&\le0\\
           \left[C_c(\tau)X(\tau)+D_c(\tau)U_c(\tau)\right]\mathds{1}+F_c(\tau)\mathds{1}-\gamma \mathds{1}&\le0\\
           \left[C_dM+D_dU_d(\theta)\right]\mathds{1}+F_d\mathds{1}-\gamma \mathds{1}&\le0
             \end{array}
\end{equation}
hold for all $\tau\in[0,\Tmax]$ and all $\theta\in[\Tmin,\Tmax]$. Then, the closed-loop system \eqref{eq:mainsyst2}, \eqref{eq:rDTK} is asymptotically stable and has hybrid $L_\infty\times\ell_\infty$-gain less than $\gamma$ with the controller gains $K_c(\tau)= U_c(\tau)X(\tau)^{-1}$ and $K_d(\theta)=X(\theta)^{-1}U_d(\theta)$.
\end{theorem}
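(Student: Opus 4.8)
The plan is to reduce Theorem~\ref{th:stabzrDT2} to statement (c) of Theorem~\ref{th:rangeDTLinfty} (the one carrying the slack function $\mu$), following the same recipe as in the proofs of Theorems~\ref{th:stabzcstDTLinfty} and~\ref{th:stabzrDT}, the only genuinely new device being that $\mu$ is now chosen to be the \emph{constant} vector $M\mathds{1}$. First I would introduce the changes of variables $\zeta(\tau):=X(\tau)\mathds{1}$, $U_c(\tau):=K_c(\tau)X(\tau)$ and $U_d:=K_dM$, so that $K_c(\tau)=U_c(\tau)X(\tau)^{-1}$ and $K_d=U_dM^{-1}$, the latter being independent of the dwell-time. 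Since $X(\tau)$ and $M$ are positive definite diagonal matrices, right-multiplying the positivity block by $X(\tau)^{-1}$ and $M^{-1}$ and invoking Proposition~\ref{prop:positive} shows that it is equivalent to the internal positivity of the closed-loop system~\eqref{eq:mainsyst2},\eqref{eq:rDTK}; in particular $A(\tau)+B_c(\tau)K_c(\tau)$ is Metzler and $J+B_dK_d$, $C_c(\tau)+D_c(\tau)K_c(\tau)$ and $C_d+D_dK_d$ are nonnegative for all $\tau\in[0,\Tmax]$. The continuous-time rows of the second block then reproduce term-by-term the flow and continuous-output inequalities of Theorem~\ref{th:rangeDTLinfty}, statement (c), written for the closed-loop data, exactly as in the proof of Theorem~\ref{th:stabzrDT}.

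For the discrete-time part I would take $\mu(\theta)\equiv M\mathds{1}$. Then the constraint $X(\theta)\le M$ is precisely $\zeta(\theta)\le\mu(\theta)$ on $[\Tmin,\Tmax]$, while the second and fourth rows of the second block become, writing $J_{cl}:=J+B_dK_d$, the inequalities $J_{cl}(M\mathds{1})-\zeta(0)+E_d\mathds{1}+\eps\mathds{1}\le0$ and $(C_d+D_dK_d)(M\mathds{1})+F_d\mathds{1}-\gamma\mathds{1}\le0$, i.e. the jump and discrete-output inequalities of Theorem~\ref{th:rangeDTLinfty}, statement (c), evaluated at $\mu(\theta)=M\mathds{1}$. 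Hence the three conditions of that statement all hold for the closed-loop system, and Theorem~\ref{th:rangeDTLinfty} yields asymptotic stability under range dwell-time $[\Tmin,\Tmax]$ together with a hybrid $L_\infty\times\ell_\infty$-gain of at most $\gamma$. The constant matrix $M$ plays exactly the role of an S-procedure (equivalently, linear Finsler) multiplier: once $J+B_dK_d$ and $C_d+D_dK_d$ are known to be nonnegative, the $\theta$-dependent quantity $\zeta(\theta)$ may be replaced by its uniform upper bound $M\mathds{1}$ in the jump and discrete-output estimates, which is what decouples $K_d$ from the dwell-time.

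The main obstacle, and really the only point that is not pure bookkeeping, is to ensure the domination step has the right orientation: one must verify that, for internally positive discrete data, $\zeta(\theta)\le M\mathds{1}$ together with $J_{cl}(M\mathds{1})-\zeta(0)+E_d\mathds{1}<0$ implies $J_{cl}\zeta(\theta)-\zeta(0)+E_d\mathds{1}<0$, and likewise on the output row, which uses the nonnegativity of $J_{cl}$ and $C_d+D_dK_d$ rather than merely that of $J$ and $C_d$ --- this is precisely why the positivity block must be imposed on the \emph{closed-loop} jump and output matrices. The remaining steps are routine: matching the matrix-valued inequalities with the vector conditions of Theorem~\ref{th:rangeDTLinfty}, statement (c), including the derivative term and the placement of the $\eps\mathds{1}$ slack, and recovering the discrete gain through $U_d=K_dM$, so that $K_d=U_dM^{-1}$ is a genuinely dwell-time-independent gain while all dwell-time dependence is confined to $K_c(\tau)=U_c(\tau)X(\tau)^{-1}$.
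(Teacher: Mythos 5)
Your proposal is correct and follows essentially the same route as the paper, whose proof simply invokes the argument of Theorem \ref{th:stabzrDT} with the slack function of Theorem \ref{th:rangeDTLinfty} fixed to $\mu(\theta)=M\mathds{1}$; your explicit verification of the domination step $J_{cl}\zeta(\theta)\le J_{cl}M\mathds{1}$ via the nonnegativity of the \emph{closed-loop} jump and output matrices is precisely the content hidden in that choice. Your reading $U_d=K_dM$, hence the constant gain $K_d=U_dM^{-1}$, is the consistent one given the stated purpose of a dwell-time-independent discrete gain (the formula $K_d(\theta)=X(\theta)^{-1}U_d(\theta)$ in the theorem statement is a leftover from Theorem \ref{th:stabzrDT}).
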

\begin{proof}
  The proof follows from the same lines as that of Theorem \ref{th:stabzrDT} except that we use Theorem \ref{th:rangeDTLinfty}, (b) with $\mu(\theta)=M\mathds{1}$.
\end{proof}

It seems important to stress that it is unclear how to design a controller that would be independent of the timer variable.

\subsection{Minimum dwell-time}\label{sec:Linf_imp_stabz_min}

We consider here the following class of state-feedback control law
\begin{equation}\label{eq:minDTK}
\begin{array}{rcl}
    u_c(t_k+\tau)&=&\left\{\begin{array}{l}
      K_c(\tau)x(t_k+\tau),\ \tau\in(0,\bar T]\\
      K_c(\bar T)x(t_k+\tau),\ \tau\in(\bar T, T_k]
    \end{array}\right.\\
    u_d(k)&=&K_dx(t_k)
\end{array}
\end{equation}
where $K_c:[0,\Tmax]\mapsto\mathbb{R}^{m_c\times n}$ and $K_d\in\mathbb{R}^{m_d\times n}$ are the gains of the controller to be determined. The rationale for considering such structure is to allow for the derivation of convex design conditions.

\begin{theorem}\label{th:stabzmDT}
There exists a differentiable matrix-valued function $X:[0,\bar T]\mapsto\mathbb{D}^n$, $X(\bar T)\succ0$, a matrix-valued function $U_c:[0,\bar T]\mapsto\mathbb{R}^{m_c\times n}$, a matrix $U_d\in\mathbb{R}^{m_d\times n}$ and scalars $\eps,\alpha,\gamma>0$ such that the conditions
\begin{subequations}\label{eq:th2a}
\begin{alignat}{4}
             A(\tau)X(\tau)+B_c(\tau)U_c(\tau)+\alpha I_n&\ge0\\
           JX(0)+B_dU_d&\ge0\label{eq:th1:2}\\
           C_c(\tau)X(\tau)+D_c(\tau)U_c(\tau)&\ge0\\
            C_dX(0)+B_dU_d&\ge0
  \end{alignat}
\end{subequations}
and
    \begin{subequations}\label{eq:th2b}
\begin{alignat*}{4}
           \left[A(\bar T)X(\bar T)+B_c(\bar T)U_c(\bar T)+\eps I_n\right]\mathds{1}+E_c(\bar T)\mathds{1}&\le0\\
           \left[C_c(\bar T)X(\bar T)+D_c(\bar T)U_c(\bar T)\right]\mathds{1}+F_c(\bar T)\mathds{1}-\gamma \mathds{1}^{\T}&\le0\\
           \left[\dot{X}(\tau)+A(\tau)X(\tau)+B_c(\tau)U_c(\tau)\right]\mathds{1}+E_c(\tau)\mathds{1}&\le0\\
           \left[JX(\bar{T})+B_dU_d-X(0)+\eps I\right]\mathds{1}+E_d\mathds{1}&\le0\\
           \left[C_c(\tau)X(\tau)+D_c(\tau)U_c(\tau)\right]\mathds{1}+F_c(\tau)\mathds{1}-\gamma \mathds{1}&\le0\\
           \left[C_cX(\bar{T})+D_dU_d\right]\mathds{1}+F_d\mathds{1}-\gamma \mathds{1}&\le0
\end{alignat*}
\end{subequations}
hold for all $\tau\in[0,\bar T]$. Then, the closed-loop system \eqref{eq:mainsyst2}, \eqref{eq:minDTK} is asymptotically stable and has hybrid $L_\infty\times\ell_\infty$-gain less than $\gamma$ with the controller gains $K_c(\tau)= U_c(\tau)X(\tau)^{-1}$ and $K_d=X(\bar{T})^{-1}U_d$.
\end{theorem}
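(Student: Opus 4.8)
The plan is to reduce the statement to Theorem~\ref{th:minDTLinfty}, statement (c), applied to the closed-loop system \eqref{eq:mainsyst2},\eqref{eq:minDTK}, following the same template as Theorems~\ref{th:stabzcstDTLinft}, \ref{th:stabzcstDTLinfty} and \ref{th:stabzrDT}. There are two ingredients: turning the componentwise nonnegativity conditions \eqref{eq:th2a} into a certificate of internal positivity of the closed loop via Proposition~\ref{prop:positive}, and turning \eqref{eq:th2b} into the minimum dwell-time Lyapunov inequalities \eqref{eq:minDTLinfty1_1}--\eqref{eq:minDTLinfty1_3} through the change of variables $\zeta(\tau):=X(\tau)\mathds{1}$.

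First I would fix the controller gains $K_c(\tau)=U_c(\tau)X(\tau)^{-1}$ (held at $K_c(\bar T)$ for $\tau>\bar T$, by the piecewise form in \eqref{eq:minDTK}) and $K_d=X(\bar T)^{-1}U_d$, and substitute them into \eqref{eq:mainsyst2} with $u_c=K_c x$, $u_d=K_d x$, so that the closed loop is again of the form \eqref{eq:mainsyst2} with $u_c,u_d\equiv0$ and continuous-time data $A(\tau)+B_c(\tau)K_c(\tau)=\bigl(A(\tau)X(\tau)+B_c(\tau)U_c(\tau)\bigr)X(\tau)^{-1}$, $C_c(\tau)+D_c(\tau)K_c(\tau)=\bigl(C_c(\tau)X(\tau)+D_c(\tau)U_c(\tau)\bigr)X(\tau)^{-1}$, and discrete data $J+B_dK_d=\bigl(JX(0)+B_dU_d\bigr)X(0)^{-1}$ etc. (matching \eqref{eq:th1:2}). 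Since $X(\tau)\in\mathbb{D}_{\succ0}^n$, right multiplication by $X(\tau)^{-1}$ leaves the Metzler property of the first block and the nonnegativity of the remaining blocks unchanged; hence \eqref{eq:th2a}, with the $\alpha I_n$ slack absorbing the (irrelevant) diagonal of the Metzler block, is exactly condition (b) of Proposition~\ref{prop:positive} for the closed-loop system, so the closed loop is internally positive. I would note at this point that freezing $K_c$ at $\tau=\bar T$ is what makes the closed-loop continuous-time matrices constant on $\tau\ge\bar T$, as required by the standing hypothesis of Theorem~\ref{th:minDTLinfty}.

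Next I would put $\zeta(\tau):=X(\tau)\mathds{1}$; because $X$ is diagonal this is just the vector of its diagonal entries, it is differentiable with $\dot\zeta(\tau)=\dot X(\tau)\mathds{1}$, and $\zeta(0)=X(0)\mathds{1}>0$. Right-multiplying each line of \eqref{eq:th2b} by $\mathds{1}$ and using $X(\tau)^{-1}X(\tau)\mathds{1}=\mathds{1}$, the six inequalities become, in order, the two inequalities of \eqref{eq:minDTLinfty1_2}, then the first inequality of \eqref{eq:minDTLinfty1_1}, then the first inequality of \eqref{eq:minDTLinfty1_3}, then the second inequality of \eqref{eq:minDTLinfty1_1}, then the second inequality of \eqref{eq:minDTLinfty1_3}, all for $\tau\in[0,\bar T]$, with this $\zeta$ and the same $\gamma$ (the $\eps I$ and $-\gamma\mathds{1}$ terms strictify the closed inequalities). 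Since by Proposition~\ref{prop:nodiff} the endpoint constraint $\dot\zeta(\bar T)=0$ in Theorem~\ref{th:minDTLinfty}(c) may be dropped, no differentiability matching at $\tau=\bar T$ is needed, and Theorem~\ref{th:minDTLinfty} then gives asymptotic stability under minimum dwell-time $\bar T$ and hybrid $L_\infty\times\ell_\infty$-gain at most $\gamma$ for the closed loop, which is the claim with the stated gains.

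The calculations are routine congruence and substitution; the one genuinely delicate point is ensuring the closed-loop system really does satisfy the hypothesis of Theorem~\ref{th:minDTLinfty} that the continuous-time data be timer-invariant for $\tau\ge\bar T$ --- this forces both the piecewise definition of $u_c$ in \eqref{eq:minDTK} and the (implicit) assumption that $A,B_c,C_c,D_c,E_c,F_c$ are themselves constant past $\bar T$; without it, the substitution $\zeta(\tau)=X(\tau)\mathds{1}$ on $\tau>\bar T$ would not collapse to the finite list of inequalities used here. A secondary, purely representational, point is the restriction $X(\tau)\in\mathbb{D}^n$: diagonality is precisely what makes $K_c(\tau)=U_c(\tau)X(\tau)^{-1}$ preserve positivity and makes $\zeta=X\mathds{1}$ an admissible change of variables, at the usual cost of some conservatism in the design conditions.
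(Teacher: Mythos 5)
Your proposal is correct and follows exactly the template the paper uses for the analogous stabilization results (Theorems \ref{th:stabzcstDTLinft}--\ref{th:stabzrDT2}), whose one-paragraph proofs invoke the diagonal structure of $X$, Proposition \ref{prop:positive}, and the change of variables $\zeta(\tau)=X(\tau)\mathds{1}$ to reduce to the corresponding analysis theorem — here Theorem \ref{th:minDTLinfty}(c) together with Proposition \ref{prop:nodiff}; the paper simply omits this proof as routine. Your added remarks on freezing $K_c$ at $\tau=\bar T$ to meet the timer-invariance hypothesis of Theorem \ref{th:minDTLinfty} and on dropping $\dot\zeta(\bar T)=0$ are exactly the right delicate points.
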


\subsection{Examples}

\begin{example}[Constant and range dwell-time]
  Let us consider here the system
\begin{equation}\label{eq:ex2}
\begin{array}{l}
    A=\begin{bmatrix}
    1 & 1\\
    0 & 1
  \end{bmatrix}, B_c=\begin{bmatrix}
    1\\0
  \end{bmatrix},E_c=\begin{bmatrix}
    0.2\\
    0.3
  \end{bmatrix},\\
  J=\begin{bmatrix}
1.2 & 0\\
1 & 0.1
  \end{bmatrix},  B_d=\begin{bmatrix}
    1\\1
  \end{bmatrix},E_d=\begin{bmatrix}
    0.3\\
    0.3
  \end{bmatrix},\\
    C_c=C_d=\begin{bmatrix}
    0 & 1
  \end{bmatrix}, F_c=F_d=0.1,D_c=D_d=0.
\end{array}
\end{equation}
\textbf{Constant dwell-time.} Solving the range dwell-time conditions of Theorem \ref{th:stabzrDT} with polynomials of degree 2 and $\bar{T}=0.1$ and $\bar{T}=0.3$, we find that the hybrid $L_\infty\times\ell_\infty$-gain of the closed-loop system is less than $0.5095$ and $0.69199$, respectively. The number of primal/dual variables is 140/55 and the SOS program solves in 1.65 seconds. For $\bar{T}=0.1$, the controller gains are given by
\begin{equation}
K_c(\tau)=\begin{bmatrix}
\frac {- 1.6244- 0.90003 \tau+ 0.51618 {\tau}^{2}}{ 1.3206+ 1.4047 \tau+ 0.48782 {\tau}^{2}}\\
\frac { 0.36674- 0.36543 \tau- 0.48944 {\tau}^{2}}{ 0.34094+ 0.64094 \tau+ 0.33734 {\tau}^{2}}
  \end{bmatrix}^{\T}\textnormal{ and } K_d=\begin{bmatrix}
    -1 & 0
  \end{bmatrix}
\end{equation}
whereas for $\bar{T}=0.3$, they are given by
\begin{equation}
K_c(\tau)=\begin{bmatrix}
\frac {- 1.4178- 0.48705 \tau+ 0.20764 {\tau}^{2}}{ 0.9062+ 0.73759 \tau+ 0.44814 {\tau}^{2}}\\
\frac { 0.13191- 0.32503 \tau- 0.44001 {\tau}^{2}}{ 0.3593+ 0.6593 \tau+ 0.38782 {\tau}^{2}}
  \end{bmatrix}^{\T}\textnormal{ and } K_d=\begin{bmatrix}
    -1 & 0
  \end{bmatrix}.
\end{equation}

\noindent\textbf{Range dwell-time.} Solving the range dwell-time conditions of Theorem \ref{th:stabzrDT} with $(\Tmin,\Tmax)=(0.1,0.3)$ and polynomials of degree 2, we find that the hybrid $L_\infty\times\ell_\infty$-gain of the closed-loop system is less than $0.69199$ (which is accurate as it is equal to the value obtained in the constant dwell-tine case) with the controller gains
\begin{equation}\label{eq:rangeKc1a}
K_c(\tau)=\begin{bmatrix}
{\frac {- 1.4349- 0.6155 \tau+ 0.29095 {\tau}^{2}}{ 1.0223+ 0.78716 \tau+ 0.39358 {\tau}^{2}}}\\
{\frac { 0.11222- 0.34431 \tau- 0.45596 {\tau}^{2}}{ 0.3593+ 0.6593 \tau+ 0.38782 {\tau}^{2}}}
  \end{bmatrix}^{\T}
\end{equation}
and
\begin{equation}\label{eq:rangeKc1b}
K_d(\theta)=\begin{bmatrix}
{\frac {- 1.0153- 0.80458 \theta- 0.41365 {\theta}^{2}}{ 1.0223+ 0.78716 \theta+ 0.39358 {\theta}^{2}}}&{\frac { 0.008416- 0.02624 \theta- 0.006043 {\theta}^{2}}{ 0.3593+ 0.6593 \theta+ 0.38782 {\theta}^{2}}}
  \end{bmatrix}^{\T}
\end{equation}
The number of primal and dual variables is 247 and 90, respectively. The SOS program solves  in 2.1 seconds. For simulation purposes, we pick $x_0=(4,2)$, $w_c(t)=(1+\sin(t))/2$ and $w_d\sim\mathcal{U}(0,1)$ (i.e. drawn from a uniform distribution). The simulation results are depicted in Fig.~\ref{fig:rangeDT} where we can observe that the system is indeed stabilized.
\begin{figure}
  \centering
  \includegraphics[width=0.8\textwidth]{./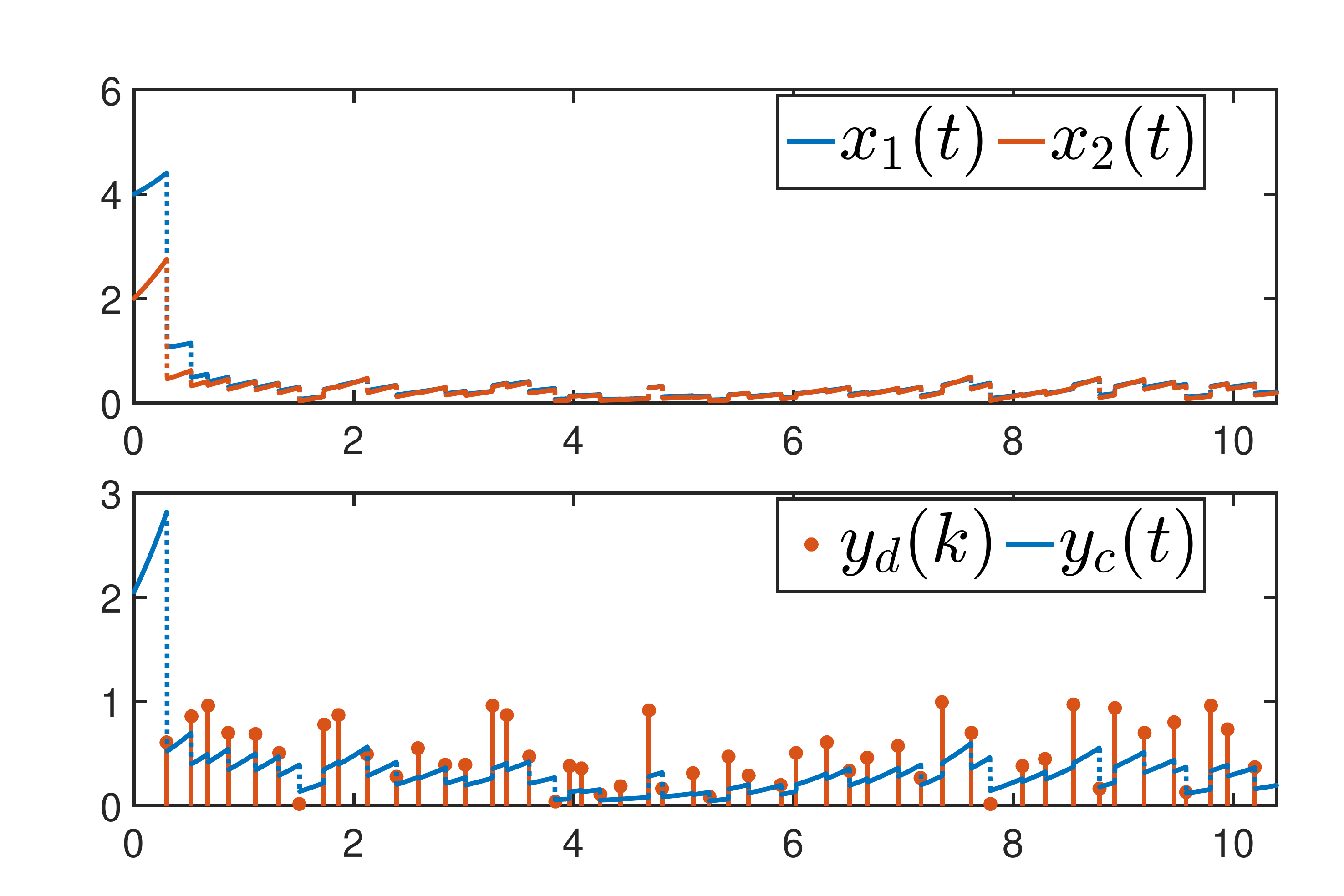}
  \caption{Trajectories of the system \eqref{eq:mainsyst2}, \eqref{eq:ex2}, \eqref{eq:rangeKc1a}, \eqref{eq:rangeKc1b} for some randomly chosen impulse times satisfying the range dwell-time $[0.1,\ 0.3]$.}\label{fig:rangeDT}
\end{figure}
\begin{figure}
  \centering
  \includegraphics[width=0.8\textwidth]{./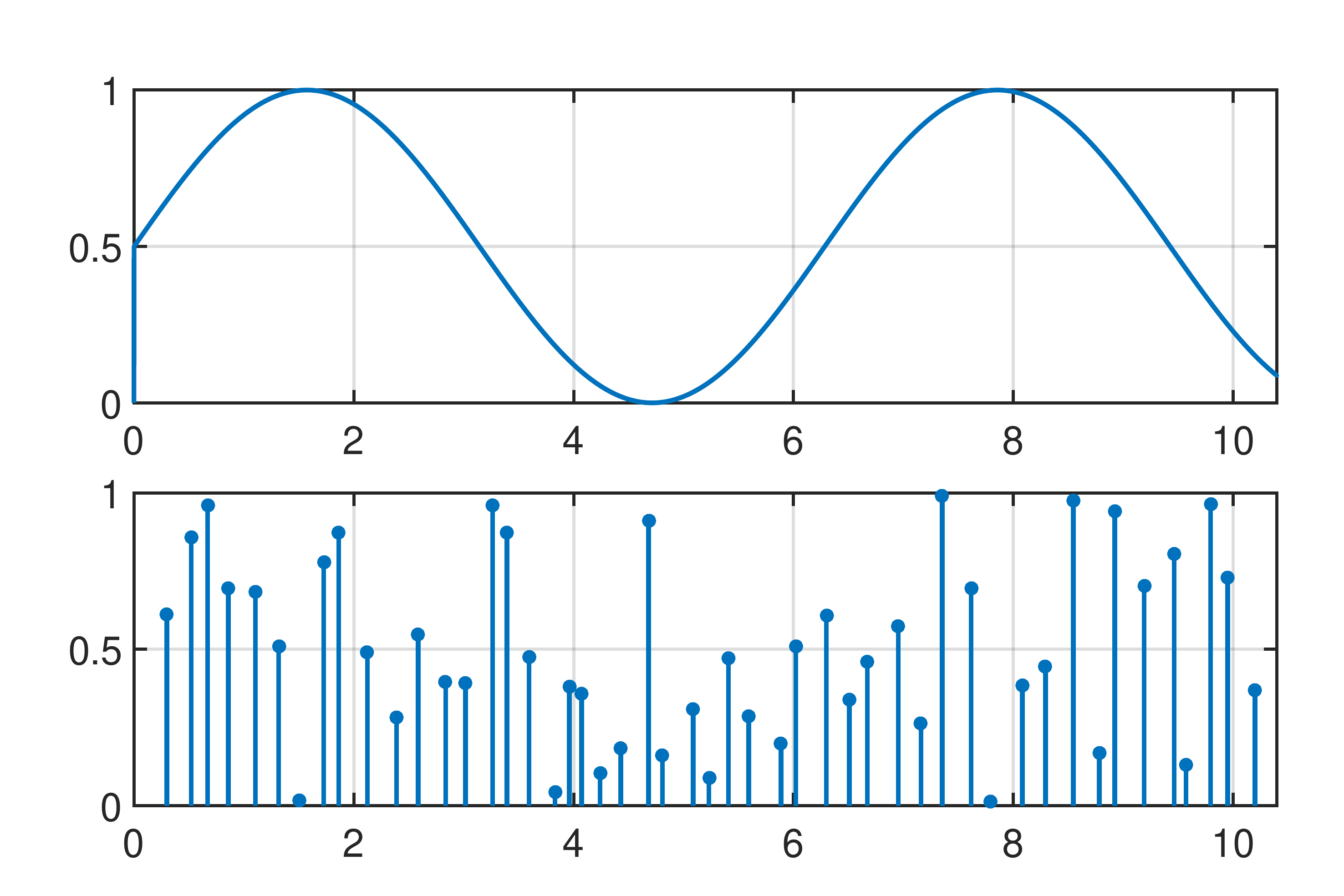}
  \caption{Inputs to the system \eqref{eq:mainsyst2}, \eqref{eq:ex2}, \eqref{eq:rangeKc1a}, \eqref{eq:rangeKc1b}  for some randomly chosen impulse times satisfying the range dwell-time $[0.1,\ 0.3]$.}\label{fig:rangeDTi}
\end{figure}

We now use Theorem \ref{th:stabzrDT2} in order to design a controller where $K_d$ does not depend on the dwell-time. Choosing  $(\Tmin,\Tmax)=(0.1,0.3)$ and polynomials of degree 2, we find that the hybrid $L_\infty\times\ell_\infty$-gain of the closed-loop system is less than $0.69199$, as previously, with the controller gains
\begin{equation}\label{eq:rangeKc2a}
  K_c(\tau)=\begin{bmatrix}
    \dfrac {- 1.2567- 0.30204\tau+ 0.26299{\tau}^{2}}{ 0.7466+ 0.29068\tau+ 0.19336{\tau}^{2}}\\
    \dfrac {- 0.064035- 0.43875\tau- 0.4312{\tau}^{2}}{ 0.3593+ 0.6593\tau+ 0.38782{\tau}^{2}}
  \end{bmatrix}^{\T}\textnormal{ and }K_d=\begin{bmatrix}
  -1 & 0
\end{bmatrix}.
\end{equation}
The number of primal and dual variables is 197 and 74, respectively. The SOS program solves  in 2.1 seconds. We use the same simulation settings as before and we get the results depicted in Figure \ref{fig:rangeDT2} and Figure \ref{fig:rangeDTi2}.
\begin{figure}
  \centering
  \includegraphics[width=0.8\textwidth]{./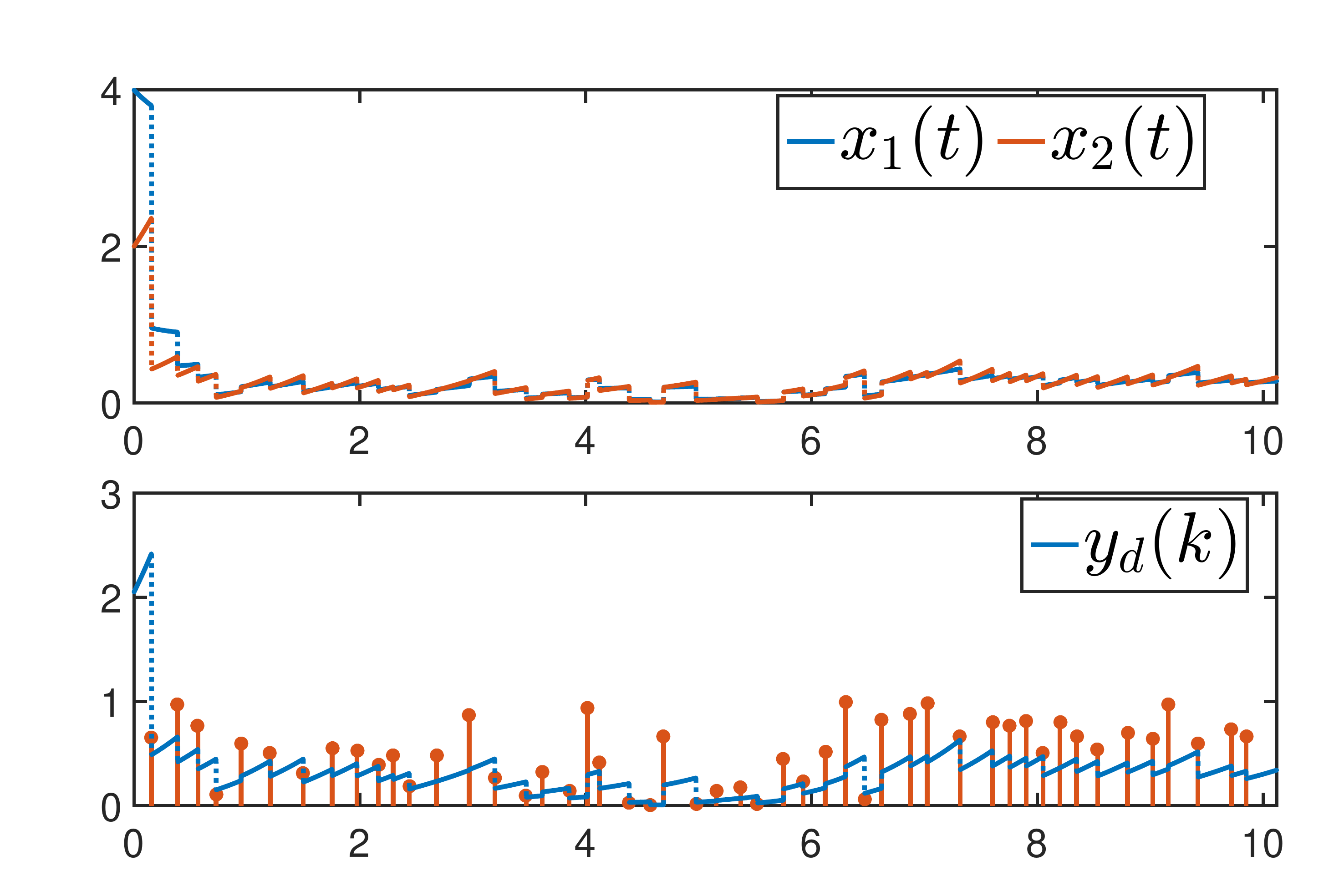}
  \caption{Trajectories of the system \eqref{eq:mainsyst2}, \eqref{eq:ex2}, \eqref{eq:rangeKc2a} for some randomly chosen impulse times satisfying the range dwell-time $[0.1,\ 0.3]$.}\label{fig:rangeDT2}
\end{figure}
\begin{figure}
  \centering
  \includegraphics[width=0.8\textwidth]{./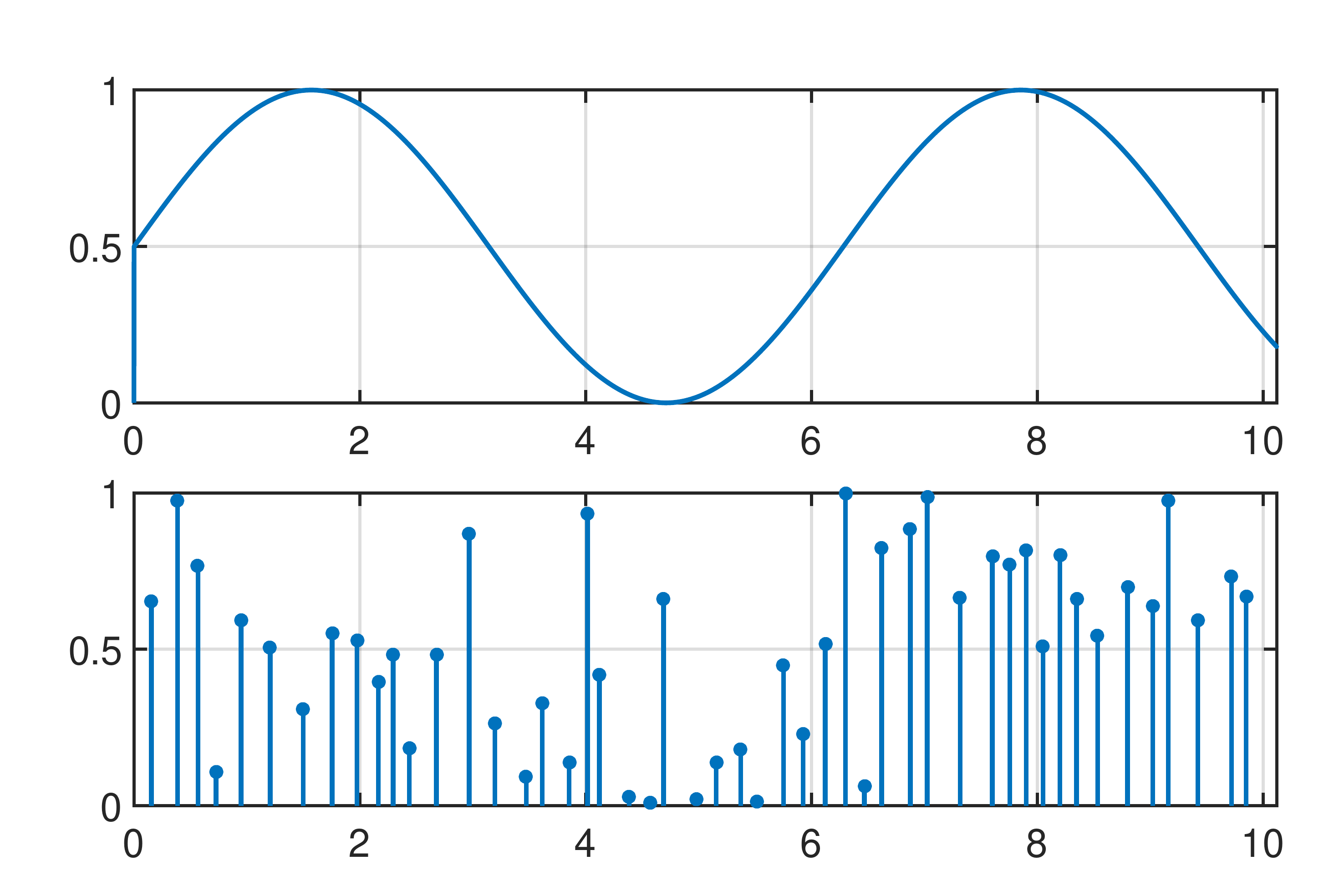}
  \caption{Inputs to the system \eqref{eq:mainsyst2}, \eqref{eq:ex2}, \eqref{eq:rangeKc2a} for some randomly chosen impulse times satisfying the range dwell-time $[0.1,\ 0.3]$.}\label{fig:rangeDTi2}
\end{figure}
\end{example}

\begin{example}[Constant and minimum dwell-time]
Let us consider here the system
\begin{equation}
\begin{array}{l}
    A=\begin{bmatrix}
    3 & -1\\
    2 & -1
  \end{bmatrix}, B_c=\begin{bmatrix}
    1\\0
  \end{bmatrix},E_c=\begin{bmatrix}
    0.5\\
    0.1
  \end{bmatrix},\\
  J=\begin{bmatrix}
2 & 1\\
0 & 1.2
  \end{bmatrix},  B_d=\begin{bmatrix}
    1\\1
  \end{bmatrix},E_d=\begin{bmatrix}
    0.8\\
    0.3
  \end{bmatrix},\\
    C_c=C_d=\begin{bmatrix}
    0 & 1
  \end{bmatrix}, F_c=F_d=0.1, D_c=D_d=0.
\end{array}
\end{equation}
\noindent \textbf{Constant dwell-time.}  Solving the range dwell-time conditions of Theorem \ref{th:stabzrDT} with polynomials of degree 2 and $\bar{T}=0.2$, we find that the hybrid $L_\infty\times\ell_\infty$-gain of the closed-loop system is less than $0.5999$.  The number of primal/dual variables is 140/55 and the SOS program solves in 1.71 seconds. The controller gains are given by
\begin{equation}
K_c(\tau)=\begin{bmatrix}
4.240\frac {- 11.29+ 10.91 \tau+ 0.8850 {\tau}^{2}}{ 3.404- 18.03 \tau+ 5.148 {\tau}^{2}}\\
-\frac { 2.476+ 3.176 \tau- 5.154 {\tau}^{2}}{- 0.3992- 1.307 \tau+ 4.240 {\tau}^{2}}
  \end{bmatrix}^{\T}\textnormal{ and } K_d=\begin{bmatrix}
    0 & -1
  \end{bmatrix}.
\end{equation}

\noindent \textbf{Minimum dwell-time.} Solving the minimum dwell-time conditions of Theorem \ref{th:stabzrDT} with $\bar T=0.2$ and polynomials of degree 2, we find that the hybrid $L_\infty\times\ell_\infty$-gain of the closed-loop system is less than $0.84401$ with the controller gains
\begin{equation}\label{eq:minDT_K}
K_c(\tau)=\begin{bmatrix}
5.6562\frac {- 14.608+ 84.504 \tau- 98.019 {\tau}^{2}}{ 6.6359- 55.759 \tau+ 129.06 {\tau}^{2}}\\
- \frac { 0.93781- 2.5446 \tau+ 8.3343 {\tau}^{2}}{- 0.59204- 1.8543 \tau+ 5.6562 {\tau}^{2}}
  \end{bmatrix}^{\T}\textnormal{ and }K_d = \begin{bmatrix}
  0 & -0.8037
\end{bmatrix}.
\end{equation}
The number of primal and dual variables is 192 and 78, respectively. The SOS program solves  in approximately 4 seconds. For simulation purposes, we pick $x_0=(4,2)$, $w_c(t)=(1+\sin(t))/2$ and $w_d\sim\mathcal{U}(0,1)$ (i.e. drawn from a uniform distribution). The simulation results are depicted in Fig.~\ref{fig:minDT} where we can observe that the system is indeed stabilized.
\begin{figure}
  \centering
  \includegraphics[width=0.8\textwidth]{./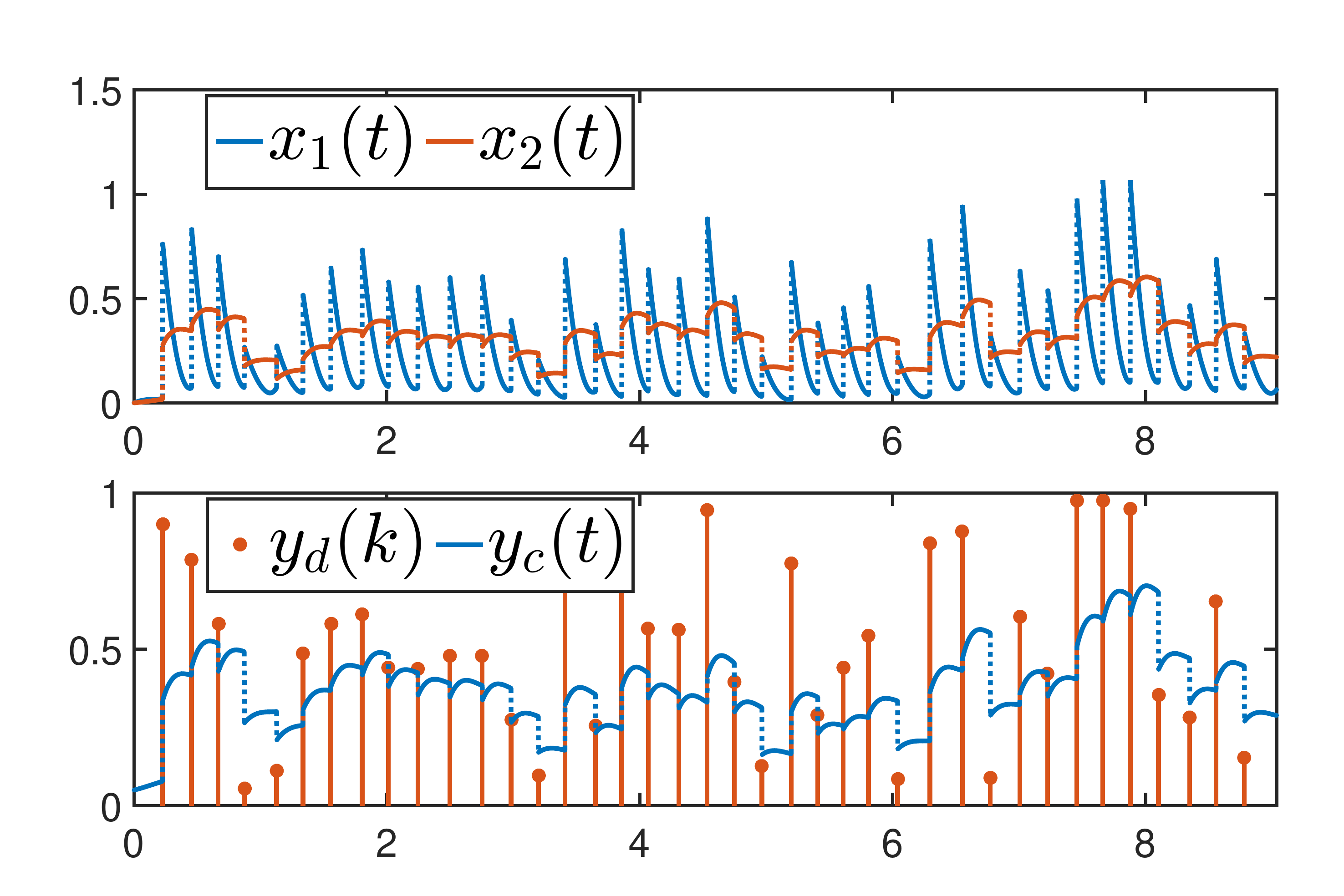}
  \caption{Trajectories of the system \eqref{eq:mainsyst2}, \eqref{eq:ex2}, \eqref{eq:minDT_K} for some randomly chosen impulse times satisfying the minimum dwell-time $\bar T=0.2$.}\label{fig:minDT}
\end{figure}
\begin{figure}
  \centering
  \includegraphics[width=0.8\textwidth]{./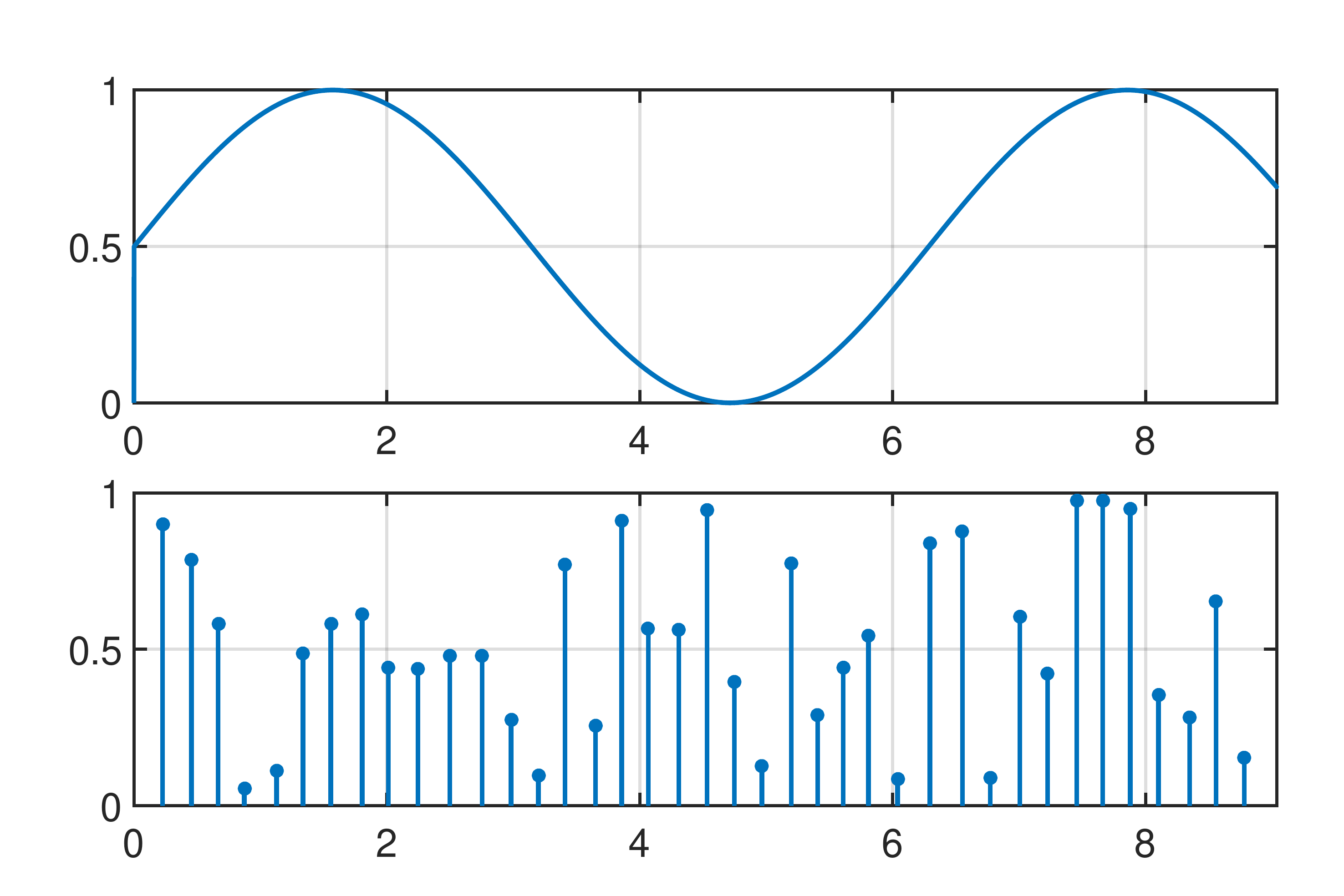}
  \caption{Inputs to the system \eqref{eq:mainsyst2}, \eqref{eq:ex2}, \eqref{eq:minDT_K} for some randomly chosen impulse times satisfying the minimum dwell-time $\bar T=0.2$.}\label{fig:minDTi}
\end{figure}
\end{example}

\section{Stability analysis and stabilization with $L_\infty$-performance of continuous-time linear positive switched systems}\label{sec:Linf_sw}

Let us consider here the switched system
\begin{equation}\label{eq:switched}
\begin{array}{rcl}
\partial_\tau \bar{x}(t_k+\tau) &=& \bar{A}_{\sigma(t_k+\tau)}(\tau)\bar{x}(t_k+\tau)+\bar{B}_{\sigma(t_k+\tau)}(\tau)\bar{u}(t_k+\tau)+\bar{E}_{\sigma(t_k+\tau)}(\tau)w(t_k+\tau)\\
     \bar{z}(t_k+\tau)&=&\bar{C}_{\sigma(t_k+\tau)}(\tau)\bar{x}(t_k+\tau) +\bar{D}_{\sigma(t_k+\tau)}(\tau)\bar{u}(t_k+\tau)+\bar{F}_{\sigma(t_k+\tau)}(\tau)w(t_k+\tau)
\end{array}
\end{equation}
where  $\sigma:\mathbb{R}_{\ge0}\mapsto\{1,\ldots,N\}$ is the switching signal, $\bar x\in\mathbb{R}^n$ is the state of the system, $\bar{w}\in\mathbb{R}^p$ is the exogenous input, $\bar{u}\in\mathbb{R}^m$ is the control input and $\bar{z}\in\mathbb{R}^q$ is the controlled output. The switching signal $\sigma$ is assumed to change values at the times in the sequence $\{t_k\}_{k\in\mathbb{Z}_{\ge1}}$. The above system can be rewritten as the following impulsive system with multiple jump maps
\begin{equation}\label{eq:switchedimp}
\begin{array}{rcl}
  \partial_\tau x(t_k+\tau)&=&A(\tau)x(t_k+\tau)+B(\tau)u(t_k+\tau)+E (\tau)w(t_k+\tau)\\
  z(t_k+\tau)&=&C(\tau)x(t_k+\tau) +D(\tau)u(t_k+\tau)+F(\tau)w(t_k+\tau)\\
  x(t_k^+)&=&(e_ie_j^{\T}\otimes I_n)x(t_k),i,j=1,\ldots,N, i\ne j
\end{array}
\end{equation}
where $\textstyle A(\tau)=\diag_{i=1}^N(\bar A_i)$, $\textstyle B(\tau)=\diag_{i=1}^N(B_i)$, $\textstyle E(\tau)=\col_{i=1}^N(E_i)$, $\textstyle C(\tau)=\diag_{i=1}^N(C_i)$, $\textstyle D(\tau)=\diag_{i=1}^N(D_i)$, and $\textstyle F(\tau)=\col_{i=1}^N(F_i)$. Based on this reformulation, one can apply the results obtained for impulsive systems to switched systems.

\subsection{Stability and performance analysis under minimum dwell-time}

We have the following result:
\begin{theorem}\label{th:minDTLinfty_sw_stab}
  Let us consider the switched system \eqref{eq:switched} with $u_c\equiv0$ and assume that it is internally positive. Then, the following statements are equivalent:
  \begin{enumerate}[(a)]
   \blue{\item There exist some vectors $\lambda_i\in\mathbb{R}^n_{>0}$, $i=1,\ldots,N$, and a scalar $\gamma>0$ such that the conditions
       \begin{equation}\label{eq:minDTLinfty_sw_stab1_1}
          \begin{array}{rcl}
           A_i(\bar{T})\lambda_i+E_i(\bar{T})\mathds{1}&<&0\\
            C_i(\bar{T})\lambda_i+F_i(\bar{T})\mathds{1}-\gamma\mathds{1}&<&0,
            \end{array}
            \end{equation}
 \begin{equation}\label{eq:minDTLinfty_sw_stab1_2}
   \Phi_i(\bar T,0)\lambda_j-\lambda_i+\int_0^{\bar T}\Phi_i(\bar T,s)E_i(s)\mathds{1}\ds<0,
 \end{equation}
 and
 \begin{equation}\label{eq:minDTLinfty_sw_stab1_3}
   C_i(\tau)\left(\Phi_i(\tau,0)\lambda_j+\int_0^{\tau}\Phi_i(\tau,s)E_i(s)\mathds{1}\ds\right)+F_i(\tau)\mathds{1}<\gamma\mathds{1}
 \end{equation}
 hold for all $\tau\in[0,\bar T]$ and all $i,j=1,\ldots,N$, $i\ne j$.}
    \item Assume further that there exist differentiable vector-valued functions $\zeta_i:\mathbb{R}_{\ge0}\mapsto\mathbb{R}^n$, $\zeta_i(0)>0$, $i=1,\ldots,N$, and a scalar $\gamma>0$ such that the conditions
    \begin{equation}
          \begin{array}{rcl}
            -\dot\zeta_i(\tau)+A_i(\tau)\zeta(\tau)+E_i(\tau)\mathds{1}&<&0\\
            C_i(\tau)\zeta(\tau)+F_i(\tau)\mathds{1}-\gamma\mathds{1}&<&0
  \end{array}
  \end{equation}
  and
     \begin{equation}
          \begin{array}{rcl}
           A_i(\bar{T})\zeta_i(\bar{T})+\zeta_i(\bar{T})^{\T}E_i(\bar{T})&<&0\\
            C_i(\bar{T})\zeta(\bar{T})+F_i(\bar{T})\mathds{1}-\gamma\mathds{1}&<&0\\
           \zeta_j(\bar{T})-\zeta_i(0)&\le&0
          \end{array}
        \end{equation}
    hold for all $\tau\in[0,\bar T]$ and all $i,j=1,\ldots,N$, $i\ne j$.
  \end{enumerate}

    Then, the positive switched system \eqref{eq:switched} with $u_c\equiv0$ is uniformly exponentially stable under minimum dwell-time $\bar T$ and the $L_\infty$-gain of the transfer $w_c\mapsto z_c$ is at most $\gamma$.
\end{theorem}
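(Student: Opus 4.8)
The plan is to deduce Theorem~\ref{th:minDTLinfty_sw_stab} from the impulsive-system results of the previous section via the reformulation \eqref{eq:switchedimp} of the switched system \eqref{eq:switched} as an impulsive system with multiple jump maps, on the lifted state space $\mathbb{R}^{Nn}$. First I would note that, since $A(\tau)=\diag_i(\bar A_i(\tau))$ is Metzler iff every $\bar A_i(\tau)$ is, $E(\tau)=\col_i(\bar E_i(\tau))$ (and $C(\tau),F(\tau)$) are nonnegative iff every mode is, and each jump matrix $e_ie_j^{\T}\otimes I_n$ is nonnegative, Proposition~\ref{prop:positive} shows that \eqref{eq:switchedimp} is internally positive iff every mode of \eqref{eq:switched} is. The jump map $e_ie_j^{\T}\otimes I_n$ takes the $j$-th block of the lifted state and moves it into the $i$-th block (discarding the rest), so it encodes a switch from mode $j$ into mode $i$; in particular, along any trajectory of \eqref{eq:switchedimp} the $\sigma(t)$-th block coincides with the state $\bar x$ of \eqref{eq:switched}, and when $w\equiv0$ and the initial condition is supported on a single block only that block is ever nonzero.

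The equivalence (a) $\Leftrightarrow$ (b) would be obtained by the same computation as the equivalence of the statements in Theorem~\ref{th:minDTLinfty}. Integrating the per-mode flow inequality in (b) from $0$ to $\tau\in[0,\bar T]$ gives $\zeta_i(\tau)>\Phi_i(\tau,0)\zeta_i(0)+\int_0^\tau\Phi_i(\tau,s)E_i(s)\mathds{1}\ds$, where $\Phi_i$ is the state-transition matrix of the flow of mode $i$; setting $\lambda_i:=\zeta_i(0)$ and substituting this representation (with $\zeta_i$ held at $\zeta_i(\bar T)$ for $\tau\ge\bar T$) into the cross-mode jump inequalities $\zeta_j(\bar T)-\zeta_i(0)\le0$ and into the output inequalities yields \eqref{eq:minDTLinfty_sw_stab1_1}--\eqref{eq:minDTLinfty_sw_stab1_3}. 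Conversely, given the $\lambda_i$'s one recovers an admissible differentiable $\zeta_i$ on $[0,\bar T]$ from the same formula plus an $\eps\mathds{1}$ slack, exactly as in the proof of Theorem~\ref{th:minDTLinfty}.

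To show that (b) implies uniform exponential stability under minimum dwell-time $\bar T$ together with the $L_\infty$-gain bound $\gamma$, I would apply Theorem~\ref{th:minDTLinfty} (equivalently Theorem~\ref{th:generalLinf}) to the lifted impulsive system \eqref{eq:switchedimp} with the block-structured certificate
\begin{equation*}
\xi(t_k+\tau)=\col_{i=1}^N\bigl(\zeta_i(\min\{\tau,\bar T\})\bigr),\qquad \xi(t_k^+)=\col_{i=1}^N\bigl(\zeta_i(0)\bigr),\qquad k\in\mathbb{Z}_{\ge0}.
\end{equation*}
Because $A,E,C,F$ of \eqref{eq:switchedimp} are block-diagonal/block-stacked, the flow and output conditions of Theorem~\ref{th:minDTLinfty} decouple into the per-mode inequalities in (b), the portion $\tau\ge\bar T$ being covered by the eventual constancy of the mode matrices together with the stationary inequalities at $\bar T$ in (b) (and the differentiability constraint at $\tau=\bar T$ may be dropped by Proposition~\ref{prop:nodiff}). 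For the jump map $e_ie_j^{\T}\otimes I_n$, the inequality $J\xi(t_{k+1})-\xi(t_{k+1}^+)<0$ reads, in block $i$, $\zeta_j(\bar T)-\zeta_i(0)\le0$ (the last inequality in (b)), while every other block gives $-\zeta_\ell(0)<0$, which holds since $\zeta_\ell(0)>0$. Hence all hypotheses of Theorem~\ref{th:minDTLinfty} hold for \eqref{eq:switchedimp}, so the lifted system is uniformly exponentially stable with hybrid $L_\infty\times\ell_\infty$-gain at most $\gamma$; since the $\sigma(t)$-th block of the lifted state equals $\bar x(t)$, the stability estimate transfers directly (for $w\equiv0$ only that block is nonzero), and $\bar z(t)$ is one block of the lifted output, hence is dominated by it in $L_\infty$-norm while the disturbance $w$ is shared, so the $L_\infty$-gain bound transfers to \eqref{eq:switched}.

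The main obstacle I expect is the bookkeeping for the multiple jump maps: Theorem~\ref{th:minDTLinfty} is stated for a single jump matrix, so one must verify that a single block-structured certificate simultaneously validates every matrix $e_ie_j^{\T}\otimes I_n$ and that this ansatz is not lossy — this is precisely what forces one certificate $\zeta_i$ per mode and the cross-mode inequalities $\zeta_j(\bar T)\le\zeta_i(0)$ for all $i\ne j$. A secondary technical point is justifying the freezing of $\zeta_i$ beyond $\bar T$ and the attendant removal of the differentiability requirement at $\bar T$, which rests on the nonsmooth refinement underlying Proposition~\ref{prop:nodiff}.
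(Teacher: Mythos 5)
Your proposal is correct and follows exactly the route the paper takes: the paper's own proof is a one-line reduction to Theorem~\ref{th:minDTLinfty} (and Remark~\ref{rem:relax}) applied to the lifted impulsive reformulation \eqref{eq:switchedimp} with the discrete-time condition imposed for every jump matrix $e_ie_j^{\T}\otimes I_n$, and you have simply supplied the block-diagonal certificate, the decoupling of the conditions into per-mode and cross-mode inequalities, and the transfer of the gain bound that this citation compresses. The only cosmetic difference is that you do not name Remark~\ref{rem:relax} when passing from the strict jump inequality of Theorem~\ref{th:minDTLinfty} to the closed inequality $\zeta_j(\bar T)-\zeta_i(0)\le0$ (the switched system has no jump disturbance, so the persistent-flowing relaxation applies), but you handle that point in substance.
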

\begin{proof}
  The proof is based on Theorem \ref{th:minDTLinfty} and Remark \ref{rem:relax} together with the system \eqref{eq:switchedimp}. Note that Theorem \ref{th:minDTLinfty} trivially generalizes to the multiple jump maps case by simply considering the discrete-time condition for all values of the jump matrices.
\end{proof}


\blue{For comparison, we recall the following result:
\begin{theorem}[Theorem 4.11 in \cite{Blanchini:15}]\label{th:blanchini}
  Let us consider the switched system \eqref{eq:switched} with $u_c\equiv0$ and assume that it is internally positive and timer independent. Assume further that there exist vectors $\lambda_i\in\mathbb{R}^n$, $i=1,\ldots,N$, and a scalar $\gamma>0$ such that the conditions
  \begin{equation}\label{eq:blanchini1}
    \begin{array}{rcl}
      A_i\lambda_i+E_i\mathds{1}&<&0,\ i=1,\ldots,N,\\
      e^{A_i\bar T}\lambda_j-\lambda_i+\int_0^{\bar T}e^{A_is}E_i\mathds{1}\ds&<&0,\ i,j=1,\ldots, j\ne i,
    \end{array}
  \end{equation}
  and
  \begin{equation}\label{eq:blanchini2}
    C_i\left[\lambda_i+e^{A_i\tau}(\lambda_j-\lambda_i)\right]+F_i\mathds{1}-\gamma\mathds{1}<0,\ i=1,\ldots,N,\tau\in[0,\bar T]
  \end{equation}
hold. Then, the timer-independent version of the positive switched system \eqref{eq:switched} with $u_c\equiv0$ is uniformly exponentially stable under minimum dwell-time $\bar T$ and the $L_\infty$-gain of the transfer $w_c\mapsto z_c$ is at most $\gamma$.
\end{theorem}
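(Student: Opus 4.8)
The plan is to show that Theorem~\ref{th:blanchini} is exactly the timer-independent specialization of Theorem~\ref{th:minDTLinfty_sw_stab}: I will verify that the hypotheses \eqref{eq:blanchini1}--\eqref{eq:blanchini2} imply conditions \eqref{eq:minDTLinfty_sw_stab1_1}--\eqref{eq:minDTLinfty_sw_stab1_3} of statement~(a) of Theorem~\ref{th:minDTLinfty_sw_stab}, using the \emph{same} vectors $\lambda_i$, and then invoke that theorem to obtain the asserted uniform exponential stability under minimum dwell-time $\bar T$ together with the $L_\infty$-gain bound $\gamma$. To set up, since the system is timer independent we have $A_i(\tau)\equiv A_i$ (and likewise for the other matrices), so that the mode-$i$ state-transition matrix is $\Phi_i(\tau,s)=e^{A_i(\tau-s)}$ and the conditions \eqref{eq:minDTLinfty_sw_stab1_1}--\eqref{eq:minDTLinfty_sw_stab1_3} become explicit matrix-exponential expressions; moreover, by Proposition~\ref{prop:positive} applied to the reformulation~\eqref{eq:switchedimp}, internal positivity means that each $A_i$ is Metzler and that $E_i,C_i,F_i$ are nonnegative, facts I will use repeatedly.

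Two of the identifications are immediate. The first inequality of \eqref{eq:minDTLinfty_sw_stab1_1} is verbatim the first inequality of \eqref{eq:blanchini1}; the second inequality of \eqref{eq:minDTLinfty_sw_stab1_1}, namely $C_i\lambda_i+F_i\mathds 1<\gamma\mathds 1$, is the $j=i$ instance of \eqref{eq:blanchini2}, in which the interpolant $\lambda_i+e^{A_i\tau}(\lambda_j-\lambda_i)$ reduces to the constant $\lambda_i$ (equivalently, it is the limit $\tau\to\infty$ of \eqref{eq:blanchini2}, which makes sense because the first inequality of \eqref{eq:blanchini1} forces the Metzler matrix $A_i$ to be Hurwitz). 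For \eqref{eq:minDTLinfty_sw_stab1_2}, the change of variable $u=\bar T-s$ turns $\int_0^{\bar T}e^{A_i(\bar T-s)}E_i\mathds 1\,\ds$ into $\int_0^{\bar T}e^{A_iu}E_i\mathds 1\,\textnormal{d}u$, so \eqref{eq:minDTLinfty_sw_stab1_2} becomes exactly the second inequality of \eqref{eq:blanchini1}.

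The only step that requires a genuine argument is the output inequality \eqref{eq:minDTLinfty_sw_stab1_3}. Here I would rewrite the first inequality of \eqref{eq:blanchini1} as $E_i\mathds 1\le -A_i\lambda_i$ and combine it with the nonnegativity of the Metzler exponential $e^{A_i(\tau-s)}$: for each $s\in[0,\tau]$ this gives $e^{A_i(\tau-s)}E_i\mathds 1\le e^{A_i(\tau-s)}(-A_i\lambda_i)$, hence, integrating and recognizing $s\mapsto e^{A_i(\tau-s)}\lambda_i$ as an antiderivative of $s\mapsto e^{A_i(\tau-s)}(-A_i\lambda_i)$,
\begin{equation*}
\int_0^{\tau}e^{A_i(\tau-s)}E_i\mathds 1\,\ds\;\le\;(I_n-e^{A_i\tau})\lambda_i .
\end{equation*}
Adding $e^{A_i\tau}\lambda_j$ to both sides yields $e^{A_i\tau}\lambda_j+\int_0^{\tau}e^{A_i(\tau-s)}E_i\mathds 1\,\ds\le\lambda_i+e^{A_i\tau}(\lambda_j-\lambda_i)$, and then left-multiplying by $C_i\ge0$ and adding $F_i\mathds 1$ gives, using \eqref{eq:blanchini2},
\begin{equation*}
C_i\!\left(e^{A_i\tau}\lambda_j+\int_0^{\tau}e^{A_i(\tau-s)}E_i\mathds 1\,\ds\right)+F_i\mathds 1\;\le\;C_i\big[\lambda_i+e^{A_i\tau}(\lambda_j-\lambda_i)\big]+F_i\mathds 1\;<\;\gamma\mathds 1 ,
\end{equation*}
which is precisely \eqref{eq:minDTLinfty_sw_stab1_3}. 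With \eqref{eq:minDTLinfty_sw_stab1_1}--\eqref{eq:minDTLinfty_sw_stab1_3} all established, statement~(a) of Theorem~\ref{th:minDTLinfty_sw_stab} holds, and its conclusion is exactly that of Theorem~\ref{th:blanchini}.

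I expect the comparison estimate in the third step to be the only non-routine point: one has to recognize that the input-to-state convolution term $\int_0^{\tau}e^{A_i(\tau-s)}E_i\mathds 1\,\ds$ present in the paper's explicit conditions is dominated — thanks to the flow inequality $A_i\lambda_i+E_i\mathds 1<0$ and the nonnegativity of the transition matrix — by the affine-in-$\lambda$ quantity $(I_n-e^{A_i\tau})\lambda_i$ that is implicit in Blanchini's formulation. Everything else is either a literal identification of terms or the degenerate $j=i$ case, while the passage from the impulsive-system-with-multiple-jump-maps picture to the switched one has already been carried out via the reformulation~\eqref{eq:switchedimp} underlying Theorem~\ref{th:minDTLinfty_sw_stab}.
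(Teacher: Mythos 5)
Your proposal is correct and coincides with the paper's own treatment of this statement: the paper recalls Theorem~\ref{th:blanchini} from \cite{Blanchini:15} without reproving it, but the implication you establish --- that \eqref{eq:blanchini1}--\eqref{eq:blanchini2} imply conditions \eqref{eq:minDTLinfty_sw_stab1_1}--\eqref{eq:minDTLinfty_sw_stab1_3} of Theorem~\ref{th:minDTLinfty_sw_stab}, via the domination $\int_0^\tau e^{A_i(\tau-s)}E_i\mathds{1}\,\ds\le (I-e^{A_i\tau})\lambda_i$ deduced from $A_i\lambda_i+E_i\mathds{1}<0$ and the nonnegativity of $e^{A_i(\tau-s)}$ --- is exactly the forward direction of Proposition~\ref{prop:equivalence}. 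One small caveat: your parenthetical justification of $C_i\lambda_i+F_i\mathds{1}<\gamma\mathds{1}$ as ``the limit $\tau\to\infty$'' of \eqref{eq:blanchini2} does not work, since $\tau$ ranges only over $[0,\bar T]$ and a strict inequality would in any case degrade to a non-strict one in the limit; the $j=i$ instance you give first is the correct justification and suffices.
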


In the light of the previous results, this results is essentially a discrete-time stability results and we refer to the discussion below Theorem \ref{th:cstDTLinfty} for the benefits and drawbacks of such a formulation. The main issues are the difficulty for considering them when the matrices of the system are not time-invariant and for design purposes. Those difficulties were the motivation for developing the concepts of looped-functionals \cite{Seuret:12,Briat:11l} and clock-dependent Lyapunov functions \cite{Briat:13d,Briat:16c}.}

\blue{The result below clarifies the relationship between Theorem \ref{th:minDTLinfty_sw_stab} and Theorem \ref{th:blanchini}:
\begin{proposition}\label{prop:equivalence}
   Let us consider the switched system \eqref{eq:switched} with $u_c\equiv0$ and assume that it is internally positive and timer independent. Assume further that the conditions of Theorem \ref{th:blanchini} are satisfied for some $\gamma$. Then, the conditions of Theorem \ref{th:minDTLinfty_sw_stab} hold as well. Moreover, whenever $-A_i^{-1}E_i\mathds{1}>0$ for all $i=1,\ldots,N$, then the converse also holds.
\end{proposition}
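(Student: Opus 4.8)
The plan is to reduce to the timer-independent setting, where $\Phi_i(t,s)=e^{A_i(t-s)}$, and to prove the two implications separately, both resting on one comparison between the reference trajectories that enter the two theorems. For a switch $j\to i$ followed by a dwell of length $\tau$, let $r_{ij}(\tau):=e^{A_i\tau}\lambda_j+\int_0^\tau e^{A_i(\tau-s)}E_i\mathds{1}\,\ds$ be the solution at time $\tau$ of $\dot r=A_ir+E_i\mathds{1}$, $r(0)=\lambda_j$ (this is exactly the vector multiplied by $C_i$ in \eqref{eq:minDTLinfty_sw_stab1_3}), and let $\psi_{ij}(\tau):=\lambda_i+e^{A_i\tau}(\lambda_j-\lambda_i)$ be the solution of $\dot\psi=A_i\psi-A_i\lambda_i$, $\psi(0)=\lambda_j$ (the vector multiplied by $C_i$ in \eqref{eq:blanchini2}). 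Then $\psi_{ij}-r_{ij}$ solves a Metzler ODE with zero initial value and forcing $-A_i\lambda_i-E_i\mathds{1}$, which is a positive vector by the first inequality of \eqref{eq:blanchini1} (equivalently of \eqref{eq:minDTLinfty_sw_stab1_1}); since $e^{A_i(\tau-s)}\ge0$ this yields $\psi_{ij}(\tau)=r_{ij}(\tau)+\int_0^\tau e^{A_i(\tau-s)}(-A_i\lambda_i-E_i\mathds{1})\,\ds\ \ge\ r_{ij}(\tau)$ for all $\tau\ge0$. Everything below is a consequence of this single inequality. I would also record that the first inequality of \eqref{eq:blanchini1} with positivity makes each $A_i$ Hurwitz Metzler, hence invertible, so the steady-state vector $\pi_i:=-A_i^{-1}E_i\mathds{1}\ge0$ is well defined and $r_{ij}(\tau)=\pi_i+e^{A_i\tau}(\lambda_j-\pi_i)$.

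For the implication Theorem~\ref{th:blanchini}$\;\Rightarrow\;$Theorem~\ref{th:minDTLinfty_sw_stab} I would take the Blanchini vectors $\lambda_i$ as the vectors of statement (a) of Theorem~\ref{th:minDTLinfty_sw_stab}. Then \eqref{eq:minDTLinfty_sw_stab1_1} is the first inequality of \eqref{eq:blanchini1} together with the $j=i$ (settled-regime) instance of \eqref{eq:blanchini2}, for which $\psi_{ii}\equiv\lambda_i$; \eqref{eq:minDTLinfty_sw_stab1_2} is the second inequality of \eqref{eq:blanchini1} after the substitution $s\mapsto\bar T-s$ in the integral; and \eqref{eq:minDTLinfty_sw_stab1_3} follows from \eqref{eq:blanchini2} because $C_i\ge0$ and $r_{ij}(\tau)\le\psi_{ij}(\tau)$ give $C_ir_{ij}(\tau)+F_i\mathds{1}\le C_i\psi_{ij}(\tau)+F_i\mathds{1}<\gamma\mathds{1}$. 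This establishes statement (a), hence all of Theorem~\ref{th:minDTLinfty_sw_stab}.

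For the converse, assume $\pi_i>0$ for every $i$ and start from vectors $\mu_i$ satisfying statement (a) of Theorem~\ref{th:minDTLinfty_sw_stab}. I would first observe that the settled-regime inequalities propagate the output bound past $\bar T$: from $r_{ij}(\bar T)<\mu_i$ and $A_i\mu_i+E_i\mathds{1}<0$ a comparison argument gives $r_{ij}(\tau)<\mu_i$, hence $C_ir_{ij}(\tau)+F_i\mathds{1}<\gamma\mathds{1}$, for all $\tau\ge0$, and letting $\tau\to\infty$ yields $C_i\pi_i+F_i\mathds{1}\le\gamma\mathds{1}$. The candidate $\lambda_i=\mu_i$ already satisfies both inequalities of \eqref{eq:blanchini1} and the $j=i$ case of \eqref{eq:blanchini2}; the only obstruction is the off-diagonal case of \eqref{eq:blanchini2}, where, by the identity above, $\psi_{ij}$ overshoots $r_{ij}$ by the nonnegative vector $(I-e^{A_i\tau})(\mu_i-\pi_i)$, so the bound on $C_ir_{ij}$ does not transfer to $C_i\psi_{ij}$. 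The plan to remove this obstruction is to slide the Blanchini vectors towards the steady states along $\lambda_i(t):=(1-t)\pi_i+t\mu_i$, $t\in[0,1]$ (after a harmless perturbation replacing $\pi_i$ by $\pi_i+\eps v_i$ with $v_i>0$, $A_iv_i<0$, $\eps>0$ small, to restore strictness in \eqref{eq:blanchini1} at $t=0$): for $t$ near $0$ the interpolation $\psi_{ij}$ reduces essentially to $\pi_i+e^{A_i\tau}(\pi_j-\pi_i)\le r_{ij}(\tau)$, so \eqref{eq:blanchini2} holds with strict slack, while for $t$ near $1$ the minimum-dwell-time inequality \eqref{eq:blanchini1} holds because $\lambda_i(1)=\mu_i$ satisfies \eqref{eq:minDTLinfty_sw_stab1_2}. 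Since both conditions are affine in $t$, a common feasible $t$ exists once the corresponding one-sided $t$-intervals overlap, and it is precisely the strict positivity $\pi_i>0$ that forces this overlap; choosing such a $t$ (and the associated $\eps$) produces the required Blanchini vectors.

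The first implication is essentially bookkeeping; \textbf{the main obstacle is the converse}, and more precisely the fact that Blanchini's interpolation $\psi_{ij}$ overshoots the forced trajectory $r_{ij}$ by a nonnegative amount, which rules out the naive choice $\lambda_i=\mu_i$ and makes it necessary to re-centre the $\lambda_i$ near the steady-state vectors $-A_i^{-1}E_i\mathds{1}$ while keeping \eqref{eq:blanchini1} satisfied. Turning the segment/continuity sketch above into a quantitative estimate — i.e. showing that the ``performance is fine'' window (near $t=0$) and the ``dwell-time inequality is fine'' window (near $t=1$) meet, which is exactly where the nondegeneracy $-A_i^{-1}E_i\mathds{1}>0$ is used — is the technical heart of the proof.
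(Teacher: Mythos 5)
Your forward implication is correct and is essentially the paper's own argument: your single comparison $\psi_{ij}(\tau)\ge r_{ij}(\tau)$, obtained by integrating $e^{A_i s}(A_i\lambda_i+E_i\mathds{1})<0$, is precisely the paper's inequality $\int_0^\tau e^{A_is}E_i\mathds{1}\,\ds<(I-e^{A_i\tau})\lambda_i$, and the identification of the remaining conditions is the same bookkeeping.

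The converse is where you depart from the paper and where your proposal has a genuine gap. The paper does not re-centre the vectors by a homotopy: it substitutes $\lambda_i=-A_i^{-1}E_i\mathds{1}$ outright, for which $(e^{A_i\tau}-I)\lambda_i+\int_0^\tau e^{A_is}E_i\mathds{1}\,\ds=0$, so that $\psi_{ij}\equiv r_{ij}$ and the two families of conditions collapse into one another; the hypothesis $-A_i^{-1}E_i\mathds{1}>0$ is used only to argue that the resulting equality $A_i\lambda_i+E_i\mathds{1}=0$ may stand in for the strict inequality (the Lyapunov-equation analogy, made quantitative in the following remark via $\lambda_i=-A_i^{-1}(E_i\mathds{1}+\eps\mathds{1})$). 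Your interpolation $\lambda_i(t)=(1-t)\pi_i+t\mu_i$ is therefore not the paper's route, and, more importantly, your argument stops exactly at the step that needs proving: the claim that the set of $t$ for which \eqref{eq:blanchini2} holds (an interval containing $t=0$) and the set of $t$ for which the second inequality of \eqref{eq:blanchini1} holds (an interval containing $t=1$) must intersect does not follow from affinity in $t$ — affinity only guarantees that each feasible set is an interval, not that the two intervals overlap — and you supply no quantitative estimate forcing the overlap. As written, the converse is a plan rather than a proof. Your underlying concern is legitimate (at $t=0$ the dwell-time inequality reduces to $e^{A_i\bar T}(\pi_j-\pi_i)<0$, which is not implied by feasibility of Theorem \ref{th:minDTLinfty_sw_stab} with the vectors $\mu_i$), and the paper's own terse converse does not address it either; but to complete your version you would have to produce the missing estimate, and the paper's proof will not give it to you.
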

\begin{proof}
  Clearly, the first condition in \eqref{eq:minDTLinfty_sw_stab1_1} is readily seen to be identical to the first inequality in \eqref{eq:blanchini1} while is nothing else but the time-invariant counterpart of \eqref{eq:minDTLinfty_sw_stab1_2}. To see that \eqref{eq:blanchini2} implies \eqref{eq:minDTLinfty_sw_stab1_3}, first note that $A_i\lambda_i+E_i\mathds{1}<0$ implies that $e^{A_is}(A_i\lambda_i+E_i\mathds{1})<0$ and, therefore,
  \begin{equation}\label{eq:dkapssdk;l'sk;ldasdk;lsdk;l}
    \int_0^\tau e^{A_is}(A_i\lambda_i+E_i\mathds{1})\ds=(e^{A_i\tau}-I)\lambda_i+\int_0^\tau e^{A_is}E_i\mathds{1}\ds<0
  \end{equation}
  for all $\tau\in[0,\bar T]$, and thus,
  \begin{equation}
    \int_0^\tau e^{A_is}E_i\mathds{1}\ds<(I-e^{A_i\tau})\lambda_i
  \end{equation}
  which, together with \eqref{eq:blanchini2} is readily seen to yield the condition \eqref{eq:minDTLinfty_sw_stab1_3}, which proves the implication.

  We prove now that the converse holds when $-A_i^{-1}E_i\mathds{1}>0$ for all $i=1,\ldots,N$. Letting $\lambda_i=-A_i^{-1}E_i\mathds{1}$, yields
  \begin{equation}
    (e^{A_i\tau}-I)\lambda_i+\int_0^\tau e^{A_is}E_i\mathds{1}\ds=0
  \end{equation}
  for all $\tau\in[0,\bar T]$ and, we have equality between $\int_0^\tau e^{A_is}E_i\mathds{1}\ds$ and $(I-e^{A_i\tau})\lambda_i$. Note, however, that this choice for $\lambda_i$ seems to violate the first inequality of \eqref{eq:minDTLinfty_sw_stab1_1}  and \eqref{eq:blanchini1} because those expressions become 0 instead to being negative. This is not an issue here since the inequality can be closed because of the fact that $-A_i^{-1}E_i\mathds{1}>0$. This is analogous to the fact that one can check the stability of a linear system by solving the Lyapunov equation $A^{\T}P+PA+EE^{\T}=0$ under the condition that the pair $(A,E)$ is controllable even though $EE^{\T}$ is not full-rank.
\end{proof}

From a numerical standpoint, whenever  $-A_i^{-1}E_i\mathds{1}\ngtr0$ for some $i=1,\ldots,N$, we can approach arbitrarily closely the results obtained using Theorem \ref{th:minDTLinfty_sw_stab} by the results of Theorem \ref{th:blanchini} using $\lambda_i=-A_i^{-1}(E_i\mathds{1}+\eps\mathds{1})$ with some sufficiently small $\eps>0$. This shows that, in the end, there is little gap between those results when the matrices of the system are time-invariant. However, the proof of Theorem \ref{th:blanchini} heavily relies on the time-invariance of the system through \eqref{eq:dkapssdk;l'sk;ldasdk;lsdk;l} which is not valid anymore when the matrices of the system become timer-dependent. In this regard, this result is not valid in this case and it is unclear whether it can be generalized to the case of timer-dependent matrices.}

\subsection{Stabilization}

As previously, we consider the following state-feedback controller
\begin{equation}\label{eq:minDTKsw}
\begin{array}{rcl}
    \bar{u}(t_k+\tau)&=&\left\{\begin{array}{l}
      \bar{K}_{\sigma(t_k+\tau)}(\tau)\bar x(t_k+\tau),\ \tau\in(0,\bar T]\\
      \bar{K}_{\sigma(t_k+\tau)}(\bar T)\bar x(t_k+\tau),\ \tau\in(\bar T, T_k]
    \end{array}\right.\\
    u_d(k)&=&K_dx(t_k)
\end{array}
\end{equation}
where $\bar{K}_i:[0,\Tmax]\mapsto\mathbb{R}^{m\times n}$, $i=1,\ldots,N$, are the gains of the controller to be determined. We can therefore build the augmented gain matrix as $K(\tau)=\diag_{i=1}^N(\bar{K}_i)$ in correspondence with the system \eqref{eq:switchedimp}. This leads to the following result:


\begin{theorem}\label{th:minDTLinfty_sw}
Let us consider the system  \eqref{eq:mainsyst2} with $\bar A_i(\tau)=\bar A_i(\bar{T})$, $\bar B_{i}(\tau)=\bar B_{i}(\bar{T})$, $\bar E_{i}(\tau)=\bar E_{i}(\bar{T})$, $\bar C_{i}(\tau)=\bar C_{i}(\bar{T})$, $\bar D_{i}(\tau)=\bar D_{i}(\bar{T})$ and $\bar F_{i}(\tau)=\bar F_{i}(\bar{T})$ for all $\tau\ge\bar{T}>0$, where $\bar T>0$ is given, and assume that it is internally positive. Assume further that there exist a differentiable matrix-valued function $X_i:[0,\bar T]\mapsto\mathbb{D}^n$, $X_i(\bar T)\succ0$, $i=1,\ldots,N$, and scalars $\eps,\gamma,\alpha>0$ such that the conditions
\begin{equation}
     \begin{array}{rcl}
           \bar A_i(\tau)X_i(\tau)+B_i(\tau)U_i(\tau)+\alpha I&\ge&0\\
           \bar C_{i}(\tau)X_i(\tau)+D_i(\tau)U_i(\tau)&\ge&0
          \end{array}
\end{equation}
and
 \begin{equation}
          \begin{array}{rcl}
           \left[\bar A_i(\bar{T})X_i(\bar{T})+B_i(\bar{T})U_i(\bar{T})\right]\mathds{1}+\bar E_{i}(\bar{T})\mathds{1}&<&0\\
           \left [\bar C_{i}(\bar{T})X_i(\bar{T})+B_i(\bar{T})U_i(\bar{T})\right]\mathds{1}+\bar F_{i}(\bar{T})\mathds{1}-\gamma\mathds{1}&<&0\\
              \left[\dot X_i(\tau)+\bar A_i(\tau)X_i(\tau)+B_i(\tau)U_i(\tau)\right]\mathds{1}+\bar E_{i}(\tau)\mathds{1}&<&0\\
            \left[\bar C_{i}(\tau)X_i(\tau)+D_i(\tau)U_i(\tau)\right]\mathds{1}+\bar F_{i}(\tau)\mathds{1}-\gamma\mathds{1}&<&0\\
           X_i(\bar{T})-X_j(0)&\le&0
          \end{array}
        \end{equation}
hold for all $\tau\in[0,\bar T]$ and all $i,j=1,\ldots,N$, $i \ne j$. Then, the closed-loop system \eqref{eq:switched}-\eqref{eq:minDTKsw} is asymptotically stable under minimum dwell-time $\bar T$ with the controller $\bar{K}_i(\tau)=U_i(\tau)X_i(\tau)^{-1}$ and the $L_\infty$-gain of the transfer $w_c\mapsto z_c$ is at most $\gamma$\hfill\mendth
\end{theorem}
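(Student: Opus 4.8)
The plan is to reduce the statement to the analysis result Theorem~\ref{th:minDTLinfty_sw_stab} applied to the closed-loop system, following the pattern of the proofs of Theorems~\ref{th:stabzrDT} and~\ref{th:stabzmDT}. Rewriting the switched system in the impulsive form with multiple jump maps~\eqref{eq:switchedimp} and closing the loop with the block-diagonal gain $K(\tau)=\diag_{i=1}^N(\bar K_i(\tau))$, the closed-loop flow data in mode $i$ is $(\bar A_i(\tau)+\bar B_i(\tau)\bar K_i(\tau),\,\bar E_i(\tau),\,\bar C_i(\tau)+\bar D_i(\tau)\bar K_i(\tau),\,\bar F_i(\tau))$, while the jump maps $e_ie_j^{\T}\otimes I_n$ are unchanged. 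Two things then have to be checked: (i) that the resulting closed-loop impulsive system is internally positive, and (ii) that the assumed inequalities are exactly the conditions of Theorem~\ref{th:minDTLinfty_sw_stab}(b) written for this closed-loop data, so that its conclusion transfers.

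For (i), I would first observe that each $X_i:[0,\bar T]\mapsto\mathbb{D}^n$ is continuous, diagonal, and positive definite at $\tau=\bar T$; the matching inequality $X_i(\bar T)-X_j(0)\le0$ then gives $X_j(0)\succ0$ for every $j$, and the usual bootstrapping argument used inside Lemma~\ref{lem:general_max}/Theorem~\ref{th:minDTLinfty} (together with the flow inequalities) shows $X_i(\tau)\succ0$ on all of $[0,\bar T]$. Hence each $X_i(\tau)$ is invertible, the gains $\bar K_i(\tau)=U_i(\tau)X_i(\tau)^{-1}$ are well defined, and multiplying $\bar A_i(\tau)X_i(\tau)+B_i(\tau)U_i(\tau)+\alpha I_n\ge0$ on the right by the positive diagonal matrix $X_i(\tau)^{-1}$ shows that $\bar A_i(\tau)+\bar B_i(\tau)\bar K_i(\tau)+\alpha X_i(\tau)^{-1}$ is nonnegative, i.e.\ $\bar A_i(\tau)+\bar B_i(\tau)\bar K_i(\tau)$ is Metzler; similarly $\bar C_i(\tau)X_i(\tau)+D_i(\tau)U_i(\tau)\ge0$ gives $\bar C_i(\tau)+\bar D_i(\tau)\bar K_i(\tau)\ge0$. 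Combined with the standing internal-positivity assumption (so $\bar E_i(\tau),\bar F_i(\tau)\ge0$) and the obvious nonnegativity of each $e_ie_j^{\T}\otimes I_n$, Proposition~\ref{prop:positive}, in the multiple-jump-map form already invoked in the proof of Theorem~\ref{th:minDTLinfty_sw_stab}, yields internal positivity of the closed loop.

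For (ii), I would perform the change of variables $\zeta_i(\tau):=X_i(\tau)\mathds{1}$. Since $X_i(\tau)$ is diagonal, $\dot\zeta_i(\tau)=\dot X_i(\tau)\mathds{1}$, $\zeta_i(0)=X_i(0)\mathds{1}>0$, and $\bar K_i(\tau)\zeta_i(\tau)=U_i(\tau)X_i(\tau)^{-1}X_i(\tau)\mathds{1}=U_i(\tau)\mathds{1}$, so the bracketed expression $[\dot X_i(\tau)+\bar A_i(\tau)X_i(\tau)+B_i(\tau)U_i(\tau)]\mathds{1}+\bar E_i(\tau)\mathds{1}$ becomes the closed-loop flow residual evaluated at $\zeta_i$, and likewise the output, terminal-mode, and jump inequalities translate into the corresponding lines of Theorem~\ref{th:minDTLinfty_sw_stab}(b) for the closed-loop data; the constraint $X_i(\bar T)-X_j(0)\le0$ becomes $\zeta_i(\bar T)-\zeta_j(0)\le0$, which, after relabelling the ordered pair $(i,j)$, is the matching condition of that theorem. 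The two terminal lines involving $\bar A_i(\bar T)X_i(\bar T)+B_i(\bar T)U_i(\bar T)$ and $\bar C_i(\bar T)X_i(\bar T)+D_i(\bar T)U_i(\bar T)$ are precisely the persistent-flow conditions that make the constant extension $\xi(t_k+\tau)=\zeta_i(\bar T)$, $\tau\ge\bar T$, admissible (cf.\ the structure~\eqref{eq:nonsmooth} and Proposition~\ref{prop:nodiff}), so no separate smoothness requirement at $\tau=\bar T$ is imposed. Applying Theorem~\ref{th:minDTLinfty_sw_stab} then gives uniform exponential stability under minimum dwell-time $\bar T$ and an $L_\infty$-gain of the transfer $w_c\mapsto z_c$ at most $\gamma$, with the stated gains $\bar K_i(\tau)=U_i(\tau)X_i(\tau)^{-1}$.

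The routine part is the substitution in (ii); the care is needed at two points, and this is where I expect the main obstacle to lie. First, one must make sure (as in Lemma~\ref{lem:general_max}) that the linear-program solution $X_i$ is genuinely positive definite and bounded away from $0$ on $[0,\bar T]$, so that the piecewise-constructed weight $\xi$ is a legitimate max-separable copositive Lyapunov weight — this is a pure feasibility/boundedness argument but it is easy to overlook. Second, one must confirm that the single-jump-map analysis result really extends verbatim to the augmented system~\eqref{eq:switchedimp} once the discrete inequality is imposed for every ordered pair $(i,j)$ with $i\ne j$, and that the block-diagonal structure of $A(\tau),B(\tau),C(\tau),D(\tau),E(\tau),F(\tau)$ is preserved by the block-diagonal controller $K(\tau)=\diag_{i=1}^N(\bar K_i(\tau))$, so that the closed loop remains in the multiple-jump-map class of Theorem~\ref{th:minDTLinfty_sw_stab}.
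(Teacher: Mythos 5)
Your proposal is correct and follows essentially the same route as the paper, whose proof of this theorem is the single sentence that it is an immediate application of Theorem \ref{th:minDTLinfty_sw_stab} together with the change of variables $\zeta_i(\tau)=X_i(\tau)\mathds{1}$, $U_i(\tau)=\bar{K}_i(\tau)X_i(\tau)$ already used in the minimum dwell-time stabilization case. You merely spell out the two steps (internal positivity of the closed loop via the diagonal structure of $X_i$ and Proposition \ref{prop:positive}, and the translation of the stated inequalities into the conditions of Theorem \ref{th:minDTLinfty_sw_stab}(b) for the closed-loop data) that the paper leaves implicit.
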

\begin{proof}
The proof is an immediate application of Theorem \ref{th:minDTLinfty_sw_stab} and the change of variables used in the minimum dwell-time case.
\end{proof}

\subsection{Example}

  Let us consider the system \eqref{eq:switched} with $\bar u\equiv0$ and the matrices
\begin{equation}\label{eq:ex3}
\begin{array}{l}
    \bar A_1=\begin{bmatrix}
    -1 & 0\\
    1 & -2
  \end{bmatrix}, \bar E_1=\begin{bmatrix}
    0.1\\
    0.1
  \end{bmatrix},\\ \bar A_2=\begin{bmatrix}
-1 & 1\\
1 & -6
  \end{bmatrix}, \bar E_2=\begin{bmatrix}
    0.5\\
    0
  \end{bmatrix},
    \\ \bar C_1=\bar C_2=\begin{bmatrix}
    0 & 1
  \end{bmatrix}, \bar F_1=\bar F_2=0.1.
\end{array}
\end{equation}
Using polynomials of degree 4 and solving for the conditions of Theorem \ref{th:minDTLinfty_sw_stab} with $\bar T=0.1$, we get the value $\gamma=0.50753$ as the minimum upper-bound for the $L_\infty$-gain. The SOS program has 322 primal and 49 dual variables and solves in 4.15 seconds, including preprocessing. Simulating the system 100 times with $x_0=0$ together with the inputs $w_c\equiv 1$ and $w_d\equiv 1$, and random dwell-time sequences satisfying the minimum dwell-time condition, we obtain the lower-bound on the $L_\infty$-gain given by 0.3012, which suggests that the computed value may not be as tight as for the other examples. The simulation results are depicted in Figure \ref{fig:minDTswitched_States} and Figure \ref{fig:minDTswitched_Inputs}.

\blue{It is possible to compare this result with Theorem 4.11 in \cite{Blanchini:15}, recalled here as Theorem \ref{th:blanchini}. Since the conditions in this result are not polynomial in the timer variable, we cannot use SOS programming to verify them directly, even though we could replace the exponential terms by sufficiently accurate polynomial approximations. For simplicity, we use a gridding method here and consider $N_p=101$ equidistant points in the interval $[0,\bar{T}]$, and solve the linear program at those points only (making the approach only necessary and not sufficient; see e.g. \cite{Briat:book1}). The resulting linear program has 5 decision variables, 217 linear constraints, and the program solves in less than a second, which is much better than the proposed approach. Using this result, we obtain 0.50674 as smallest upper-bound for the $L_\infty$-gain for this system, which is pretty much the same as the value obtained using the proposed approach. This is not surprising as Proposition \ref{prop:equivalence} has shown that the two approaches are equivalent when $-A_i^{-1}E_i\mathds{1}>0$ for all $i=1,\ldots,N$, which is the case here.}

\begin{figure}
  \centering
  \includegraphics[width=0.8\textwidth]{./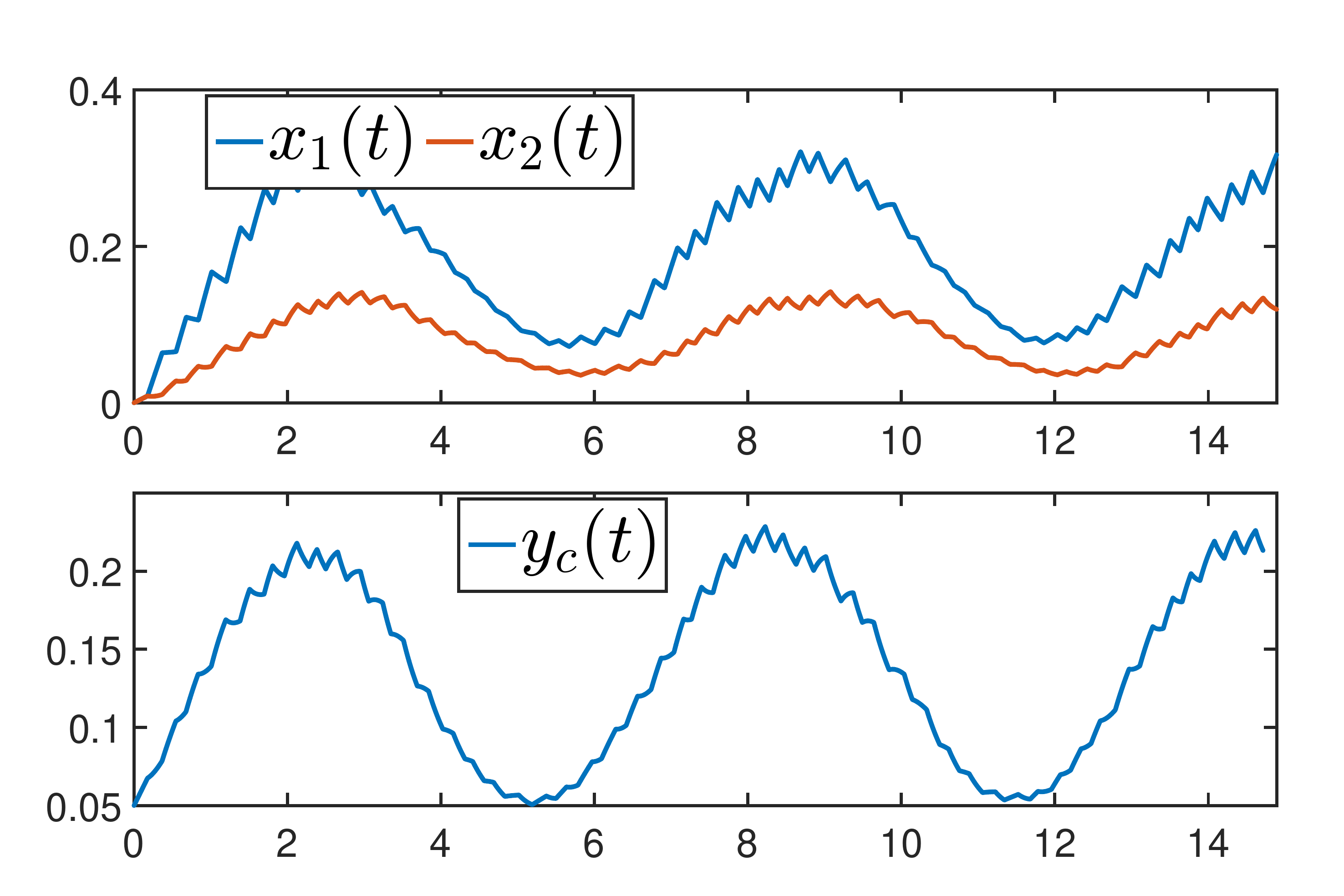}
  \caption{Trajectories of the state (top) and the output (bottom) of the system \eqref{eq:mainsyst2}, \eqref{eq:ex2} and $\bar T=0.1$.}\label{fig:minDTswitched_States}
\end{figure}
\begin{figure}
  \centering
  \includegraphics[width=0.8\textwidth]{./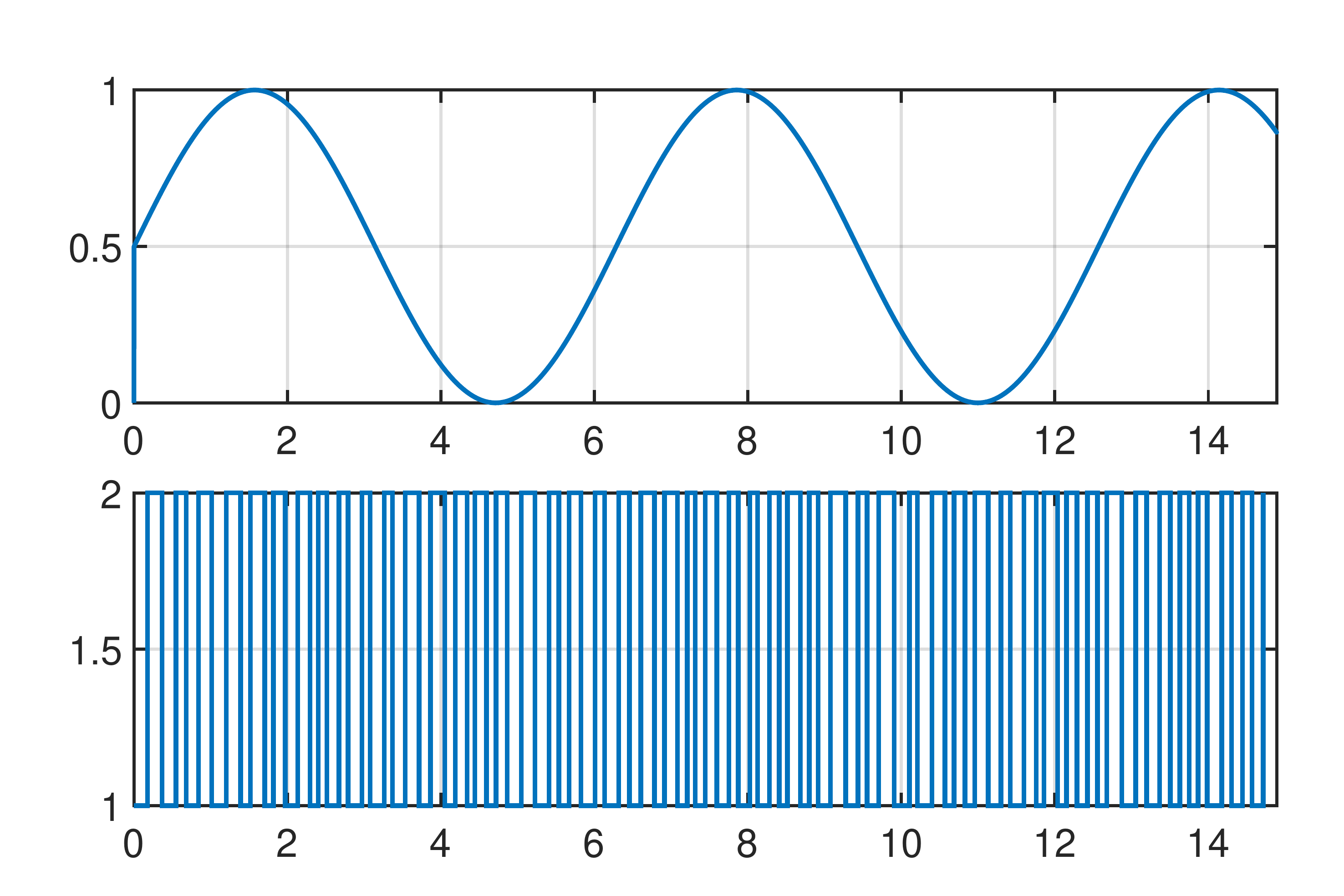}
  \caption{Exogenous input (top) and switching signal (bottom) for the system \eqref{eq:mainsyst2}, \eqref{eq:ex2} with the dwell-time $\bar T=0.1$.}\label{fig:minDTswitched_Inputs}
\end{figure}

%


%

\section{Particular cases}\label{sec:part}

The objective of this section is to show that many of the results in the literature can be recovered by the proposed very general framework provided by Theorem \ref{th:generalLinf}. 

\subsection{LTV, periodic, and LTI continuous-time positive systems}

Let us define here the following continuous-time LTV positive system
\begin{equation}\label{eq:CT}
      \begin{array}{rcl}
        \dot{x}(t)&=&A(t)x(t)+E(t)w(t)\\
        z(t)&=&C(t)x(t)+F(t)w(t)\\
        x(0)&=&x_0>0
      \end{array}
\end{equation}
where the matrices are piecewise continuous and uniformly bounded.

\subsubsection{$L_\infty$ results}

The following result addresses the case of the $L_\infty$ performance continuous-time LTV positive systems that does not seem to have been addressed so far:
\begin{theorem}
  The following statements are equivalent:
  \begin{enumerate}[(a)]
    \item The LTV positive system \eqref{eq:CT} is uniformly exponentially stable and the $L_\infty$-gain of the transfer $w\mapsto z$ is at most  $\gamma$.
    \item There exists a differentiable vector-valued function $\xi:\mathbb{R}_{\ge0}\mapsto\mathbb{R}^n_{>0}$,  $0<\bar\xi_1\le \xi(t)\le \bar{\xi}_2<\infty$  such that the conditions
    \begin{equation}
    \begin{array}{rcl}
       -\dot{\xi}(t)+A(t)\xi(t)+E(t)\mathds{1}&<&0\\
      C(t)\xi(t)+F(t)\mathds{1}-\gamma\mathds{1}&<&0
    \end{array}
    \end{equation}
    hold for all $t\ge t_0$ and all $t_0\in\mathbb{R}_{\ge0}$.
  \end{enumerate}
\end{theorem}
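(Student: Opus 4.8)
The plan is to obtain this theorem as a direct corollary of Theorem~\ref{th:generalLinf} by viewing the LTV positive system \eqref{eq:CT} as a degenerate impulsive system with no jumps. First I would embed \eqref{eq:CT} into the framework of \eqref{eq:mainsyst}: take $\tilde A(t)=A(t)$, $\tilde E_c(t)=E(t)$, $\tilde C_c(t)=C(t)$, $\tilde F_c(t)=F(t)$, set all the discrete-time data ($\tilde J$, $\tilde E_d$, $\tilde C_d$, $\tilde F_d$, and the corresponding signals $w_d$, $z_d$) to be absent (equivalently, choose a vacuous impulse sequence, or formally push $t_1\to\infty$ so that $\frak{T}$ contributes nothing on any finite horizon). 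With no jumps, the hybrid $L_\infty\times\ell_\infty$-gain of the impulsive system reduces to the ordinary $L_\infty$-gain of \eqref{eq:CT}, and uniform exponential stability in the sense of Definition~\ref{def:expstab} with $\kappa(t,t_0)\equiv0$ reduces to standard uniform exponential stability of the LTV system.

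Second, I would invoke Theorem~\ref{th:generalLinf} directly. Statement (a) of that theorem — uniform exponential stability together with hybrid $L_\infty\times\ell_\infty$-gain at most $\gamma$ — becomes precisely statement (a) of the present theorem once the discrete channel is removed. Statement (b) of Theorem~\ref{th:generalLinf} provides a continuously differentiable $\xi:[t_0,\infty)\mapsto\mathbb{R}^n_{>0}$ with $\bar\xi_1\le\xi(t)\le\bar\xi_2$ and $\eps>0$ satisfying the four inequalities in \eqref{eq:condLinfperf}; the two inequalities indexed by $k$ (involving $\tilde J$, $\tilde E_d$, $\tilde C_d$, $\tilde F_d$) are vacuous in this setting, leaving exactly
\begin{equation*}
\begin{array}{rcl}
-\dot{\xi}(t)+A(t)\xi(t)+E(t)\mathds{1}&<&0\\
C(t)\xi(t)+F(t)\mathds{1}-\gamma\mathds{1}&<&0,
\end{array}
\end{equation*}
which is statement (b) here. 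The quantifier over $t_0\in[0,t_1)$ in Theorem~\ref{th:generalLinf} becomes the quantifier over all $t_0\in\mathbb{R}_{\ge0}$ once $t_1$ is sent to infinity, matching the "for all $t_0\in\mathbb{R}_{\ge0}$" clause in the statement.

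The only real obstacle is a bookkeeping one: making the "impulsive system with no jumps" reduction rigorous, since Theorem~\ref{th:generalLinf} was phrased for a \emph{given} impulse sequence $\frak{T}=\{t_k\}_{k\in\mathbb{Z}_{\ge1}}$ with $t_k\to\infty$, and a literally empty sequence is not of that form. The clean fix is to note that all the objects involved (the gain, the stability estimate, the Lyapunov inequalities) are defined on finite horizons and that for any fixed horizon one may choose $t_1$ beyond it; alternatively, one can simply re-run the (short) proof of Theorem~\ref{th:generalLinf} with the discrete terms deleted throughout — the "(b) implies (a)" direction uses only the continuous-time state-transition computation and the nonnegativity of $\mathds{1}-w_c$, and the "(a) implies (b)" direction uses only Lemma~\ref{lem:general_max} (again in its no-jump, purely continuous form, where the construction $\xi(t)=\int_{-\infty}^t\tilde\Phi(t,s)(b+\eps\mathds{1})\,\d s$ and the bound via uniform exponential stability go through verbatim) plus the convolution operator $G_{cc}$. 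I expect no new analytic difficulty beyond confirming that the boundedness and boundedness-away-from-zero arguments for $\xi$ survive when only the integral term is retained, which they do since that term alone was already shown to be sandwiched between $\bar\xi_1$ and $\bar\xi_2$ in the proof of Lemma~\ref{lem:general_max}.
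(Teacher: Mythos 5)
Your proposal is correct and, like the paper, obtains the result as a specialization of Theorem~\ref{th:generalLinf}; the only real divergence is in how the discrete dynamics are trivialized. You remove the impulse sequence altogether (vacuous $\frak{T}$, or $t_1\to\infty$), which forces you to confront the bookkeeping obstacle you rightly flag — the framework requires infinitely many impulse times with $t_k\to\infty$, and the gain and uniform exponential stability are not finite-horizon notions, so the ``choose $t_1$ beyond any fixed horizon'' fix is delicate; your fallback of re-running the proof of Theorem~\ref{th:generalLinf} with the discrete terms deleted is the sound version of that route. The paper sidesteps the issue entirely by keeping an arbitrary impulse sequence and setting $\tilde J(k)=I$, $\tilde E_d=\tilde C_d=\tilde F_d=0$: the jumps are then trivial, the fourth inequality in \eqref{eq:condLinfperf} reduces to $-\gamma\mathds{1}<0$, and the second becomes $\xi(t_k)-\xi(t_k^+)<0$, which no continuous $\xi$ can satisfy strictly — whence the invocation of Remark~\ref{rem:relax} to close that inequality (persistent flowing, hybrid rate $(\alpha,1)$). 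This buys a one-line proof that stays strictly within the stated hypotheses of Theorem~\ref{th:generalLinf}, at the price of leaning on the relaxation in Remark~\ref{rem:relax}; your route avoids that relaxation but needs the degenerate-sequence argument made rigorous. Both are legitimate.
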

\begin{proof}
  This result is obtained by considering $\tilde A(t)=A(t)$, $\tilde E_c(t)=E(t)$,  $\tilde C_c(t)=C(t)$,  $\tilde F_c(t)=F(t)$, $\tilde{J}(k)=I$, $\tilde{E}_d(k)=\tilde{C}_d(k)=\tilde{F}_d(k)=0$ in Theorem \ref{th:generalLinf}. Following Remark \ref{rem:relax}, we can close the inequality corresponding to jumps since they have no effect on the stability of the system.
\end{proof}

\begin{theorem}
  The following statements are equivalent:
  \begin{enumerate}[(a)]
    \item The $T$-periodic version of the positive system \eqref{eq:CT} is uniformly exponentially stable and the $L_\infty$-gain of the transfer $w\mapsto z$ is at most  $\gamma$.
    \item There exists a differentiable vector-valued function $\xi:[0,T]\mapsto\mathbb{R}^n_{>0}$, $\xi(0)=\xi(T)$, $0<\bar\xi_1\le \xi(t)\le \bar{\xi}_2<\infty$ such that the conditions
    \begin{equation}
    \begin{array}{rcl}
    -\dot{\xi}(t)+A(t)\xi(t)+E(t)\mathds{1}&<&0\\
      C(t)\xi(t)+F(t)\mathds{1}-\gamma\mathds{1}&<&0
    \end{array}
    \end{equation}
    hold for all $t\in[0,T]$.
  \end{enumerate}
\end{theorem}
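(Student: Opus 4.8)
The plan is to derive this periodic result as a specialization of Theorem \ref{th:generalLinf}, exactly in the spirit of the preceding LTV theorem, but now encoding periodicity through a suitably chosen sequence of (fictitious) impulse times with trivial jump maps. First I would set $\tilde A(t)=A(t)$, $\tilde E_c(t)=E(t)$, $\tilde C_c(t)=C(t)$, $\tilde F_c(t)=F(t)$, and $\tilde J(k)=I$, $\tilde E_d(k)=\tilde C_d(k)=\tilde F_d(k)=0$, so that the impulsive system \eqref{eq:mainsyst} reduces to the $T$-periodic version of \eqref{eq:CT} with no genuine jump dynamics. I would then invoke Theorem \ref{th:generalLinf} together with Remark \ref{rem:relax}: since the jump map is the identity and carries no input or output, the discrete-time inequalities can be closed (persistent-flowing case), leaving only the two continuous-time inequalities of statement (b). The key additional observation is that, because all the data are $T$-periodic, it suffices to look for a $T$-periodic Lyapunov function, i.e. one generated by a $T$-periodic $\xi$; this is where the boundary condition $\xi(0)=\xi(T)$ enters.

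The core of the argument is the reduction from "$\xi$ defined on $[t_0,\infty)$ for all $t_0$" to "$\xi$ defined on $[0,T]$ with $\xi(0)=\xi(T)$." For the direction (b) $\Rightarrow$ (a): given a $T$-periodic $\xi$ on $[0,T]$ satisfying the two inequalities, extend it periodically to all of $\mathbb{R}_{\ge0}$; the extension is continuously differentiable precisely because $\xi(0)=\xi(T)$ and the right-hand sides match up at the period boundary (here one uses that $A,E,C,F$ are themselves $T$-periodic so the differential inequality is consistent across $t=kT$). The extended $\xi$ is automatically bounded above and below by positive vectors since it is continuous and periodic. Then Theorem \ref{th:generalLinf} (with the fictitious impulse times, or simply its continuous-only specialization from the previous theorem) yields uniform exponential stability and the $L_\infty$-gain bound $\gamma$. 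For the direction (a) $\Rightarrow$ (b): apply the "(a) implies (b)" part of Theorem \ref{th:generalLinf} to obtain some $\xi^*$ on $[t_0,\infty)$; the point is to produce from it a genuinely $T$-periodic solution. Here I would mimic the construction in the proof of Lemma \ref{lem:general_max}, writing $\xi(t)=\int_{-\infty}^t\Phi(t,s)(E(s)\mathds{1}+\eps\mathds{1})\,\textnormal{d}s$ (no jump/sum term, since $\tilde J=I$), and observe that because $A$ and $E$ are $T$-periodic, the state-transition matrix satisfies $\Phi(t+T,s+T)=\Phi(t,s)$, which forces $\xi(t+T)=\xi(t)$; restricting to $[0,T]$ gives the periodic function with $\xi(0)=\xi(T)$. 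The well-posedness and the positive upper/lower bounds for this $\xi$ are established verbatim as in the proof of Lemma \ref{lem:general_max} (uniform exponential stability gives the upper bound; the Metzler-plus-shift trick with a large enough $c$ gives the strictly positive lower bound), and differentiating under the integral sign shows it satisfies the first inequality; substituting into the $L_\infty$-performance argument from the proof of Theorem \ref{th:generalLinf} gives the second inequality.

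The main obstacle I anticipate is not conceptual but a matching/regularity subtlety: ensuring that the periodically extended $\xi$ is truly continuously differentiable (not merely absolutely continuous) at the period boundaries $t=kT$, and, conversely, that the periodicity one reads off from $\Phi(t+T,s+T)=\Phi(t,s)$ is genuine rather than only asymptotic. The former is handled by noting that the differential equation $\dot\xi=A(t)\xi+E(t)\mathds{1}+\eps\mathds{1}$ with $T$-periodic coefficients and $T$-periodic $\xi$ automatically has $\dot\xi$ matching at $t=0$ and $t=T$, so no explicit boundary condition on $\dot\xi$ is needed beyond $\xi(0)=\xi(T)$; the latter follows because the integral $\int_{-\infty}^t$ converges absolutely (by exponential stability) to an honest $T$-periodic limit, not just an asymptotically periodic one. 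A secondary minor point is that one must verify that "uniform exponential stability in the sense of Definition \ref{def:expstab}" for this jump-free, $T$-periodic system is equivalent to the usual notion of uniform exponential stability for \eqref{eq:CT} with $\kappa(t,s)$ counting the (inert) fictitious jumps contributing a harmless bounded factor $\rho^{\kappa}$; this is immediate since $\rho^{\kappa(t,t_0)}\le 1$ and can be absorbed, so the stated decay $\|x(t)\|\le M\rho^{\kappa}e^{-\alpha(t-t_0)}\|x_0\|$ reduces to ordinary exponential decay. With these caveats addressed, the theorem follows directly from Theorem \ref{th:generalLinf} and Remark \ref{rem:relax}, and I would phrase the written proof as a one-line invocation of those two results plus a short remark on periodicity, exactly as the paper does for the neighbouring LTV case.
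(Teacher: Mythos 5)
Your proposal is correct and follows essentially the same route as the paper: the paper's proof is a one-line specialization of the constant dwell-time result (Theorem \ref{th:cstDTLinfty} with $\tau=t$, $J=I$, $E_d=C_d=F_d=0$) together with Remark \ref{rem:relax}, and since that theorem is itself obtained from Theorem \ref{th:generalLinf} via the ``periodic Lyapunov function'' argument, your direct invocation of Theorem \ref{th:generalLinf} with identity jumps and the explicit periodic construction $\xi(t)=\int_{-\infty}^t\Phi(t,s)(E(s)\mathds{1}+\eps\mathds{1})\,\textnormal{d}s$ merely unrolls one layer of the same derivation. Your flagged regularity caveat at the period boundary is real but harmless, being exactly the issue the paper disposes of via Proposition \ref{prop:nodiff}.
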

\begin{proof}
  This result is obtained by considering $\tau=t$ in Theorem \ref{th:cstDTLinfty} together with $E_c=E$, $C_c=C$, $F_c=F$, $J(k)=I$, $E_d(k)=C_d(k)=F_d(k)=0$. Following Remark \ref{rem:relax},  we can close the inequality corresponding to jumps since they have no effect on the stability of the system.
\end{proof}

As a corollary, we recover the result in the LTI case first proposed in \cite{Briat:11g,Briat:11h}:
\begin{corollary}
    The following statements are equivalent:
  \begin{enumerate}[(a)]
    \item The LTI version of the system \eqref{eq:CT} is exponentially stable and the $L_\infty$-gain of the transfer $w\mapsto z$ is at most  $\gamma$.
    \item There exists a vector $\xi\in\mathbb{R}^n_{>0}$ such that the conditions
    \begin{equation}
    \begin{array}{rcl}
       A\xi+E\mathds{1}&<&0\\
      C\xi+F\mathds{1}-\gamma\mathds{1}&<&0.
    \end{array}
    \end{equation}
  \end{enumerate}
\end{corollary}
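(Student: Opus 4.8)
The plan is to obtain this corollary as a direct specialization of the $T$-periodic theorem proved just above (equivalently, of Theorem \ref{th:generalLinf}), by freezing all the matrices and then reducing the periodic Lyapunov certificate to a constant one. First I would set $A(t)\equiv A$, $E(t)\equiv E$, $C(t)\equiv C$, $F(t)\equiv F$, together with $\tilde J(k)=I$ and $\tilde E_d(k)=\tilde C_d(k)=\tilde F_d(k)=0$ in the periodic statement with an arbitrary period $T>0$. The jump part of the conditions is then vacuous: the jump map is the identity and contributes nothing to stability or performance, so by Remark \ref{rem:relax} the corresponding inequality may be closed and discarded. What remains is exactly: the LTI system is exponentially stable with $L_\infty$-gain at most $\gamma$ if and only if there is a differentiable $T$-periodic $\xi:[0,T]\mapsto\mathbb{R}^n_{>0}$ with $-\dot\xi(t)+A\xi(t)+E\mathds{1}<0$ and $C\xi(t)+F\mathds{1}-\gamma\mathds{1}<0$ on $[0,T]$.

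The implication (b) $\Rightarrow$ (a) is immediate: a constant $\xi\in\mathbb{R}^n_{>0}$ satisfying the two algebraic inequalities is, in particular, a $T$-periodic solution with $\dot\xi\equiv0$, so the periodic theorem applies. For (a) $\Rightarrow$ (b) I would first invoke the periodic theorem to produce a periodic certificate $\xi(t)$ and then average it over one period: set $\bar\xi:=\tfrac1T\int_0^T\xi(t)\,\dt$. Since $\xi(t)>0$ pointwise, $\bar\xi>0$. Integrating the strict inequality $\dot\xi(t)-A\xi(t)-E\mathds{1}>0$ over $[0,T]$ and using periodicity, $\int_0^T\dot\xi(t)\,\dt=\xi(T)-\xi(0)=0$, so $\tfrac1T\int_0^T\big(A\xi(t)+E\mathds{1}\big)\,\dt<0$, i.e.\ $A\bar\xi+E\mathds{1}<0$ by linearity; likewise $\tfrac1T\int_0^T\big(C\xi(t)+F\mathds{1}\big)\,\dt<\gamma\mathds{1}$ gives $C\bar\xi+F\mathds{1}<\gamma\mathds{1}$. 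Thus $\bar\xi$ is the desired constant vector. (Alternatively one can exhibit the constant certificate explicitly, in the spirit of the construction in the proof of Theorem \ref{th:generalLinf}: with $A$ Hurwitz Metzler pick $v>0$ with $Av<0$ and take $\xi=\epsilon v-A^{-1}(E\mathds{1}+\epsilon\mathds{1})$, which is positive since $-A^{-1}(E\mathds{1}+\epsilon\mathds{1})=\int_0^\infty e^{As}(E\mathds{1}+\epsilon\mathds{1})\,\ds>0$; then $A\xi+E\mathds{1}=\epsilon Av-\epsilon\mathds{1}<0$ and $C\xi+F\mathds{1}=(F-CA^{-1}E)\mathds{1}+\epsilon\big(Cv+C(-A^{-1})\mathds{1}\big)$, whose leading term has maximal entry equal to the $L_\infty$-gain, so letting $\epsilon\downarrow0$ recovers the strict output bound.)

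There is no genuine obstacle here beyond bookkeeping: the only point to verify carefully is that passing from the periodic certificate to a constant one preserves the \emph{strict} inequalities, which is handled by the averaging argument above (the integrand is continuous and strictly positive, hence so is its integral), and — as throughout the paper — the borderline value of $\gamma$ in statement (a) is absorbed by the arbitrary-$\epsilon$ slack used in the proof of Theorem \ref{th:generalLinf}. The resulting conditions are precisely those of \cite{Briat:11g,Briat:11h}, so the corollary recovers the known LTI characterization.
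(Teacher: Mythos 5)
Your proposal is correct, and it follows the route the paper itself takes for this corollary: the paper states it without a dedicated proof, as a direct specialization of the preceding LTV/periodic theorems (themselves obtained from Theorem \ref{th:generalLinf} with $\tilde J(k)=I$, $\tilde E_d=\tilde C_d=\tilde F_d=0$, the jump inequality being closed via Remark \ref{rem:relax}). The one place where you go beyond what the paper writes down is the reduction from a time-varying (periodic) certificate $\xi(t)$ to a constant vector $\xi$, which the paper leaves implicit; your averaging argument $\bar\xi=\tfrac1T\int_0^T\xi(t)\,\dt$, using $\int_0^T\dot\xi\,\dt=0$ and the fact that the integral of a continuous, componentwise strictly negative function over an interval of positive length is strictly negative, closes that gap cleanly, and your alternative explicit certificate $\xi=\epsilon v-A^{-1}(E\mathds{1}+\epsilon\mathds{1})$ is consistent with the constructions used in the necessity parts of Lemma \ref{lem:general_max} and Theorem \ref{th:generalLinf}. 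The only caveat, which you already flag, is the usual boundary issue between ``gain at most $\gamma$'' and the strict inequalities in (b); this is handled the same way (via the arbitrary $\epsilon$ slack) throughout the paper, so it is not a defect of your argument relative to the paper's conventions.
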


\subsubsection{$L_1$ results}

The following result addresses the case of the $L_1$-performance for continuous-time LTV positive systems that does not seem to have been addressed so far:
\begin{theorem}
  \begin{enumerate}[(a)]
    \item The LTV positive system \eqref{eq:CT} is uniformly exponentially stable and the $L_1$-gain of the transfer $w\mapsto z$ is at most  $\gamma$.
    \item There exists a differentiable vector-valued function $\chi:\mathbb{R}_{\ge0}\mapsto\mathbb{R}^n_{>0}$,  $0<\bar\chi_1\le \xi(t)\le \bar{\chi}_2<\infty$  such that the conditions
    \begin{equation}
    \begin{array}{rcl}
       \dot{\chi}(t)^{\T}+\chi(t)^{\T}A(t)+\mathds{1}^{\T}C(t)\mathds{1}&<&0\\
      \chi(t)^{\T}E(t)+\mathds{1}^{\T}F(t)-\gamma\mathds{1}^{\T}&<&0
    \end{array}
    \end{equation}
    hold for all $t\ge t_0$ and all $t_0\in\mathbb{R}_{\ge0}$.
  \end{enumerate}
\end{theorem}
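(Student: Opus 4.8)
The plan is to obtain this theorem as a direct specialization of Theorem \ref{th:generalL1}, in exact parallel with the way the $L_\infty$ companion result stated just above is derived from Theorem \ref{th:generalLinf}. First I would embed the purely continuous-time system \eqref{eq:CT} into the impulsive template \eqref{eq:mainsyst} by setting $\tilde A(t)=A(t)$, $\tilde E_c(t)=E(t)$, $\tilde C_c(t)=C(t)$, $\tilde F_c(t)=F(t)$, and by trivializing all the discrete-time data, $\tilde J(k)=I$ and $\tilde E_d(k)=\tilde C_d(k)=\tilde F_d(k)=0$, while taking $u_c,u_d\equiv0$ and $w_d\equiv0$. With this choice the embedded dynamics do not depend on the impulse sequence at all, so one may fix any admissible $\frak{T}$ with $t_k\to\infty$ (or regard the system as jump-free): the hybrid $L_1\times\ell_1$-gain of the embedding then collapses to the $L_1$-gain of $w\mapsto z$, and uniform hybrid exponential stability in the sense of Definition \ref{def:expstab} collapses to uniform exponential stability of $\dot x=A(t)x$. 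The uniform boundedness and piecewise continuity hypotheses on $A,E,C,F$ are exactly what is needed for the embedded system to meet the standing assumptions of Section \ref{sec:fund}.

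Next I would write out what condition \eqref{st:L1general:3} of Theorem \ref{th:generalL1} becomes under this substitution. The continuous-time pair in \eqref{eq:L1cond} specializes to
\[
\dot{\chi}(t)^{\T}+\chi(t)^{\T}A(t)+\mathds{1}^{\T}C(t)<0, \qquad \chi(t)^{\T}E(t)+\mathds{1}^{\T}F(t)-\gamma\mathds{1}^{\T}<0,
\]
which are precisely the two conditions of statement (b) (correcting the evident misprints there, where $\chi$ should replace $\xi$ in the bounding inequality and the last term of the first inequality should read $\mathds{1}^{\T}C(t)$). The discrete-time pair in \eqref{eq:L1cond} becomes $\chi(t_k^+)^{\T}-\chi(t_k)^{\T}<0$ and $-\gamma\mathds{1}^{\T}<0$. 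The second is vacuous for $\gamma>0$, and the first is removed by the persistent-flowing relaxation: invoking the analogue of Remark \ref{rem:relax} for Lemma \ref{lem:general_lin}, the jump condition may be closed to $\chi(t_k^+)^{\T}-\chi(t_k)^{\T}\le0$ and the constant $d+\eps\mathds{1}$ set to zero, and since $\tilde J=I$ with $\chi$ continuous this holds trivially with equality. Concretely, in the construction \eqref{eq:chiL1proof} from the proof of Lemma \ref{lem:general_lin} the sum over jump times drops out, leaving only $\chi(t)^{\T}=\int_t^\infty(b+\eps\mathds{1})^{\T}\tilde{\Phi}(s,t)\,\ds$, whose convergence and two-sided positive bounds $\bar\chi_1\le\chi(t)\le\bar\chi_2$ follow verbatim from the argument there using only the continuous-time exponential decay of $\tilde{\Phi}$.

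Having matched the two sets of conditions, I would conclude by invoking the equivalence $\eqref{st:L1general:1}\Leftrightarrow\eqref{st:L1general:3}$ of Theorem \ref{th:generalL1}. I do not expect a genuine obstacle; the only point requiring a little care is reconciling the quantifiers — Theorem \ref{th:generalL1} is stated for a fixed sequence $\frak{T}$ and (implicitly) all admissible $t_0$, whereas statement (b) of the present theorem quantifies "for all $t\ge t_0$ and all $t_0\in\mathbb{R}_{\ge0}$." This is immediate once one notes that the embedded dynamics are independent of $\frak{T}$ and $t_0$, so the family $\mathbb{T}_0$ can be taken as large as desired and the characterizing inequalities become genuinely global in $t$. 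One should also double-check the degenerate bookkeeping when $\eps$ is sent to the boundary, but this is routine and already handled in the referenced proofs.
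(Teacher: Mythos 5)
Your proposal is correct and follows essentially the same route as the paper: specialize Theorem \ref{th:generalL1} with $\tilde A=A$, $\tilde E_c=E$, $\tilde C_c=C$, $\tilde F_c=F$, $\tilde J(k)=I$, $\tilde E_d=\tilde C_d=\tilde F_d=0$, and then dispose of the (now vacuous) jump inequalities via the persistent-flowing relaxation of Remark \ref{rem:relax}. The extra detail you supply on the specialization of \eqref{eq:chiL1proof} and on the quantifier bookkeeping is a faithful elaboration of what the paper leaves implicit, not a different argument.
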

\begin{proof}
  This result is obtained by considering $\tilde A(t)=A(t)$, $\tilde E_c(t)=E(t)$,  $\tilde C_c(t)=C(t)$,  $\tilde F_c(t)=F(t)$, $\tilde{J}(k)=I$, $\tilde{E}_d(k)=\tilde{C}_d(k)=\tilde{F}_d(k)=0$ in Theorem \ref{th:generalL1}. Following Remark \ref{rem:relax}, we can close the inequality corresponding to jumps since they have no effect on the stability of the system.
\end{proof}

\begin{theorem}
  The following statements are equivalent:
  \begin{enumerate}[(a)]
    \item The $T$-periodic version of the positive system \eqref{eq:CT} is uniformly exponentially stable and the $L_1$-gain of the transfer $w\mapsto z$ is at most  $\gamma$.
    \item There exists a differentiable vector-valued function $\chi:[0,T]\mapsto\mathbb{R}^n_{>0}$, $\chi(0)=\chi(T)$, $0<\bar\chi_1\le \chi(t)\le \bar{\chi}_2<\infty$ such that the conditions
    \begin{equation}
    \begin{array}{rcl}
    \dot{\chi}(t)^{\T}+\chi(t)^{\T}A(t)+\chi(t)^{\T}C(t)\mathds{1}&<&0\\
      \chi(t)^{\T}E(t)+\mathds{1}^{\T}F(t)-\mathds{1}^{\T}\gamma&<&0
    \end{array}
    \end{equation}
    hold for all $t\in[0,T]$.
  \end{enumerate}
\end{theorem}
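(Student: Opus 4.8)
The plan is to realise the $T$-periodic version of \eqref{eq:CT} as a particular instance of the timer-dependent positive impulsive system \eqref{eq:mainsyst2} and then invoke the general hybrid $L_1\times\ell_1$ characterisation of Theorem \ref{th:generalL1}; this parallels the route used for the periodic $L_\infty$ statement, which went through Theorem \ref{th:cstDTLinfty}. Concretely I would set $\tilde A(t)=A(t)$, $\tilde E_c(t)=E(t)$, $\tilde C_c(t)=C(t)$, $\tilde F_c(t)=F(t)$, take the uniform impulse grid $t_k=kT$ (constant dwell-time $\bar T=T$), and put $\tilde J(k)=I$, $\tilde E_d(k)=\tilde C_d(k)=\tilde F_d(k)=0$. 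With these choices the discrete input and output channels vanish, so the hybrid $L_1\times\ell_1$-norm of the exogenous signal collapses to $\lVert w\rVert_{L_1}$ and that of the output to $\lVert z\rVert_{L_1}$; likewise hybrid uniform exponential stability of the impulsive model coincides with uniform exponential stability of the $T$-periodic system because the jump map is the identity.

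Applying Theorem \ref{th:generalL1}, statement \eqref{st:L1general:3}, to this embedded system, conditions \eqref{eq:L1cond} become, after the substitutions, the continuous-time inequalities
\begin{equation*}
\dot{\chi}(t)^{\T}+\chi(t)^{\T}A(t)+\mathds{1}^{\T}C(t)<0,\qquad \chi(t)^{\T}E(t)+\mathds{1}^{\T}F(t)-\gamma\mathds{1}^{\T}<0,
\end{equation*}
together with the jump inequality $\chi(t_k^+)^{\T}-\chi(t_k)^{\T}<0$. By the relaxation recorded in Remark \ref{rem:relax} — here transferred from the max-separable setting of Lemma \ref{lem:general_max} to the sum-separable setting of Lemma \ref{lem:general_lin} and Theorem \ref{th:generalL1} — the jump plays no role in stability or in the supply rate, so this last inequality may be closed and, in fact, one may simply require $\chi$ to be continuous across each $t_k$. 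Combined with $C^1$-regularity on the interior of each interval, this is exactly what the statement of the theorem encodes as the single boundary requirement $\chi(0)=\chi(T)$ on the $T$-periodic profile.

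The remaining — and only genuinely nontrivial — point is that $\chi$ may be chosen $T$-periodic rather than merely bounded above and away from zero. I would handle this as in the proof of Theorem \ref{th:cstDTLinfty}: the monodromy operator of the periodic system is the discrete-time map $x\mapsto\Phi(T,0)x$ (since $J=I$), so uniform exponential stability is equivalent to $\Phi(T,0)$ being Schur, and one seeks a periodic storage function $\chi(t_k+\tau)=\eta(\tau)$ with $\eta(0)=\eta(T)$. Equivalently, the explicit construction \eqref{eq:chiL1proof} from the proof of Lemma \ref{lem:general_lin}, namely $\chi(t)^{\T}=\int_t^\infty(b+\eps\mathds{1})^{\T}\tilde\Phi(s,t)\ds+\sum_{t_i\ge t}(d+\eps\mathds{1})^{\T}\tilde\Phi(t_i,t)$ with $d=0$, already produces a $T$-periodic function, because $\tilde\Phi(s+T,t+T)=\tilde\Phi(s,t)$ under the periodicity and uniform-grid hypotheses; the upper and lower bounds then follow verbatim from that proof. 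The converse direction (b)$\Rightarrow$(a) is immediate from the "(c) implies (a)" half of Theorem \ref{th:generalL1} applied to the embedded system, after extending the periodic $\chi$ to all of $\mathbb{R}_{\ge0}$. Thus the main obstacle is not the algebra of the substitution but the periodicity/continuity bookkeeping: verifying that the $C^1$ matching at the period boundary reduces precisely to $\chi(0)=\chi(T)$, and that the relaxed jump condition of Remark \ref{rem:relax} indeed carries over to the linear sum-separable case.
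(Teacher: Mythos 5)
Your proposal is correct and follows essentially the same route as the paper: embed the $T$-periodic system as an impulsive system with constant dwell-time $T$, identity jump map and zero discrete channels, apply the sum-separable $L_1\times\ell_1$ machinery, and close the jump inequality via Remark \ref{rem:relax}. The only difference is that the paper delegates to the ``$L_1\times\ell_1$-version'' of Theorem \ref{th:cstDTLinfty} (citing \cite{Briat:18:IntImp}), whereas you derive the periodicity of $\chi$ directly from the explicit construction \eqref{eq:chiL1proof} in Theorem \ref{th:generalL1} --- a slightly more self-contained presentation of the same argument.
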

\begin{proof}
  This result is obtained by considering $\tau=t$ in the "$L_1\times\ell_1$-version" of Theorem \ref{th:cstDTLinfty} (which is a particular case of the results in \cite{Briat:18:IntImp}) together with $E_c=E$, $C_c=C$, $F_c=F$, $\tilde{J}(k)=I$, $\tilde{E}_d(k)=\tilde{C}_d(k)=\tilde{F}_d(k)=0$. Following Remark \ref{rem:relax}, we can close the inequality corresponding to jumps since they have no effect on the stability of the system.
\end{proof}

As a corollary, we recover the result in the LTI case first proposed in \cite{Briat:11g,Ebihara:11,Briat:11h}:
\begin{corollary}
    The following statements are equivalent:
  \begin{enumerate}[(a)]
    \item The LTI version of the system \eqref{eq:CT} is exponentially stable and the $L_1$-gain of the transfer $w\mapsto z$ is at most  $\gamma$.
    \item There exists a vector $\chi\in\mathbb{R}^n_{>0}$ such that the conditions
    \begin{equation}
    \begin{array}{rcl}
       \chi^{\T}A+\mathds{1}^{\T}C&<&0\\
      \chi^{\T}C+\mathds{1}^{\T}F-\gamma\mathds{1}^{\T}&<&0.
    \end{array}
    \end{equation}
  \end{enumerate}
\end{corollary}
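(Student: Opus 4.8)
The plan is to derive this corollary as the time-invariant specialization of Theorem~\ref{th:generalL1}, exactly as the two preceding theorems handle the LTV and periodic cases. Concretely, I would take $\tilde A(t)\equiv A$, $\tilde E_c(t)\equiv E$, $\tilde C_c(t)\equiv C$, $\tilde F_c(t)\equiv F$, set $\tilde J(k)=I$ and $\tilde E_d(k)=\tilde C_d(k)=\tilde F_d(k)=0$, and invoke Remark~\ref{rem:relax} to close and trivialize the jump inequalities, the jumps being the identity and therefore irrelevant to both stability and the $L_1$-gain. It then suffices to show that, under constant data, the storage-function certificate in statement~(c) of Theorem~\ref{th:generalL1} can be taken to be a constant vector $\chi\in\mathbb{R}^n_{>0}$, in which case $\dot\chi\equiv0$ and \eqref{eq:L1cond} collapses to the two static inequalities of statement~(b).

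\textbf{(b) $\Rightarrow$ (a).} This is immediate. Given a constant $\chi\in\mathbb{R}^n_{>0}$ satisfying the two static inequalities, set $\chi(t)\equiv\chi$ and $\bar\chi_1=\bar\chi_2=\chi$; since both inequalities are strict there is an $\eps>0$ realizing the dissipation margins in \eqref{eq:L1cond}, so Theorem~\ref{th:generalL1}, (c)$\Rightarrow$(a), yields uniform exponential stability together with an $L_1$-gain at most $\gamma$.

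\textbf{(a) $\Rightarrow$ (b).} The only point requiring care is that the a priori time-varying certificate produced by Theorem~\ref{th:generalL1} can be chosen constant when the data are constant. I would use the explicit construction from the proofs of Lemma~\ref{lem:general_lin} and Theorem~\ref{th:generalL1}, namely $\chi^*(t)=\eps v(t)+\tilde r(t)$ with $v(t)^{\T}=\int_t^\infty(b+\eps\mathds{1})^{\T}\tilde{\Phi}(s,t)\,\ds$ and $\tilde r(t)^{\T}=\int_t^\infty\mathds{1}^{\T}C\,\tilde{\Phi}(s,t)\,\ds$, all jump/sum terms being dropped. In the LTI case $\tilde{\Phi}(s,t)=e^{A(s-t)}$ depends only on $s-t$, and $A$ is Hurwitz because the system is exponentially stable, so the substitution $\sigma=s-t$ gives $v(t)^{\T}=(b+\eps\mathds{1})^{\T}(-A^{-1})$ and $\tilde r(t)^{\T}=-\mathds{1}^{\T}CA^{-1}$, both independent of $t$. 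Hence $\chi:=\chi^*$ is a constant positive vector; substituting it into \eqref{eq:L1cond} with $\dot\chi=0$ gives $\chi^{\T}A+\mathds{1}^{\T}C<0$ and $\chi^{\T}E+\mathds{1}^{\T}F-\gamma\mathds{1}^{\T}<0$, which are precisely the inequalities of~(b).

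\textbf{Main obstacle.} Everything is routine bookkeeping except the replacement of the time-varying certificate by a constant one, and even that is painless: it follows directly from the translation invariance $\tilde{\Phi}(s,t)=e^{A(s-t)}$. If one prefers a fully self-contained argument avoiding the integral formulas, the same conclusion follows from the classical fact that a Metzler matrix $A$ is Hurwitz if and only if there exists $w\in\mathbb{R}^n_{>0}$ with $w^{\T}A<0$, after a small perturbation of $w$ (and of $\gamma$) to absorb the output inequality as well; I would nonetheless present the proof via the specialization above, since it introduces no new lemma.
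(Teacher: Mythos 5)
Your proof is correct and follows essentially the route the paper intends for this corollary: specializing Theorem \ref{th:generalL1} to constant data with identity jumps, invoking Remark \ref{rem:relax} to dispose of the (now trivial) jump inequality, and observing that translation invariance of $\tilde{\Phi}(s,t)=e^{A(s-t)}$ renders the constructed certificate constant — a detail the paper leaves implicit but which you supply cleanly. Note that your derivation produces $\chi^{\T}E+\mathds{1}^{\T}F-\gamma\mathds{1}^{\T}<0$, which is the correct (and dimensionally consistent) form; the $\chi^{\T}C$ appearing in the printed statement of the corollary is evidently a typo.
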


\subsection{LTV, periodic, and LTI  discrete-time positive systems}

Let us define here the following discrete-time LTV positive system
    \begin{equation}\label{eq:DT}
      \begin{array}{rcl}
        x(k+1)&=&A(k)x(k)+E(k)w(k)\\
        z(k)&=&C(k)x(k)+F(k)w(k)\\
        x(0)&=&x_0>0
      \end{array}
    \end{equation}
where the matrices are piecewise continuous and uniformly bounded.

\subsubsection{$\ell_\infty$ results}

As for the continuous-time case, we obtain the following result that does not seem to have been obtained so far in the literature:
\begin{theorem}
  The following statements are equivalent:
  \begin{enumerate}[(a)]
    \item The LTV positive system \eqref{eq:DT} is uniformly exponentially stable and the $\ell_\infty$-gain of the transfer $w\mapsto z$ is at most  $\gamma$.
    \item There exists a vector-valued function $\xi:\mathbb{Z}_{\ge0}\mapsto\mathbb{R}^n_{>0}$,  $0<\bar\xi_1\le \xi(k)\le \bar{\xi}_2<\infty$  such that the conditions
    \begin{equation}
    \begin{array}{rcl}
    A(k)\xi(k)-\xi(k+1)+E(k)\mathds{1}&<&0\\
      C(k)\xi(k)+F(k)\mathds{1}-\gamma\mathds{1}&<&0
    \end{array}
    \end{equation}
    hold for all $k\ge k_0$ and all $k_0\in\mathbb{Z}_{\ge0}$.
  \end{enumerate}
\end{theorem}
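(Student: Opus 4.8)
The plan is to deduce this discrete-time $\ell_\infty$ result as a direct specialization of the general impulsive result, Theorem~\ref{th:generalLinf}, in exactly the same way the continuous-time $L_\infty$ results were obtained earlier in this section. The key observation is that a pure discrete-time LTV system \eqref{eq:DT} can be embedded into the impulsive framework \eqref{eq:mainsyst} by taking the continuous flow to be trivial and letting the jump map carry the entire dynamics. Concretely, I would choose a sequence of impulse times with, say, unit spacing ($t_k = k$), set $\tilde{A}(t) \equiv 0$ (or any fixed Hurwitz Metzler matrix, to match the hypotheses on the continuous part; using $\tilde A(t)=-I_n$ keeps the continuous part harmless), $\tilde{E}_c(t) = \tilde{C}_c(t) = \tilde{F}_c(t) = 0$, and then put all the discrete data into the jump: $\tilde{J}(k) = A(k)$, $\tilde{E}_d(k) = E(k)$, $\tilde{C}_d(k) = C(k)$, $\tilde{F}_d(k) = F(k)$. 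Internal positivity of \eqref{eq:DT} (namely $A(k),E(k),C(k),F(k)\ge 0$) is exactly what Proposition~\ref{prop:positive} requires for this embedded impulsive system to be internally positive.

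The next step is to translate the conditions of Theorem~\ref{th:generalLinf}, statement (b), under this identification. With $\tilde A(t)=-I_n$ and $\tilde E_c=\tilde C_c=\tilde F_c=0$, the first and third inequalities in \eqref{eq:condLinfperf} become $-\dot\xi(t)-\xi(t)<0$ and $-\gamma\mathds{1}<0$ on the flow intervals, which are trivially satisfiable by choosing $\xi$ constant between jumps (the relaxation described in Remark~\ref{rem:relax}, persistent-jumping case, applies here: since jumps alone must provide the decay, the flow inequality may be closed and the slack $\tilde E_c\mathds{1}$ dropped). On the jumps, writing $\xi(t_k)=\xi(k)$ and $\xi(t_k^+)=\xi(k+1)$ (the flow does nothing), the second inequality $\tilde J(k)\xi(t_k)-\xi(t_k^+)+\tilde E_d(k)\mathds{1}<0$ becomes precisely $A(k)\xi(k)-\xi(k+1)+E(k)\mathds{1}<0$, and the fourth inequality $\tilde C_d(k)\xi(t_k)+\tilde F_d(k)\mathds{1}-\gamma\mathds{1}<0$ becomes $C(k)\xi(k)+F(k)\mathds{1}-\gamma\mathds{1}<0$. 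These are exactly the two conditions in statement (b) of the theorem to be proved.

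Finally, I would observe that the hybrid notions reduce correctly: the hybrid $L_\infty\times\ell_\infty$-gain of the embedded system, when the continuous input channel is absent, is just the $\ell_\infty$-gain of the transfer $w\mapsto z$ of \eqref{eq:DT}; likewise uniform exponential stability of the impulsive system in the sense of Definition~\ref{def:expstab} with $\alpha$ absorbed into $\rho$ (again Remark~\ref{rem:relax}) coincides with uniform exponential stability of \eqref{eq:DT}. Since the quantifier ``for all $k\ge k_0$ and all $k_0\in\mathbb{Z}_{\ge0}$'' corresponds to the ``for all $0\le t_0<t_1$'' clause after shifting the impulse sequence, the equivalence (a)$\iff$(b) transfers verbatim. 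The only point requiring a little care — and the main (mild) obstacle — is the bookkeeping around the continuous-time part: one must check that augmenting with a harmless stable flow does not alter the stability or the gain, and that the boundedness/continuity hypotheses on $\tilde A,\tilde E_c,\tilde C_c,\tilde F_c$ in \eqref{eq:mainsyst} are met; both are immediate with the constant choices above. A short proof along these lines, citing Theorem~\ref{th:generalLinf} and Remark~\ref{rem:relax}, completes the argument; a completely analogous specialization of Theorem~\ref{th:generalL1} would give the corresponding discrete-time $\ell_1$ statement.
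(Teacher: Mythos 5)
Your proposal is correct and follows essentially the same route as the paper: the paper's own proof is a one-line specialization of Theorem~\ref{th:generalLinf} with $\tilde A=\tilde E_c=\tilde C_c=\tilde F_c=0$, $\tilde J(k)=A(k)$, $\tilde E_d(k)=E(k)$, $\tilde C_d(k)=C(k)$, $\tilde F_d(k)=F(k)$, invoking Remark~\ref{rem:relax} to close the flow inequality. Your extra care about the flow (choosing $-I_n$ instead of $0$ and checking the boundedness hypotheses) is a harmless variant of the same argument.
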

\begin{proof}
    This result is obtained by considering $\tilde A(t)=\tilde E_c(t)=\tilde C_c(t)=\tilde F_c(t)=0$, $\tilde{J}(k)=A(k)$, $\tilde{E}_d(k)=E(k)$, $\tilde{C}_d(k)=C(k)$ and $\tilde{F}_d(k)=F(k)$ in Theorem \ref{th:generalLinf}. Following Remark \ref{rem:relax}, we can close the inequality corresponding to the flow since it has no effect on the stability of the system.
\end{proof}

\begin{theorem}
  The following statements are equivalent:
  \begin{enumerate}[(a)]
    \item The linear $T$-periodic version of the positive system \eqref{eq:DT} is uniformly exponentially stable and the $\ell_\infty$-gain of the transfer $w\mapsto z$ is at most  $\gamma$.
    \item There exists a differentiable vector-valued function $\xi:[0,T]\mapsto\mathbb{R}^n_{>0}$, $\xi(0)=\xi(T)$, $0<\bar\xi_1\le \xi(k)\le \bar{\xi}_2<\infty$  such that the conditions
    \begin{equation}
    \begin{array}{rcl}
A(k)\xi(k)-\xi(k+1)+E(k)\mathds{1}&<&0\\
      C(k)\xi(k)+F(k)\mathds{1}-\gamma\mathds{1}&<&0
    \end{array}
    \end{equation}
    hold for all $k\in\{0,\ldots,T-1\}$.
  \end{enumerate}
\end{theorem}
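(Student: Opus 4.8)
The plan is to derive this periodic discrete-time result as a specialization of the constant dwell-time theorem (Theorem~\ref{th:cstDTLinfty}) in exactly the same way the continuous-time periodic results were obtained earlier, but now making the \emph{discrete} part of the impulsive system \eqref{eq:mainsyst2} carry the dynamics and the \emph{continuous} part trivial. Concretely, I would embed \eqref{eq:DT} into \eqref{eq:mainsyst2} by setting $A(\tau)=E_c(\tau)=C_c(\tau)=F_c(\tau)=0$ (so the flow is frozen), and letting the jump data be $J\rightsquigarrow A(k)$, $E_d\rightsquigarrow E(k)$, $C_d\rightsquigarrow C(k)$, $F_d\rightsquigarrow F(k)$, with the dwell-time taken to be the single step $\bar T=1$ and the time index identified with the jump index $k$. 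Under this identification the periodic hypothesis on $\eqref{eq:DT}$ (period $T$) corresponds to a periodically jumping impulsive system, which is the setting of Theorem~\ref{th:cstDTLinfty}.

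The key steps, in order: (i) verify internal positivity of the embedded system via Proposition~\ref{prop:positive} — with $\tilde A\equiv0$ Metzler trivially and $\tilde J(k)=A(k)\ge0$, $\tilde E_d,\tilde C_d,\tilde F_d\ge0$ this is immediate from the positivity of \eqref{eq:DT}; (ii) apply Theorem~\ref{th:cstDTLinfty}, statement \eqref{st:cstDTLinfty2}, whose conditions \eqref{eq:cstDTLinfty1}--\eqref{eq:cstDTLinfty2} specialize, since $A(\tau)\equiv0$ forces $\dot\zeta\equiv$ anything consistent and the continuous inequalities \eqref{eq:cstDTLinfty1} become vacuous (following Remark~\ref{rem:relax}, the flow inequality can be closed/dropped because the flow has no effect on stability), leaving precisely $J\zeta(\bar T)-\zeta(0)+E_d\mathds{1}<0$ and $C_d\zeta(\bar T)+F_d\mathds{1}-\gamma\mathds{1}<0$, i.e.\ $A(k)\xi(k)-\xi(k+1)+E(k)\mathds{1}<0$ and $C(k)\xi(k)+F(k)\mathds{1}-\gamma\mathds{1}<0$; (iii) translate the periodicity: because the discrete system is $T$-periodic, searching for a uniform Lyapunov certificate is equivalent to searching for a $T$-periodic one, giving $\xi(0)=\xi(T)$ and the conditions holding only for $k\in\{0,\dots,T-1\}$, exactly as in the continuous-time periodic theorem; the uniform boundedness $0<\bar\xi_1\le\xi(k)\le\bar\xi_2$ is automatic on a finite set of indices once $\xi(k)>0$ for each $k$. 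Assembling (i)--(iii) establishes the equivalence (a)$\Leftrightarrow$(b).

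I would also note the one bookkeeping subtlety worth a sentence: the constant dwell-time machinery is phrased with $\xi$ a function of a continuous timer $\tau$, whereas here the ``timer'' takes only the value $0$ and $\bar T=1$, so $\zeta(\cdot)$ degenerates to the two values $\zeta(0)=\xi(0)$ and $\zeta(1)=\xi(1)$, and iterating the periodic jump map reproduces $\xi(k)$ for all $k\ge0$ — this is the discrete analogue of gluing $\xi(t_k+\tau)=\zeta(\tau)$, $\xi(t_k^+)=\zeta(0)$. The main obstacle, such as it is, is therefore not analytical depth but making sure the correspondence between the impulsive-system time variable and the discrete index $k$ is stated cleanly, and invoking Remark~\ref{rem:relax} correctly so that the (now content-free) continuous-time inequality is handled by the persistent-jumping relaxation rather than left as a genuine constraint. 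Everything else is a direct transcription of the proof pattern already used for the continuous-time periodic $L_\infty$ and $\ell_\infty$ results.
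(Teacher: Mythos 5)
Your overall strategy is the right one and matches the paper's in spirit: freeze the flow, push the dynamics into the jump part, invoke Remark~\ref{rem:relax} to close the now content-free flow inequality, and use periodicity of the system to restrict the certificate to one period. The paper's own proof does exactly this. However, there is a concrete mismatch in the theorem you invoke as the base. You embed the $T$-periodic discrete-time system into \eqref{eq:mainsyst2} with $J\rightsquigarrow A(k)$, $E_d\rightsquigarrow E(k)$, etc., and then apply Theorem~\ref{th:cstDTLinfty}. But the system \eqref{eq:mainsyst2} and Theorem~\ref{th:cstDTLinfty} have \emph{constant} jump data $J,E_d,C_d,F_d$ (independent of $k$), and the conclusion of Theorem~\ref{th:cstDTLinfty}, statement (c), produces a single jump inequality $J\zeta(\bar T)-\zeta(0)+E_d\mathds{1}<0$ for one function $\zeta$ on one dwell-time interval — not the family of $T$ inequalities indexed by $k\in\{0,\ldots,T-1\}$ with the periodic boundary condition $\xi(0)=\xi(T)$ that the statement requires. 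For $T>1$ the jump maps of the embedded system necessarily vary with $k$ within a period, so the object you construct is not of the form \eqref{eq:mainsyst2} and Theorem~\ref{th:cstDTLinfty} does not literally apply. (Your identification works only in the $T=1$, i.e.\ LTI, case.)

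The repair is exactly what the paper does: base the argument on Theorem~\ref{th:generalLinf} applied to the general system \eqref{eq:mainsyst}, whose jump matrices $\tilde J(k)$, $\tilde E_d(k)$, $\tilde C_d(k)$, $\tilde F_d(k)$ are allowed to depend on $k$; set $\tilde A=\tilde E_c=\tilde C_c=\tilde F_c=0$ and $\tilde J(k)=A(k)$, $\tilde E_d(k)=E(k)$, $\tilde C_d(k)=C(k)$, $\tilde F_d(k)=F(k)$, close the flow inequality via Remark~\ref{rem:relax}, and then use $T$-periodicity of the data to argue that the $k$-indexed certificate $\xi(k)$ can be taken $T$-periodic, which yields the conditions on $\{0,\ldots,T-1\}$ with $\xi(0)=\xi(T)$. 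Everything else in your write-up — the positivity check via Proposition~\ref{prop:positive}, the automatic uniform bounds on a finite index set, and the handling of the persistent-jumping relaxation — is correct and carries over unchanged once the base theorem is swapped.
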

\begin{proof}
  This result can be obtained by considering Theorem \ref{th:generalLinf} and $\tilde A(t)=\tilde E_c(t)=\tilde C_c(t)=\tilde F_c(t)=0$, $\tilde{J}(k)=A(k)$, $\tilde{E}_d(k)=E(k)$, $\tilde{C}_d(k)=C(k)$ and $\tilde{F}_d(k)=F(k)$ in Theorem \ref{th:generalLinf}. Following Remark \ref{rem:relax}, we can close the inequality corresponding to the flow since it has no effect on the stability of the system.
\end{proof}

We have the following corollary in the LTI case \cite{Naghnaeian:14}:
\begin{corollary}
    The following statements are equivalent:
  \begin{enumerate}[(a)]
        \item The LTI version of the positive system \eqref{eq:DT} is uniformly exponentially stable and the $\ell_\infty$-gain of the transfer $w\mapsto z$ is at most  $\gamma$.
    \item There exists a vector $\xi\in\mathbb{R}^n_{>0}$ such that the conditions
    \begin{equation}
    \begin{array}{rcl}
       (A-I_n)\xi+E\mathds{1}&<&0\\
      C\xi+F\mathds{1}-\gamma\mathds{1}&<&0
    \end{array}
    \end{equation}
    hold.
  \end{enumerate}
\end{corollary}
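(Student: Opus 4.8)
The final statement is the LTI discrete-time corollary. The plan is to derive it directly from the preceding periodic theorem (or even more directly from Theorem \ref{th:generalLinf}) by specializing to the time-invariant setting, exactly in the spirit of all the other corollaries in this subsection.

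\textbf{Approach.} First I would invoke the periodic $\ell_\infty$ theorem stated just above with period $T=1$ and constant matrices $A(k)\equiv A$, $E(k)\equiv E$, $C(k)\equiv C$, $F(k)\equiv F$. In that theorem the decision variable is a function $\xi:[0,T]\mapsto\mathbb{R}_{>0}^n$ with the periodicity constraint $\xi(0)=\xi(T)$; with $T=1$ the relevant values are $\xi(0)$ and $\xi(1)$, and periodicity forces $\xi(1)=\xi(0)=:\xi$. The first inequality $A(k)\xi(k)-\xi(k+1)+E(k)\mathds{1}<0$ then collapses, for $k=0$, to $A\xi-\xi+E\mathds{1}<0$, i.e. $(A-I_n)\xi+E\mathds{1}<0$, and the second inequality collapses to $C\xi+F\mathds{1}-\gamma\mathds{1}<0$. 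This gives the equivalence between statement (a) of the corollary and the existence of such a constant $\xi$.

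\textbf{Key steps in order.} (1) Reduce the LTI system to a periodic positive system with period $1$ — trivially, since any LTI system is $1$-periodic. (2) Apply the periodic $\ell_\infty$ theorem: uniform exponential stability plus $\ell_\infty$-gain $\le\gamma$ is equivalent to feasibility of the periodic differential-difference inequality system in $\xi(\cdot)$. (3) Exploit the periodicity constraint $\xi(0)=\xi(1)$ to conclude that the only freedom is a single constant vector $\xi\in\mathbb{R}_{>0}^n$; the bounds $0<\bar\xi_1\le\xi\le\bar\xi_2$ are then automatic. (4) Substitute to obtain the two stated finite-dimensional linear inequalities. Alternatively, one can bypass the periodic theorem and go straight to Theorem \ref{th:generalLinf} with $\tilde A_c\equiv0$, $\tilde E_c,\tilde C_c,\tilde F_c\equiv0$, $\tilde J(k)\equiv A$, $\tilde E_d(k)\equiv E$, $\tilde C_d(k)\equiv C$, $\tilde F_d(k)\equiv F$, using Remark \ref{rem:relax} to drop the (irrelevant) flow inequality and seeking a constant $\xi$; this yields the same two inequalities directly.

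\textbf{Main obstacle.} There is essentially no hard step here — the corollary is a pure specialization, so the only thing to be slightly careful about is the direction (a)$\Rightarrow$(b): one must argue that when the matrices are constant one may \emph{choose} the Lyapunov vector to be constant (not merely that some periodic/time-varying one exists). This follows because the system and the constraints are invariant under time shifts, so averaging or simply evaluating a periodic solution at the period endpoint produces a constant feasible point; formally it is cleanest to just read it off from the $T=1$ periodic theorem where periodicity already forces constancy. The converse (b)$\Rightarrow$(a) is immediate since a constant $\xi$ is a legitimate (trivially periodic, trivially bounded) choice in the periodic theorem. Hence the proof is one or two lines, as in the companion corollaries.

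\begin{proof}
  This result is obtained by considering the linear $1$-periodic version of the above periodic theorem (equivalently, Theorem \ref{th:generalLinf}) with $\tilde A_c\equiv0$, $\tilde E_c\equiv0$, $\tilde C_c\equiv0$, $\tilde F_c\equiv0$, $\tilde{J}(k)\equiv A$, $\tilde{E}_d(k)\equiv E$, $\tilde{C}_d(k)\equiv C$ and $\tilde{F}_d(k)\equiv F$. Following Remark \ref{rem:relax}, the inequality corresponding to the flow can be closed and dropped since it has no effect on the stability of the system. Since all the matrices are constant and, in the periodic formulation, the periodicity constraint $\xi(0)=\xi(T)$ with $T=1$ forces $\xi(1)=\xi(0)=:\xi$, one may take $\xi\in\mathbb{R}^n_{>0}$ constant, and the remaining two inequalities reduce exactly to $(A-I_n)\xi+E\mathds{1}<0$ and $C\xi+F\mathds{1}-\gamma\mathds{1}<0$. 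The bounds $0<\bar\xi_1\le\xi\le\bar\xi_2$ hold automatically for a constant vector. This proves the equivalence.
\end{proof}
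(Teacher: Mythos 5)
Your proof is correct and follows essentially the same route the paper uses for all the companion corollaries in this section: the paper leaves this particular corollary without an explicit proof, but the intended argument is precisely the specialization of the periodic $\ell_\infty$ theorem (itself obtained from Theorem \ref{th:generalLinf} with $\tilde A=\tilde E_c=\tilde C_c=\tilde F_c=0$, $\tilde J(k)=A$, $\tilde E_d(k)=E$, $\tilde C_d(k)=C$, $\tilde F_d(k)=F$, invoking Remark \ref{rem:relax} to drop the flow inequality) to the $1$-periodic case where $\xi(0)=\xi(1)$ forces a constant $\xi$. Your handling of the direction (a)$\Rightarrow$(b) via the periodicity constraint is exactly the right way to justify restricting to a constant Lyapunov vector, so nothing is missing.
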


\subsubsection{$\ell_1$ results}

As for the continuous-time case, we obtain the following result that does not seem to have been obtained so far in the literature:
\begin{theorem}
  The following statements are equivalent:
  \begin{enumerate}[(a)]
       \item The LTV positive system \eqref{eq:DT} is uniformly exponentially stable and the $\ell_1$-gain of the transfer $w\mapsto z$ is at most  $\gamma$.
    \item There exists a vector-valued function $\chi:\mathbb{Z}_{\ge0}\mapsto\mathbb{R}^n_{>0}$,  $0<\bar\chi_1\le \chi(k)\le \bar{\chi}_2<\infty$  such that the conditions
    \begin{equation}
    \begin{array}{rcl}
    \chi(k+1)^{\T}A(k)-\chi(k)+\mathds{1}^{\T}C(k)&<&0\\
      \chi(k)^{\T}E(k)+\mathds{1}^{\T}F(k)-\gamma\mathds{1}^{\T}&<&0
    \end{array}
    \end{equation}
    hold for all $k\in\mathbb{Z}_{\ge0}$.
  \end{enumerate}
\end{theorem}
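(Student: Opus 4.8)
The plan is to obtain this as a direct specialization of Theorem \ref{th:generalL1}, exactly as the continuous-time $L_1$ theorem was obtained just above, but now sending the continuous part of the hybrid system to zero and keeping the discrete part. First I would instantiate the general impulsive model \eqref{eq:mainsyst} by taking the continuous-time data to be trivial, $\tilde A(t)=\tilde E_c(t)=\tilde C_c(t)=\tilde F_c(t)=0$ (so the flow $\dot x=0$ does nothing), and identifying the jump with the discrete recursion: $\tilde J(k)=A(k)$, $\tilde E_d(k)=E(k)$, $\tilde C_d(k)=C(k)$, $\tilde F_d(k)=F(k)$. With this choice the impulse-time sequence $\{t_k\}$ can be taken as $t_k=k$, so that the hybrid trajectory sampled at jump instants is precisely the discrete-time trajectory of \eqref{eq:DT}, the hybrid $L_1\times\ell_1$-norm of $(0,w)$ reduces to $\|w\|_{\ell_1}$, and the hybrid $L_1\times\ell_1$-gain of the transfer $(0,w)\mapsto(0,z)$ reduces to the $\ell_1$-gain of $w\mapsto z$ for \eqref{eq:DT}. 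Likewise, uniform exponential stability in the sense of Definition \ref{def:expstab} for this degenerate hybrid system (with $\alpha$ arbitrary, so effectively the rate $\rho$ governs) is exactly uniform exponential stability of the discrete-time system.

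Next I would read off what statement \eqref{st:L1general:3} of Theorem \ref{th:generalL1} becomes under this substitution. The first inequality in \eqref{eq:L1cond}, $\dot\chi(t)^{\T}+\chi(t)^{\T}\tilde A(t)+\mathds{1}^{\T}\tilde C_c(t)<0$, reads $\dot\chi(t)^{\T}<0$ on the flow intervals, which carries no information once we drop the flow (and can be closed, i.e. made nonstrict / made trivial, by Remark \ref{rem:relax}, since the flow has no effect on stability); what remains are the jump conditions, $\chi(t_k^+)^{\T}\tilde J(k)-\chi(t_k)^{\T}+\mathds{1}^{\T}\tilde C_d(k)<0$ and $\chi(t_k^+)^{\T}\tilde E_d(k)+\mathds{1}^{\T}\tilde F_d(k)-\gamma\mathds{1}^{\T}<0$. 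Writing $\chi(k):=\chi(t_k)$ and $\chi(k+1):=\chi(t_k^+)$ — which is the natural identification since after the jump at $t_k$ the hybrid state coincides with the discrete state at index $k+1$ — these become
\begin{equation}
\chi(k+1)^{\T}A(k)-\chi(k)^{\T}+\mathds{1}^{\T}C(k)<0,\qquad \chi(k+1)^{\T}E(k)+\mathds{1}^{\T}F(k)-\gamma\mathds{1}^{\T}<0,
\end{equation}
and the boundedness requirement $\bar\chi_1\le\chi(t)\le\bar\chi_2$ becomes $0<\bar\chi_1\le\chi(k)\le\bar\chi_2<\infty$. The last, $w_d(k)E(k)$-type, inequality with $\mathds{1}^{\T}F_d$ is exactly the second displayed condition, so the two statements match. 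I would then invoke Remark \ref{rem:relax} explicitly to justify that the now-vacuous flow inequality may be closed and the constant $b$-term dropped, mirroring the justification given in the proofs of the continuous-time $L_1$ and $\ell_\infty$ results in this section.

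The one genuine point to be careful about — and the only place where this is more than bookkeeping — is the reindexing of $\chi$ at jump times and the handling of the initial index. In the hybrid picture the difference inequality couples $\chi(t_k^+)$ with $\chi(t_k)$, and one must check that the convention $\chi(k):=\chi(t_k)$, $\chi(k+1):=\chi(t_k^+)$ is consistent with Definition \ref{def:expstab} and with the fact that in \eqref{eq:DT} the recursion runs $x(k+1)=A(k)x(k)+\dots$; a half-step misalignment here would put $A(k)$ against $\chi(k)$ instead of $\chi(k+1)$. I expect this to be the main (minor) obstacle, and it is resolved exactly as in the analogous discrete-time $\ell_\infty$ theorem proved a few lines earlier in the paper. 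Everything else — the equivalence of the $\ell_1$-gain with the $\ell_1\times L_1$-gain of the degenerate operator, and the equivalence of the two notions of exponential stability — is immediate. I would therefore write the proof as: "This result is obtained by considering $\tilde A(t)=\tilde E_c(t)=\tilde C_c(t)=\tilde F_c(t)=0$, $\tilde J(k)=A(k)$, $\tilde E_d(k)=E(k)$, $\tilde C_d(k)=C(k)$, $\tilde F_d(k)=F(k)$ in Theorem \ref{th:generalL1}; following Remark \ref{rem:relax}, we can close the inequality corresponding to the flow since it has no effect on the stability of the system."
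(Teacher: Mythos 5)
Your proposal is correct and follows essentially the same route as the paper: the paper's proof is precisely the one-line specialization you end with, namely setting $\tilde A(t)=\tilde E_c(t)=\tilde C_c(t)=\tilde F_c(t)=0$, $\tilde J(k)=A(k)$, $\tilde E_d(k)=E(k)$, $\tilde C_d(k)=C(k)$, $\tilde F_d(k)=F(k)$ in Theorem \ref{th:generalL1} and invoking Remark \ref{rem:relax} to close the now-vacuous flow inequality. Your additional care about the identification $\chi(k)=\chi(t_k)$, $\chi(k+1)=\chi(t_k^+)$ is a useful elaboration the paper leaves implicit (and it correctly exposes that the specialization naturally yields $\chi(k+1)^{\T}E(k)$ in the gain inequality, so the index in the paper's displayed condition should be read accordingly).
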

\begin{proof}
    This result is obtained by considering $\tilde A(t)=\tilde E_c(t)=\tilde C_c(t)=\tilde F_c(t)=0$, $\tilde{J}(k)=A(k)$, $\tilde{E}_d(k)=E(k)$, $\tilde{C}_d(k)=C(k)$ and $\tilde{F}_d(k)=F(k)$ in Theorem \ref{th:generalL1}. Following Remark \ref{rem:relax}, we can close the inequality corresponding to the flow since it has no effect on the stability of the system.
\end{proof}

\begin{theorem}
  The following statements are equivalent:
  \begin{enumerate}[(a)]
    \item The linear $T$-periodic version of the positive system \eqref{eq:DT} is uniformly exponentially stable and the $\ell_1$-gain of the transfer $w\mapsto z$ is at most  $\gamma$.
    \item There exists a differentiable vector-valued function $\chi:[0,T]\mapsto\mathbb{R}^n_{>0}$, $\chi(0)=\chi(T)$, $0<\bar\chi_1\le \chi(k)\le \bar{\chi}_2<\infty$  such that the conditions
    \begin{equation}
    \begin{array}{rcl}
    \chi(k+1)^{\T}A(k)-\chi(k)^{\T}+\mathds{1}^{\T}C(k)&<&0\\
     \chi(k)^{\T}E(k)+\mathds{1}^{\T}F(k)-\gamma\mathds{1}^{\T}&<&0
    \end{array}
    \end{equation}
    hold for all $k\in\{0,\ldots,T-1\}$.
  \end{enumerate}
\end{theorem}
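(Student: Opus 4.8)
The plan is to mirror the proofs of the preceding theorems and corollaries by recasting the $T$-periodic discrete-time system \eqref{eq:DT} as a linear positive impulsive system of the form \eqref{eq:mainsyst} and then invoking the hybrid $L_1\times\ell_1$ result in its periodic (constant dwell-time) specialization. Concretely, I would take impulse times $t_k=k$ (constant dwell-time $\bar T=1$), set the continuous-time data to zero, $\tilde A(t)=0$, $\tilde E_c(t)=0$, $\tilde C_c(t)=0$, $\tilde F_c(t)=0$, and set the discrete-time data to the $T$-periodic extension of that of \eqref{eq:DT}, namely $\tilde J(k)=A(k\bmod T)$, $\tilde E_d(k)=E(k\bmod T)$, $\tilde C_d(k)=C(k\bmod T)$, $\tilde F_d(k)=F(k\bmod T)$. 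By Proposition \ref{prop:positive}, internal positivity of \eqref{eq:DT} (i.e.\ $A(k),E(k),C(k),F(k)\ge0$) is exactly internal positivity of this impulsive model; moreover the jump state $x(t_k^+)$ of the impulsive model coincides with $x(k+1)$ of \eqref{eq:DT}, and the hybrid $L_1\times\ell_1$-norm of the (purely discrete) exogenous and performance signals reduces to the corresponding $\ell_1$-norms.

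Next I would apply the ``$L_1\times\ell_1$-version'' of Theorem \ref{th:cstDTLinfty} — the periodic counterpart of Theorem \ref{th:generalL1} already used in the continuous-time periodic case above — to this model. Since there is no continuous flow, the flow inequality in the hybrid conditions \eqref{eq:L1cond} is vacuous; following Remark \ref{rem:relax} (the persistent-jumping case, hybrid rate $(0,\rho)$) it can simply be dropped, as the trivial continuous dynamics do not affect stability. Because the jump data are $T$-periodic and the composite map over one period is time-invariant, it is necessary and sufficient to look for a $T$-periodic linear copositive storage function $V(x,k)=\chi(k)^{\T}x$, that is, one with $\chi(0)=\chi(T)$; this is the $L_1\times\ell_1$ analogue of the equivalence (a)$\Leftrightarrow$(c) in Theorem \ref{th:cstDTLinfty}, where the periodicity of the certificate is forced by the periodicity (equivalently, the LTI monodromy structure) of the system. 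Positivity and boundedness $0<\bar\chi_1\le\chi(k)\le\bar\chi_2<\infty$ are then automatic since $\chi$ is a positive function on the finite index range $\{0,\ldots,T\}$.

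Finally I would unwind the jump inequalities. With the identifications $\chi(t_k)\leftrightarrow\chi(k)$ and $\chi(t_k^+)\leftrightarrow\chi(k+1)$, the two discrete inequalities of \eqref{eq:L1cond} become $\chi(k+1)^{\T}A(k)-\chi(k)^{\T}+\mathds{1}^{\T}C(k)<0$ and $\chi(k)^{\T}E(k)+\mathds{1}^{\T}F(k)-\gamma\mathds{1}^{\T}<0$ for $k\in\{0,\ldots,T-1\}$, which is precisely statement (b). Conversely, any $\chi$ satisfying (b) yields, via the $T$-periodic extension $\chi(k+T):=\chi(k)$, a valid storage function for the impulsive model, giving (b)$\Rightarrow$(a). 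The point requiring care — and the main obstacle — is the index bookkeeping between the impulsive ``continuous time with jumps at the integers'' picture and the purely recursive picture (which of $\chi(t_k),\chi(t_k^+)$ plays the role of $\chi(k)$ versus $\chi(k+1)$, and how the initial-time freedom $t_0\in[0,t_1)$ collapses here), together with the justification that, for a periodic positive system, uniform exponential stability plus $\ell_1$-gain at most $\gamma$ forces the existence of a \emph{periodic} certificate rather than merely a bounded one; this is exactly the periodic-to-LTI-monodromy reduction already invoked in Theorem \ref{th:cstDTLinfty}.
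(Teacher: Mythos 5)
Your proposal follows essentially the same route as the paper: the paper's own proof is a one-line reduction that sets the continuous-time data of \eqref{eq:mainsyst} to zero, identifies the jump data $\tilde J(k),\tilde E_d(k),\tilde C_d(k),\tilde F_d(k)$ with the ($T$-periodically extended) matrices $A(k),E(k),C(k),F(k)$, applies Theorem \ref{th:generalL1}, and invokes Remark \ref{rem:relax} to discard the vacuous flow inequality --- exactly the steps you carry out, with the periodic-certificate reduction made explicit along the lines of Theorem \ref{th:cstDTLinfty}. The index bookkeeping you flag is genuine but is an artifact of the paper's own formulation: a literal translation of the fourth inequality of \eqref{eq:L1cond} yields $\chi(k+1)^{\T}E(k)$ rather than the $\chi(k)^{\T}E(k)$ appearing in the statement, so your unwinding is as faithful as the source permits.
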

\begin{proof}
  This result can be obtained by considering Theorem \ref{th:generalL1} and $\tilde A(t)=\tilde E_c(t)=\tilde C_c(t)=\tilde F_c(t)=0$, $\tilde{J}(k)=A(k)$, $\tilde{E}_d(k)=E(k)$, $\tilde{C}_d(k)=C(k)$ and $\tilde{F}_d(k)=F(k)$ in Theorem \ref{th:generalLinf}. Following Remark \ref{rem:relax}, we can close the inequality corresponding to the flow since it has no effect on the stability of the system.
\end{proof}

We have the following corollary in the LTI case:
\begin{corollary}
    The following statements are equivalent:
  \begin{enumerate}[(a)]
       \item The LTI version of the positive system \eqref{eq:DT} is exponentially stable and the $\ell_1$-gain of the transfer $w\mapsto z$ is at most  $\gamma$.
    \item There exists a vector $\xi\in\mathbb{R}^n_{>0}$ such that the conditions
    \begin{equation}
    \begin{array}{rcl}
    \chi^{\T}A-\chi^{\T}+\mathds{1}^{\T}C&<&0\\
    \chi^{\T}E+\mathds{1}^{\T}F-\gamma\mathds{1}^{\T}&<&0
    \end{array}
    \end{equation}
    hold.
  \end{enumerate}
\end{corollary}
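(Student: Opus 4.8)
The plan is to obtain this corollary exactly as the other specializations in this section: instantiate Theorem \ref{th:generalL1} with $\tilde A(t)=\tilde E_c(t)=\tilde C_c(t)=\tilde F_c(t)=0$, $\tilde J(k)=A$, $\tilde E_d(k)=E$, $\tilde C_d(k)=C$, $\tilde F_d(k)=F$, and invoke Remark \ref{rem:relax} to close the (now irrelevant) flow inequality since the pure flow has no effect on stability. Under this substitution the conditions \eqref{eq:L1cond} reduce to the pair of difference inequalities $\chi(k+1)^{\T}A-\chi(k)^{\T}+\mathds{1}^{\T}C<0$ and $\chi(k)^{\T}E+\mathds{1}^{\T}F-\gamma\mathds{1}^{\T}<0$, so the only thing left to argue is that, in the time-invariant setting, the bounded time-varying solution $\chi(\cdot)$ delivered by the theorem may be taken to be a constant vector $\chi\in\mathbb{R}^n_{>0}$.

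The direction (b)$\Rightarrow$(a) is immediate: a constant $\chi>0$ satisfying the two inequalities of (b) is a positive, bounded, bounded-away-from-zero solution of the specialized \eqref{eq:L1cond}, so (a) follows directly from Theorem \ref{th:generalL1}. For (a)$\Rightarrow$(b), I would inspect the explicit certificate constructed in the proof of Lemma \ref{lem:general_lin} (equation \eqref{eq:chiL1proof}), carried over to the performance setting as in the proof of Theorem \ref{th:generalL1}, i.e. with $b^{\T}$ replaced by $\mathds{1}^{\T}C$. In the purely discrete, time-invariant case one has $\tilde\Phi(i,k)=A^{i-k}$, the flow integral disappears, and the construction collapses to
\[
  \chi^{\T}=\sum_{j\ge0}(\mathds{1}^{\T}C+\eps\mathds{1}^{\T})A^{j}=(\mathds{1}^{\T}C+\eps\mathds{1}^{\T})(I_n-A)^{-1},
\]
which is a single constant vector. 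It is well-defined and positive because exponential stability forces $A$ to be Schur stable, so the Neumann series converges, while internal positivity forces $A\ge0$, so $(I_n-A)^{-1}=\sum_{j\ge0}A^j\ge I_n$ and $\chi^{\T}\ge\eps\mathds{1}^{\T}>0$. Substituting this $\chi$ into the first inequality gives $\chi^{\T}A-\chi^{\T}+\mathds{1}^{\T}C=-\eps\mathds{1}^{\T}<0$, and the second inequality is obtained exactly as in the closing paragraph of the proof of Theorem \ref{th:generalL1}: feeding a discrete unit impulse $w(k)=\delta_{k,\ell}$ into the dissipation inequality yields $\chi^{\T}E+\mathds{1}^{\T}F<\gamma\mathds{1}^{\T}+\eps\,\mathds{1}^{\T}(I_n-A)^{-1}E$ for every $\eps>0$, and letting $\eps\downarrow0$ gives the strict bound $\chi^{\T}E+\mathds{1}^{\T}F<\gamma\mathds{1}^{\T}$; a small uniform slack then makes all the inequalities of (b) strict at once.

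I do not expect a substantial obstacle here, since all the analytic content is inherited from Theorem \ref{th:generalL1} and Lemma \ref{lem:general_lin}. The one point I would present most carefully is the collapse of the time-varying Lyapunov certificate to a constant one — verifying that the general construction really does produce a time-independent $\chi$ when the data is constant — together with the $\eps$-slack bookkeeping needed to reconcile the strict inequalities in (b) with the "gain at most $\gamma$" formulation in (a).
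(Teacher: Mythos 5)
Your proposal is correct and follows essentially the same route as the paper, which states this corollary without proof as the immediate LTI specialization of the preceding discrete-time theorems, themselves obtained by substituting $\tilde A=\tilde E_c=\tilde C_c=\tilde F_c=0$, $\tilde J(k)=A$, $\tilde E_d(k)=E$, $\tilde C_d(k)=C$, $\tilde F_d(k)=F$ into Theorem \ref{th:generalL1} and invoking Remark \ref{rem:relax}. Your explicit constant certificate $\chi^{\T}=(\mathds{1}^{\T}C+\eps\mathds{1}^{\T})(I_n-A)^{-1}$ and the $\eps$-slack argument supply a detail the paper leaves implicit (why the time-varying $\chi(\cdot)$ of Lemma \ref{lem:general_lin} collapses to a constant in the time-invariant case), and both computations check out.
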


\section{Conclusion}

A novel general necessary and sufficient condition characterizing the exponential stability and the hybrid $L_\infty\times\ell_\infty$ performance of linear positive impulsive systems has been obtained and used to obtain several verifiable stability conditions for linear positive impulsive systems under various dwell-time constraints. Those results have then been exploited to provide a solution to the state-feedback design problem with guaranteed hybrid $L_\infty\times\ell_\infty$ performance level. Results on switched systems are provided using the fact that a switched system can be reformulated as an impulsive system with augmented state-space. It has also been shown that existing results on LTI, LTV and switched systems from the literature can all be formulated as corollaries of the obtained results. Convincing numerical examples have also been presented.

An interesting remaining question concerns the design of interval observers along the same lines as \cite{Briat:15g,Briat:17ifacObs,Briat:18:IntImp} that can minimize the hybrid $L_\infty\times\ell_\infty$-gain of the transfer of the disturbances to the estimation error. It is unclear at this time whether the approach described in \cite{Briat:15g} can be adapted to the current setup.

Finally, the question of designing controllers which are independent of the timer-variable is still open. Indeed, while the approach works on the discrete-time part of the system through the application of Finsler's Lemma or, analogously, of the S-procedure, it fails when applied to the continuous-time part of the system. This is because it leads to conditions taking the form of a linear form in the variables $\dot{x},x$ and $w_c$ where the first one takes indefinite values, unlike the others. Note that when $B$ and $D$ are both nonnegative or nonpositive, then we may possibly replace the timer-dependent controller by a constant one coinciding with its (componentwise) maximal or its minimal values. However, there is no guarantee that the resulting closed-loop system be stable. This interesting problem is left for future research.

%

\begin{thebibliography}{10}
\providecommand{\url}[1]{#1}
\csname url@samestyle\endcsname
\providecommand{\newblock}{\relax}
\providecommand{\bibinfo}[2]{#2}
\providecommand{\BIBentrySTDinterwordspacing}{\spaceskip=0pt\relax}
\providecommand{\BIBentryALTinterwordstretchfactor}{4}
\providecommand{\BIBentryALTinterwordspacing}{\spaceskip=\fontdimen2\font plus
\BIBentryALTinterwordstretchfactor\fontdimen3\font minus
  \fontdimen4\font\relax}
\providecommand{\BIBforeignlanguage}[2]{{%
\expandafter\ifx\csname l@#1\endcsname\relax
\typeout{** WARNING: IEEEtran.bst: No hyphenation pattern has been}%
\typeout{** loaded for the language `#1'. Using the pattern for}%
\typeout{** the default language instead.}%
\else
\language=\csname l@#1\endcsname
\fi
#2}}
\providecommand{\BIBdecl}{\relax}
\BIBdecl

\bibitem{Farina:00}
L.~Farina and S.~Rinaldi, \emph{Positive Linear Systems: Theory and
  Applications}.\hskip 1em plus 0.5em minus 0.4em\relax John Wiley \& Sons,
  2000.

\bibitem{Briat:11h}
C.~Briat, ``Robust stability and stabilization of uncertain linear positive
  systems via integral linear constraints - ${L_1}$- and ${L_\infty}$-gains
  characterizations,'' \emph{{I}nternational {J}ournal of {R}obust and
  {N}onlinear {C}ontrol}, vol. 23(17), pp. 1932--1954, 2013.

\bibitem{Briat:11g}
------, ``Robust stability analysis of uncertain linear positive systems via
  integral linear constraints - ${L_1}$- and ${L_\infty}$-gains
  characterizations,'' in \emph{50th {IEEE} Conference on Decision and
  Control}, Orlando, Florida, USA, 2011, pp. 3122--3129.

\bibitem{Ebihara:11}
Y.~Ebihara, D.~Peaucelle, and D.~Arzelier, ``${L_1}$ gain analysis of linear
  positive systems and its applications,'' in \emph{50th Conference on Decision
  and Control, Orlando, Florida, USA}, 2011, pp. 4029--4034.

\bibitem{Colombino:15}
M.~Colombino and R.~S. Smith, ``A convex characterization of robust stability
  for positive and positively dominated linear systems,'' \emph{IEEE
  Transactions on Automatic Control}, vol. 61(7), pp. 1965--1971, 2016.

\bibitem{Kaczorek:09}
T.~Kaczorek, ``Stability of positive continuous-time linear systems with
  delays,'' \emph{Bulletin of the {P}olish {A}cademy of {S}ciences - Technical
  sciences}, vol. 57(4), pp. 395--398, 2009.

\bibitem{Shen:15}
J.~Shen and J.~Lam, ``$\ell_\infty$/${L}_\infty$-gain analysis for positive
  linear systems with unbounded time-varying delays,'' \emph{IEEE Transactions
  on Automatic Control}, vol. 60(3), pp. 857--862, 2015.

\bibitem{Shen:15b}
J.~Shen and W.~X. Zheng, ``Positivity and stability of coupled
  differential–difference equations with time-varying delays,''
  \emph{Automatica}, vol.~57, pp. 123--127, 2015.

\bibitem{Briat:16b}
C.~Briat, ``Stability and performance analysis of linear positive systems with
  delays using input-output methods,'' \emph{International Journal of Control},
  vol. 91(7), pp. 1669--1692, 2018.

\bibitem{Zhu:17b}
S.~Zhu, Q.-L. Han, and C.~Zhang, ``${L}_1$-stochastic stability and
  ${L}_1$-gain performance of positive markov jump linear systems with
  time-delays: Necessary and sufficient conditions,'' \emph{IEEE Transactions
  on Automatic Control}, vol. 62(7), pp. 3634--3639, 2017.

\bibitem{Briat:12c}
C.~Briat and M.~Khammash, ``Computer control of gene expression: Robust
  setpoint tracking of protein mean and variance using integral feedback,'' in
  \emph{51st {IEEE} Conference on Decision and Control}, Maui, Hawaii, USA,
  2012, pp. 3582--3588.

\bibitem{Briat:13h}
------, ``Integral population control of a quadratic dimerization process,'' in
  \emph{52nd {IEEE} Conference on Decision and Control}, Florence, Italy, 2013,
  pp. 3367--3372.

\bibitem{Briat:13i}
A.~Gupta, C.~Briat, and M.~Khammash, ``A scalable computational framework for
  establishing long-term behavior of stochastic reaction networks,'' \emph{PLOS
  Computational Biology}, vol. 10(6), p. e1003669, 2014.

\bibitem{Blanchini:14}
F.~Blanchini and G.~Giordano, ``Piecewise-linear {L}yapunov functions for
  structural stability of biochemical networks,'' \emph{Automatica}, vol.
  50(10), pp. 2482--2493, 2014.

\bibitem{Briat:15e}
C.~Briat, A.~Gupta, and M.~Khammash, ``Antithetic integral feedback ensures
  robust perfect adaptation in noisy biomolecular networks,'' \emph{Cell
  Systems}, vol.~2, pp. 17--28, 2016.

\bibitem{Briat:16a}
C.~Briat, C.~Zechner, and M.~Khammash, ``Design of a synthetic integral
  feedback circuit: dynamic analysis and {DNA} implementation,'' \emph{ACS
  Synthetic Biology}, vol. 5(10), pp. 1108--1116, 2016.

\bibitem{Hynn:01}
F.~Hynne, S.~Dan{{\o}}, and P.~G. S{{\o}}rensen, ``Full-scale model of
  glycolysis in {\it saccharomyces cerevisiae},'' \emph{Biophysical Chemistry},
  vol.~94, pp. 121--163, 2001.

\bibitem{Linninger:09}
A.~A. Linninger, M.~Xenos, B.~Swettman, S.~Ponkshe, X.~Guo, and R.~Penn, ``A
  mathematical model of blood, cerebrospinal fluid and brain dynamics,''
  \emph{Journal of Mathematical Biology}, vol. 59(6), pp. 729--759, 2009.

\bibitem{Murray:02}
J.~D. Murray, \emph{Mathematical Biology Part I. An Introduction. 3rd
  Edition}.\hskip 1em plus 0.5em minus 0.4em\relax Springer-Verlag Berlin
  Heidelberg, 2002.

\bibitem{Jonsson:14}
V.~Jonsson, A.~Rantzer, and R.~M. Murray, ``A scalable formulation for
  engineering combination therapies for evolutionary dynamics of disease,'' in
  \emph{American Control Conference}, Portland, USA, 2014, pp. 2771--2778.

\bibitem{Jonsson:16}
V.~D. Jonsson, C.~M. Blakely, L.~Lin, S.~Asthana, N.~Matni, V.~Olivas,
  E.~Pazarentzos, M.~A. Gubens, B.~C. Bastian, B.~S. Taylor, J.~C. Doyle, and
  T.~G. Bivona, ``Novel computational method for predicting polytherapy
  switching strategies to overcome tumor heterogeneity and evolution,''
  \emph{Scientific Reports}, vol. 7:44206, pp. 1--14, 2016.

\bibitem{Ai:17}
Z.~Ai and C.~Chen, ``Asymptotic stability analysis and design of nonlinear
  impulsive control systems,'' \emph{Nonlinear Analysis: Hybrid Systems},
  vol.~24, pp. 244--252, 2017.

\bibitem{Sootla:17cdc}
A.~Sootla, Y.~Zheng, and A.~Papachristodoulou, ``Block-diagonal solutions to
  lyapunov inequalities and generalisations of diagonal dominance,'' in
  \emph{56th IEEE Conference on Decision and Control}, Melbourne, Australia,
  2017, pp. 6561--6566.

\bibitem{Gouze:00}
J.~L. Gouz{\'{e}}, A.~Rapaport, and M.~Z. {Hadj-Sadok}, ``Interval observers
  for uncertain biological systems,'' \emph{Ecological modelling}, vol. 133,
  pp. 45--56, 2000.

\bibitem{Briat:15g}
C.~Briat and M.~Khammash, ``Interval peak-to-peak observers for continuous- and
  discrete-time systems with persistent inputs and delays,'' \emph{Automatica},
  vol.~74, pp. 206--213, 2016.

\bibitem{Efimov:16c}
D.~Efimov and T.~Ra{\"{\i}}ssi, ``Design of interval observers for uncertain
  dynamical systems,'' \emph{Automation and Remote Control}, vol. 77(2), pp.
  191--225, 2016.

\bibitem{Chambon:16}
E.~Chambon, L.~Burlion, and P.~Apkarian, ``Overview of linear time-invariant
  interval observer design: towards a non-smoot optimisation-based approach,''
  \emph{IET Control Theory \& Applications}, vol. 10(11), pp. 1258--1268, 2016.

\bibitem{Briat:18:IntImp}
C.~Briat, ``${L}_1\times\ell_1$-to-${L}_1\times\ell_1$ analysis of linear
  positive impulsive systems with application to the
  ${L}_1\times\ell_1$-to-${L}_1\times\ell_1$ interval observation of linear
  impulsive and switched systems,'' \emph{Nonlinear Analysis: Hybrid Systems},
  vol.~34, pp. 1--17, 2019.

\bibitem{Briat:18:IntMarkov}
------, ``A class of ${L_1}$-to-${L_1}$ and ${L_\infty}$-to-${L_\infty}$
  interval observers for (delayed) {M}arkov jump linear systems,'' \emph{IEEE
  Control Systems Letters}, vol. 3(2), pp. 410--415, 2019.

\bibitem{Briat:19:IJC}
------, ``Stability and ${L_1\times\ell_1}$-to-${L_1\times\ell_1}$ performance
  analysis of uncertain impulsive linear positive systems with applications to
  the interval observation of impulsive and switched systems with constant
  delays,'' \emph{International Journal of Control (in press)}, 2019.

\bibitem{Zhang:14b}
J.-S. Zhang, Y.-W. Wang, J.-W. Xiao, and Z.-H. Guan, ``Stability analysis of
  impulsive positive systems,'' in \emph{19th IFAC World Congress}, Cape Town,
  South Africa, 2014, pp. 5987--5991.

\bibitem{Briat:16c}
C.~Briat, ``Dwell-time stability and stabilization conditions for linear
  positive impulsive and switched systems,'' \emph{Nonlinear Analysis: Hybrid
  Systems}, vol.~24, pp. 198--226, 2017.

\bibitem{Blanchini:15}
F.~Blanchini, P.~Colaneri, and M.~E. Valcher, ``Switched positive linear
  systems,'' \emph{Foundations and Trends in Systems and Control}, vol. 2(2-3),
  pp. 101--273, 2015.

\bibitem{Desoer:75a}
C.~A. Desoer and M.~Vidyasagar, \emph{Feedback Systems : Input-Output
  Properties}.\hskip 1em plus 0.5em minus 0.4em\relax Academic Press, New York,
  1975.

\bibitem{Dashkhovskiy:11}
S.~Dashkhovskiy, H.~Ito, and F.~Wirth, ``On a small gain theorem for iss
  networks in dissipative lyapunov form,'' in \emph{European Control
  Conference}, Budapest, Hungary, 2011, pp. 1077--1082.

\bibitem{Ito:12}
H.~Ito, S.~Dashkovskiy, and F.~Wirth, ``Capability and limitation of max- and
  sum-type construction of lyapunov functions for networks of iiss systems,''
  \emph{Automatica}, vol.~48, pp. 1197--1204, 2012.

\bibitem{Allerhand:11}
L.~I. Allerhand and U.~Shaked, ``Robust stability and stabilization of linear
  switched systems with dwell time,'' \emph{{IEEE} Transactions on Automatic
  Control}, vol. 56(2), pp. 381--386, 2011.

\bibitem{Briat:13d}
C.~Briat, ``Convex conditions for robust stability analysis and stabilization
  of linear aperiodic impulsive and sampled-data systems under dwell-time
  constraints,'' \emph{Automatica}, vol. 49(11), pp. 3449--3457, 2013.

\bibitem{Briat:14f}
------, ``Convex conditions for robust stabilization of uncertain switched
  systems with guaranteed minimum and mode-dependent dwell-time,''
  \emph{Systems \& Control Letters}, vol.~78, pp. 63--72, 2015.

\bibitem{Briat:15i}
------, ``Stability analysis and stabilization of stochastic linear impulsive,
  switched and sampled-data systems under dwell-time constraints,''
  \emph{Automatica}, vol.~74, pp. 279--287, 2016.

\bibitem{Parrilo:00}
P.~Parrilo, ``Structured semidefinite programs and semialgebraic geometry
  methods in robustness and optimization,'' Ph.D. dissertation, California
  Institute of Technology, Pasadena, California, 2000.

\bibitem{sostools3}
A.~Papachristodoulou, J.~Anderson, G.~Valmorbida, S.~Prajna, P.~Seiler, and
  P.~A. Parrilo, \emph{{SOSTOOLS}: Sum of squares optimization toolbox for
  {MATLAB} v3.00}, 2013.

\bibitem{Goebel:12}
R.~Goebel, R.~G. Sanfelice, and A.~R. Teel, \emph{Hybrid Dynamical Systems.
  Modeling, Stability, and Robustness}.\hskip 1em plus 0.5em minus 0.4em\relax
  Princeton University Press, 2012.

\bibitem{Kreyszig:78}
E.~Kreyzig, \emph{Introductory Functional Analysis with Applications}.\hskip
  1em plus 0.5em minus 0.4em\relax John Wiley \& Sons, 1978.

\bibitem{Lawrence:12}
D.~A. Lawrence, ``Duality properties of linear impulsive systems,'' \emph{Asian
  Journal of Control}, vol. 14(4), pp. 901--910, 2012.

\bibitem{Holicki:19}
T.~Holicki and C.~W. Scherer, ``Stability analysis and output-feedback
  synthesis of hybrid systems affected by piecewise constant parameters via
  dynamic resetting scalings,'' \emph{Nonlinear Analysis: Hybrid Systems},
  vol.~34, pp. 179--208, 2019.

\bibitem{Xiang:15a}
W.~Xiang, ``On equivalence of two stability criteria for continuous-time
  switched systems with dwell time constraint,'' \emph{Automatica}, vol.~54,
  pp. 36--40, 2015.

\bibitem{Handelman:88}
D.~Handelman, ``Representing polynomials by positive linear functions on
  compact convex polyhedra,'' \emph{Pacific Journal of Mathematics}, vol.
  132(1), pp. 35--62, 1988.

\bibitem{Putinar:93}
M.~Putinar, ``Positive polynomials on compact semi-algebraic sets,''
  \emph{Indiana Univ. Math. J.}, vol.~42, no.~3, pp. 969--984, 1993.

\bibitem{Zheng:19}
Y.~Zheng, G.~Fantuzzi, and A.~Papachristodoulou, ``Sparse sum-of-squares (sos)
  optimization: A bridge between dsos/sdsos and sos optimization for sparse
  polynomials,'' in \emph{American Control Conference}, Philadelphia, PA, USA,
  2019, pp. 5513--5518.

\bibitem{Ahmadi:19}
A.~{Ali Ahmadi} and A.~Majumdar, ``Dsos and sdsos optimization: More tractable
  alternatives to sum of squares and semidefinite optimization,'' \emph{SIAM
  Journal on Applied Algebraic Geometry}, vol. 3(2), pp. 193--230, 2019.

\bibitem{Sturm:01a}
J.~F. Sturm, ``Using {SEDUMI} $1. 02$, a {M}atlab {T}oolbox for {O}ptimization
  {O}ver {S}ymmetric {C}ones,'' \emph{Optimization Methods and Software},
  vol.~11, no.~12, pp. 625--653, 2001.

\bibitem{Seuret:12}
A.~Seuret, ``A novel stability analysis of linear systems under asynchronous
  samplings,'' \emph{Automatica}, vol. 48(1), pp. 177--182, 2012.

\bibitem{Briat:11l}
C.~Briat and A.~Seuret, ``A looped-functional approach for robust stability
  analysis of linear impulsive systems,'' \emph{Systems \& Control Letters},
  vol. 61(10), pp. 980--988, 2012.

\bibitem{Briat:book1}
C.~Briat, \emph{Linear Parameter-Varying and Time-Delay Systems -- Analysis,
  Observation, Filtering \& Control}, ser. Advances on Delays and
  Dynamics.\hskip 1em plus 0.5em minus 0.4em\relax Heidelberg, Germany:
  Springer-Verlag, 2015, vol.~3.

\bibitem{Naghnaeian:14}
M.~Naghnaeian and P.~G. Voulgaris, ``Performance optimization over positive
  $l_\infty$ cones,'' in \emph{American Control Conference}, Portland, USA,
  2014, pp. 5645--5650.

\bibitem{Briat:17ifacObs}
C.~Briat and M.~Khammash, ``Simple interval observers for linear impulsive
  systems with applications to sampled-data and switched systems,'' in
  \emph{20th IFAC World Congress}, Toulouse, France, 2017, pp. 5235--5240.

\end{thebibliography}

\end{document}